\newcommand{\C}{\mathbb{C}}
\newcommand{\N}{\mathbb{N}}
\newcommand{\Z}{\mathbb{Z}}
\newcommand{\R}{\mathbb{R}}
\newcommand{\calO}{{\mathcal{O}}}
\newcommand{\Diff}{{{\rm Diff}\, ({\mathbb C^n}, 0)}}
\newcommand{\ooY}{{\overline{Y}}}
\newcommand{\diff}{{{\rm Diff}\, ({\mathbb C}, 0)}}
\newcommand{\diffn}{{{\rm Diff}\, ({\mathbb C^n}, 0)}}
\newcommand{\ghatXone}{{\widehat{\mathfrak{X}}}}
\newcommand{\ghatX}{{\widehat{\mathfrak{X}}_2}}
\newcommand{\formdiffn}{{\widehat{\rm Diff}_{1} (\mathbb{C}^2, 0)}}
\newcommand{\formalC}{{\mathbb{C} [[x,y]]}}
\newcommand{\fieldC}{{\mathbb{C} ((x,y))}}
\newcommand{\diffCtwo}{{{\rm Diff}_1 ({\mathbb C}^2, 0)}}
\newcommand{\formdiffgeneral}{{\widehat{\rm Diff} (\mathbb{C}^2, 0)}}
\newcommand{\Diffgentwo}{{{\rm Diff}\, ({\mathbb C}^2, 0)}}
\def\picill#1by#2(#3)#4
\vfill\special{illustration #3 scaled #4}}}
\newtheorem{theorem}{Theorem}[section]
\newtheorem{prop}[theorem]{Proposition}
\newtheorem{corol}[theorem]{Corollary}
\newtheorem{lemma}[theorem]{Lemma}
\newtheorem{obs}[theorem]{Remark}
\newtheorem{example}[theorem]{Example}
\theoremstyle{definition}
\newtheorem{defi}[theorem]{Definition}
\newtheorem{ex}{Example}
\theoremstyle{remark}
\begin{document}

\title[Topological dynamics in $\Diffgentwo$]{Discrete orbits, recurrence and solvable subgroups of $\Diffgentwo$}

\author{Julio C. Rebelo \, \, \, \& \, \, \, Helena Reis}
\address{}
\thanks{}

\begin{abstract}
We discuss the local dynamics of a subgroup of $\Diffgentwo$ possessing locally discrete orbits as well as
the structure of the recurrent set for more general groups. It is proved,
in particular, that a subgroup of $\Diffgentwo$ possessing locally discrete orbits must be virtually solvable.
These results are of considerable interest in problems concerning integrable systems.
\end{abstract}

\maketitle

\section{Introduction}

This paper is devoted to establishing some general theorems about the dynamics of (virtually) {\it non solvable}\, subgroups of
$\Diffgentwo$. Whereas motivations for these results arise from a few different sources, problems concerned with integrable
systems and with Morales-Ramis-Sim\'o {\it differentiable Galois theory}\, are very
directly related to our main results, \cite{MoralesRamis}, \cite{M-Ramis-S}. In this introduction, we shall first state
our main results and then proceed to a general
discussion about their motivations and applications in perspective with some previous results.

Throughout this paper, a group will be said to be {\it virtually solvable}\, if it contains a normal, solvable subgroup of finite index.
Now, consider finitely many local diffeomorphisms $f_1, \ldots ,f_k$
inducing elements of $\Diffgentwo$. Denote by $G_U$ the pseudogroup generated by $f_1, \ldots ,f_k$ on some chosen neighborhood
$U$ of $(0,0) \in \C^2$; see Section~2.1 for details. At the level of germs, the subgroup of $\Diffgentwo$ generated by
$f_1, \ldots ,f_k$ is going to be denoted by $G$. When no misunderstood is possible, we shall allow ourselves
to identify $G_U$ and $G$. With this identification,
$G$ is said to have locally discrete orbits (resp. finite orbits), if there is a sufficiently small neighborhood
$U$ of $(0,0)$ where $G_U$ has locally
discrete orbits (resp. finite orbits). The reader is referred to Section~2.1 for accurate definitions. With this terminology,
our first main result reads:

\vspace{0.2cm}

\noindent {\bf Theorem~A}. {\sl Suppose that $G$ is a finitely generated subgroup of $\Diffgentwo$ with
locally discrete orbits. Then $G$ is virtually solvable.}

\vspace{0.2cm}

\vspace{0.2cm}

\noindent {\bf Remark}. Although we always work with finitely generated groups, the reader will note that
Theorem~A also holds for groups that are infinitely generated. A simple argument to derive
this slightly stronger statement from the proof of Theorem~A is provided at the end of Section~3,
see Theorem~\ref{infinitelygeneratedgroups}.

\vspace{0.2cm}

The notion of {\it recurrent points}\, allows us to accurately state Theorem~A. Given $U$ and $G_U$ as above, a point
$p \in U$ is said to be {\it recurrent}\, if there exists a sequence $\{ g_n\}$ of elements in $G_U$
such that $g_n (p) \rightarrow p$ with $g_n (p) \neq p$ for every~$n$. In this definition,
it is implicitly assumed that $p$ belongs to the domain of definition
of $g_n$ when $g_n$ is viewed as an element of the pseudogroup $G_U$. A recurrent point does not have locally finite orbit and,
conversely, a point whose orbit is not locally finite must be recurrent. Thus, Theorem~A can be rephrased by saying that
there are always recurrent points for a non-virtually solvable group $G \subset \Diffgentwo$. The size of the set formed by these
recurrent points may, however, be relatively small as it may coincide with a Cantor set (this is very much similar to the case
of a Kleinian group having a Cantor set as its limit set, cf. Section~\ref{section.examples}). To obtain a general result about
the size of recurrent points, we are led to consider the
normal subgroup $\diffCtwo$ of $\Diffgentwo$ consisting of those local diffeomorphisms tangent to the identity.
When $G$ happens to be a (pseudo-) subgroup of $\diffCtwo$, the following stronger result holds:

\vspace{0.2cm}

\noindent {\bf Theorem~B}. {\sl Consider a non-solvable group $G \subset \diffCtwo$ and denote by $\Omega \,(G)$ the set
of points that fail to be recurrent for~$G$. Then there is a neighborhood
$U$ of $(0,0) \in \C^2$ such that $\Omega (G) \cap U$ is contained in a countable union of proper analytic subsets of~$U$
(in particular $\Omega (G) \cap U$ has null Lebesgue measure).}

\vspace{0.1cm}

\noindent {\bf Remark}. In the above statement the reader will note that the group of germs at $(0,0) \in \C^2$
naturally associated to $G$ is only assumed to be
non solvable as opposed to non virtually solvable. Also it is easy to prove that for a group $G$ generated
by a random choice of~$n \geq 2$ elements in $\diffCtwo$, the resulting set $\Omega \, (G)$ is reduced to the origin
of $\C^2$, cf. Remark~\ref{ongenericgroups}.

On a different note, we know of no example of non-solvable group $G \subset \diffCtwo$ for which $\Omega (G)$
is not contained in a {\it proper analytic set}. It would be nice to know whether this stronger statement always holds.

\vspace{0.2cm}

Concerning the above theorems, it may be observed that suitable versions of them are likely to
hold in arbitrary dimensions although we have not tried to work out any of these generalizations. Indeed, we decided
to restrict our attention to the $2$-dimensional case
partly because this setting is already full of new phenomena
and partly because the corresponding proofs are already fairly involved. Yet, a careful reading of our
arguments indicates that more typical arguments of complex dimension two were used only at a few points which, in turn,
suggests the existence of suitable arbitrary dimensional versions of the mentioned results.

We can now go back to the beginning of this introduction and discuss the motivations for the above statements.
The most important motivations can be ascribed to several types of {\it Galois theories}\,
and to integrability problems, see below. However,
we may begin by observing that very little of general is known about the dynamics of {\it large}\, (e.g. non-solvable)
subgroups of $\diffn$ when $n\geq 2$. In this sense,
the above results stand among the first ones in this direction. The situation contrasts
with the case of the local dynamics associated with subgroups of $\diff$ and
a brief review of the main results in this case may be a good starting point for us.
Whereas the local dynamics of subgroups of $\diff$
still holds some subtle open problems, the topic can be regarded
as well understood since a large body of knowledge on these dynamics can be found in the literature,
see \cite{shcherbakov}, \cite{nakai}, \cite{Loraybook}, \cite{russians}, \cite{Yoccozpetitsdiviseurs}.
The picture changes drastically when $n\geq 2$ as many new phenomena
emerge to provide a far more involved landscape. Indeed, when $n\geq 2$, there is a significant body of theory developed
in the case of the dynamics associated with a {\it parabolic germ}, cf. \cite{ecalle}, \cite{hakim}, \cite{abate},
\cite{raissy}. For non-solvable groups, the results of \cite{lorayandI} provide satisfactory
answers for {\it non-discrete groups containing a hyperbolic contraction}. These conditions, however, are not always
satisfied in the cases of interest.

Along the lines of the above paragraph, a first motivation for this work can broadly be described as the beginning
of a systematic study of the dynamics associated to ``large'' subgroups of $\Diffgentwo$, where by
``large'' we typically mean {\it non solvable}\, (and in some cases non virtually solvable). Naturally,
when considering these groups, we might be tempted to parallel the theory of Shcherbakov-Nakai
vector fields applicable to non solvable
subgroups of $\diff$. Although their theory remains an important guiding principle for our investigations,
the very existence of {\it free discrete}\, subgroups of $\Diffgentwo$ is enough to ensure that Shcherbakov-Nakai vector fields
cannot be associated with subgroups of $\Diffgentwo$ without additional assumptions; see Section~\ref{section.examples}
for details and definitions. In this direction, whereas our recurrence
statements constitute a less powerful tool than vector fields {\it approximating the dynamics of the group}, they
have the advantage of holding for arbitrary non virtually solvable subgroups of $\Diffgentwo$ and, in fact, they
constitute the first general result concerning the dynamics of these groups. Moreover, as far as general
non-virtually-solvable subgroups of $\Diffgentwo$ are concerned, Theorem~A is probably not far from sharp.
Also it is worth mentioning that in a number of standard applications of
Shcherbakov-Nakai theory, only the recurrent character of the dynamics is needed so that the Theorem~A
suffices to derive important conclusions. As an outstanding example of these situations, we quote the work of Camacho
and Scardua on the ``Analytic limit set problem'', see \cite{camachoKyoto}, \cite{camachoasterisque}: the remarkable
conclusion that the holonomy group of the limit set in question must be solvable requires only the fact that the dynamics
of a non solvable subgroup of $\diff$ has recurrent points. Thus, Theorem~A is strong enough to yield the analogous conclusion
for suitable higher dimensional versions of the problem in question.

The second and more important motivation for the previously stated results, however, comes from a few fundamental questions
concerning the {\it integrable character}\, of certain systems (vector fields). Most of this goes along the connection between
integrable systems and Galois differential theories in the spirit of \cite{MoralesRamis}, \cite{M-Ramis-S}. Yet, our first motivation
stemming from {\it integrable systems}\,
can be traced back to a classical theorem due to Mattei and Moussu \cite{M-M} asserting that,
in dimension~$2$, the existence of holomorphic first integrals for (local) holomorphic foliations
can be read off the topological dynamics associated to the singular point. It was recently shown in \cite{thesis} that,
strictly speaking, this remarkable phenomenon no longer holds in higher dimensions and some additional curious examples were provided in
\cite{HelenaAndI}. These examples made it clear that a fundamental question in this problem is to decide which, if any,
kind of ``integrable character'' can be associated with a finitely generated subgroup $G$ of $\Diffgentwo$ possessing
finite orbits (or more generally locally finite orbits so as to allow for meromorphic as well as other types
of first integrals). Indeed,
the cornerstone of Mattei-Moussu's argument \cite{M-M} is the fact that a subgroup $G$ of $\diff$ all of whose orbits
are finite must be finite itself: a result no longer valid in dimension~$2$; see \cite{HelenaAndI}. Naturally, finite
groups always admit non-constant first integrals for their actions which leads to the existence of first integrals for
the initial foliation.

From the point of view of differentiable Galois theories, or from the point of view of Morales-Ramis-Sim\'o theory,
solvable groups are associated with integrable systems where {\it integrability}\,
should be understood in a type of {\it quadrature sense}\,
slightly more general than the standard context of Arnold-Liouville theorem. In this sense,
Theorem~A provides a fully satisfactory answer to the preceding question, namely the integrable character of
a subgroup of $\Diffgentwo$ possessing locally finite orbits lies in the fact that this group must be virtually solvable.

We are finally able to explain other aspects of Morales-Ramis-Sim\'o theory \cite{M-Ramis-S} that have provided us with
extra motivation for the present work. Inasmuch Galois differentiable theories are highly developed in the linear case, and
they allow us to decide whether or not a given equation is solvable by quadratures, a far
more general non-integrability criterion applicable to genuinely non-linear situations
is summarized by Morales-Ramis-Sim\'o theorem \cite{M-Ramis-S}. This theorem
asserts that the Galois group associated with the $k^{\rm th}$-variational equation arising from a periodic solution
must be virtually solvable (actually virtually abelian) provided that the system is integrable in the
sense of Arnold-Liouville. This context is somehow very close to our Theorems~A and~B and this issue deserves further comments.

The first main difference between the two sets of results lies in the groups considered: both Morales-Ramis and Morales-Ramis-Sim\'o
theories focus on {\it Galois groups}\, which may be larger than the more commonly used
{\it holonomy groups}, primarily concerned by
the results in this work. In this sense, the theories in \cite{MoralesRamis}, \cite{M-Ramis-S} are more complete since
they have a {\it better chance}\, at detecting non-integrable behavior. On the other hand, the advantage
of our direct analysis of the holonomy group is the possibility of providing further information on the actual dynamics of
several non-integrable systems. As a matter of fact, when the mentioned group is not (virtually) solvable, then our results allow
us to derive non-trivial conclusions concerning the dynamics of the (necessarily non-integrable) system in question.

Nonetheless, modulo a reasonable theory for the associated {\it Stokes phenomena}, which is often available for irregular
singular points, our statements can be applied to the dynamics of Galois groups as well. Indeed, the main difference
between the Galois group and the holonomy group lies in the fact that
the former also includes the so-called {\it Stokes diffeomorphisms} (Stokes matrices in the linear case).
In codimension~$2$, these ``Stokes diffeomorphisms'' are realized as local diffeomorphisms defined around
$(0,0) \in \C^2$ and they may or may not fix the origin. However, in the case they all fix the origin
(or rather if we decide to consider the subgroup formed by elements fixing the origin)
our results can directly be applied to investigate the dynamics of the resulting {\it Galois
group}, hence providing a nice complement to their theory.
In the more general case, both theories can probably be merged together into a similar dynamical study of
pseudogroups defined about $(0,0) \in \C^2$ that will not be discussed here. In any event, it should be pointed
out that, according to the point of view developed by Ramis and his co-authors,
this ``enlarged dynamics'', i.e. the dynamics associated with the Galois group, should be studied along with
the dynamics of the usual holonomy group. This remark opens the way to further applications of our results.

Another more specific, and likely deep, question raised by our results concerns
the classification of solvable non-abelian subgroups of $\Diffgentwo$ possessing locally finite orbits. The reader is
reminded that, for $n=1$, the corresponding result is due to Birkhoff, though it was independently
re-discovered by Loray in \cite{Loray}. Since this beautiful result possesses a number of applications, we believe
that its generalization to dimension~$2$ is a problem worth further investigation.

Let us close this introduction with an outline of the structure of the paper. The basic idea underlining most of
the present work is rather simple and comes from Ghys recurrence theorem proved in \cite{ghysBSBM} in a different context.
More precisely Ghys proves that a group of real analytic diffeomorphisms of a compact manifold
generated by diffeomorphisms close to the identity has recurrent dynamics provided that the group
is not {\it pseudo-solvable}. However, exploiting his idea to prove Theorem~A involves two main issues, the first one
being related to the assumption on {\it closeness}\, to the identity made in Ghys' theorem \cite{ghysBSBM}.
The other fundamental difficulty is related to the notion of {\it pseudo-solvable group}\, introduced in the same paper \cite{ghysBSBM}.
From an algebraic point of view, the main issue lies in the definition
of {\it pseudo-solvable group}\, which is related
to the fact that certain {\it sets of commutators}\, should not degenerate into the identity. This is actually a tricky point:
the geometric meaning of {\it pseudo-solvability}\, is not clear especially because the notion may, in principle, depend on
the generating set. As a consequence, the need to work with this type of groups
limits the dynamical applications of Ghys's ideas. To overcome this difficulty, we are led
to determine the borderline between pseudo-solvable groups and solvable groups when the former is realized as a group
of diffeomorphisms (with a given degree or regularity). In other words, we search for results ensuring that
a pseudo-solvable group of diffeomorphisms is, indeed, solvable.
This problem is already singled out in \cite{ghysBSBM} where
the author shows that every pseudo-solvable subgroup of $\diff$, or of the group of real analytic diffeomorphisms of the circle,
is solvable.

From the analytic side, our approach conceptually
hinges from the dichotomy involving discrete and non-discrete groups; see Section~\ref{section.examples}
for a detailed self-contained discussion. As already mentioned, among finitely generated subgroups of $\Diffgentwo$
there are groups that are {\it discrete}\, in a natural sense as well as groups that are {\it non-discrete}\, in the same sense.
Roughly speaking, a group is said to be non-discrete if it contains a non-trivial sequence of elements defined on some
fixed neighborhood $U$ of $(0,0) \in \C^2$ and converging uniformly to the identity on this neighborhood.
Ultimately, the importance of showing that a pseudo-solvable group is actually solvable lies in this dichotomy: the
corresponding result yields a powerful criterion to detect non-discrete groups. In fact, every sequence of
``iterated commutators'' starting from two elements sufficiently close to the identity will converge to the identity;
see Section~\ref{provingtheorems} (this explains the assumption on ``closeness to the identity'' made in Ghys's recurrence
theorem mentioned above). From this point, our general argument will allow us to connect {\it discrete subgroup of $\Diffgentwo$}\,
with Kleinian groups in an accurate sense. Then, by relying on basic facts from Kleinian group theory combined
to equally basic results on stable manifold theory of hyperbolic fixed point, we shall manage to establish Theorem~A
in the case of discrete groups. The complementary case of non-discrete groups can then be handled by resorting
to the argument on {\it convergence of iterated commutators close to the identity}\, as in \cite{ghysBSBM}.

The proof that a pseudo-solvable subgroup of $\Diffgentwo$ must be solvable (Theorem~\ref{commuting9})
is, however, the main technical difficulty in the paper. This is not really a surprise since the algebraic complexity
of subgroups of $\Diffgentwo$ is known to be much greater than the corresponding one-dimensional case
of subgroups of $\diff$ (see for example \cite{ecalle}).
The fact that Ghys result on pseudo-solvable subgroups of $\diff$ can be established in an easier way is related
to the fact that ``commutation relations'' in $\diff$ are very restrictive. Evidence for this issue arises, for example,
from the fact that the structure of solvable subgroups of $\diff$ has been well known for over two decades
(see \cite{Loraybook}, \cite{cerveaumoussu}, \cite{russians}) while there is relatively little literature in the higher
dimensional cases, apart from \cite{ecalle} and the recent papers \cite{ribon} and \cite{ribon2}.
Simple basic phenomena such as the possible existence of non-constant first integrals and the presence of higher
rank abelian groups, which have no one-dimensional analogue,
add significantly to the algebraic complication of the general picture.

A comment is needed in order to relate our discussion with the recent work by Martelo and Ribon in \cite{ribon}.
These authors have provided a systematic treatment of solvable subgroups of $\Diff$
at {\it formal level} in the sense that their results apply to the group of formal diffeomorphisms and not only to convergent ones.
In view of the purpose of this present work, we can make our discussion shorter by restricting ourselves
to the case of subgroups of $\diffCtwo$, where $\diffCtwo$ denotes the subgroup
of $\Diffgentwo$ consisting of diffeomorphisms tangent to the identity.
In this context, the results in \cite{ribon} can be summarized as follows. To a group $G \subset \diffCtwo$ they associate
a Lie algebra of {\it formal vector fields}\, whose exponentiation contains the initial group $G$. Moreover, if $G$
is solvable then so is its Lie algebra. This correspondence between a solvable Lie group $G \subset \diffCtwo$ and
its solvable Lie algebra has a number of natural properties and, essentially, reduces the problem of formally classifying
solvable subgroups of $\diffCtwo$ to the problem of classifying solvable Lie algebras of formal vector fields in
two variables. The latter problem is also settled in the mentioned paper through an inductive procedure
(Theorem~6 of \cite{ribon}).

The results of \cite{ribon} will find applications in our study of pseudo-solvable groups. However, further elaborations will also be
needed. The need for additional elaborations has its roots in the fact that
our purpose is to prove that a pseudo-solvable subgroup $G \subset \diffCtwo$
is, indeed, solvable. Therefore the initial group $G$ is not known to be solvable. Still, we may try to
consider its Lie algebra in the sense of \cite{ribon}. The main problem here is to determine that this Lie algebra
should be ``pseudo-solvable'', with an appropriate definition of {\it pseudo-solvable Lie algebras}. The very
definition of pseudo-solvable groups and its, a priori, dependence on the generating set is the main obstacle to
exploit this type of idea. However, if this connection between groups and Lie algebras can be made accurate, then
the the desired statement will be reduced to prove its version for Lie algebras. These questions will be further
detailed in the course of the work.

This paper is organized as follows. Section~2 contains background material and
is divided in three paragraphs. The first one contains several pertinent definitions including the statement
of Theorem~\ref{commuting9} claiming, in particular, that a ``pseudo-solvable subgroup'' of $\Diffgentwo$ is solvable.
This theorem is the main algebraic result which cannot be avoided in the proof of Theorems~A and~B as well as in the description
of ``discrete'' and of ``non-discrete'' subgroups of $\Diffgentwo$. In addition to basic preparatory material, Section~2.3
contains a review of \cite{ribon} construction of the Lie algebra associated to a given subgroup of $\diffCtwo$ or,
more generally of $\formdiffn$. The other result of \cite{ribon}, see also
\cite{ribon2}, that will be used in this paper is the formal classification
of solvable subgroups of $\formdiffn$ and the corresponding list will de made explicit
in Section~5.3: it amounts to a particular case of Theorem~6 in \cite{ribon} and further detail
can also be found in the recent preprint \cite{ribon2}.

Going back to Theorem~\ref{commuting9}, its proof is the object of Sections~5, 6, and~7.
In Section~5.1 and~5.2, more advanced results concerning abelian subgroups of $\formdiffn$ as well as their normalizers
in $\formdiffn$ will be detailed. Finally, Sections~6 and~7 are entirely devoted
to proving Theorem~\ref{commuting9} by building in the previously developed material. We also note that
Campbell-Hausdorff type formulas will play a prominent role in much of the proofs given in Sections~5 and~6.

The reader willing to take for grant the statement of Theorem~\ref{commuting9} will find the proofs of Theorems~A
and~B in Section~3. Additional details and examples illustrating these theorems are supplied in the short Section~4.

\vspace{0.1cm}

\noindent {\bf Acknowledgments}. We are very indebted to the referee for several comments and suggestions that helped us
to simplify our discussion of pseudo-solvable subgroups of $\Diffgentwo$.

The first author wishes to thank M. Garakani for discussions concerning algebraic properties
of subgroups of $\Diff$. Part of this work was conducted during a visit of the authors to
IMPA and we would like to thank the CNPq-Brazil for partial financial support.
The second author was partially supported by FCT through CMUP. Finally, both authors were also supported
by project EXPL/MAT-CAL/1575/2013.

\section{Basic notions}

Throughout this work $\Diffgentwo$ stands for the group of germs of holomorphic diffeomorphisms fixing $(0,0) \in \C^2$
and $\diffCtwo$ denotes its normal subgroup consisting of diffeomorphisms tangent to the identity.
The group of formal diffeomorphisms
of $(\C^2,0)$ is denoted by $\formdiffgeneral$ whereas $\formdiffn$ is the formal counterpart of
$\diffCtwo$ i.e., it is constituted by formal diffeomorphisms tangent to the identity.
Similarly, by $G$ we shall always denote a finitely generated subgroup of one of the groups $\Diffgentwo$,
$\diffCtwo$ or $\formdiffn$.

To make accurate our discussion, it is convenient to begin with a few standard definitions.
First, let $\formalC$ denote
the space of formal series in the variables $x,y$. Similarly $\fieldC$ will stand for the field of fractions
(or field of quotients) of $\formalC$. An element $F \in \formdiffgeneral$ consists of a pair of formal series
$(F_1(x,y) , F_2 (x,y))$, $F_1 (x,y), \, F_2(x,y) \in \formalC$, satisfying the following condition: setting
$F_1 (x,y) = a_1 x + a_2y + {\rm h.o.t.}$ and $F_2 (x,y) = b_1 x + b_2y + {\rm h.o.t.}$, the $2 \times 2$
matrix whose entries are the coefficients $a_1, a_2, b_1, b_2$ is invertible. The formal diffeomorphism
$F$ is said to belong to $\formdiffn$ when this matrix happens to coincide
with the identity.

\subsection{Pseudogroups and additional terminology}

Assume that $G$ is actually a subgroup of $\Diffgentwo$ generated by the elements $h_1, \ldots, h_k$.
A natural way to make sense of the local dynamics of $G$ consists
of choosing representatives for $h_1, \ldots, h_k$ as local diffeomorphisms fixing $(0,0) \in \C^2$.
These representatives are still denoted
by $h_1, \ldots, h_k$ and, once this choice is made, $G$ itself can be identified to the {\it pseudogroup}\,
generated by these local diffeomorphisms on a (sufficiently small) neighborhood of the origin. It is however convenient to
recall the definition of {\it pseudogroup}. For this, consider a small neighborhood $V$ of the origin where the
local diffeomorphisms $h_1, \ldots, h_k$, along with their inverses $h_1^{-1}, \ldots, h_k^{-1}$, are all well defined diffeomorphisms
onto their images.
The pseudogroup generated by $h_1, \ldots, h_k$ (or rather by $h_1, \ldots , h_k, h_1^{-1}, \ldots, h_k^{-1}$
if there is any risk of confusion) on $V$ is defined as follows. Every element of this pseudogroup has the form
$F = F_s \circ \ldots \circ F_1$ where each $F_i$, $i \in \{1, \ldots, s\}$, belongs to the set
$\{h_i^{\pm 1}, i=1, \ldots, k\}$. The element $F$ should be regarded as an one-to-one holomorphic map
defined on a subset of $V$. Indeed, the domain of definition of $F = F_s \circ \ldots \circ F_1$, as an
element of the pseudogroup, consists of those points $x \in V$ such that for every $1 \leq l < s$ the point $F_l \circ
\ldots \circ F_1(x)$ belongs to $V$. Since the origin is fixed by the diffeomorphisms $h_1, \ldots, h_k$, it follows that
every element $F$ in this pseudogroup possesses a non-empty open domain of definition. This
domain of definition may however be disconnected. Whenever no misunderstanding is possible, the pseudogroup defined above will also
be denoted by $G$ and we are allowed to shift back and forward from $G$ viewed as pseudogroup or as group of germs.

Let us continue with some definitions that will be useful throughout the text. Suppose we are given local holomorphic
diffeomorphisms $h_1, \ldots, h_k, h_1^{-1}, \ldots, h_k^{-1}$ fixing the origin of $\C^n$. Let $V$ be a
neighborhood of the origin where all these maps yield diffeomorphisms from $V$ onto the corresponding image. From now on, $G$ will be
identified to the corresponding pseudogroup on $V$.
Given an element $h \in G$, the domain of definition of $h$ (as element of $G$) will be denoted by ${\rm Dom}_V (h)$.

\begin{defi}
The $V_G$-orbit $\calO_V^G (p)$ of a point $p \in V$ is the set of points in $V$ obtained from $p$ by
taking its image through every element of $G$ whose domain of definition (as element of $G$) contains $p$.
In other words,
\[
\calO_V^G (p) = \{q \in V \; \, ; \; \,  q = h(p), \; h \in G \; \; {\rm and} \; \; p \in {\rm Dom}_V (h) \} \, .
\]
Fixed $h \in G$, the $V_h$-orbit of $p$ can be defined as the $V_{<h>}$-orbit of $p$, where $<h>$ denotes the subgroup
of $\diffn$ generated by $h$.
\end{defi}

\begin{defi}
\label{finiteorbitsforpoints}
Given a pseudogroup $G$ and a point $p$, the orbit $\calO_V^G (p)$ of $p$ under $G$ is said to be {\it finite} if
the set $\calO_V^G (p)$ is finite. This orbit $\calO_V^G (p)$ is called {\it locally discrete (or locally finite)},
if there is a neighborhood $W \subset \C^n$ of $p$ such that $W \cap \calO_V^G (p) = \{ q \}$. Finally,
if the orbit of $p$ is not locally discrete then it is said to be {\it recurrent}.
\end{defi}

For $G$ and $V$ as above,
we can now define the notions of {\it pseudogroups with finite orbits}\, and of
{\it pseudogroups with locally discrete orbits}\, (or, equivalently, locally
finite orbits).

\begin{defi}\label{def_finiteorbits}
A pseudogroup $G \subseteq \Diffgentwo$ is said to have finite orbits if there exists a sufficiently small
open neighborhood $V$ of $0 \in \C^n$ all of whose points have finite orbits.
Analogously, $h \in G$ is said to have finite orbits if the pseudogroup
$\langle h \rangle$ generated by $h$ has finite orbits.

Similarly, a pseudogroup is said to have locally discrete orbits (or locally finite orbits)
if there is $V$ small as above such that every point in $V$ has locally discrete orbit.
\end{defi}

Let us now remind the reader
the definition of solvable group. Let $G$ be a given group and denote by $D^1 G$
its {\it first derived group}, namely the subgroup generated by all elements of the form $[g_1, g_2] =
g_1 \circ g_2 \circ g_1^{-1} \circ g_2^{-1}$ where $g_1, g_2 \in G$. The second derived group $D^2 G$
of $G$ is defined as the first derived group of $D^1 G$, i.e., $D^2 G =D^1 (D^1 G)$. More
generally, we set $D^j G = D^1 (D^{j-1} G)$. The group
$G$ is said to be {\it solvable}\, if the groups $D^j G$ become reduced to $\{ {\rm id} \}$ for sufficiently
large $j \in \N$. The smallest $r \in \N^{\ast}$ for which $D^r G = \{ {\rm id} \}$ is called the {\it derived
length}\, of $G$. Equivalently, the group $G$ is also said to be step-$r$ solvable. Thus, an abelian group
is step-$1$ solvable. Step-$2$ solvable groups are also called {\it metabelian groups}.

Since this will be needed later, we may also provide the definition of a solvable Lie algebra. Let then
$\mathfrak{g}$ denote a Lie algebra. The {\it first derived algebra} $D^1 \mathfrak{g}$ of $\mathfrak{g}$
is defined as the Lie algebra generated by the elements $[X,Y]$ where $X, Y \in \mathfrak{g}$. The $j^{\rm th}$-derived
algebra $D^j \mathfrak{g}$ is inductively defined by setting $D^j \mathfrak{g} = D^1 (D^{j-1} \mathfrak{g})$.
Naturally $\mathfrak{g}$ is said to be solvable if $D^j \mathfrak{g}$ is reduced to {\it zero} for sufficiently
large $j \in \N$. Again the derived length of a Lie algebra is defined as the smallest positive $r \in \N^{\ast}$
for which $D^k \mathfrak{g} = \{ 0 \}$.

Closely related to solvable groups is the notion of nilpotent groups. In this case, we consider
$C^1G =D^1G$ (also sometimes called the {\it first central subgroup}). The {\it central series}\, $C^jG$ of $G$
is inductively defined by letting $C^j G$ to be the group generated by all elements of the form $[a,b]$
where $a \in G$ and $b \in C^{j-1} G$. A group $G$ is said to be step-$r$ nilpotent if $r$ is the smallest
integer for which $C^r G = \{ {\rm id} \}$.
We leave to the reader to adapt this definition to Lie algebras.

It is now convenient to recall the definition of pseudo-solvable groups as formulated in \cite{ghysBSBM}.

\begin{defi}
\label{definitionpseudosolvablegroup}
Let $G$ be a group and consider a given finite generating set $S$ for $G$.
To the generating set $S$, a sequence of sets $S(j) \subseteq G$ is associated as follows: $S(0) =S$ and $S(j+1)$
is the set whose elements are the commutators written under the form $[F_1^{\pm 1} ,F_2^{\pm 1}]$ where $F_1 \in S(j)$ and
$F_2 \in S(j) \cup S(j-1)$ ($F_2 \in S(0)$ if $j=0$).
The group $G$ is said to be {\it pseudo-solvable}\, if it admits a (finite) generating set $S$ as above such that
the sequence $S(j)$ becomes reduced to the identity for $j$ large enough.
\end{defi}

As already mentioned, a large part of this paper is devoted to the following

\begin{theorem}
\label{commuting9}
A pseudo-solvable subgroup $G$ of $\formdiffn$ is necessarily solvable.
\end{theorem}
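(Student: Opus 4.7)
The plan is to reduce the statement to an analogous one at the level of Lie algebras of formal vector fields and then transfer the conclusion back using the Martelo--Ribon correspondence recalled in Section~2.3. Every $f \in \formdiffn$ admits a unique logarithm $\log f$, which is a formal vector field vanishing at $(0,0)$ to order at least~$2$, and $f = \exp(\log f)$. Given a pseudo-solvable $G \subset \formdiffn$ with generating set $S = \{f_1, \ldots, f_k\}$ such that $S(j) = \{\mathrm{id}\}$ for large~$j$, I would set $T(0) = \{X_1, \ldots, X_k\}$ with $X_i = \log f_i$ and define $T(j+1)$ inductively as the set of Lie brackets $\pm [Y_1, Y_2]$ with $Y_1 \in T(j)$ and $Y_2 \in T(j) \cup T(j-1)$, mirroring Definition~\ref{definitionpseudosolvablegroup}. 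Call the Lie subalgebra of formal vector fields generated by $T(0)$ \emph{pseudo-solvable} if $T(j) = \{0\}$ for some~$j$.

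The first key step is to show that pseudo-solvability of $G$ forces pseudo-solvability of this Lie algebra. Given $g \in S(j)$, write $g = \exp(Y_g)$; then the Campbell--Hausdorff formula expands $Y_g$ as the corresponding iterated Lie bracket of the $X_i$'s plus a formal series of higher-length iterated brackets. The crucial point is a degree-by-degree analysis: if $X_i$ vanishes to order $\geq 2$, then an iterated Lie bracket of length $\ell$ among the $X_i$'s vanishes to order $\geq \ell + 1$, so the length-$2$ leading term of $Y_g$ has strictly lower order than every Campbell--Hausdorff correction. When $g = \mathrm{id}$, setting $Y_g = 0$ and matching homogeneous components of increasing degree inductively forces every component of the leading iterated Lie bracket to vanish. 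Applying this argument simultaneously across all elements of $S(j)$ and iterating on $j$, one concludes $T(j) = \{0\}$, so the Lie algebra $\mathfrak{g}$ generated by $T(0)$ is indeed pseudo-solvable.

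The second and most substantial step is to prove that a pseudo-solvable Lie algebra $\mathfrak{g}$ of formal vector fields on $(\mathbb{C}^2, 0)$ must be solvable. Here I would invoke the classification of solvable Lie algebras of formal vector fields in two variables (Theorem~6 of \cite{ribon}, also made explicit in Section~5.3) together with the structural results on abelian subalgebras of $\formdiffn$ and their normalizers developed in Sections~5.1 and~5.2. The idea is to argue by contradiction: a non-solvable Lie algebra of formal vector fields in two variables contains a subalgebra on which the iterated bracket sets of any finite generating set cannot degenerate to zero, because the classification forces any maximal abelian subalgebra to have a normalizer that is either itself solvable (contradicting non-solvability) or else generates new non-vanishing iterated brackets at every stage.

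Finally, once $\mathfrak{g}$ is known to be solvable and $G$ is contained in its formal exponentiation (as provided by the Martelo--Ribon construction), the group $G$ inherits solvability. The main obstacle I anticipate is in fact the Lie-algebraic step: pseudo-solvability is intrinsically a condition on a chosen generating set and, being preserved only under quite restrictive manipulations, is not immediately comparable to the derived series. Combining this delicate combinatorial input with the classification in \cite{ribon} and refined Campbell--Hausdorff bookkeeping is precisely where the bulk of the technical work in Sections~5, 6, and~7 is expected to lie.
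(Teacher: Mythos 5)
Your overall plan --- transfer pseudo-solvability from $G$ to a Lie algebra of formal vector fields, prove a Lie-algebraic analogue, and exponentiate back --- is precisely the ``natural strategy'' that the paper describes in Section~2.1 and then deliberately does not pursue, and the reason it is abandoned there is exactly where your argument breaks down. The gap is in your first key step, not (as you anticipate) in the Lie-algebraic one. The infinitesimal generator of a group commutator $[F_1,F_2]$ is $[X_1,X_2]+\frac{1}{2}[X_1,[X_1,X_2]]+\cdots$, and for a \emph{single} commutator the corrections do have order strictly greater than ${\rm ord}\,[X_1,X_2]$, so $[F_1,F_2]={\rm id}$ forces $[X_1,X_2]=0$; this is Lemma~\ref{commuting1}. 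But at the next stage the elements of $S(j)$ have infinitesimal generators of the form $B+C$, where $B$ is the pure iterated bracket you place in $T(j)$ and $C$ is a tail of Campbell--Hausdorff corrections. The vanishing of $\log g$ for $g\in S(j+1)$ then gives $[B_1+C_1,\,B_2+C_2]=0$, and this does not force $[B_1,B_2]=0$: all one knows about $B_1,B_2,C_1,C_2$ are \emph{lower} bounds on their orders, so the cross terms $[B_1,C_2]+[C_1,B_2]+[C_1,C_2]$ need not have order strictly larger than ${\rm ord}\,[B_1,B_2]$ (the pure bracket may vanish to much higher order than its a priori bound, or vanish identically while the corrections do not), and cancellation between them cannot be excluded. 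Your claim that the leading iterated bracket has strictly lower order than every correction is therefore valid only at the first stage, and the inductive degree-matching does not go through. Your second step (a ``pseudo-solvable'' Lie algebra is solvable) is, in addition, only a one-sentence sketch and would itself require essentially all of the machinery of Sections~5--7.

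The paper's actual route stays at the group level throughout. It takes the smallest $m$ for which $G(m,m-1)$ is solvable and exploits the relation $F^{\pm1}\circ G(m-1)\circ F^{\mp1}\subset G(m,m-1)$ satisfied by every $F\in S(m-2)$; the goal is to show that $G(m-1,m-2)$ is then still solvable, contradicting minimality of $m$. This is carried out by a case analysis on the structure of the solvable group $G(m,m-1)$ (abelian; derived algebra equal to a linear span of two non-parallel vector fields; all infinitesimal generators parallel to a single $X$; non-trivial center; and finally Cases~2 and~3 of Section~5.3), using the Martelo--Ribon classification, Hadamard's lemma and the normalizer computations of Section~5.2, and the targeted Campbell--Hausdorff statements of Section~6 (Lemmas~\ref{NormalizeractingonAlgebra}, \ref{addedinJanuary2015.Number2}, \ref{universalsolvablealgebra-June2015} and~\ref{addedinJanuary2015.Number3}). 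In particular, the only place where Campbell--Hausdorff information is extracted from iterated commutators is in tightly controlled situations where the relevant brackets are already known to lie in a prescribed solvable algebra, which is what makes the order comparisons legitimate there.
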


The discussion revolving around the proof of Theorem~\ref{commuting9} is of algebraic nature and the
corresponding results are of interest in their own right. In dimension~$1$, the analogous result was established in \cite{ghysBSBM}
and the argument employed there suggests a natural strategy to handle other situations. However, once we
try to implement this strategy for, say, subgroups of $\formdiffn$, several new difficulties are quickly encountered.
Among these difficulties, we quote
the existence of non-constant first integrals and the existence of rank~$2$ abelian groups. Clearly
neither of these phenomena has an one-dimensional analogue and this partially accounts for the much simpler nature
of the problem in dimension one.
Another point concerning the above mentioned Ghys's strategy is that it naturally
requires some previous knowledge of the structure of solvable subgroups from $\diffCtwo$ or from $\formdiffn$.
In dimension one, the structure of these groups is highly developed (see for example \cite{nakai}, \cite{russians},
\cite{cerveaumoussu} and \cite{Loraybook}) and the corresponding information comes in hand when implementing Ghys's strategy.

Only recently similar material on the classification of solvable subgroups of
$\formdiffn$ (or more generally of $\widehat{\rm Diff}_1 (\C^n ,0)$) has become available through the work of
Martelo and Ribon \cite{ribon}. As already mentioned, these authors associate a Lie algebra to a subgroup of,
say, $\formdiffn$ so that this Lie algebra is solvable if and only if the initial group is so. Moreover, the exponential
of this Lie algebra contains the initial group. By means of this construction, the classification of solvable
subgroups of $\formdiffn$ becomes reduced to the classification of solvable Lie algebras of formal vector fields
with zero linear part. The structure of these solvable Lie algebras is also clarified in their work. However, in the
context of proving Theorem~\ref{commuting9}, further elaboration of their results is needed since
we are dealing with pseudo-solvable groups which a priori are not solvable. Naturally, we may still consider
the Lie algebra associated to a pseudo-solvable subgroup of $\formdiffn$ and try to prove this Lie algebra
is ``pseudo-solvable'' with an appropriate definition of pseudo-solvability for Lie algebras. The difficulty to carry out
this idea lies in the fact that it is hard to compute the {\it infinitesimal generator}\, of the commutator of
two elements $F_1, F_2 \in \formdiffn$ in terms of the infinitesimal generators of $F_1, F_2$; see Section~2.2. Indeed,
this type of computation is governed by the Campbell-Hausdorff formula whose complexity is very high for arbitrary
elements $F_1, F_2$. However, by suitably blending our knowledge of solvable subgroups of $\formdiffn$ with
Campbell-Hausdorff type formulas, these difficulties will eventually be overcome and Theorem~\ref{commuting9} established.
Though this will not be made explicit in the course of our discussion, our method actually shows that the Lie algebra
associated to a pseudo-solvable group has special properties that might be used to define pseudo-solvable Lie algebras.
Yet, since ultimately all these groups will turn out to be solvable, we consider that working out these notions more
explicitly is not really necessary.

Going back to Theorem~\ref{commuting9}, the above mentioned strategy to prove that a given pseudo-solvable group
is actually solvable is naturally suggested by the very definition of pseudo-solvable group.
This is as follows. Consider a pseudo-solvable group
$G$ along with a finite generating set $S =S(0)$ leading to a sequence of sets $S(j)$ that degenerates into
$\{ {\rm id} \}$ for large enough $j \in \N$. Denote by
$G(j)$ (resp. $G (j,j-1)$) the subgroup generated by
$S(j)$ (resp. $S(j) \cup S(j-1)$). Let $k$ be the largest integer for which $S(k)$ is not reduced to the identity.
It then follows that $G(k)$ is abelian. Similarly the group $G (k,k-1)$
is solvable. In particular, the {\it smallest}\, integer $m$ for which
$G (m,m-1)$ is solvable can be considered. Furthermore, if $m=1$ then the initial group
$G$ is solvable and hence there is nothing else to be proved. Suppose then that $m \geq 2$ and note that
every element $F$ in $S (m-2)$ satisfies the condition
\begin{equation}
F^{\pm 1} \circ G (m-1) \circ F^{\mp 1} \subset G (m,m-1) \, . \label{Conjugatingsolvablegroups}
\end{equation}
To derive a contradiction with the fact that $m \geq 2$ (so that $G$ is not solvable), we only need to show
that $G (m-1,m-2)$ must be solvable as well. In other words, we need to show that the group generated by
$$
G (m,m-1) \cup S(m-2)
$$
is still {\it solvable}. To establish this statement, we are however allowed to
exploit the assumption that the {\it elements $F$ of $S(m-2)$ satisfy the condition expressed in
Equation~(\ref{Conjugatingsolvablegroups})}\,
where $G(m-1)$ and $G (m,m-1)$ are both solvable groups with $G (m) \subset G (m,m-1)$. Furthermore neither $G(m-1)$
nor $G(m)$ is reduced to $\{ {\rm id}\}$. Indeed, we can be slightly more precise by saying that for every
$F \in S(m-2)$ and $g \in S(m-1)$, we have
\begin{equation}
F \circ g \circ F^{-1} = g \circ \overline{g} \, , \label{Conjugatingsolvablegroups-2}
\end{equation}
for some $\overline{g} \in S(m)$. In any event, we are then led to investigate the structure of
{\it the solutions ``$F$'' of the functional relation expressed by~(\ref{Conjugatingsolvablegroups})}.
Besides, and inasmuch we shall apply Theorem~\ref{commuting9} only to subgroups of $\diffCtwo$, the issue about
convergence of power series will play no role in the course of the discussion. This explains why Theorem~\ref{commuting9}
is stated for formal subgroups of $\formdiffn$.

In the present case both $G(m-1)$ and $G (m,m-1)$ are subgroups of $\formdiffn$ and this explains
why the implementation of the above mentioned strategy requires detailed
information on solvable subgroups of $\formdiffn$. At this point, we shall have occasion to take advantage of the
results established in \cite{ribon}.

\subsection{Some formal computations}

Here some basic statements concerning formal diffeomorphisms
in $\formdiffn$ and formal vector fields will quickly be reviewed as a preparation for more elaborate arguments.
To begin with, consider again the set $\ghatXone$ of formal vector fields at $(\C^2,0)$ so that
every element (formal vector field) in $\ghatXone$ has the form
$a(x,y) \partial /\partial x + b(x,y) \partial /\partial y$ where $a, \, b \in \formalC$.
The space of formal vector fields whose first jet at the origin
vanishes is going to be denoted by $\ghatX$.
Formal vector fields as above act as derivations on $\formalC$ by means of the formula
$X_{\ast} f = df. X \in \formalC$, where $f \in \formalC$ and $X \in \ghatXone$. This action can naturally
be iterated so that $(X)^k_{\ast} f$ is inductively defined by $X_{\ast} [ (X)^{k-1}_{\ast} f]$ for $k \in \N$.
By way of definition, we also have $(X)^0_{\ast} f =f \in \formalC$.

Next, let $t \in \C$ and $X \in \ghatXone$ be fixed. The {\it exponential of $X$ at time-$t$}, $\exp (tX)$, can be defined as
the operator from $\formalC$ to itself given by
\begin{equation}
\exp (tX) (h) = \sum_{j=0}^{\infty} \frac{t^j}{j!} (X)^j_{\ast} h \, . \label{betterasformula}
\end{equation}
Naturally $\exp (0.X)$ is the identity operator and $\exp (t_1 X) \circ \exp (t_2X) = \exp ((t_1+t_2)X)$.

Recall that the order of a function (or vector field) at the origin is
the degree of its first non-zero homogeneous component. Suppose then
that $X \in \ghatX$ so that $X =a(x,y) \partial /\partial x + b(x,y) \partial /\partial y$ where the orders of both $a, \, b$
at $(0,0) \in \C^2$ are at least~$2$. It then follows that the order of $X_{\ast} h$ is strictly greater
than the order of $h$ itself. In particular, for $h=x$, we conclude that
\begin{equation}
\exp (tX) (x) = x + t.a(x,y) + \cdots \; \; \, {\rm and} \; \; \,  \exp (tX) (y) = y + t.b(x,y) + \cdots \label{previousformula}
\end{equation}
where the dots stand for terms whose degrees in $x,y$ are strictly greater than the order of $a$ (resp. $b$) at the origin.
Therefore, for every $X \in \ghatX$ and every $t \in \C$, the pair of formal
series $(\exp (tX)(x), \exp (tX) (y))$ can be viewed as an element of $\formdiffn$, namely the group of formal diffeomorphisms
of $(\C^2,0)$ that are tangent to the identity at the origin. In turn, we call the {\it exponential of $X$}\,
the subgroup of $\formdiffn$ consisting of all formal diffeomorphisms $(\exp (tX)(x), \exp (tX) (y))$, $t \in \C$.
If the vector field $X$ happens to be holomorphic,
as opposed to merely formal, then $(\exp (tX)(x), \exp (tX) (y))$
is an actual diffeomorphism tangent to the identity and coinciding with the diffeomorphism induced by the local
flow of $X$ at time~$t$. Next, by letting ${\rm Exp} \, (X) = (\exp (X)(x), \exp (X) (y))$ and, more generally,
${\rm Exp} \, (tX) = (\exp (tX)(x), \exp (tX) (y))$, the following well-known lemma holds:
\begin{lemma}
\label{correspondence}
The map ${\rm Exp}$ settles a bijection between $\ghatX$ and $\formdiffn$.
\end{lemma}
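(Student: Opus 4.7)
The plan is to prove bijectivity of ${\rm Exp}$ by matching the homogeneous components of $X$ and those of ${\rm Exp}(X) - {\rm id}$ degree by degree, which will deliver injectivity and surjectivity simultaneously. First, the map is well defined: if $X = a\partial /\partial x + b\partial /\partial y \in \ghatX$ has $a, b$ of order at least $2$, then $X_{\ast}$ strictly increases the order of any formal series it differentiates, so by induction $X_{\ast}^{k}x$ and $X_{\ast}^{k}y$ have order at least $k+1$. Thus only finitely many terms of the defining series (\ref{betterasformula}) contribute to any prescribed degree; the series converges in the $(x,y)$-adic topology of $\formalC$, and by (\ref{previousformula}) its image lies in $\formdiffn$.

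To set up the matching, decompose $a = \sum_{j \geq 2} a_{j}$, $b = \sum_{j \geq 2} b_{j}$, and write ${\rm Exp}(X) = (x + \sum_{k \geq 2} A_{k},\, y + \sum_{k \geq 2} B_{k})$, where subscripts denote homogeneous components of the given degree. The term $j=1$ in ${\rm Exp}(X)(x) - x = \sum_{j \geq 1} \frac{1}{j!}X_{\ast}^{j}x$ contributes exactly $a_{k}$ at degree $k$, while for $j \geq 2$ the order estimate above forces the degree-$k$ contribution of $\frac{1}{j!}X_{\ast}^{j}x$ to involve only components $a_{\ell}, b_{\ell}$ with $\ell \leq k-1$. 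Consequently, for every $k \geq 2$,
$$A_{k} = a_{k} + P_{k}(a_{2}, \ldots, a_{k-1}, b_{2}, \ldots, b_{k-1}), \qquad B_{k} = b_{k} + Q_{k}(a_{2}, \ldots, a_{k-1}, b_{2}, \ldots, b_{k-1}),$$
where $P_{k}, Q_{k}$ are universal polynomials (with rational coefficients) depending only on strictly lower-order components of $X$.

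Injectivity of ${\rm Exp}$ is then immediate by induction on $k$: if ${\rm Exp}(X) = {\rm Exp}(Y)$ with $Y = a'\partial /\partial x + b' \partial /\partial y$, the triangular relations above force $a_{k} = a'_{k}$ and $b_{k} = b'_{k}$ for every $k$. For surjectivity, given $\phi = (x + A,\, y + B) \in \formdiffn$ with $A, B$ of order at least $2$, one defines $X \in \ghatX$ by setting $a_{2} = A_{2}$, $b_{2} = B_{2}$ and then inductively $a_{k} = A_{k} - P_{k}$, $b_{k} = B_{k} - Q_{k}$; by construction ${\rm Exp}(X) = \phi$. The only step that requires genuine attention is the order-raising estimate that produces the triangular structure of the system; once this is in hand, no convergence issue beyond the $(x,y)$-adic one appears, and the remainder of the argument is bookkeeping with homogeneous polynomial algebra.
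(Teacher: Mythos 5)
Your proof is correct and follows essentially the same route as the paper: both arguments exploit the fact that $X_{\ast}$ strictly raises order to obtain the triangular relations $p_{m+1}=a_{m+1}+R_{m+1}$ (in the paper's notation, your $A_k=a_k+P_k$), and then read off existence and uniqueness of $X$ degree by degree. Your added remarks on $(x,y)$-adic convergence and well-definedness are handled in the paper by the discussion surrounding Formula~(\ref{previousformula}), so nothing essential differs.
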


\begin{proof}
In the sequel $p_n (x,y), \, q_n (x,y) , \, a_n (x,y), \, b_n (x,y)$ denote homogeneous polynomials of degree~$n$ in the variables
$x,y$. Let $F \in \formdiffn$ be given by $F(x,y) = (x + \sum_{n=2}^{\infty} p_n (x,y) , y + \sum_{n=2}^{\infty} q_n (x,y))$. Similarly
consider a vector field $X \in \ghatX$ given as
$$
X = \sum_{n=2}^{\infty} \left[ a_n (x,y) \frac{\partial}{\partial x} +  b_n (x,y) \frac{\partial}{\partial y} \right] \, .
$$
The equation ${\rm Exp}\, (X) =F$ amounts to $p_{m+1} = a_{m+1} + R_{m+1} (x,y)$ and
$q_{n+1} = b_{n+1} + S_{m+1} (x,y)$ where $R_{m+1} (x,y)$ (resp. $S_{m+1} (x,y)$) stands for the homogeneous component
of degree~$m+1$ of the vector field
$$
\sum_{j=2}^m \frac{1}{j!} (Z_m)^j_{\ast} (x)
$$
(resp. of $\sum_{j=2}^m (Z_m)^j_{\ast} (y) /  j!$), where
$Z_m = \sum_{n=2}^{m} [ a_n (x,y) \partial /\partial x +  b_n (x,y) \partial /\partial y ]$.
These equations show that, given $F \in \formdiffn$, there is one unique $X \in \ghatX$ such that
${\rm Exp} \, (X) =F$. The lemma is proved.
\end{proof}

For $F \in \formdiffn$, recall that the formal vector field $X$ satisfying ${\rm Exp} \, (X) =F$ is
called the {\it infinitesimal generator of $F$}. The notation $X = \log \, (F)$ may also be used to state
that $X$ is the infinitesimal generator of $F \in \formdiffn$.
Note that the series of $X$ need not converge even when $F$ is an actual holomorphic diffeomorphism.

Recall that the order ${\rm ord}\, (f)$ at $(0,0)$ of an element $f \in \formalC$ is nothing but the degree of the first non-zero
homogeneous component of the formal series of $f$.
Next, if $F \neq {\rm id} $ is a formal diffeomorphism tangent to the identity, the order of the (formal) function
$F-{\rm id}$ is called the contact order with the identity of $F$. Here the order of the formal function $F-{\rm id}$
is defined by considering the minimum of the order of its components. Now, we have:

\begin{lemma}
\label{commuting1}
Consider two elements $F_1,\, F_2$ in $\formdiffn$ together with their respective infinitesimal generators $X_1, \, X_2$.
Then the following holds:
\begin{enumerate}
  \item $F_1, \, F_2$ commute if and only if so do $X_1, \, X_2$.
  \item If $F_1, \, F_2$ do not commute, then the contact order with the identity of $[F_1, F_2] =
  F_1 \circ F_2 \circ F_1^{-1} \circ F_2^{-1}$ is strictly greater than the corresponding orders of $F_1$ and of $F_2$.
\end{enumerate}

\end{lemma}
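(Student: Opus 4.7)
The approach is to pass to the infinitesimal generators and apply the Baker--Campbell--Hausdorff (BCH) formula in a formal setting. Since $\ghatX$ is a Lie algebra filtered by the order at the origin and iterated Lie brackets strictly increase that order (because all generators have order $\geq 2$), BCH converges as a formal series in this filtration, so the identities $\exp(tX)\exp(tY)=\exp(Z(t))$ and $\exp(X_1)\exp(X_2)\exp(-X_1)\exp(-X_2)=\exp(\Phi(X_1,X_2))$ make sense at the level of derivations on $\formalC$, hence also at the level of formal diffeomorphisms via Lemma~\ref{correspondence}.

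The central computation is to expand $\log([F_1,F_2])$ via BCH in the form
\[
\log([F_1,F_2]) \; = \; [X_1,X_2] \; + \; \sum_{k\geq 3} L_k(X_1,X_2),
\]
where each $L_k(X_1,X_2)$ is a (rational) linear combination of iterated Lie brackets of $X_1,X_2$ of total length $k$. Writing $n_i = {\rm ord}(X_i) \geq 2$, which by formula~(\ref{previousformula}) coincides with the contact order of $F_i$ with the identity, the classical order estimate for brackets of formal vector fields yields ${\rm ord}\,[X_1,X_2] \geq n_1+n_2-1$, and more generally an iterated bracket of total length $k$ involving $X_1,X_2$ has order at least $k\min(n_1,n_2) - (k-1)$. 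Since $\min(n_i)\geq 2$, this quantity strictly exceeds $n_1+n_2-1$ as soon as $k\geq 3$. Consequently, in the filtration by order, the leading term of $\log([F_1,F_2])$ is exactly $[X_1,X_2]$.

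From this, both statements follow quickly. For (1), if $[X_1,X_2]=0$ then BCH collapses to $\exp(X_1)\exp(X_2)=\exp(X_1+X_2)=\exp(X_2)\exp(X_1)$, giving $F_1\circ F_2 = F_2\circ F_1$. Conversely, if $F_1, F_2$ commute then $[F_1,F_2]={\rm id}$ and hence $\log([F_1,F_2])=0$ by the bijection of Lemma~\ref{correspondence}; the leading-term analysis above then forces $[X_1,X_2]=0$. For (2), suppose $F_1, F_2$ do not commute; by part (1) we have $[X_1,X_2]\neq 0$, so $\log([F_1,F_2])$ has order exactly $n_1+n_2-1$. Since $n_1,n_2\geq 2$ gives $n_1+n_2-1 > \max(n_1,n_2)$, and since the contact order of any $F\in\formdiffn$ with the identity equals the order of $\log(F)$ (again by~(\ref{previousformula})), the contact order of $[F_1,F_2]$ strictly exceeds both $n_1$ and $n_2$.

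The main obstacle is the careful justification of BCH and of the order estimates on iterated brackets; once one verifies that the ideal $\ghatX_{(k)}$ of vector fields of order $\geq k$ satisfies $[\ghatX_{(p)}, \ghatX_{(q)}] \subseteq \ghatX_{(p+q-1)}$ and that $\ghatX$ is complete in this filtration, BCH becomes a legitimate identity in $\ghatX$ and all the manipulations above are rigorous. Everything else is a direct extraction of leading homogeneous components.
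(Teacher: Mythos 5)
Your proposal is correct and takes essentially the same route as the paper's own proof: expand $\log([F_1,F_2])$ via the Campbell--Hausdorff formula, identify $[X_1,X_2]$ as the leading term in the order filtration using $[\ghatX_{(p)},\ghatX_{(q)}]\subseteq\ghatX_{(p+q-1)}$, and read off both assertions from the fact that ${\rm ord}\,[X_1,X_2]\geq n_1+n_2-1>\max(n_1,n_2)$. The only refinement worth noting is that your bound ``length-$k$ brackets have order $>n_1+n_2-1$'' does not by itself rule out interference with the lowest homogeneous component of $[X_1,X_2]$ when that component has order strictly larger than $n_1+n_2-1$; one should instead observe (as the paper implicitly does) that every nonvanishing iterated bracket of length $\geq 3$ in $X_1,X_2$ contains $[X_1,X_2]$ as its innermost factor and therefore has order strictly greater than ${\rm ord}\,[X_1,X_2]$ itself.
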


\begin{proof}
The statement is very well known and can also be seen as
a particular case of the results in \cite{ribon} comparing nilpotence lengths
for a nilpotent group and for its nilpotent Lie algebra. We shall give an elementary argument that will help us to
state a consequence of Campbell-Hausdorff formula that will find further applications later on.

Consider the first claim in the above statement. It suffices to show that $[X_1, X_2] =0$ provided that $F_1$ and $F_2$ commute
since the converse is clear.
For this, denote by $Z_+$ (resp. $Z_-$) the infinitesimal generator of $F_1 \circ F_2$ (resp. $F_1^{-1} \circ F_2^{-1}$). The
diffeomorphisms $F_1, F_2$ commute if and only if
$F_1 \circ F_2 \circ F_1^{-1} \circ F_2^{-1} = {\rm Exp} \, (Z_+) {\rm Exp} \, (Z_-) = {\rm id}$.
Denoting by $Z$ the infinitesimal generator of $F_1 \circ F_2 \circ F_1^{-1} \circ F_2^{-1}$, we have
\begin{eqnarray*}
Z & = & \log \, ( {\rm Exp} \, (Z_+) {\rm Exp} \, (Z_-) ) = \\
 & = & Z_+ + Z_- + \frac{1}{2} [Z_+, Z_-] + \frac{1}{12} [Z_+ ,[Z_+,Z_-]] - \frac{1}{12} [Z_- ,[Z_+,Z_-]] + {\rm h.o.t.}
\end{eqnarray*}
as it follows from Campbell-Hausdorff formula, see \cite{who??}. In turn,
$$
Z_+  =  \log \, ( F_1 \circ F_2 ) = \log \, ( {\rm Exp} \, (X_1) {\rm Exp} \, (X_2)) =
 X_1 + X_2 + \frac{1}{2} [X_1, X_2] + \cdots \, .
$$
Analogously
$$
Z_- = -X_1 -X_2 + \frac{1}{2} [X_1, X_2] + \cdots \, .
$$
Therefore
\begin{eqnarray}
Z & = & X_1 + X_2 + \frac{1}{2} [X_1, X_2] + \cdots  + ( -X_1 -X_2 + \frac{1}{2} [X_1, X_2] + \cdots ) +  \nonumber \\
 & & + \frac{1}{2} \left[X_1 + X_2 + \frac{1}{2} [X_1, X_2] + \cdots ,
-X_1 -X_2 + \frac{1}{2} [X_1, X_2] + \cdots \right] + \cdots \nonumber \\
 & = & [X_1,X_2] + \frac{1}{2} [X_1,[X_1,X_2]] + \frac{1}{2} [X_2,[X_1,X_2]] +
\cdots \, . \label{finalcommutators}
\end{eqnarray}
Assuming that $[X_1, X_2]$ does not vanish identically, we can write
$[X_1, X_2]  = \sum_{j \geq k}^{\infty} Y_j$,
where $Y_j$ is a degree~$j$ homogeneous vector field and where $Y_k$ is not
identically zero. The orders
of the higher iterated commutators appearing in Equation~(\ref{finalcommutators}) are strictly greater than $k$, since
the orders of $X_1,X_2$ at the origin are at least~$2$. In other words, we have $Z = Y_k + {\rm h.o.t.}$. Since
$F_1 \circ F_2 \circ F_1^{-1} \circ F_2^{-1} = {\rm Exp} \, (Z)$, it follows that $F_1, F_2$ do not commute either and this
establishes the first assertion.

Concerning the second assertion suppose that $F_1, F_2$ do not commute. Then the Lie bracket $[X_1, X_2]$ does not
vanish identically. Therefore Formula~(\ref{finalcommutators}) shows that the order of contact with the identity
of $[F_1, F_2]$ coincides with the order of $[X_1 ,X_2]$ at the origin. The latter order is strictly greater than
the maximum between the orders of $X_1$ and $X_2$ since the first jets of both $X_1 , \, X_2$ vanish at the
origin. In fact, if $r \geq 2$ (resp. $s \geq 2$) stands for the order of $X_1$ (resp. $X_2$) at the origin,
then the order of $[X_1 ,X_2]$ is, at least, equal to $r+s-1$. The statement follows at once.
\end{proof}

The order at $(0,0)$ of a formal vector field
$X \in \ghatXone$ is the minimum between the orders of its components and this is well defined since this minimum
does not depend on the choice of the formal coordinates. Next consider
an element $h \in \fieldC$, the quotient field of $\formalC$,
and set $h =f/g$ with $f,g \in \formalC$. The order of $h$ at $(0,0)$ can be defined
as the unique integer $n \in \Z$ for which the limit
$$
\lim_{\lambda \rightarrow 0} \frac{h(\lambda x ,\lambda y)}{\lambda^n}
$$
is a non-identically zero quotient of two homogeneous polynomials. Alternatively, this value of $n$ is simply
the difference ${\rm ord}\, (f) - {\rm ord}\, (g)$. The extension of this definition to formal vector fields
with coefficients in $\fieldC$ is immediate: the order at $(0,0)$ of the vector field in question
is the minimum between the orders of its components. Clearly this notion of order is again well defined since it does
not depend on the choice of the formal coordinates.

In what follows, a formal vector field $X$ with coefficients in $\formalC$ will often be referred to as a (formal) vector field
belonging to $\ghatXone$ (or occasionally to $\ghatX$). Unless otherwise mentioned, whenever we talk about formal vector fields
without specifying that they belong to either $\ghatXone, \, \ghatX$ they are allowed
to have coefficients in $\fieldC$.

Two formal vector fields $X, Y \in \ghatX$ are said to be {\it everywhere parallel}\, if $X$ is a multiple of $Y$
by an element in $\fieldC$. When $X, Y \in \ghatX$ {\it are not}\, everywhere parallel, then every formal vector field
$Z \in \ghatXone$ can be expressed as a linear combination of $X, Y$ with
coefficients in $\fieldC$. More precisely, for $X, \, Y$ and $Z$ as above let
$$
Z = f X + gY
$$
where $f,g \in \fieldC$. In fact, by setting $X = A \partial /\partial x + B \partial /\partial y$, $Y = C\partial /\partial x
+ D \partial /\partial y$ and $Z = P \partial /\partial x + Q \partial /\partial y$, we obtain:
\begin{equation}
f = \frac{PD - QC}{AD - BC} \; \; \, {\rm and} \; \; \, g = \frac{QA - PB}{AD - BC} \, . \label{veryelementary}
\end{equation}

To close this section, recall that the standard Hadamard lemma
expresses the pull-back of a vector field $X$ by a formal diffeomorphism $F$ in terms of the infinitesimal generator $Z$ of $F$;
see \cite{who??}.
More precisely, Hadamard lemma provides us with the formula
\begin{equation}
F^{\ast} X = X + [Z,X] + \frac{1}{2} [Z,[Z,X]] + \frac{1}{3!} [Z,[Z,[Z,X]]] +
\cdots \, . \label{hadamardlemmastatement1}
\end{equation}

\subsection{Subgroups of $\diffCtwo$ and their Lie algebras}

This section contains a summary of Martelo Ribon's construction \cite{ribon} of a Lie algebra associated to a group
of formal diffeomorphisms. For brevity, we restrict ourselves to subgroups of $\formdiffn$.

Let $\mathfrak{m}$ denote the maximal ideal of $\formalC$ and note that every formal diffeomorphism
$f \in \formdiffn$ acts on the vector space $\mathfrak{m}/\mathfrak{m}^k$ of $k$-jets of elements in $\formalC$.
More precisely $f$ defines an element $f_k \in {\rm GL}\, (\mathfrak{m}/\mathfrak{m}^k)$ whose action on
the vector space $\mathfrak{m}/\mathfrak{m}^k$ is given by $g+\mathfrak{m}^k \mapsto g \circ f +\mathfrak{m}^k$.
Next, let $D_k \subset {\rm GL}\, (\mathfrak{m}/\mathfrak{m}^k)$ be the subgroup consisting of those automorphisms
having the form $\{ f_k \} \in {\rm GL}\, (\mathfrak{m}/\mathfrak{m}^k)$ for some $f \in \formdiffn$. It is easy
to check that $D_k$ is an algebraic group. Furthermore there are natural (restriction) morphisms
$\pi_k : D_{k+1} \rightarrow D_k$ of algebraic groups for every $k \in \N^{\ast}$.

Suppose now that we are given a group $G \subset \diffCtwo$. Fixed $k \in \N^{\ast}$,
we can consider all automorphisms in ${\rm GL}\, (\mathfrak{m}/\mathfrak{m}^k)$ having the form $\{ f_k \}$
for some $f \in G$. The Zariski-closure $G_k$ of this group is the smallest algebraic subgroup of $D_k$
containing all the mentioned automorphisms. Clearly $G_k$ is itself an algebraic group and the natural
character of the preceding constructions ensures that $\pi_k$ sends $G_{k+1}$ to $G_k$. The following
lemma is very standard.

\begin{lemma}
\label{connectnessfromribon}
The groups $G_k$ are connected for every $k \in \N^{\ast}$.
\end{lemma}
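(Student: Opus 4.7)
The plan is to argue that the ambient algebraic group $D_k$ is itself a \emph{unipotent} algebraic group, and then to invoke the standard fact that in characteristic zero every closed algebraic subgroup of a unipotent algebraic group is connected. Since $G_k$ is by definition a Zariski-closed subgroup of $D_k$, the conclusion will follow immediately.

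The one step with genuine content is the unipotence of the elements of $D_k$ on $\mathfrak{m}/\mathfrak{m}^k$. For $f \in \formdiffn$, the hypothesis that $f$ is tangent to the identity means that both components of $f - \mathrm{id}$ lie in $\mathfrak{m}^2$, so a short Taylor-type computation gives $g \circ f - g \in \mathfrak{m}^{j+1}$ whenever $g \in \mathfrak{m}^j$. Thus $f_k$ preserves the decreasing filtration of $\mathfrak{m}/\mathfrak{m}^k$ induced by the powers $\mathfrak{m}^j$, and acts as the identity on each successive graded quotient. In a basis adapted to this filtration, $f_k$ is represented by an upper unitriangular matrix, so $f_k - I$ is nilpotent and $f_k$ is unipotent.

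With this established, $D_k$ is a linear algebraic group all of whose elements are unipotent, so it is unipotent as an algebraic group. Over a field of characteristic zero, the exponential map is an isomorphism of algebraic varieties from the (nilpotent) Lie algebra of such a group onto the group itself, and closed algebraic subgroups correspond bijectively to Lie subalgebras, i.e.\ to linear subspaces. Linear subspaces are irreducible and hence connected, so the corresponding algebraic subgroups are connected. Applied to $G_k \subset D_k$, this yields the lemma.

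I expect the unipotence verification to be the only non-routine point; the rest is a standard fact about unipotent algebraic groups. Should one wish to avoid citing that general statement, an alternative hands-on route is available: because $f_k$ is unipotent, the Zariski closure of the cyclic group $\langle f_k \rangle$ coincides with the one-parameter subgroup $\{\exp(t \log f_k)\,:\, t \in \C\}$, which is irreducible and contains the identity. The union over $f \in G$ of these one-parameter subgroups is then a connected subset of $G_k$ whose Zariski closure equals $G_k$, and connectedness is preserved under Zariski closure.
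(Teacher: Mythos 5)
Your proof is correct. The paper's own argument is the one you sketch as the ``alternative hands-on route'' at the end: for each $f\in G$ the element $f_k={\rm Exp}\,(X_k)$ is unipotent (with $X_k$ the nilpotent endomorphism induced by the infinitesimal generator of $f$), so any polynomial equation cutting out $G_k$, evaluated along $t\mapsto {\rm Exp}\,(tX_k)$, is a polynomial in $t$ vanishing on $\N$ and hence identically; thus the whole one-parameter subgroup lies in $G_k$ and every element is joined to the identity inside $G_k$. Your primary route packages the same mechanism as a citation: the unipotence of $f_k$ (which you verify correctly via the filtration by powers of $\mathfrak{m}$) makes $G_k$ a Zariski-closed subgroup consisting of unipotent elements, and in characteristic zero such a group is connected because $\exp$ identifies it with its (linear, hence irreducible) Lie algebra. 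What the general-theory route buys is brevity and the stronger conclusion of irreducibility; what the paper's explicit polynomial argument buys is self-containedness and, more importantly, a template that is reused almost verbatim later (e.g.\ in the Claim inside Lemma~\ref{NormalizeractingonAlgebra}), where one needs one-parameter invariance statements and not merely connectedness. Either way the lemma is established.
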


\begin{proof}
Consider an element $f_k$ in $G_k$. The element $f_k$ is induced by a certain element $f \in \formdiffn$.
In turn, $f$ is the time-one map of a formal vector field $X$. However, for $k$ fixed, the
mentioned formal vector field induces an actual element $X_k$ in ${\rm End}\, (\mathfrak{m}/\mathfrak{m}^k)$
and we have $f_k = {\rm Exp}\, (X_k)$, where the exponential here is to be understood in the sense of a finite
dimensional algebraic group. Now consider an algebraic equation $\mathcal{R}$ whose solution
set contains $G_k$. Note that $\mathcal{R} ({\rm Exp}\, (tX_k))$ is a polynomial in the variable $t$ and this
polynomial must vanish at the integral powers of $f_k$. Since $f_k$ is not of torsion in $G_k$
(since $f$ is tangent to the identity), it follows that the polynomial $\mathcal{R} ({\rm Exp}\, (tX_k))$
vanishes for every $t \in \N$ and hence it must vanish identically. Thus we conclude that
${\rm Exp}\, (tX_k)$ is contained in $G_k$ for every $t \in \C$ so that $f_k$ can be connected to
the identity by a path contained in $G_k$. The lemma follows at once.
\end{proof}

Next we set
$$
\overline{G} = \{ f \in \formdiffn \; ; \; \, f_k \in G_k \;\, {\rm for \; \, every} \, \; k \in \N^{\ast} \} \, .
$$
The group $\overline{G}$ is closed for the Krull topology and it clearly contains $G$. Also, there follows from
Lemma~\ref{connectnessfromribon} that $\overline{G}$ is connected. Moreover, by construction, it is also
clear that $\overline{G}$ inherits the algebraic properties of $G$. In other words, $\overline{G}$ is solvable
(resp. nilpotent) if and only if $G$ is so. Furthermore, in these cases, the derived lengths (resp. nilpotent
length) of both $G, \, \overline{G}$ coincide. By slightly abusing notations, the group $\overline{G}$ defined
above will often be referred to as the {\it Zariski-closure}\, of $G$.

For every $k \in \N^{\ast}$, let $\mathfrak{g}_k$ denote the Lie algebra associated to the algebraic
group $G_k$. Consider the Lie algebra $\mathfrak{g} \subset \ghatX$ defined as follows:
\begin{equation}
\mathfrak{g} = \{ X \in \ghatX \; ; \; \, X_k \in \mathfrak{g}_k
\;\, {\rm for \; \, every} \, \; k \in \N^{\ast} \} \, . \label{definitionLiealgebraassociated}
\end{equation}
The Lie algebra $\mathfrak{g}$ is, by definition, the Lie algebra associated to the initial group $G \subset \diffCtwo$.

Here is a good point to further explain some comments made in Section~2.1 concerning the Lie algebra
$\mathfrak{g}$ and the structure of pseudo-solvable subgroups of $\formdiffn$. A solvable
algebraic group possesses a solvable Lie algebra. Furthermore, the Zariski-closure of a solvable
group is known to be solvable. Analogous conclusions hold true for nilpotent groups so that the Lie algebra
$\mathfrak{g}$ inherits the algebraic properties of the initial group $G$.

In the context of pseudo-solvable groups, however, the analogous statements cannot immediately be derived.
Regardless of finding a suitable notion of ``pseudo-solvable Lie algebra'', it is not totally clear that the Zariski-closure
of a pseudo-solvable group still is pseudo-solvable since the definition depends on the generating set and
has no a priori implication on commutators of elements that do not belong to the generating set in question. This is an
inconvenient characteristic of the definition of pseudo-solvable subgroups that, ultimately, can only be clarified
through a detailed study of the condition expressed by relation~(\ref{Conjugatingsolvablegroups}).

\begin{obs}
{\rm There is a simple alternative construction of
the Lie algebra $\mathfrak{g}$  associated with a group $G \subset \formdiffn$ which is as follows. Consider the collection
formed by the infinitesimal generators of all elements of $G$ which is clearly contained in $\ghatX$.
The Lie algebra associated with $G$ then coincides with the
Lie algebra generated by this collection of vector fields. Whereas this definition is much simpler to be formulated, it has
the inconvenient of missing the role played by the above mentioned algebraic groups in the whole picture.
In particular, if this alternative definition is adopted from the beginning, then it is not clear
that the Lie algebra associated with a, say, solvable group
must be solvable as well. Although amendments can be made for this deficiency by systematically using various Campbell-Hausdorff
type formulas in a way similar to the use made in this paper (see for example the proof of Lemma~\ref{LemmainvolvingHadamard}),
it is definitely useful to keep both constructions in mind, while being aware of their equivalence.}
\end{obs}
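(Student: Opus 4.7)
The plan is to let $\mathfrak{h}$ denote the Lie subalgebra of $\ghatX$ generated (algebraically, by iterated brackets and finite linear combinations) by the collection $L = \{ \log f \, ; \, f \in G\}$, and to prove that $\mathfrak{g}$ equals the Krull-closure $\overline{\mathfrak{h}}$ of $\mathfrak{h}$ inside $\ghatX$. This is the precise form of the claimed coincidence: $\mathfrak{g} = \mathfrak{h}$ literally holds exactly when $\mathfrak{h}$ is already Krull-closed, for example when $G$ is finitely generated and nilpotent, so that $\mathfrak{h}$ is finite dimensional.

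First I would establish the inclusion $\mathfrak{h} \subseteq \mathfrak{g}$. Given $f \in G$ and $X = \log f$, the argument in the proof of Lemma~\ref{connectnessfromribon} shows that for every polynomial relation $\mathcal{R}$ defining $G_k$, the polynomial $\mathcal{R}({\rm Exp}(tX_k))$ in the variable~$t$ vanishes at every positive integer and hence vanishes identically; consequently the whole one-parameter subgroup $\{{\rm Exp}(tX_k) \, ; \, t \in \C\}$ is contained in $G_k$. Differentiating at $t=0$ yields $X_k \in \mathfrak{g}_k = {\rm Lie}(G_k)$, so $X$ lies in $\mathfrak{g}$ by the very definition~(\ref{definitionLiealgebraassociated}). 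Since $\mathfrak{g}$ is itself a Lie subalgebra of $\ghatX$ (each $\mathfrak{g}_k$ is closed under brackets and $X \mapsto X_k$ is a Lie algebra morphism) and is Krull-closed (as an intersection of preimages of closed subsets under continuous projections), it contains both $\mathfrak{h}$ and its Krull-closure.

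The reverse inclusion $\mathfrak{g} \subseteq \overline{\mathfrak{h}}$ would be established level-by-level. For each $k$, let $\mathfrak{h}_k$ denote the image of $\mathfrak{h}$ in the finite-dimensional Lie algebra ${\rm Lie}(D_k)$; the inclusion $\mathfrak{h}_k \subseteq \mathfrak{g}_k$ is immediate. The crucial point is the opposite inclusion $\mathfrak{g}_k \subseteq \mathfrak{h}_k$, which I would derive from the following standard fact in the theory of connected linear algebraic groups in characteristic zero: if a connected linear algebraic group $H$ equals the Zariski closure of the subgroup generated by a family of one-parameter algebraic subgroups $\{{\rm Exp}(tY_\alpha)\}_\alpha$, then ${\rm Lie}(H)$ is the Lie subalgebra generated by $\{Y_\alpha\}$. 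I would apply this to $H = G_k$ with $Y_\alpha$ ranging over the nilpotent jets $(\log f)_k$, $f \in G$; this is legitimate because $G_k$ is connected by Lemma~\ref{connectnessfromribon}, each ${\rm Exp}(t(\log f)_k)$ is an algebraic one-parameter subgroup (the $Y_\alpha$ being nilpotent) and lies in $G_k$ by the previous step, and $G_k$ is by definition the Zariski closure of $\{f_k \, ; \, f \in G\}$. This yields $\mathfrak{g}_k = \mathfrak{h}_k$. Finally, given $Y \in \mathfrak{g}$, the equality $Y_k \in \mathfrak{g}_k = \mathfrak{h}_k$ lets me choose $h^{(k)} \in \mathfrak{h}$ with $(h^{(k)})_k = Y_k$; since the map $X \mapsto X_k$ detects the coefficients of $X$ up to order $k-1$, this forces $h^{(k)}$ and $Y$ to agree through that order, so $h^{(k)}$ Krull-converges to $Y$ and $Y \in \overline{\mathfrak{h}}$.

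The main obstacle is the algebraic-group fact invoked above. Its proof splits into two inclusions: $\langle Y_\alpha \rangle_{\rm Lie} \subseteq {\rm Lie}(H)$ is immediate by computing the tangent vector to ${\rm Exp}(tY_\alpha)$ at $t=0$, whereas the reverse rests on the observation that the unique connected algebraic subgroup of $H$ with Lie algebra $\langle Y_\alpha \rangle_{\rm Lie}$ must contain every ${\rm Exp}(tY_\alpha)$, hence the Zariski-dense subgroup they generate, hence all of $H$. The nilpotence of each $Y_\alpha$ (coming from $f \in \formdiffn$ having unipotent $k$-jet) is essential here: it simultaneously guarantees that ${\rm Exp}(tY_\alpha)$ is an algebraic one-parameter subgroup and that the Lie subalgebra generated by the $Y_\alpha$ integrates to a well-defined connected algebraic subgroup inside $H$. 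As a useful consistency check, the level-wise equalities $\mathfrak{g}_k = \mathfrak{h}_k$ immediately imply that $\mathfrak{g}$ and $\mathfrak{h}$ share the same derived length and nilpotent length, reconciling the two definitions for every algebraic purpose invoked in the rest of the paper.
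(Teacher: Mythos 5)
The paper never proves this remark at all --- it is stated as a bare assertion of equivalence, with the reader deferred to \cite{ribon} --- so any argument here is ``a different route,'' and yours is, as far as I can check, a correct and essentially complete one. It even sharpens the statement in a way the remark glosses over: the construction via the algebraic groups $G_k$ produces a Krull-closed algebra (an intersection of linear conditions on finite jets), whereas the algebra $\mathfrak{h}$ generated by $\{ \log f \, ; \, f \in G \}$ consists of finite bracket/linear combinations only, and the two can genuinely differ. Indeed, for a group modelled on Example~\ref{attention-1}, generated by ${\rm Exp}\,(x^2y\,\partial/\partial y)$ and ${\rm Exp}\,(x^2y^2\,\partial/\partial y)$, the algebra $\mathfrak{h}$ contains each $x^{2j}y^2\,\partial/\partial y$ but only polynomial vector fields, while the jet-wise definition~(\ref{definitionLiealgebraassociated}) also admits Krull-convergent sums such as $\sum_{j\geq 1} x^{2j}y^2\,\partial/\partial y$; so the remark's ``coincides'' must indeed be read as coincidence with the closed Lie algebra generated, exactly as you prove. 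Your two inclusions are sound: $\overline{\mathfrak{h}} \subseteq \mathfrak{g}$ recycles the one-parameter-subgroup argument of Lemma~\ref{connectnessfromribon} together with the closedness of $\mathfrak{g}$, and the passage from the level-wise equality $\mathfrak{g}_k = \mathfrak{h}_k$ back to $\mathfrak{g} \subseteq \overline{\mathfrak{h}}$ correctly uses that $X \mapsto X_k$ determines the jet of $X$ to order $k-1$. The one place where you invoke nontrivial machinery --- that the Lie subalgebra generated by the nilpotent jets $(\log f)_k$ is algebraic and integrates to a connected subgroup of $G_k$ --- is valid (this is Chevalley's characteristic-zero theory of algebraic Lie algebras), but in this setting it can be made elementary: every element of $\formdiffn$ acts unipotently on $\mathfrak{m}/\mathfrak{m}^k$, so $D_k$, and hence $G_k$, is a unipotent group over $\C$, where ${\rm Exp}$ is an isomorphism of varieties, Campbell--Hausdorff terminates, and Lie subalgebras correspond bijectively to closed (automatically connected) subgroups. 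Then $K = {\rm Exp}\, (\mathfrak{h}_k)$ is a closed subgroup containing every $f_k = {\rm Exp}\, ((\log f)_k)$, hence $K \supseteq G_k$ and $\mathfrak{g}_k \subseteq \mathfrak{h}_k$, which combined with the trivial inclusion gives $\mathfrak{g}_k = \mathfrak{h}_k$ with no appeal to the general theory. What your approach buys is precisely what the remark says the naive definition lacks: the level-wise equalities transfer solvability, nilpotency, and the corresponding lengths between $G$, $\mathfrak{h}$ and $\mathfrak{g}$, recovering item~(3) of Proposition~\ref{frommarteloribon} for the alternative construction.
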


Given a Lie algebra $\mathfrak{g} \subset \ghatX$, the {\it exponential of $\mathfrak{g}$} is the image of
$\mathfrak{g}$ by the exponential map ${\rm Exp}$. In other words, it is the subgroup of $\formdiffn$
consisting of all formal diffeomorphisms $(\exp (tX)(x), \exp (tX) (y))$ where $X \in \mathfrak{g}$ and $t \in \C$.
The proposition below from \cite{ribon} summarizes the main properties of the Lie algebra
$\mathfrak{g}$ associated with a solvable subgroup $G \subset \formdiffn$.

\begin{prop}
\label{frommarteloribon}
{\rm (\cite{ribon})}
Let $G \subset \formdiffn$ be a finitely generated group and denote by $\mathfrak{g} \subset \ghatX$
its associated Lie algebra. Then the following holds:
\begin{enumerate}
  \item For every $X \in \mathfrak{g}$ the exponential ${\rm Exp}\, (tX)$ of $X$ at time-$t$ is contained in
  $\overline{G}$ for every $t \in \C$.

    \item The group $\overline{G}$ is spanned by ${\rm Exp}\, (\mathfrak{g})$. Furthermore ${\rm Exp} :
    \mathfrak{g} \rightarrow \overline{G}$ is a bijection.

  \item Assuming furthermore that $G$ is solvable (resp. nilpotent), then the Lie
  algebra $\mathfrak{g}$ is solvable (resp. nilpotent) as well. Besides the same derived lengths (resp. nilpotent lengths)
  of $\mathfrak{g}$ and of $G$ coincide.\qed
\end{enumerate}
\end{prop}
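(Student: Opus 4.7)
The plan is to reduce all three assertions to level-by-level statements on the finite-dimensional algebraic groups $G_k$ and then pass to the inverse limit through the projections $\pi_k$. The key preliminary observation is that every $f\in\formdiffn$ induces a \emph{unipotent} automorphism $f_k$ of $\mathfrak{m}/\mathfrak{m}^k$: since $f$ is tangent to the identity, the endomorphism $f_k-{\rm id}$ strictly raises the order of a germ and is therefore nilpotent on the finite-dimensional space $\mathfrak{m}/\mathfrak{m}^k$. Because $G_k$ is connected by Lemma~\ref{connectnessfromribon} and all of its elements are unipotent, $G_k$ is in fact a connected unipotent algebraic group; its Lie algebra $\mathfrak{g}_k$ then consists of nilpotent endomorphisms, and the exponential map ${\rm Exp}:\mathfrak{g}_k\to G_k$ is a polynomial bijection given by a \emph{terminating} power series.

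Item~(1) is now immediate: if $X\in\mathfrak{g}$ then $X_k\in\mathfrak{g}_k$ for every $k$, hence ${\rm Exp}(tX_k)\in G_k$ for all $t\in\C$, and these finite-level exponentials are automatically compatible under $\pi_k$; thus ${\rm Exp}(tX)\in\overline{G}$. For item~(2), given $f\in\overline{G}$, the unipotence of each $f_k$ together with the bijectivity of the exponential on $\mathfrak{g}_k$ produces a unique $X_k\in\mathfrak{g}_k$ with ${\rm Exp}(X_k)=f_k$; uniqueness forces the compatibility $\pi_k(X_{k+1})=X_k$, so the family $\{X_k\}$ assembles into an element $X\in\ghatX$ satisfying $X\in\mathfrak{g}$ and ${\rm Exp}(X)=f$. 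Injectivity of ${\rm Exp}:\mathfrak{g}\to\overline{G}$ reduces likewise to injectivity at each finite level, and the spanning assertion is just surjectivity.

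For item~(3), assume $G$ is solvable of derived length $r$; since the Zariski-closure was already noted to inherit both solvability and derived length, $\overline{G}$ is solvable of length $r$, and every $G_k$ is solvable of length at most $r$. In characteristic zero, the Lie algebra of a connected solvable algebraic group has the same derived length as the group, so $\mathfrak{g}_k$ is solvable of length at most $r$ for every $k$, whence the same bound holds for $\mathfrak{g}$. The reverse inequality is obtained by combining the bijection in~(2) with the Campbell-Hausdorff formula, which guarantees that ${\rm Exp}$ sends the derived series of $\mathfrak{g}$ to that of $\overline{G}$. The nilpotent case is entirely parallel. The main obstacle in executing this scheme is keeping precise track of derived lengths through both the Zariski-closure step and the passage to the inverse limit; everything else reduces to the transparent fact that $\overline{G}$ is a projective limit of connected unipotent algebraic groups and $\mathfrak{g}$ is the corresponding projective limit of nilpotent Lie algebras, so that exponentiation is genuinely finite-dimensional at each stage.
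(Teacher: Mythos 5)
The paper offers no proof of Proposition~\ref{frommarteloribon}: it is quoted from \cite{ribon} with a terminal tombstone, so there is no in-house argument to compare yours against. Taken on its own terms, your level-by-level reduction is the natural (and, as far as one can tell, the intended) route, and items~(1) and~(2) are sound as written: each $G_k$ is connected by Lemma~\ref{connectnessfromribon} and consists of unipotent elements, hence is a unipotent algebraic group on which ${\rm Exp}$ is a polynomial bijection from $\mathfrak{g}_k$; the finite-level logarithms of $f_k$ are pinned down by uniqueness and assemble into the infinitesimal generator of $f$ supplied by Lemma~\ref{correspondence}, which therefore lies in $\mathfrak{g}$ by the defining condition~(\ref{definitionLiealgebraassociated}).

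The one soft spot is the reverse inequality in item~(3). Asserting that ``Campbell--Hausdorff guarantees that ${\rm Exp}$ sends the derived series of $\mathfrak{g}$ to that of $\overline{G}$'' is not yet an argument: $D^j\overline{G}$ is generated by products of commutators of exponentials, and identifying such products with exponentials of elements of $D^j\mathfrak{g}$ requires controlling both the products and the infinite sums of iterated brackets, which is precisely the delicate point. The cleaner fix is already contained in your own setup: in characteristic zero one has ${\rm Lie}\,(D^jG_k)=D^j\mathfrak{g}_k$ for the connected groups $G_k$, so the derived lengths of $G_k$ and of $\mathfrak{g}_k$ agree \emph{exactly} at every finite level. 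It then remains to check that the derived length of $\overline{G}$ equals $\sup_k$ of the derived lengths of the $G_k$ (each $G_k$ is the Zariski closure of a quotient of $G$, and $D^r\overline{G}$ is trivial once its image at every level is), and that the derived length of $\mathfrak{g}$ equals $\sup_k$ of the derived lengths of the $\mathfrak{g}_k$; the latter uses the surjectivity of the truncations $\mathfrak{g}\rightarrow\mathfrak{g}_k$, which holds because the transition maps $\mathfrak{g}_{k+1}\rightarrow\mathfrak{g}_k$ are the differentials of the surjective morphisms $\pi_k$ between finite-dimensional groups. With that substitution your argument is complete, and the nilpotent case is verbatim the same.
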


The statement of Proposition~\ref{frommarteloribon} can be complemented by saying that the derived length
of $G$ as in the statement is at most~$3$, cf. \cite{ribon}. Whenever possible, dealing with Lie algebras is
preferable to working with groups themselves since most calculations become simplified.

Thanks to Proposition~\ref{frommarteloribon}, the formal classification of solvable subgroups of $\formdiffn$
becomes reduced to the classification of solvable Lie algebras of formal vector fields with zero linear parts.
A structural description of these algebras appears in Theorem~6 of \cite{ribon} and it will be detailed later in Section~5.

In closing this section, we would like to draw the reader's attention to a few subtle issues
involving general solvable/nilpotent groups that need to be taken into account in the course of our discussion.
Let then $G$ be a finitely generated group and consider its central and derived series
$\{ C^k G \}$ and $\{ D^k G\}$. Consider also a finite generating set $S$ for $G$ along with the corresponding
sequence of sets $S(k)$ arising from Definition~\ref{definitionpseudosolvablegroup}.

Similarly to the definition of pseudo-solvable groups by means of a generating set, one may wonder about a
hypothetical notion of ``pseudo-nilpotent group'' obtained by defining a suitable sequence of sets $\overline{S} (k)$ by means
of $S$ and requiring this series to degenerate into $\{ {\rm id} \}$ for large~$k$. However, this idea is of no interest
since it is an elementary
algebraic fact, going back to Zassenhaus, that the resulting groups would still be nilpotent. In other words,
a group is known to be nilpotent once we can prove that ``its central series restricted to a finite generating set''
becomes reduced to the identity, see \cite{ghysBSBM} for further details.
A similar property, however, is not shared by solvable groups in general. This difference of behaviors opposing nilpotent
and solvable groups has its roots in the fact
that the quotient of the free group on two generators by its second derived group {\it is not finitely presented},
though it is clearly a step-$2$ solvable group. It is this very issue that makes the notion of pseudo-solvable
group non-trivial. In particular, at combinatorial level, there are pseudo-solvable groups that are not solvable.
On the other hand, it is unclear whether this type of
``pathological'' behavior can still be produced by groups of diffeomorphisms or, in our case,
by subgroups of $\formdiffn$.

Next let $G \subset \formdiffn$ be a finitely generated solvable group. Following Proposition~\ref{frommarteloribon},
denote by $\overline{G}$ its Zariski-closure naturally associated to the Lie
algebra $\mathfrak{g}$ of $G$. The first thing
to be noted is that $\mathfrak{g}$ may be infinite dimensional though it is finitely generated as Lie algebra
(something that does not happen if $G$ is nilpotent). Another subtle point contrasting with experience coming from
the usual theory of algebraic groups, is the fact that the subgroup of unipotent elements of a solvable group need not be nilpotent.
Both phenomena are well illustrated by the following example.

\begin{example}
\label{attention-1}
{\rm Consider the Lie algebra $\mathfrak{g}$ generated by the vector fields $x^2y^2 \partial /\partial y$ and by
$x^2y \partial /\partial y$. The subgroup of $\formdiffn$ obtained by exponentiating $\mathfrak{g}$ is unipotent since all its elements
are tangent to the identity. The dimension of
Lie algebra $\mathfrak{g}$ is infinite and this Lie algebra is not nilpotent.
To check both claims, first note that the commutator between $x^2y^2 \partial /\partial y$ and
$x^2y \partial /\partial y$ has the form $-x^4y^2 \partial /\partial y$. In turn
the commutator of $x^4y^2 \partial /\partial y$
with the vector field $x^2y \partial /\partial y$ gives rise to the vector field $-x^6 y^2 \partial /\partial y$ while
the commutator of $x^6 y^2 \partial /\partial y$ with $x^2y \partial /\partial y$ leads to
$-x^8 y^2 \partial /\partial y$. Continuing inductively, we see that $\mathfrak{g}$ is infinite dimensional
since all the corresponding vector fields have different orders at $(0,0)$. It also immediately follows that
$\mathfrak{g}$ is not nilpotent.

Now note that all the above mentioned vector fields $x^4y^2 \partial /\partial y$, $x^6 y^2 \partial /\partial y$ and so on
belong to $D^1 \mathfrak{g}$. Therefore this derived Lie algebra still is infinite dimensional. Finally, the reader
will easily check that $D^1 \mathfrak{g}$ is also an abelian Lie algebra so that $\mathfrak{g}$ is solvable.}
\end{example}

There is a few further points where the use of the Lie algebra associated with a solvable subgroup of $\formdiffn$
requires special attention. Related to the above mentioned issue concerning unipotent elements, there is the fact
that the first derived group (resp. first derived algebra) of a solvable group (resp. algebra) need not be nilpotent.
Whereas this contrasts again with the case of algebraic groups, the reason behind this phenomenon
can easily be explained as follows.
Given $G \subset \formdiffn$ denote by $\overline{G}$ its Zariski-closure so that
$G$ is solvable if and only if $\overline{G}$ is so and, in this case, both groups have the same derived length.
On the other hand, it is clear that the group $\overline{G}$
may be pictured as the projective limit of a sequence of finite dimensional algebraic groups $G_k$.
Also $D^1 \overline{G}$ is the projective limit of the algebraic
groups $D^1 G_k$ and the groups $D^1 G_k$ are nilpotent. However the
projective limit of a sequence of nilpotent groups need not be nilpotent unless the nilpotence length of
the groups $D^1 G_k$ is uniformly bounded (which is not always the case).

\section{Proof of Theorems~A and~B}
\label{provingtheorems}

Taking for grant Theorem~\ref{commuting9}, we are going to establish Theorems~A and~B in this section. First,
we will exploit Ghys's observation \cite{ghysBSBM} concerning convergence of commutators for diffeomorphisms ``close to the identity''
to establish the following proposition:

\begin{prop}
\label{almostthere}
Suppose that $G \subset \diffCtwo$ is a finitely generated group possessing locally discrete orbits. Then $G$ is solvable.
\end{prop}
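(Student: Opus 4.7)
The plan is to establish first that $G$ is pseudo-solvable in the sense of Definition~\ref{definitionpseudosolvablegroup}, and then to invoke Theorem~\ref{commuting9} to upgrade pseudo-solvability to solvability. The driving mechanism is the one singled out by Ghys in \cite{ghysBSBM}: iterated commutators of diffeomorphisms tangent to the identity contract very rapidly toward the identity on small neighborhoods, so the presence of locally discrete orbits will force them to become exactly the identity after finitely many steps.

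Concretely, I would fix a finite symmetric generating set $S = \{h_1^{\pm 1},\ldots,h_k^{\pm 1}\}$ of $G$ together with a neighborhood $V$ of $(0,0) \in \C^2$ small enough so that every $h_i^{\pm 1}$ is defined on $V$ and every $G_V$-orbit contained in $V$ is locally discrete. Let $\{S(j)\}_{j\geq 0}$ be the sequence of commutator sets associated with $S$ as in Definition~\ref{definitionpseudosolvablegroup}. Lemma~\ref{commuting1}(2) yields the key algebraic input: if $F_1 \in S(j)$ and $F_2 \in S(j) \cup S(j-1)$ have contact orders $r,s \geq 2$ with the identity, then $[F_1^{\pm 1},F_2^{\pm 1}]$ has contact order at least $r+s-1$. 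Iterating, the contact orders of elements of $S(j)$ grow without bound as $j\to\infty$. A Campbell--Hausdorff-type bookkeeping performed on a sufficiently small polydisk $U \subset V$ then produces a sequence $\epsilon_j \to 0$ such that every $g \in S(j)$ is defined on $U$ and satisfies $\|g - {\rm id}\|_{C^0(U)} \leq \epsilon_j$.

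Now suppose, towards a contradiction, that $S(j)$ contains a non-identity element for infinitely many $j$, and pick $g_j \in S(j)$ with $g_j \neq {\rm id}$ for $j$ in some infinite set $J \subset \N$. For every $p \in U$, the orbit of $p$ under $G_V$ is locally discrete while $g_j(p) \to p$, hence $g_j(p) = p$ for all $j \in J$ large enough, depending on $p$. Each set $B_j = \{p \in U : g_j(p) = p\}$ is the zero locus of the not identically zero holomorphic map $g_j - {\rm id}$, and is therefore a proper analytic subset of $U$. But the preceding observation shows that $U = \bigcup_{j \in J} B_j$, which contradicts Baire's theorem since proper analytic subsets of $U$ are nowhere dense. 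Consequently $S(j) = \{{\rm id}\}$ for all sufficiently large $j$, so $G$ is pseudo-solvable, and Theorem~\ref{commuting9} delivers solvability of $G$.

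The hard part will be the quantitative estimate $\|g - {\rm id}\|_{C^0(U)} \leq \epsilon_j$ with $\epsilon_j \to 0$: Lemma~\ref{commuting1} controls only the growth of contact orders, whereas a genuine $C^0$-contraction on a fixed polydisk requires taming the coefficients produced by iterated Campbell--Hausdorff expansions so as to prevent them from overwhelming the gain in contact order. This is precisely the type of estimate underlying Ghys's closeness-to-the-identity argument in \cite{ghysBSBM}; in the present two-variable setting the tangency-to-the-identity hypothesis, combined with the recursive structure of $S(j)$ (each element of $S(j+1)$ arises from commuting an element of $S(j)$, already very small on $U$, against some element of $S(j) \cup S(j-1)$), should make it possible to carry through the analogous bookkeeping on a common polydisk. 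The subsequent Baire step, by contrast, is routine but genuinely relies on holomorphy to ensure that fixed-point sets are analytic.
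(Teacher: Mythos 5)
Your proposal follows essentially the same route as the paper's proof: both rest on Theorem~\ref{commuting9} combined with the Ghys-type contraction estimate for iterated commutators of maps tangent to the identity, and both conclude by observing that the fixed-point sets of the resulting non-identity elements of $S(j)$ are proper analytic subsets that cannot exhaust a neighborhood of the origin (you via Baire category, the paper via the Lebesgue-null measure of a countable union of such sets). The quantitative step you defer --- uniform $C^0$-contraction of the sets $S(j)$ on a fixed ball, which, as you correctly note, contact-order growth alone does not give --- is exactly what the paper supplies via the explicit inequality $\sup_{z \in B_{r-4\delta-\tau}}\Vert [F_1,F_2](z)-z\Vert \leq \frac{2}{\tau}\,\sup_{z\in B_r}\Vert F_1(z)-z\Vert\cdot\sup_{z\in B_r}\Vert F_2(z)-z\Vert$ from \cite{lorayandI}, applied after conjugating the generators by a homothety so that they are $\delta$-close to the identity on the unit ball.
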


\begin{proof}
Consider a finite set $S$ consisting of local diffeomorphisms of $(\C^2, 0)$ that are tangent to the identity.
Assume that the group $G$ generated by the set $S$ is not solvable (at level of groups of germs of diffeomorphisms).
Then consider the pseudogroup generated by $S$ on a sufficiently small neighborhood of the origin.
For the sake of notation, this neighborhood of $(0,0) \in \C^2$
will be left implicit in the course of the discussion.
The proof of the proposition amounts to showing that the resulting pseudogroup
$G$ is {\it non-discrete}\, in the sense that it contains a sequence of elements $g_i$ satisfying the following conditions
(cf. Section~4):
\begin{itemize}
\item $g_i \neq {\rm id}$ for every $i \in \N$. Furthermore, $g_i$ viewed as element of the pseudogroup $G$
is defined on a ball $B_{\epsilon}$ of uniform radius $\epsilon > 0$
around $(0,0) \in \C^2$.

\item The sequence of mappings $\{ g_i \}$ converges uniformly to the identity on $B_{\epsilon}$.
\end{itemize}
Assuming the existence of a sequence $g_i$ as indicated above, there follows that each of the sets
${\rm Fix}_i = \{ p \in B_{\epsilon} \, \; ; \; \, g_i (p) = p \}$ is a proper analytic subset of $B_{\epsilon}$.
For every $N \geq 1$, pose $A_N = \bigcap_{i=N}^{\infty} {\rm Fix}_i$ so that $A_N$ is also a proper analytic
set of $B_{\epsilon}$. Finally, let $F = \bigcup_{N=1}^{\infty} A_N$. The set $F$ has null Lebesgue measure so
that points in $B_{\epsilon} \setminus F$ can be considered. If $p \in B_{\epsilon} \setminus F$ then, by construction,
there is a subsequence of indices $\{ i(j) \}_{j \in \N}$ such that $g_{i(j)} (p) \neq p$ for every
$j$. Since $g_i$ converges to the identity on $B_{\epsilon}$, the sequence $\{ g_{i(j)} (p) \}_{j \in \N}$
converges to~$p$. This shows that the orbit of $p$ is not locally discrete and establishes the
proposition modulo verifying the existence of mentioned sequence $\{ g_i \}$.

The construction of the sequence $\{ g_i \}$ begins with an estimate concerning commutators of diffeomorphisms
that can be found in \cite{lorayandI}, page~159, which is itself similar to another estimate found in \cite{ghysBSBM}.
Let $F_1, F_2$ be local diffeomorphisms (fixing the origin and) defined on the ball $B_r$ of radius $r > 0$ around the
origin of $\C^2$. For small $\delta > 0$, to be fixed later, suppose that
\begin{equation}
\max \, \{ \; \sup_{z \in B_r} \Vert F_1^{\pm 1} (z) -z \Vert \; , \; \sup_{z \in B_r} \Vert F_2^{\pm 1} (z) -z \Vert \; \}
\leq \delta \, . \label{initializing}
\end{equation}
Then, given $\tau$ such that $4\delta + \tau < r$, the commutator $[F_1,F_2]$ is defined on the ball of radius $r -4\delta -\tau$
and, in addition, the following estimate holds:
\begin{equation}
\sup_{z \in B_{r -4\delta -\tau}} \Vert [F_1,F_2] (z) -z \Vert \leq \frac{2}{\tau}  \sup_{z \in B_r}
\Vert F_1 (z) -z \Vert \, . \, \sup_{z \in B_r} \Vert F_2 (z) -z \Vert \, . \label{estimateLorayandI}
\end{equation}

Let us apply the preceding estimate to $S(1)$. Up to conjugating the elements of $S$ by a homothety
having the form $(x,y) \mapsto (\lambda x, \lambda y)$, we can assume that all of them are defined on the unit ball.
Moreover, since these diffeomorphisms are tangent to the identity, the use of a conjugating homothety as above allows
to assume that the diffeomorphisms in question also satisfy Estimate~(\ref{initializing}) for $r=1$ and some arbitrarily small
$\delta >0$ to be fixed later. Setting $\tau = 4\delta$, it then follows that every element
$\overline{g}$ in $S(1)$ is defined on $B_{1- 8 \delta}$ and satisfies
$$
\sup_{z \in B_{1 - 8\delta}} \Vert \overline{g} (z) -z \Vert \leq \delta /2\, .
$$
Next, note that every element in $S(2)$ is the commutator of an element in $S (1)$ and an element in $S \cup S(1)$.
Thus, applying again Estimate~(\ref{estimateLorayandI}) to $r=1-8\delta$, $\delta$ and $\tau = 4 \delta$,
we conclude that every element $\overline{g}$ in $S(2)$ is defined on $B_{1 -16\delta}$.
Furthermore these elements $\overline{g}$ satisfy the estimate
$$
\sup_{z \in B_{1 -8\delta - 8\delta}} \Vert \overline{g} (z) -z \Vert \leq \delta /2^2 \, .
$$
Now, every element in $S(3)$ is the commutator of an element in $S(2)$ and an element in $S(1) \cup S(2)$.
Hence the distance to the identity of any of these elements is bounded by $\delta/2$. Thus, choosing
$\delta_1 =\delta/2$ and $\tau_1 = 4 \delta_1 =2\delta = \tau/2$, we obtain
$$
\sup_{z \in B_{1 - 8 \delta -(8+4)\delta}} \Vert \overline{g} (z) -z \Vert \leq \delta /2^3\, .
$$
For $S(4)$ we have to consider the commutator of an element in $S(3)$ with an element in $S(2) \cup S(3)$.
Now the distance to the identity of any of these diffeomorphisms is bounded by $\delta /2^2$ (on the ball
of radius $1 - 8 \delta -(8+4)\delta$). Hence this time we choose $\delta_2 =\delta_1/2$ and $\tau_2 = 4 \delta_2
=2 \delta_1 = \tau_1 /2$ so as to conclude that the elements in $S(4)$ satisfy
$$
\sup_{z \in B_{1 - 8 \delta -(8+4+2)\delta}} \Vert \overline{g} (z) -z \Vert \leq \delta /2^4\, .
$$
The proof continues inductively as follows: for $i \geq 3$, we divide the previous values of ``$\delta$''
and of ``$\tau$'' by~$2$. For every value of $i \in \N^{\ast}$ the radius chosen is then dictated by the choices
of ``$\delta$'' and ``$\tau$'' according to Formula~(\ref{initializing}). In particular, for every $i \geq 3$ and
$\overline{g}_{(i)}$ in $S(i)$, the local diffeomorphism $\overline{g}_{(i)}$ is defined on the ball of radius
$1 - 8 \delta - \delta \sum_{j=1}^{i-1} 2^{4-i}$. Hence, if $\delta < 1/48$, all the diffeomorphisms $\overline{g}_{(i)}$,
$i \in \N^{\ast}$, are defined on the ball of radius $1/2$.

Similarly, it is also clear that elements in $S (i)$ converge uniformly to the identity
on $B_{1/2}$. In fact, for $i \geq 3$ and $\overline{g}_{(i)} \in S(i)$, we have
$$
\sup_{z \in B(1/2)} \Vert \overline{g} (z) -z \Vert \leq \delta /2^i \, .
$$
Therefore, to obtain the desired sequence $g_i$, it suffices to select for every $i \in \N^{\ast}$
one diffeomorphism $g_i \in S(i)$ which is different from the identity.
In view of Theorem~\ref{commuting9}, the
sequence of sets $S (i)$ never degenerate into the identity alone so that the indicated choice of $g_i$
is always possible. The proof of the proposition is over.
\end{proof}

The above argument suffices to imply Theorem~B.

\begin{proof}[Proof of Theorem~B]
Let then $G \subset \diffCtwo$
be a given non-solvable group and consider again the sets
$S(i)$ constructed above. Without loss of generality we can suppose that the sequence $\{ g_j \}_{j \in \N}$ actually forms
an enumeration of the set $\bigcup_{i=1}^{\infty} [S(i) \setminus \{ {\rm id} \}]$, where ${\rm id}$ stands for the identity map.
In particular, it follows from the proof of Proposition~\ref{almostthere}
that all these local diffeomorphisms $g_j$ are defined and one-to-one on the ball $B(1/2)$ of radius~$1/2$
around the origin.

Now, consider the sets ${\rm Fix}_j$ given as
$$
{\rm Fix}_i = \{ p \in B (1/2) \, ; \; \, g_i (p) =p \} \, .
$$
Let $A_N = \bigcap_{j=N}^{\infty} {\rm Fix}_j$ so that $A_1 \subseteq A_2 \subseteq \cdots \subseteq A_N  \cdots
\subset B(1/2)$. For every fixed value of $N \in \N$, note that the set $A_N$
is a proper analytic subset of $B(1/2)$ since it is given as a countable intersection of proper analytic subsets ${\rm Fix}_j$.
Since the inclusion
$$
\Omega \, (G) \cap B(1/2) \subset \bigcup_{N=1}^{\infty} A_N
$$
clearly holds, the proof of Theorem~B results at once.
\end{proof}

\begin{obs}
\label{ongenericgroups}
{\rm In the introduction we have claimed that a generic $n$-tuple, $n \geq 2$, of
local diffeomorphisms in $\diffCtwo$ generates a subgroup $G \subset \diffCtwo$ whose set of non-recurrent points
$\Omega (G)$ is reduced to the origin. The purpose of this remark is to substantiate this claim by providing an accurate
statement along with a detailed indication of proof.

For this, let $n \geq 2$ be fixed and consider the product $(\diffCtwo)^n$ of $n$ copies of $\diffCtwo$.
Note that $\diffCtwo$, and hence $(\diffCtwo)^n$, can be equipped with the Takens topology discussed in \cite{compositio},
\cite{Jussieu} so that these sets become Baire spaces. Now, there is a
$G_{\delta}$-dense set $\mathcal{U} \subset (\diffCtwo)^n$ whose points are $n$-tuples $(F_1, \ldots , F_n)$
of diffeomorphism in $\diffCtwo$ satisfying the following conditions:
\begin{itemize}
  \item The subgroup $G$ generated by $F_1, \ldots , F_n$ is isomorphic to the free group in~$n$ letters.
  \item Every point $P$ different from the origin is such that its stabilizer in~$G$ is either trivial or
  infinite cyclic.
\end{itemize}
Whereas \cite{compositio}, \cite{Jussieu} deal with local diffeomorphisms of $(\C,0)$, as opposed to local
diffeomorphisms of $(\C^2, (0,0))$,
the above claim is actually much easier to be proved than the analogous statements in \cite{compositio}, \cite{Jussieu}.
In fact, to establish the above assertions {\it every type of perturbation}\, of a initial
$n$-tuple $(F_1, \ldots , F_n)$ can be considered while in \cite{compositio}, \cite{Jussieu} the construction
of perturbations was constrained by the condition that they needed to preserve the analytic conjugation classes
of the generators.

Finally, if $G = \langle F_1, \ldots , F_n \rangle$ is as above, then it is clear that the set $A_N$ is reduced to the origin
for every~$N \in \N$. Therefore the set $\Omega \, (G)$ of non-recurrent points must be reduced to the origin as well.}
\end{obs}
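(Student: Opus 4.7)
The plan is to combine a Baire-category argument in the Takens topology on $(\diffCtwo)^n$ with the explicit sequence of iterated commutators from the proof of Proposition~\ref{almostthere}. I would split the work into verifying two $G_\delta$-dense genericity conditions (freeness of $G$ and smallness of point-stabilizers) and then deducing the conclusion on $\Omega(G)$.

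For the first condition, for each nontrivial reduced word $w$ in the free group $F_n$ on $n$ letters, let $\mathcal{V}_w \subset (\diffCtwo)^n$ be the set of $n$-tuples $(F_1,\ldots,F_n)$ for which $w(F_1,\ldots,F_n) \in \Diffgentwo$ is not the identity. Each $\mathcal{V}_w$ is open in the Takens topology, and density follows from a direct perturbation of a sufficiently high-order Taylor coefficient of one of the $F_i$: since we are not constrained to preserve the analytic conjugation classes of the generators (in contrast with \cite{compositio} and \cite{Jussieu}), generic jet-perturbations at an appropriate order force $w(F_1,\ldots,F_n)$ to acquire a nonzero Taylor coefficient. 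Hence $\bigcap_w \mathcal{V}_w$ is a $G_\delta$-dense set of $n$-tuples generating a subgroup $G \cong F_n$. For the second condition, fix a small ball $B_\epsilon$ around the origin and a countable dense subset $\{P_k\}$ of $B_\epsilon \setminus \{(0,0)\}$. For each $P_k$ and each pair of reduced words $w_1, w_2 \in F_n$ that do not lie in a common cyclic subgroup, the set of $(F_1,\ldots,F_n)$ for which $w_1(F)$ and $w_2(F)$ do not both fix $P_k$ is open and made dense by the same unconstrained perturbation principle. A countable Baire intersection then yields a $G_\delta$-dense subset of $(\diffCtwo)^n$ on which the stabilizer of every non-origin point is either trivial or infinite cyclic. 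Let $\mathcal{U}$ be the intersection of the two generic sets obtained above.

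Now fix $(F_1,\ldots,F_n) \in \mathcal{U}$, so that $G \cong F_n$ and all nontrivial point-stabilizers are infinite cyclic. Enumerating $\bigcup_{i \geq 1} [S(i) \setminus \{{\rm id}\}]$ as a sequence $\{g_j\}$ of distinct elements of $G$, the proof of Proposition~\ref{almostthere} shows that the $g_j$ are all defined on the ball $B_{1/2}$ and converge uniformly to the identity there. For any $N \in \N$ and $P \in B_{1/2} \setminus \{(0,0)\}$, if the stabilizer of $P$ in $G$ is trivial then $P \notin A_N$ immediately. If the stabilizer is infinite cyclic $\langle h \rangle$, then $P \in A_N$ would force $g_j \in \langle h \rangle$ for every $j \geq N$; however $\{g_j : j \geq N\}$ already contains non-commuting elements — for instance, two distinct commutators $[F_a,F_b], [F_c,F_d] \in S(1)$ with $\{a,b\} \neq \{c,d\}$ do not commute in $F_n$ — which cannot all lie in a common cyclic subgroup. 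Hence $A_N = \{(0,0)\}$ for every $N$, and $\Omega(G) = \{(0,0)\}$.

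The main difficulty I anticipate lies in the density statement for the second genericity condition: establishing that a single perturbation of an $n$-tuple can avoid a given non-trivial stabilization without creating coincidences elsewhere, and verifying Takens-openness uniformly over base points and word pairs. The combinatorial step at the end is comparatively elementary, following from Definition~\ref{definitionpseudosolvablegroup} and the standard fact that two elements of a free group commute only if they are both powers of a common element.
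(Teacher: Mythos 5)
Your overall route coincides with the paper's: a Baire-category argument in the Takens topology producing a $G_\delta$-dense set of $n$-tuples whose generated group is free and has trivial or infinite cyclic stabilizers away from the origin, followed by the observation that the tail of the sequence $\{g_j\}$ cannot lie in a single cyclic subgroup, whence $A_N=\{(0,0)\}$ and $\Omega(G)=\{(0,0)\}$. The paper itself only indicates the genericity statement, so the substance of your proposal lies in the details you supply, and one of them does not work.

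The genuine gap is in your second genericity condition. You impose, for each point $P_k$ in a countable dense subset of $B_\epsilon\setminus\{(0,0)\}$ and each pair of words $w_1,w_2$ not lying in a common cyclic subgroup, the condition that $w_1(F)$ and $w_2(F)$ do not both fix $P_k$. A countable intersection of such open dense sets only controls the stabilizers of the chosen points $P_k$; it says nothing about an arbitrary $P\neq(0,0)$. The sets ${\rm Fix}\,(w_1(F))$ and ${\rm Fix}\,(w_2(F))$ are analytic subsets and may perfectly well meet at an isolated point lying outside any prescribed countable set, and the stabilizer of such a point then contains the non-cyclic subgroup generated by $w_1(F)$ and $w_2(F)$. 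The correct countable family of conditions must be indexed by the pairs $(w_1,w_2)$ alone, each condition being the global statement that ${\rm Fix}\,(w_1(F))\cap{\rm Fix}\,(w_2(F))=\{(0,0)\}$ on a fixed ball; establishing that this is open (intersection points can migrate in from the boundary of the ball) and dense is exactly where the real work lies and where the comparison with \cite{compositio}, \cite{Jussieu} becomes relevant. A second, smaller slip: to find non-commuting elements in the tail $\{g_j \, ; \; j\geq N\}$ you point to two commutators in $S(1)$, but for large $N$ the set $S(1)$ is no longer contained in the tail. The repair is easy: since for a free group the sets $S(i)$ never degenerate, choose $i$ large enough that $S(i-1)\cup S(i)$ lies in the tail; any nontrivial element of $S(i+1)$ is then a commutator of two non-commuting elements of $S(i-1)\cup S(i)$, both belonging to the tail.
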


\vspace{0.1cm}

In what precedes, the condition of having a group $G$ constituted by diffeomorphisms tangent to the identity
was important to fix an initial set of local diffeomorphisms sufficiently close to the
identity on a fixed domain (the unit ball), cf. the proof of Proposition~\ref{almostthere}.
Convergence of iterated commutators no longer holds when we work with diffeomorphisms that are allowed to have arbitrary
linear parts. The proof of Theorem~A will thus require a more elaborated discussion.
We begin by pointing out another consequence of the proof of Proposition~\ref{almostthere} that will be useful
for the proof of Theorem~A. This begins as follows.

\begin{lemma}
\label{pseudosolvablegl2C}
A finitely generated pseudo-solvable subgroup of ${\rm GL}\, (2, \C)$ is necessarily solvable.
\end{lemma}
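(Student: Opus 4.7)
The plan is to follow the inductive strategy from Section~2.1 (as employed to approach Theorem~\ref{commuting9}), specialised to the much simpler algebraic setting of ${\rm GL}\, (2, \C)$. Let $G \subset {\rm GL}\, (2, \C)$ be a finitely generated pseudo-solvable subgroup with generating set $S = S(0)$ such that $S(j) = \{I\}$ for all $j$ large enough. Set $G(j, j-1) = \langle S(j) \cup S(j-1) \rangle$ and let $m$ be the smallest integer for which $G(m, m-1)$ is solvable; such $m$ exists since the eventual vanishing of $S(j)$ forces $G(j, j-1)$ to be abelian for $j$ large. If $m \leq 1$ then $G(m, m-1) = G$ is itself solvable, so I assume $m \geq 2$ and aim to contradict the minimality of $m$ by showing $G(m-1, m-2)$ is solvable too.

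For every $F \in S(m-2)$ and every $g \in S(m-1)$, relation~(\ref{Conjugatingsolvablegroups-2}) yields $F g F^{-1} = g \overline{g}$ with $\overline{g} \in S(m) \subset G(m, m-1)$. Together with the analogous inverse relation this shows $F \, G(m, m-1) \, F^{-1} = G(m, m-1)$, so $F$ normalises $G(m, m-1)$ and therefore also its Zariski closure $K$ in ${\rm GL}\, (2, \C)$. Since $G(m, m-1)$ trivially normalises $K$, the whole group $G(m-1, m-2) = G(m, m-1) \cdot \langle S(m-2)\rangle$ is contained in the normaliser $N_{{\rm GL}(2, \C)}(K)$. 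Because $G(m, m-1)$ is solvable, $K$ is a solvable algebraic subgroup of ${\rm GL}\, (2, \C)$, and it therefore suffices to show that $N_{{\rm GL}(2, \C)}(K)$ is solvable.

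The main tool is the classical classification of connected solvable algebraic subgroups of ${\rm GL}\, (2, \C)$: up to conjugation any such subgroup is contained in the Borel subgroup $B$ of upper triangular matrices. The normalisers of the basic possibilities for the identity component $K^0$ are then computed directly: $N(B) = B$; $N(T) = T \cup T w$ for the diagonal torus $T$ (with $w$ the Weyl element); $N(U) = B$ for the unipotent radical $U$; and subgroups intermediate between $T$, $U$ and $B$ have normalisers bounded by $B$ or $N(T)$. In every case where $K^0$ is non-central in ${\rm GL}\, (2, \C)$, the normaliser $N_{{\rm GL}(2, \C)}(K)$ is itself solvable, and the argument closes.

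The remaining cases---when $K^0$ is central (i.e. contained in the scalar matrices) or when $K$ is finite---constitute the main obstacle. In the central case the elements of $S(m-1)$ are scalars, yet commutators in ${\rm GL}\, (2, \C)$ belong to ${\rm SL}\, (2, \C)$, so scalar commutators are restricted to $\{\pm I\}$; the resulting rigid constraints on $S(m-1)$ and $S(m)$ allow the inductive scheme to restart at a strictly lower value of~$m$. In the finite case the classification of finite subgroups of ${\rm GL}\, (2, \C)$ identifies every non-solvable instance with the binary icosahedral group $2I$, whose essential quotient is $A_5$; a combinatorial argument, exploiting that every proper subgroup of $A_5$ is either abelian or centreless together with the fact that $S(k-1)$ must lie in the centre of $\langle S(k-1) \cup S(k-2)\rangle$, shows that $A_5$ (and thus $2I$) cannot be pseudo-solvable, excluding this case.
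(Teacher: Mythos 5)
The paper itself does not argue this lemma at all: its proof is a one-line citation to \cite{ghysBSBM}, where the analogous statement is proved for ${\rm GL}\,(2,\R)$, with the remark that the same argument goes through over $\C$. So your self-contained argument is a genuinely different route -- but it has a real gap at its first structural step. From relation~(\ref{Conjugatingsolvablegroups-2}) you obtain only $F\, G(m-1)\, F^{-1} \subseteq G(m,m-1)$ for $F \in S(m-2)$, i.e.\ conjugation of the subgroup generated by $S(m-1)$ lands inside $G(m,m-1)$; this does \emph{not} give $F\, G(m,m-1)\, F^{-1} = G(m,m-1)$. The obstruction comes from the generators of $S(m)$ of the form $[g^{\pm 1}, (F')^{\pm 1}]$ with $g \in S(m-1)$ and $F' \in S(m-2)$: conjugating such an element by $F$ yields $[F g F^{-1},\, F F' F^{-1}]$, and the factor $F F' F^{-1}$ -- a conjugate of one level-$(m-2)$ generator by another -- is not controlled by any of the available relations. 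This is precisely the subtlety the paper is organized around: in Section~6 the authors point out that $F$ normalizes $G(m,m-1)$ only under an extra hypothesis (Zariski-density of $G(m-1)$ in $G(m,m-1)$), and Lemma~\ref{universalsolvablealgebra-June2015} exists exactly to manufacture an $F$-invariant solvable object out of the weaker relation~(\ref{Conjugatingsolvablegroups-10}). Without an analogous step, your reduction to ``$N_{{\rm GL}(2,\C)}(K)$ is solvable'' never gets off the ground.

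Two further remarks. First, even granting normalization, the cases you yourself flag as the main obstacle ($K^0$ central, $K$ finite) are the ones where $N(K)$ genuinely fails to be solvable, and there they are only sketched: ``the inductive scheme restarts at a strictly lower value of $m$'' is a claim, not an argument, and the $A_5$ exclusion is asserted rather than carried out (the centrality of $S(k-1)$ in $\langle S(k-1)\cup S(k-2)\rangle$ is valid only at the first level $k$ where $S(k)=\{{\rm id}\}$, so the combinatorics need to be written down with care). A workable repair of your approach would mimic the paper's device in the algebraic-group setting: attach to the Zariski closure of $G(m-1)$ (not of $G(m,m-1)$) a canonical maximal solvable subgroup of ${\rm GL}(2,\C)$ -- a Borel, the normalizer of a maximal torus, or the center, according to the type of that closure -- and check that relation~(\ref{Conjugatingsolvablegroups-10}) alone forces every $F \in S(m-2)$ to preserve it. Alternatively, one can simply reproduce Ghys's original argument, which is what the paper implicitly does.
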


\begin{proof}
The analogous statement for subgroups of ${\rm GL}\, (2, \R)$ was proven in \cite{ghysBSBM}; the same argument
applies to ${\rm GL}\, (2, \C)$.
\end{proof}

Now we have:

\begin{lemma}
\label{convergingnottangentidentity}
There is a neighborhood $\mathcal{U}$ of the identity matrix in ${\rm GL}\, (2, \C)$ with the following property:
assume that $\Gamma \subset {\rm GL}\, (2, \C)$ is a non-solvable group generated by finitely many elements $\gamma_1, \ldots ,\gamma_s$
belonging to $\mathcal{U}$. Assume also that $G \subset \Diffgentwo$ is generated by local diffeomorphisms $f_1, \ldots ,
f_s$ with $D_{(0,0)} f_i = \gamma_i$ for every $i=1, \ldots ,s$. Then there is a neighborhood $U$ of $(0,0) \in \C^2$ and
a sequence of elements $\{ g_i\}$ in the pseudogroup $G$ generated by $f_1, \ldots ,
f_s$ satisfying the following conditions:
\begin{itemize}
  \item For every $i \in \N$, $g_i$ is defined on all of $U$ and $g_i \neq {\rm id}$.
  \item The sequence $g_i$ converges uniformly to the identity on $U$.
\end{itemize}
\end{lemma}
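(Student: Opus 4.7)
The plan is to adapt the commutator-convergence argument of Proposition~\ref{almostthere} to the present setting, where the linear parts of the generators need no longer be the identity. Two new ingredients are required. First, the $C^0$-size of the generators on a fixed ball has to be controlled despite the presence of non-trivial linear parts; this will be achieved via a homothety combined with the hypothesis that all $\gamma_i$ lie in a small neighborhood $\mathcal{U}$ of $I$. Second, the non-degeneracy of the resulting sequence of iterated commutators will be extracted from Lemma~\ref{pseudosolvablegl2C} rather than from Theorem~\ref{commuting9}.

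First I would fix $\mathcal{U}$ as a sufficiently small ball about $I$ in ${\rm GL}(2, \C)$, with the precise radius to be adjusted at the end of the argument. Since $\Gamma$ is non-solvable, Lemma~\ref{pseudosolvablegl2C} implies that $\Gamma$ is not pseudo-solvable with respect to the generating set $\{ \gamma_1, \ldots ,\gamma_s \}$; hence the iterated commutator sets $S_\Gamma (j)$ from Definition~\ref{definitionpseudosolvablegroup} never collapse to $\{ I \}$. Observe now that the linear part at the origin of a commutator of two germs in $\Diffgentwo$ coincides with the matrix commutator of the linear parts. Therefore, writing $S(j) \subset G$ for the iterated commutator sets associated to $\{ f_1, \ldots ,f_s \}$, the linear parts of the elements of $S(j)$ recover exactly $S_\Gamma (j)$. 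In particular, for every $j \in \N$ the set $S(j)$ contains an element whose linear part differs from the identity matrix, and such an element is automatically different from the identity as a germ of diffeomorphism.

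Next I would perform a rescaling. Writing $f_i(z) = \gamma_i z + h_i(z)$ with $h_i$ of order at least two at the origin, the homothety $H_\lambda (x,y) = (\lambda x, \lambda y)$ produces $\tilde f_i := H_\lambda^{-1} \circ f_i \circ H_\lambda$ satisfying $\tilde f_i (z) = \gamma_i z + \lambda^{-1} h_i (\lambda z)$, so that on the unit ball
\[
\sup_{z \in B_1} \Vert \tilde f_i^{\pm 1} (z) - z \Vert \leq \Vert \gamma_i^{\pm 1} - I \Vert + C \lambda
\]
for some constant $C$ depending only on the $h_i$. Choosing first $\mathcal{U}$ and then $\lambda$ small enough, this quantity can be made smaller than any prescribed $\delta > 0$, and in particular smaller than $1/48$. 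With this estimate at hand, the iterative commutator estimate~(\ref{estimateLorayandI}) applies verbatim as in the proof of Proposition~\ref{almostthere}, yielding that every element $\tilde g$ in the corresponding rescaled set $\tilde S (j)$ is defined on the ball $B_{1/2}$ and satisfies $\Vert \tilde g - {\rm id} \Vert_{B_{1/2}} \leq \delta / 2^j$.

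To conclude, I would note that conjugation by the linear map $H_\lambda$ preserves linear parts at the origin, so the linear parts of elements of $\tilde S (j)$ still coincide with $S_\Gamma (j)$. For every $j \in \N^{\ast}$ this produces $\tilde g_j \in \tilde S (j)$ with linear part different from $I$; undoing the rescaling by $g_j := H_\lambda \circ \tilde g_j \circ H_\lambda^{-1}$ yields a non-trivial element of the pseudogroup $G$ defined on $U := B_{\lambda /2}$ and satisfying $\Vert g_j - {\rm id} \Vert_U \leq \lambda \delta / 2^j$. The sequence $\{ g_j \}$ then converges uniformly to the identity on $U$, as required. The main technical point is the propagation of non-triviality from $\Gamma$ to the sets $S(j) \subset G$ via the linear-part map; once this is observed, Lemma~\ref{pseudosolvablegl2C} takes the role played by Theorem~\ref{commuting9} in Proposition~\ref{almostthere}, and the remaining estimates are a direct repetition of those used there.
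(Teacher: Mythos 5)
Your proposal is correct and follows essentially the same route as the paper's proof: the non-degeneracy of the commutator sets $S(j)$ is extracted from Lemma~\ref{pseudosolvablegl2C} via the linear-part homomorphism, a homothety reduces the generators to the $\delta$-close-to-identity regime of Proposition~\ref{almostthere}, and the commutator estimates from that proposition finish the argument. The only difference is that you spell out explicitly the rescaling computation and the compatibility of the sets $S(j)$ with the sets $S_\Gamma(j)$, steps the paper leaves implicit.
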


\begin{proof}
Without loss of generality, we can assume that $f_1, \ldots , f_s$ are defined on the unit ball of $\C^2$.
Moreover, by setting $S= \{ f_1, \ldots , f_s \}$, the corresponding sequence of sets $S(k)$ indicated in
Definition~\ref{definitionpseudosolvablegroup} never degenerates into $\{ {\rm id} \}$. In fact, owing to
Lemma~\ref{pseudosolvablegl2C}, for every $k$
there is an element in $S(k)$ whose derivative at the origin is different from the identity.

On the other hand, according to the proof of Proposition~\ref{almostthere}, there is $\delta >0$ such that
the following holds: given a finite set $S=S(0)$ consisting of local diffeomorphisms that are $\delta$-close
to the identity on the unit ball, then every sequence of elements
$\{ \overline{g}_k\}$, with $\overline{g}_k \in S(k)$ for every $k$, converges uniformly to the identity on the ball of
radius~$1/2$ (and in particular all these diffeomorphisms are defined on the ball in question).
Fixed this value of
$\delta$, the neighborhood $\mathcal{U}$ of the identity matrix in ${\rm GL}\, (2, \C)$ is
determined by letting
$$
\mathcal{U} = \{ \gamma \in  {\rm GL}\, (2, \C) \; ; \; \; \sup_{z \in B_1} \vert \gamma.z -z \vert <
\delta/2 \} \, .
$$
Hence, if $\{ \gamma_1, \ldots ,\gamma_s \} \subset \mathcal{U}$, then up to changing coordinates by means of a
suitable homothety, we obtain
$$
\sup_{z \in B_1} \vert f_i (z) -z \vert < \delta
$$
for every $i=1, \ldots, s$. The lemma follows at once.
\end{proof}

We can now start the approach to the proof of Theorem~A. Let $\rho$ be the homomorphism from $G$
to ${\rm GL}\, (2,\C)$ assigning to an element $g \in G$ its Jacobian matrix at the origin. Denoting by $\Gamma \subset
{\rm GL}\, (2,\C)$ the image of $\rho$, consider the short exact sequence
\begin{equation}
0 \longrightarrow G_0 = {\rm Ker}\, (\rho) \longrightarrow G \stackrel{\rho}{\longrightarrow}
\Gamma \longrightarrow 0 \, . \label{theshortexactsequence1}
\end{equation}
The kernel $G_0$ of $\rho$ consists of those elements in $G$ that are tangent to the identity. Since $G$, and hence $G_0$, has
locally discrete orbits, it follows from Proposition~\ref{almostthere} that $G_0$ is solvable.

At this point, we remind the reader that the terminology {\it virtually solvable}\, is used in this paper in a sense
slightly stronger than its most common use in the literature, namely in this paper
a group is said to be {\it virtually solvable}\, if it contains a normal, solvable subgroup with finite index.
In other words, the solvable group of finite index is also required to be normal.
In the sequel, the phrase {\it virtually solvable group}\, will always be used in this stronger sense.
With this terminology, we have:

\begin{lemma}
\label{enoughtocheckGamma}
To prove Theorem~A, it suffices to check that the group
$\Gamma \subset {\rm GL}\, (2,\C)$ is virtually solvable.
\end{lemma}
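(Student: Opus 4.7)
The plan is to show that virtual solvability of $\Gamma$ propagates through the short exact sequence~(\ref{theshortexactsequence1}) to yield virtual solvability of $G$. The essential analytic input, namely the solvability of the kernel $G_0$, has already been supplied by Proposition~\ref{almostthere}: every orbit of $G_0$ is contained in an orbit of $G$, so the $G_0$-orbits are locally discrete, and since moreover $G_0 \subset \diffCtwo$, Proposition~\ref{almostthere} provides an integer $s$ with $D^s G_0 = \{{\rm id}\}$.

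Assume then that $\Gamma$ is virtually solvable in the (normal) sense adopted in this paper, so there exists a normal solvable subgroup $\Gamma' \triangleleft \Gamma$ of finite index, with derived length, say, $r$. The plan is to set $G' = \rho^{-1}(\Gamma')$. Since $\rho$ is surjective and $\Gamma'$ is normal in $\Gamma$, the subgroup $G'$ is normal in $G$, and $\rho$ induces an isomorphism $G/G' \simeq \Gamma/\Gamma'$, so $[G:G'] = [\Gamma:\Gamma'] < \infty$. Restricting $\rho$ to $G'$ yields the short exact sequence
\[
0 \longrightarrow G_0 \longrightarrow G' \stackrel{\rho}{\longrightarrow} \Gamma' \longrightarrow 0 \, .
\]

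It then remains to invoke the standard algebraic fact that an extension of a solvable group by a solvable group is solvable: from $\rho(D^r G') \subseteq D^r \Gamma' = \{{\rm id}\}$ we obtain $D^r G' \subseteq G_0$, and hence $D^{r+s} G' \subseteq D^s G_0 = \{{\rm id}\}$. Thus $G'$ is solvable of derived length at most $r+s$, and constitutes a normal solvable subgroup of $G$ of finite index, i.e. $G$ itself is virtually solvable and Theorem~A follows.

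There is no serious obstacle in this argument; all the analytic work has been absorbed into Proposition~\ref{almostthere}. The only bookkeeping point worth highlighting is the stronger normality requirement built into the paper's usage of \emph{virtually solvable}, which is automatic here because preimages of normal subgroups under surjective homomorphisms are normal.
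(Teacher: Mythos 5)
Your proof is correct and follows essentially the same route as the paper: your $G' = \rho^{-1}(\Gamma')$ is precisely the kernel of the composition $\xi\circ\rho$ used in the paper's proof, and both arguments conclude by observing that this normal finite-index subgroup is an extension of the solvable group $\Gamma'$ by the solvable group $G_0$ (the latter solvable by Proposition~\ref{almostthere}). The only difference is cosmetic — you make the derived-length bound $r+s$ explicit where the paper leaves it implicit.
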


\begin{proof}
Suppose that $\Gamma$ is virtually solvable so that there is a normal, solvable subgroup $\Gamma_0 \subset \Gamma$ with
finite index. Denote by $\xi$ the natural (projection) homomorphism from $\Gamma$ onto the finite group
$\Gamma /\Gamma_0$ and consider the homomorphism $\xi \circ \rho : G \rightarrow \Gamma /\Gamma_0$. The kernel
${\rm Ker}\, (\xi \circ \rho)$ of $\xi \circ \rho$ is clearly a normal subgroup of $G$ having finite index since
$\Gamma /\Gamma_0$ is finite. Moreover this kernel is the extension of a solvable
group by another solvable group (namely $\Gamma_0$ and $G_0$) and hence it is itself a solvable group. Thus
$G$ is virtually solvable is Theorem~A is proved.
\end{proof}

The drawback of our notion of virtually solvable group lies in the fact that it prevents us from directly applying
Tits's theorem to prove Lemma~\ref{producinghyperbolicsaddles} below; cf. \cite{tits}, \cite{delaharpe}. Rather than
trying to refine Tits argument, we will directly produce a self-contained proof of the virtually solvable character
of $\Gamma$.

Before proceeding further, it is convenient to make a few general remarks aiming at showing that a given
group $\Gamma \subset {\rm GL}\, (2,\C)$ is virtually solvable. First, we
consider the surjective projection homomorphism $\sigma : {\rm GL}\, (2,\C) \rightarrow {\rm PSL}\, (2,\C)$
which realizes ${\rm GL}\, (2,\C)$ as a central extension of ${\rm PSL}\, (2,\C)$, i.e. the kernel of $\sigma$
is contained in the center of ${\rm GL}\, (2,\C)$. The restriction of $\sigma$ to ${\rm SL}\, (2,\C) \subset
{\rm GL}\, (2,\C)$ will still be denoted by $\sigma$ and it also realizes ${\rm SL}\, (2,\C)$ as a central
extension of ${\rm PSL}\, (2,\C)$.
Now, given a subgroup $\Gamma \subset {\rm GL}\, (2,\C)$, in order to show that $\Gamma$ is virtually solvable,
it suffices to check that $\sigma (\Gamma) \subset {\rm PSL}\, (2,\C)$ is virtually solvable. This is similar
to Lemma~\ref{enoughtocheckGamma}:
given a normal, solvable subgroup $H$ of $\sigma (\Gamma)$ with finite index, denote by $\pi$ the canonical projection
$\pi : \sigma (\Gamma) \rightarrow \sigma (\Gamma)/H$ and consider the homomorphism $\pi \circ \sigma$ restricted
to $\Gamma$. The kernel of $\pi \circ \sigma$ is clearly a normal subgroup of $\Gamma$ having finite index.
The claim then follows from observing that ${\rm Ker}\, (\pi \circ \sigma)$ must be a solvable group since
$H$ is solvable and $\sigma$ realizes ${\rm GL}\, (2,\C)$ as a central extension of ${\rm PSL}\, (2,\C)$.

Let us now go back to $\Gamma \subset {\rm GL}\, (2,\C)$ which is the image by $\rho$ of the group $G \subset \Diffgentwo$.
While $\Gamma$ is a subgroup of ${\rm GL}\, (2,\C)$, its standard action on $(\C^2, 0)$ has little to do with
the action of $G$. In fact, if $\gamma$ is an element of $\Gamma$, then $\gamma$ is simply the derivative at the
origin of an actual element $g \in G$ and it is $g$, rather than $\gamma$, that acts on $(\C^2, 0)$.
Thus, the effect of the non-linear terms in $g$ must be taken into account in the following discussion.
In this direction, we have the following:

\begin{lemma}
\label{producinghyperbolicsaddles}
Assume that the group $\Gamma$ is not virtually solvable. Then at least one of the following conditions holds:
\begin{enumerate}

\item there is a diffeomorphism $g \in G$ whose
derivative at $(0,0) \in \C^2$ is hyperbolic saddle (i.e., its eigenvalues $\lambda_1, \lambda_2$ satisfy
$0 < \vert \lambda_1 \vert < 1 < \vert \lambda_2 \vert$).

\item There is a sequence of elements $g_i$ in the pseudogroup $G$ satisfying the conclusions of
Lemma~\ref{convergingnottangentidentity}.
\end{enumerate}
\end{lemma}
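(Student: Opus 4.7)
The plan is to pass to the normal subgroup $\Gamma' := \Gamma \cap {\rm SL}\, (2,\C)$ and split on whether $\Gamma'$ is discrete in ${\rm SL}\, (2,\C)$. Since $\Gamma/\Gamma'$ embeds via $\det$ into the abelian group $\C^*$, a standard extension argument (intersecting the finitely many $\Gamma$-conjugates of a hypothetical normal, finite-index, solvable subgroup of $\Gamma'$ and then killing the resulting finite action) transfers the non-virtual-solvability assumption from $\Gamma$ to $\Gamma'$. Working inside ${\rm SL}\, (2,\C)$ is convenient: since $\det \equiv 1$, the product of eigenvalue moduli equals $1$, so the absence of a hyperbolic saddle in a subset $S \subset {\rm SL}\, (2,\C)$ is equivalent to every element of $S$ having both eigenvalues of modulus~$1$.

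\textbf{Discrete subcase.} If $\Gamma'$ is discrete, then $\sigma(\Gamma') \subset {\rm PSL}\, (2,\C)$ is a discrete, non-virtually-solvable subgroup, i.e.\ a non-elementary Kleinian group. Classical Kleinian theory provides a loxodromic element in $\sigma(\Gamma')$; any of its lifts to $\Gamma'$ has eigenvalues $\lambda, \lambda^{-1}$ with $|\lambda| \neq 1$, and hence is a hyperbolic saddle. This yields conclusion~(1).

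\textbf{Non-discrete subcase.} If $\Gamma'$ is non-discrete, let $H = \overline{\Gamma'} \subset {\rm SL}\, (2,\C)$ and let $H^0$ be its identity component, a positive-dimensional connected Lie subgroup, so that $\Gamma'$ is dense in $H$. Being a hyperbolic saddle is an open condition on eigenvalues, so if $H^0$ contains one, density supplies a hyperbolic saddle in $\Gamma'$ and we obtain~(1). Otherwise every element of $H^0$ has eigenvalues of modulus~$1$, so $\mathrm{tr}|_{H^0}$ is real-valued; holomorphy of the trace on ${\rm SL}\, (2,\C)$ then forces $H^0$ to be conjugate into ${\rm SU}\, (2)$. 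Within ${\rm SU}\, (2)$ the subgroup $H^0$ is either all of ${\rm SU}\, (2)$ or a maximal torus. The torus case is excluded: the two fixed points of $H^0$ on the boundary of hyperbolic $3$-space would be preserved setwise by $H$ (since $H$ normalizes $H^0$), so $\sigma(H)$ would lie in the solvable normalizer in ${\rm PSL}\, (2,\C)$ of this pair, contradicting non-virtual-solvability of $\Gamma' \subset H$. Hence $H^0$ is conjugate to ${\rm SU}\, (2)$. In this case $\Gamma' \cap \mathcal{U}$ is dense in the neighborhood ${\rm SU}\, (2) \cap \mathcal{U}$ of the identity, and since generic pairs of elements in ${\rm SU}\, (2)$ generate free (hence non-solvable) dense subgroups, I would pick $\gamma_1, \gamma_2 \in \Gamma' \cap \mathcal{U}$ generating a non-solvable subgroup, lift them to $f_1, f_2 \in G$, and invoke Lemma~\ref{convergingnottangentidentity} applied to $\langle f_1, f_2 \rangle$ to produce conclusion~(2).

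\textbf{Main obstacle.} The principal technical point is the case analysis on $H^0$: showing that a positive-dimensional connected Lie subgroup of ${\rm SL}\, (2,\C)$ containing no hyperbolic saddle must be conjugate into ${\rm SU}\, (2)$, and ruling out the maximal-torus case through the solvability of the stabilizer of a pair of points on the ideal boundary of hyperbolic $3$-space. The density/genericity step in the ${\rm SU}\, (2)$ case is standard but must be executed carefully so that the chosen pair both lies in $\mathcal{U}$ and generates a non-solvable subgroup of~$\Gamma'$.
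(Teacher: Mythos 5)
Your overall architecture (discrete versus non-discrete dichotomy for the linear part, a loxodromic element in the discrete case, a Lie-theoretic analysis of the closure in the non-discrete case) matches the paper's, and your discrete subcase is essentially the paper's Case~B in compressed form. But there is a genuine gap in the non-discrete subcase. The claimed dichotomy ``either $H^0$ contains a hyperbolic saddle or $H^0$ is conjugate into ${\rm SU}\,(2)$'' is false: having both eigenvalues of modulus~$1$ does not exclude parabolic elements, so for instance the one-parameter unipotent subgroup $\left\{ \left( \begin{smallmatrix} 1 & t \\ 0 & 1 \end{smallmatrix} \right) \right\}$, or the connected solvable group of upper-triangular matrices with unitary diagonal, contains no hyperbolic saddle and is not conjugate into ${\rm SU}\,(2)$ (a nonzero nilpotent element of the Lie algebra lies in no conjugate of $\mathfrak{su}(2)$, so ``real trace'' proves nothing here). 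The correct statement is only that a \emph{non-solvable} connected subgroup without hyperbolic saddles must be conjugate to ${\rm SU}\,(2)$, since the other non-solvable options ${\rm SL}\,(2,\R)$ and ${\rm SL}\,(2,\C)$ contain real-hyperbolic elements. When $H^0$ is solvable and nontrivial your proof has no exit: elements of $\Gamma'$ close to the identity then lie in the open subgroup $H^0$ and generate solvable groups, so Lemma~\ref{convergingnottangentidentity} is unavailable, and nothing produces a saddle either. What is needed there --- and what the paper does in its Case~A --- is to observe that ${\rm Exp}$ of the solvable Lie algebra is a nontrivial connected solvable subgroup normalized by $\sigma(\Gamma)$, whose fixed-point set in $S^2$ consists of one or two points and is $\sigma(\Gamma)$-invariant, forcing $\sigma(\Gamma)$ (or an index-$2$ normal subgroup) into the affine group and hence $\Gamma$ to be virtually solvable, contradicting the hypothesis. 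You carry out exactly this for the maximal-torus case but omit it for all other solvable possibilities.

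A secondary weakness is the reduction to $\Gamma' = \Gamma \cap {\rm SL}\,(2,\C)$: since $\Gamma'$ need not be finitely generated, a finite-index normal subgroup of $\Gamma'$ may have infinitely many $\Gamma$-conjugates, so ``intersecting the finitely many conjugates'' is not justified as stated (it can be repaired via Zariski closures, or avoided altogether as the paper does by working with the central extension $\sigma : {\rm GL}\,(2,\C) \to {\rm PSL}\,(2,\C)$, for which virtual solvability of $\sigma(\Gamma)$ lifts directly to $\Gamma$).
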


\begin{proof}

Consider again the projection homomorphism $\sigma : {\rm GL}\, (2,\C) \rightarrow {\rm PSL}\, (2,\C)$
as well as its restriction to ${\rm SL}\, (2,\C) \subset {\rm GL}\, (2,\C)$ which realizes both
${\rm GL}\, (2,\C)$ and ${\rm SL}\, (2,\C)$ as central
extensions of ${\rm PSL}\, (2,\C)$. As noted above, $\sigma (\Gamma) \subset {\rm PSL}\, (2,\C)$ is not virtually
solvable since $\Gamma$ is by assumption not virtually solvable.

To describe our strategy for proving Lemma~\ref{producinghyperbolicsaddles}, we first consider the elements
of ${\rm PSL}\, (2,\C)$ classified into elliptic, parabolic and loxodromic ones; see \cite{apanasov}, \cite{ford}.
The reader will note that an element of ${\rm GL}\, (2,\C)$ having determinant equal to~$1$ and
projecting on a loxodromic element of ${\rm PSL}\, (2,\C)$
must be the differential of an element in $G$ exhibiting a hyperbolic saddle at the origin of $\C^2$. In
particular, Condition~(1) in the statement holds provided that we can find $\gamma \in D^1 \Gamma$ such that
$\sigma (\gamma)$ is a loxodromic element of ${\rm PSL}\, (2,\C)$.

A similar observation concerning Condition~(2) in Lemma~\ref{producinghyperbolicsaddles} is as follows.
Since $\Gamma$ is a countable subgroup of the Lie group ${\rm GL}\, (2,\C)$, the closure $\overline{\Gamma}$ of $\Gamma$
can be considered and $\overline{\Gamma} = \Gamma$ if and only if $\Gamma$ is discrete. Moreover, unless $\Gamma$ is discrete,
$\overline{\Gamma}$ is itself a real Lie group admitting a non-trivial real Lie algebra. If this real Lie algebra
is not solvable then $\overline{\Gamma}$ contains elements $\gamma_1, \ldots ,\gamma_s$ satisfying the assumptions
of Lemma~\ref{convergingnottangentidentity}. The same conclusion holds for the group $\Gamma$ since $\Gamma$ is dense
in $\overline{\Gamma}$ and the condition for a finite set to generate a non-solvable subgroup of ${\rm GL}\, (2,\C)$ is open.
Therefore, Lemma~\ref{convergingnottangentidentity} ensures that $G$ contains a sequence of elements satisfying
Condition~(2) in the statement provided that the real Lie algebra associated with $\overline{\Gamma}$ is not solvable.

Summarizing what precedes, our proof of Lemma~\ref{producinghyperbolicsaddles} is organized as follows. We assume aiming at
a contradiction that no element in $D^1 \Gamma$ projects on a loxodromic element of ${\rm PSL}\, (2,\C)$. Furthermore,
we also assume that the real Lie algebra associated with $\overline{\Gamma}$ is solvable where it is understood
that this Lie algebra is trivial (and hence solvable) if $\Gamma$ is discrete. From these two assumptions, we
shall conclude that $\sigma (\Gamma)$ must be virtually solvable hence deriving a contradiction with the assumption
that $\Gamma$ {\it is not}\, virtually solvable. This contradiction will then complete the proof of the lemma.

To implement the above mentioned strategy, it is natural to split the discussion into two cases according to whether
or not $\sigma (\Gamma) \subset {\rm PSL}\, (2,\C)$ is a discrete subgroup of ${\rm PSL}\, (2,\C)$.

\noindent {\bf Case A}: Suppose that $\sigma (\Gamma )$ is not discrete.

The closure of
the subgroup $\sigma (\Gamma )$ in ${\rm PSL}\, (2,\C)$ possesses a non-trivial real Lie algebra which will
be denoted by $\mathcal{L}_{PSL}$. We then have:

\noindent {\it Claim 1}. The algebra $\mathcal{L}_{PSL}$ is solvable.

\noindent {\it Proof of the Claim 1}. Assume aiming at a contradiction that $\mathcal{L}_{PSL}$ is not solvable.
Then the Lie algebra $D^1 \mathcal{L}_{PSL}$ associated with $D^1 (\overline{\sigma (\Gamma)})$ is non-trivial
and not solvable. This Lie algebra is, however, isomorphic to the Lie algebra $D^1 \mathcal{L}$ associated with
$D^1 (\overline{\Gamma}) \subset {\rm SL}\, (2,\C)$ since ${\rm SL}\, (2,\C)$ is a double covering of
${\rm PSL}\, (2,\C)$. Thus the Lie algebra associated with $D^1 (\overline{\Gamma})$, and hence the Lie algebra
associated with $\overline{\Gamma}$, is not solvable. The resulting contradiction establishes the claim.\qed

The Lie algebra $\mathcal{L}_{PSL}$ is therefore solvable. Clearly $\mathcal{L}_{PSL}$ is also
invariant by all elements in $\sigma (\Gamma)$.
In the sequel ${\rm PSL}\, (2,\C)$ will often be identified with the corresponding automorphism group of the Riemann sphere
$S^2$. With this identification, the exponential of the Lie algebra $\mathcal{L}_{PSL}$
is a solvable, connected subgroup ${\rm Exp}\, (\mathcal{L}_{PSL})$
of the automorphism group of the Riemann sphere.
This subgroup is not reduced to the identity since
$\mathcal{L}_{PSL}$ is not trivial which, in fact, ensures that ${\rm Exp}\, (\mathcal{L}_{PSL})$
must contain a real one-parameter
subgroup (i.e. a ``flow''). Note that, in principle, $\sigma (\Gamma)$ need not be connected so that we cannot yet derive a
contradiction. It turns out, however, that the solvable, connected subgroup ${\rm Exp}\, (\mathcal{L}_{PSL})$
must have a fixed point (as an elementary particular case of Borel's theorem, see \cite{who??}).

On the other hand, recall that a real one-parameter subgroup of ${\rm PSL}\, (2,\C)$
must have at least one and at most two fixed points in $S^2$. In other words, the set of fixed points of
${\rm Exp}\, (\mathcal{L}_{PSL})$ is non-empty and contains at most two points. Now note that the set formed
by these fixed points is necessarily invariant under $\sigma (\Gamma)$ since $\mathcal{L}_{PSL}$ is invariant under
$\sigma (\Gamma)$ (alternatively ${\rm Exp}\, (\mathcal{L}_{PSL})$ is normal in $\sigma (\Gamma)$ since it coincides
with the connected component of $\sigma (\Gamma)$ containing the identity). Summarizing what precedes,
up to passing to (necessarily normal) subgroup of $\sigma ( \Gamma)$
having index~$2$, we can assume the existence of $p \in S^2$ fixed by all elements in $\sigma ( \Gamma)$. The subgroup
of ${\rm PSL}\, (2,\C)$ fixing a given point in $S^2$ is however conjugate to the affine group of $C$. In particular all
these groups are solvable. Summarizing either $\sigma (\Gamma)$ embeds into a solvable group or it has a index~$2$ subgroup
that does. Since index~$2$ subgroups are always normal, we conclude that $\sigma (\Gamma)$
is necessarily virtually solvable (in fact it contains a normal, solvable subgroup of index~$2$).
The resulting contradiction establishes the lemma is this first case.

\noindent {\bf Case B}: Suppose that $\sigma (\Gamma)$ is discrete.

First of all, we can assume that $\sigma (\Gamma)$ is not a finite group. In fact, consider $\sigma$ restricted
to $\Gamma$ and its corresponding kernel which is an abelian group. If $\sigma (\Gamma)$ is finite, then $\Gamma$
becomes a finite extension of an abelian (normal) group and the desired contradiction arises immediately.
Therefore $\sigma (\Gamma)$ is assumed to be a finitely generated, infinite group in what follows.
Being finitely generated and infinite, a result of Schur \cite{schur} asserts that $\sigma (\Gamma)$ contains an
element of infinite order which will be denoted by $\sigma (\gamma)$ for some $\gamma \in \Gamma$. The element
$\sigma (\gamma)$ is either parabolic or loxodromic since $\sigma (\Gamma)$ is supposed to be discrete.

We consider also the subgroup $\sigma (D^1 \Gamma) = D^1 (\sigma (\Gamma))$ which is discrete since it is contained
in the discrete group $\sigma (\Gamma)$. In particular, it is a Kleinian group. Recalling that this Kleinian group
is assumed not to contain loxodromic elements, there follows that it must be an elementary Kleinian group; see \cite{apanasov}.
Moreover we have:

\noindent {\it Claim 2}. If the group $\sigma (D^1 \Gamma)$ is not solvable then
either this group is finite or contains an element of infinite order.

\noindent {\it Proof of Claim~2}. The only difficulty in applying again Schur lemma \cite{schur} to conclude the
statement is to ensure that $\sigma (D^1 \Gamma) \subset {\rm PSL}\, (2,\C)$ is finitely generated
(our assumption only ensures that $\Gamma$
is finitely generated). To overcome this difficulty suppose that $\sigma (D^1 \Gamma)$ is an infinite group
and consider an enumeration $\gamma_1, \gamma_2, \ldots$ of its elements. Consider then the groups
$(\sigma (D^1 \Gamma))_n$ generated by $\gamma_1, \ldots , \gamma_n$. All the groups $(\sigma (D^1 \Gamma))_n$ are finitely
generated so that Schur lemma applies to ensure the existence of an element of finite order unless all these groups
are finite. We assume then that this is the case.

Next we recall that finite subgroups of ${\rm PSL}\, (2,\C)$ were classified since Klein, and apart from cyclic
groups and dihedral groups, there are only finitely many of them (in correspondence with the platonic solids; see
for example \cite{finitegroups}). Thus, for $n$ large enough, every group $(\sigma (D^1 \Gamma))_n$ must be either cyclic or
dihedral. Therefore all these groups are abelian or metabelian, i.e. their derived length is at most~$2$.
This clearly implies that $\sigma (D^1 \Gamma)$
is solvable. The resulting contradiction proves the claim.\qed

Naturally we can assume $\sigma (D^1 \Gamma)$ to be non-solvable otherwise $\sigma (\Gamma)$ is solvable itself.
Assume also that $\sigma (\Gamma)$ is not finite and
consider an element in $\sigma (D^1 \Gamma)$ having infinite order. This element
must be parabolic since
elliptic and loxodromic elements are excluded (the existence of an elliptic element with infinite order would force the
group $\sigma (D^1 \Gamma)$ to be non-discrete). Elementary Kleinian groups containing parabolic elements are described
in \cite{ford} and these groups possess a fixed point in $S^2$. Therefore they are solvable as they can be realized
as a subgroup of the affine group of $\C$. Thus we conclude $\sigma (D^1 \Gamma)$ must be finite unless
$\sigma (D^1 \Gamma)$, and hence $\sigma (\Gamma)$, is solvable.
Since $\sigma (D^1 \Gamma)$ is finite, there follows that $\sigma (\Gamma)$ is amenable as a finite
(and hence amenable) extension of an abelian (and hence amenable) group, cf. \cite{eymard}.

Summarizing what precedes, the group $\sigma (\Gamma)$ is amenable. Moreover it was already seen that $\sigma (\Gamma)$
is a finitely generated, infinite group so that
Schur lemma ensures it must contain an element of infinite order
$\sigma (\gamma)$. In turn, $\sigma (\gamma)$
is either parabolic or loxodromic since $\sigma (\Gamma)$ is assumed to be discrete.
To complete the proof of the lemma, we now
proceed as follows. The action of $\sigma (\Gamma)$ on $S^2$ must preserve a probability measure $\mu$
since this group is amenable, see \cite{eymard}. In particular,
$\mu$ must be invariant by $\sigma (\gamma)$. Now we have:

\noindent $\bullet$ Suppose that $\sigma (\gamma)$ is parabolic.

The only probability measure preserved by a parabolic element (with infinite order) is the Dirac mass concentrated
at the unique fixed point for the element in question. In other words, there is a point $p \in S^2$ which
is fixed by the entire group $\sigma (\Gamma)$. Therefore $\sigma (\Gamma)$is solvable as it is conjugate to
a subgroup of the affine group of $\C$. The desired contradiction follows at once.

\noindent $\bullet$ Suppose that $\sigma (\gamma)$ is loxodromic.

A loxodromic element of ${\rm PSL}\, (2,\C)$ has exactly two fixed points $p_1$ and $p_2$ in $S^2$. Furthermore, the only
probability measures invariant under these elements are the convex combinations of Dirac masses concentrated at
$p_1$ and at $p_2$. Hence the set $\{ p_1, p_2 \}$ must be invariant by $\sigma (\Gamma)$. If one of these two
points is fixed by all of $\sigma (\Gamma)$, then we conclude as in the previous case that $\sigma (\Gamma)$, and hence
$\Gamma$, is solvable. A contradiction then results.

Finally, in the general case, $\sigma (\Gamma)$ contains a normal subgroup $[\sigma (\Gamma)]_{p_1}$ of index two
fixing $p_1$. Again $[\sigma (\Gamma)]_{p_1}$ must be solvable. Thus $\sigma (\Gamma)$ is virtually solvable.
This implies that $\Gamma$ is virtually solvable and provides the final contradiction ending the proof
of Lemma~\ref{producinghyperbolicsaddles}.
\end{proof}

\begin{proof}[Proof of Theorem~A]
Consider again the short exact sequence~(\ref{theshortexactsequence1}). To prove Theorem~A we will assume that
$\Gamma$ is not virtually solvable and derive from this the existence of recurrent points. Since $\Gamma$
is not virtually solvable, the alternative provided by Lemma~\ref{producinghyperbolicsaddles} holds. However,
if $G$ actually contains a sequence $\{ g_i \}$ of elements as in Condition~(2), then the existence of the mentioned
recurrent points follows at once from the argument employed in the proof of Theorem~B. Therefore, in order to prove
Theorem~A, we can assume without loss of generality the existence of an element $g \in G$ whose derivative $D_0 g$ at the origin
is a hyperbolic saddle as indicated in Condition~(1) of Lemma~\ref{producinghyperbolicsaddles}. In fact, we can assume
that $\sigma (\Gamma )$ is a non-elementary Kleinian group. Moreover we can also assume that
the Jacobian determinant of $D_0 g$ equals~$1$ since the preceding Lemma~\ref{producinghyperbolicsaddles}
actually ensures that the element in Condition~(1) can be assumed to belong to $D^1 \Gamma$. In this respect, however,
the only role played by the fact that the Jacobian determinant of $D_0 g$ equals~$1$ in the discussion below consists of
helping us to abridge notation,
as the reader will not fail to notice.

The eigenvalues of $D_0 g$ at the origin are then denoted by $\lambda$ and by $\lambda^{-1}$, with $\vert \lambda \vert > 1$.
It follows that $g$ has a hyperbolic fixed point at the origin with stable and unstable manifolds, $W^s_{g}, \, W^u_{g}$,
having complex dimension~$1$ and intersecting transversely at $(0,0) \in \C^2$. Fix then a {\it closed annulus}\, $A^s \subset W^s_{g}$
(resp. $A^u \subset W^u_{g}$) with radii $r_2 > r_1 > 0$ such that every point $p \in W^s_{g} $ (resp. $p \in W^u_{g} $)
possesses an orbit by $g$ non-trivially intersecting $A^s$ (resp. $A^u$).

Given a point $p$ in a fixed neighborhood $U$ of the origin, denote by
$\calO_G (p)$ the orbit of $p$ by the pseudogroup $G$. Similarly, let ${\rm Acc}_p (G)$ denote the set of {\it ends}\,
of $\calO_G (p)$. To define this set, we consider the closure $\overline{\calO_G (p)}$ of the orbit $\calO_G (p)$.
We then set ${\rm Acc}_p (G) = \overline{\overline{\calO_G (p)} \setminus \calO_G (p)}$, i.e. ${\rm Acc}_p (G)$ is the
closure of the difference $\overline{\calO_G (p)} \setminus \calO_G (p)$. In particular, if $p$ is a recurrent point
then $p \in {\rm Acc}_p (G)$. Furthermore ${\rm Acc}_p (G) = \emptyset$ provided that $\calO_G (p)$ is finite.
Clearly ${\rm Acc}_p (G)$ is closed and invariant by $G$ (viewed as pseudogroup). The following claim is
the key for the proof of Theorem~A.

\vspace{0.1cm}

\noindent {\it Claim}. For every point $p \in A^s$, the closed set $A^s \cap {\rm Acc}_p (G)$ is not empty.

\vspace{0.1cm}

Note that the claim does not immediately imply Theorem~A for it does not assert that $p$ itself belongs to
$A^s \cap {\rm Acc}_p (G)$. However, if this were the case, then clearly the orbit of $p$ would be recurrent and the
proof of Theorem~A would follow. However, by resorting to a standard
application of Zorn Lemma, the above claim can still be used to prove Theorem~A. Let us first provide the details of this argument
and then go back to the proof of the claim. To begin with, if $K \subseteq A^s$ is a non-empty closed set,
we shall say that $K$ is {\it relatively invariant}\, by the pseudogroup $G$ if, for every point $p \in K$
and every point $q \in A^s \cap {\rm Acc}_p (G)$, the point $q$ lies in $K$ as well. Next, let
$\mathfrak{C}$ denote the collection of non-empty closed sets in $A^s$ that are relatively invariant by the
pseudogroup $G$. The above claim ensures that the collection $\mathfrak{C}$ is not empty. In fact, $A^s \cap {\rm Acc}_p (G)$
in a non-empty set relatively invariant under $G$, and thus $A^s \cap {\rm Acc}_p (G)$ belongs to $\mathfrak{C}$
for every $p \in A^s$. Now, let the collection $\mathfrak{C}$ be endowed with the partial order defined by
inclusion. Finally, given a sequence $K_1 \supset K_2 \supset \ldots$ of sets in $\mathfrak{C}$, the
intersection $K_{\infty}=\bigcap_{i=1}^{\infty} K_i$ is non-empty since each $K_i$ is compact (closed and contained
in the compact set $A^s$). The set $K_{\infty}$ is clearly closed and relatively invariant by $G$ so that
it belongs to $\mathfrak{C}$. Moreover we have $K_{\infty} \subset K_i$ for every~$i$ i.e., in terms of the fixed
partial order $K_{\infty}$ is smaller than $K_i$ for every~$i$. According to Zorn Lemma, the collection $\mathfrak{C}$
contains minimal elements, so that we can consider a minimal element $K$. Choose then $q \in K$ and consider
the non-empty set $A^s \cap {\rm Acc}_q (G)$. If $q \not\in {\rm Acc}_q (G)$, then $A^s \cap {\rm Acc}_q (G)$
would be an element of $\mathfrak{C}$ strictly smaller than $K$. The resulting contradiction shows that
$q \in A^s \cap {\rm Acc}_q (G)$ and finishes the proof of Theorem~A.\qed

It only remains to prove the Claim.

\vspace{0.1cm}

\noindent {\it Proof of the Claim}. Recall that $A^s \subset W^s_{g}$
(resp. $A^u \subset W^u_{g}$) is an annulus such that every $p \in W^s_{g} $ (resp. $p \in W^s_{g} $)
possesses an orbit by $g$ non-trivially intersecting $A^s$ (resp. $A^u$).

Now consider another element $\overline{g} \in G$ whose Jacobian matrix at the origin defines a hyperbolic
saddle with determinant equal to~$1$.
Again stable and unstable manifolds for $\overline{g}$ will respectively be denoted by $W^s_{\overline{g}},
\, W^u_{\overline{g}}$. Since a (non-elementary) Kleinian group contains ``many'' loxodromic elements (including conjugates of $g$),
the element $\overline{g}$ can be chosen so that all the four invariant manifolds
$W^s_{g}, \, W^u_{g}, \, W^s_{\overline{g}}, \, W^u_{\overline{g}}$ intersect pairwise transversely at the origin.
The previously fixed annuli $A^s \subset W^s_{g}$ and $A^u \subset W^u_{g}$ will be denoted in the sequel
by $A^s_{g}$ and $A^u_{g}$. An annulus $A^s_{\overline{g}} \subset W^s_{\overline{g}}$
(resp. $A^s_{\overline{g}} \subset W^s_{\overline{g}}$) with analogous properties concerning $\overline{g}$ is
also fixed. To prove the claim it suffices to check that every point $p$ in $A^s_{g}$ is such that
$A^u_{\overline{g}} \cap {\rm Acc}_p (G) \neq \emptyset$. Indeed, let
$p^{\ast} \in A^u_{\overline{g}}$ be a point in $A^u_{\overline{g}} \cap {\rm Acc}_p (G)$. The analogue
argument changing the roles of $g, \, \overline{g}$ and replacing them by their inverses, will ensure that
$A^s_{g} \cap {\rm Acc}_{p^{\ast}} (G) \neq \emptyset$. Since $p^{\ast}$ lies in ${\rm Acc}_p (G)$ and
this set is invariant under the pseudogroup $G$, it will follow that $A^s_g \cap {\rm Acc}_p (G) \neq \emptyset$ as desired.

Finally to check that $A^u_{\overline{g}} \cap {\rm Acc}_p (G) \neq \emptyset$ for every point $p \in A^s_{g}$,
we proceed as follows. Consider local coordinates $(x,y)$ about the origin of $\C^2$ so that
$\{ x=0\} \subset W^u_{\overline{g}}$ and $\{ y=0\} \subset W^s_{\overline{g}}$.
Recall that $W^s_{g}$ is smooth and intersects the coordinate axes transversely at the origin.
Since this intersection is transverse, we can assume
that it is the only intersection point of $W^s_{g}$ with the coordinate axes. In particular, a point
$p \in A^s_{g}$ has coordinates $(u,v)$ with $u.v \neq 0$. By iterating $g$, we can find points
$p_n = (u_n , v_n) =g^n (p) \in \C^2$ such that $\vert u_n \vert \rightarrow 0$ and
$$
\frac{1}{C} \vert u_n \vert \leq \vert v_n \vert \leq C \vert u_n \vert \, ,
$$
for some uniform constant $C$ related to the ``angles'' between $W^s_{g}$ and the coordinate axes at the origin.
Now, for every $n$, consider the points of the form $\overline{g} (p_n) , \ldots , \overline{g}^{l(n)} (p_n)$
where $l(n)$ is the smallest positive integer for which the absolute value of the second component of
$\overline{g}^{l(n)} (p_n)$ is greater than $\sup_{z \in A^u_{\overline{g}}} \vert z \vert$. The integer $l(n)$ exists since
$\overline{g}$ has a hyperbolic fixed point at the origin and the action of $\overline{g}$ on $p_n$ is
such that the first coordinate becomes smaller and smaller while the second coordinate gets larger and larger.
Now it is clear that the closure of the set
$\bigcup_{n=1}^{\infty} \{ \overline{g} (p_n) , \ldots , \overline{g}^{l(n)} (p_n) \}$
intersects $A^u_{\overline{g}}$ non-trivially and this ends the proof of the Claim. The proof of Theorem~A
is completed as well.
\end{proof}

Let us close this section by showing how to extend Theorems~A and~B to encompass groups that are infinitely
generated.

\begin{theorem}
\label{infinitelygeneratedgroups}
Theorem~A and Theorem~B remain valid for infinitely generated groups.
\end{theorem}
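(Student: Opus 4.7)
The plan is to reduce each theorem to its finitely generated version by locating a suitably ``bad'' finitely generated subgroup of $G$ to which the already proved version of the theorem applies. The basic observation is that for any subgroup $H \leq G$, the pseudogroup of $H$ is contained in the pseudogroup of $G$ on a fixed neighborhood of the origin, so that every $H$-orbit is contained in the corresponding $G$-orbit. In particular, if $G$ has locally discrete orbits then so does every finitely generated subgroup of $G$, and every non-recurrent point for $G$ is a fortiori non-recurrent for $H$.

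For Theorem~B, suppose $G \subset \diffCtwo$ is non-solvable. By Proposition~\ref{frommarteloribon} every solvable subgroup of $\formdiffn$ has derived length at most~$3$, so $D^3 G$ must contain some non-identity element. This element is a word of iterated commutators involving only finitely many elements of~$G$, and those elements generate a finitely generated subgroup $H \leq G$ with $D^3 H \neq \{{\rm id}\}$, hence non-solvable. Applying the finitely generated form of Theorem~B to $H$ yields a neighborhood $U$ of the origin and a countable union of proper analytic subsets of~$U$ containing $\Omega (H) \cap U$. Since $\Omega (G) \subseteq \Omega (H)$, the conclusion of Theorem~B for $G$ follows at once.

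For Theorem~A, I would work with the short exact sequence~(\ref{theshortexactsequence1}) and establish separately that the kernel $G_0 \subset \diffCtwo$ is solvable and that the image $\Gamma \subset {\rm GL}\, (2,\C)$ is virtually solvable; combining these exactly as in the proof of Lemma~\ref{enoughtocheckGamma} then gives that $G$ itself is virtually solvable. The first reduction is quick: every finitely generated subgroup of $G_0$ inherits locally discrete orbits, is solvable by Proposition~\ref{almostthere}, and has derived length at most~$3$ by Proposition~\ref{frommarteloribon}. This uniform bound forces $D^3 G_0 = \{ {\rm id} \}$, so $G_0$ is solvable.

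The main obstacle is to promote virtual solvability from all finitely generated subgroups of $\Gamma$ up to $\Gamma$ itself. Every finitely generated subgroup of $\Gamma$ has the form $\rho (H)$ for some finitely generated $H \leq G$; since $H$ is virtually solvable by the finitely generated form of Theorem~A, the quotient $\rho (H)$ is virtually solvable as well. To pass from this to $\Gamma$, I would appeal to the Zariski closure $\overline{\Gamma}$ of $\Gamma$ inside ${\rm GL}\, (2,\C)$. By Noetherianity of the Zariski topology, there exists a finitely generated subgroup $\Gamma_0 \leq \Gamma$ whose Zariski closure coincides with $\overline{\Gamma}$. Given a normal solvable subgroup $\Gamma_1 \trianglelefteq \Gamma_0$ of finite index, its Zariski closure $\overline{\Gamma_1}$ remains solvable (taking closures of the derived series), remains normal in $\overline{\Gamma_0} = \overline{\Gamma}$, and is of finite index there since finitely many translates of $\Gamma_1$ cover $\Gamma_0$ and hence finitely many translates of $\overline{\Gamma_1}$ cover $\overline{\Gamma}$. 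Intersecting $\overline{\Gamma_1}$ with $\Gamma$ and then taking the normal core in $\Gamma$ produces the required normal solvable subgroup of finite index in $\Gamma$. The delicate point, and the one I expect to demand the most care, is to verify that the Zariski closure preserves both solvability and the normal finite-index structure, so that the paper's stronger notion of virtual solvability (with a \emph{normal} solvable subgroup of finite index) is actually obtained for $\Gamma$.
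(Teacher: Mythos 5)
Your argument is correct, and for Theorem~B it is essentially identical to the paper's: both reductions rest on the derived-length bound~$3$ for solvable subgroups of $\diffCtwo$ forcing $D^3$ of some finitely generated subgroup to be nontrivial. For Theorem~A, however, you take a genuinely different route. The paper does not split $G$ into kernel and image at this stage; instead it revisits the proof of Lemma~\ref{producinghyperbolicsaddles} to extract a \emph{uniform} bound: every finitely generated $H\subset G$ with locally discrete orbits admits a normal solvable subgroup whose index is bounded by a constant $C$ independent of $H$ (index~$2$ unless $\Gamma_H$ is finite, and the finite subgroups of ${\rm PSL}\,(2,\C)$ beyond the abelian and metabelian ones form a finite list), together with the uniform derived-length bound~$5$; it then passes from "every finitely generated subgroup has a normal solvable subgroup of index $\le C$ and derived length $\le 5$" to the same statement for $G$. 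Your approach instead handles the kernel $G_0$ by the derived-length argument and the image $\Gamma\subset{\rm GL}\,(2,\C)$ by Noetherianity of the Zariski topology: a single finitely generated $\Gamma_0$ already has Zariski closure $\overline{\Gamma}$, and the closure $\overline{\Gamma_1}$ of its normal solvable finite-index subgroup remains solvable, normal in $\overline{\Gamma}$ and of finite index, so $\Gamma\cap\overline{\Gamma_1}$ does the job (it is automatically normal in $\Gamma$, so the normal core is not even needed). What the paper's argument buys is an explicit, uniform control of the index, valid without invoking algebraic-group machinery; what yours buys is that the local-to-global passage for the property "normal solvable subgroup of index $\le C$" — which is the one genuinely delicate point in the paper's version — is replaced by a clean closure argument, and the classification of finite subgroups of ${\rm PSL}\,(2,\C)$ is not needed at all. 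Both are legitimate proofs.
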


\begin{proof}
We begin by justifying the case of Theorem~B. We consider a finitely generated subgroup $H$ of
$G$. If $H$ is not solvable, then $H$, and in particular $G$, has recurrent orbits away from a countable union
of proper analytic sets. Thus we can assume that $H$ is solvable. Owing to the classification
of solvable groups provided in \cite{ribon} (cf. also Section~5), there follows
that $D^3H = \{ {\rm id} \}$. In other words, the third derived group of every finitely generated subgroup
of $G$ is trivial. We then conclude that $D^3 G$ must be reduced to the identity as well and this yields
a contradiction proving our claim.

As to Theorem~A, we need to revisit the argument provided above. Given a finitely generated subgroup
$H$ of $\Diffgentwo$, let $\Gamma_H \subset {\rm GL}\, (2,\C)$ denote the image of $H$ by $\rho$. Assuming
that $H$ has locally discrete orbits, Theorem~A ensures that $\Gamma_H$ possesses a normal, solvable subgroup
$\Gamma_{H_0}$ having finite index. Moreover, we have seen that $H$ itself possesses a normal, solvable subgroup $H_0$
whose index equals the index of $\Gamma_{H_0}$ in $\Gamma_{H}$. A careful reading of the proof of
Lemma~\ref{producinghyperbolicsaddles} shows that the group $\Gamma_{H}$ possesses an index~$2$ (normal)
solvable subgroup unless $\Gamma_{H}$ is a finite group.
As already mentioned, bar abelian and metabelian groups, there are only a finite number
of finite subgroups of ${\rm PSL}\, (2,\C)$, see \cite{finitegroups}. A similar remark applies to subgroups
of ${\rm GL}\, (2,\C)$. Putting everything together, we conclude that all finitely
generated groups $H \subset \Diffgentwo$ having locally discrete orbits possess a normal, solvable subgroup
whose index is finite and, indeed, uniformly bounded by some constant $C$ whose exact value is not important for us.
On the other, again owing to the description of solvable groups provided in \cite{ribon}; cf. Section~5,
we know that every solvable subgroup of $\Diffgentwo$ has derived
length bounded by~$5$. Thus, we finally conclude that every finitely generated subgroup $H \subset G$
possesses a normal subgroup with index bounded by~$C$ which has derived length no greater than~$5$. There follows
that $G$ itself possesses a normal subgroup of index less than~$C$ whose derived length is no greater than~$5$.
In particular this subgroup is solvable and the statement results.
\end{proof}

\section{Discrete subgroups of $\Diffgentwo$, examples and complements}
\label{section.examples}

Consider a finitely generated subgroup $G \subset \diffn$. Up to
choosing representatives for elements of $G$ in some finite generating set, we let $G$ be identified with
a pseudogroup of local diffeomorphisms fixing the origin of $\C^n$. The following definition is very natural.

\begin{defi}
\label{definitiondiscretegroup}
The group $G \subset \diffn$ is said to be non-discrete if there is an open neighborhood $V \subseteq \C^n$
of the origin and a sequence of elements $\{ g_j \} \subset G$ satisfying the following conditions:
\begin{enumerate}
  \item For every $j \in \N$, the set $V$ is contained in the domain of definition of $g_j$ viewed as an element
  of the pseudogroup~$G$.
  \item For every $j \in \N$, the restriction of $g_j$ to $V$ does not coincide with the identity.
  \item The sequence $\{ g_j\}$ converges uniformly to the identity on compact parts of~$V$.
\end{enumerate}
\end{defi}

The above definition clearly makes sense in terms of germs since it does not depend on the set of representatives
chosen. The definition can be made more global at the expenses of considering pseudogroups acting on open sets
of $\C^n$, whether or not the origin is fixed. In this sense, the pseudogroup $G$ generated by a (finite)
collection of holomorphic diffeomorphisms defined around the origin will be called {\it globally non-discrete}\, if and only
if there is a non-empty open set~$V$ satisfying the conditions~(1), (2) and~(3) of Definition~\ref{definitiondiscretegroup}.

For pseudogroups $G$ as above, the definition below is also standard by now.

\begin{defi}
\label{definitionvectorfieldclosure}
An analytic vector field $X$ defined on a non-empty open set $U$ is said to be in the closure of~$G$,
if the following condition is satisfied, up to reducing~$U$: for every set $U' \subset U$ and
every $t_0 \in \R_+$ so that the local flow of~$X$ is defined on $U'$ for every $t \in [0, t_0]$, the resulting
local diffeomorphism $\Psi^{t_0}_X : U' \rightarrow \C^n$ induced by this local flow is the uniform limit on $U'$\,
of a sequence of elements $\{ g_j \}$ contained in $G$.
\end{defi}

In the case $n=1$, it is a simple fact that a non-solvable subgroup of $\diff$ is always non-discrete. Indeed, this
statement can be checked by specifying to the one-dimensional case the results in the previous section valid for $n=2$.
This phenomenon is in line
with the general character of Shcherbakov-Nakai theory in $\diff$ asserting the existence of non-identically zero
vector fields in the closure of these groups. In fact, a globally discrete pseudogroup
cannot admit non-identically zero vector fields in its closure since the local flow $\Psi^t_X$ converges to the
identity on~compact parts of $U$ as $t \rightarrow 0_+$.
Thus the first fundamental issue opposing subgroups of $\diff$ to subgroups of $\diffn$, $n\geq 2$, is the fact
that the latter contains discrete free subgroups on two generators.

\begin{ex}
(Schottky groups and discrete subgroups of ${\rm PSL}\, (2 ,\C)$). Consider a Schottky subgroup $\Gamma$
of ${\rm PSL}\, (2 ,\C)$. The group $\Gamma$ is free on~$2$ or more generators and $\Gamma$ is also discrete
as subgroup of ${\rm PSL}\, (2 ,\C)$ in {\it the classical sense}\, (i.e. as a set in ${\rm PSL}\, (2 ,\C)$).
Once a lift of ${\rm PSL}\, (2 ,\C)$ in ${\rm SL}\, (2 ,\C)$ is chosen,
$\Gamma$ can be identified with a subgroup of ${\rm SL}\, (2 ,\C)$. Since, in turn, ${\rm SL}\, (2 ,\C)$
can be viewed as linear diffeomorphisms of $\C^2$ fixing the origin, there follows that $\Gamma$ can also be identified with
a certain subgroup of $\Diffgentwo$. The purpose of this example is to prove the following statement which does not
depend on the chosen lift of ${\rm PSL}\, (2 ,\C)$ in ${\rm SL}\, (2 ,\C)$.

\noindent {\it Claim}. The group $\Gamma \subseteq \Diffgentwo$ is discrete in the sense of Definition~\ref{definitiondiscretegroup}.

\noindent {\it Proof of the Claim}. This is certainly a well-known result so that we shall content ourselves of sketching an
argument. Recall that ${\rm PSL}\, (2 ,\C)$ is identified with the automorphism group of the projective line
$\C P(1)$. In turn,
by considering $\C P(1)$ as the boundary of the unit ball $B^3$ of $\R^3$, the group ${\rm PSL}\, (2 ,\C)$
becomes identified with the group of orientation-preserving isometries of the hyperbolic ball. This allows us to
identify ${\rm PSL}\, (2 ,\C)$ with ${\rm SO}\, (3, \R) \times B^3$ by assigning to an element $\gamma \in {\rm PSL}\, (2 ,\C)$
the pair $(D_0 \gamma , \gamma (0))$ in ${\rm SO}\, (3, \R) \times B^3$. The topology of ${\rm PSL}\, (2 ,\C)$ also arises
from this identification. In particular, a subgroup $\Gamma \subset {\rm PSL}\, (2 ,\C)$ is discrete {\it
in the classical sense}\, if and only
if it contains only finitely many elements $\gamma_i$ such that $\Vert \gamma_i (0) \Vert < r$ for every $r \in (0,1)$.

On the other hand, consider the action of ${\rm PSL}\, (2 ,\C)$ on $\C P(1)$. Assume that $W \subset \C P(1)$ is a non-empty open
set and suppose that $\gamma_i \in {\rm PSL}\, (2 ,\C)$ is a sequence of elements converging uniformly to the identity
on $W$. Then by considering the extension of the action of $\gamma_i$ to $B^3$, there follows at once that
$$
\Vert \gamma_i (0) \Vert \rightarrow 0
$$
as $i \rightarrow \infty$. Therefore, if $\Gamma \subset {\rm PSL}\, (2 ,\C)$ is known to be discrete
(in the classical sense), then for every
non-empty open set $W \subset \C P(1)$ the group $\Gamma$ contains no sequence of elements (different from the identity) converging
uniformly to the identity on $W$. Finally, if our group $\Gamma$ were {\it not globally discrete}\,
then there would exist a non-empty open set $V \subset \C^2$ and a sequence
of elements $\{ \gamma_i \} \subset \Gamma \subset \Diffgentwo$ converging uniformly to the identity on $V$ (where
$(0,0) \not\in V$). Since
the action of $\Gamma$ on $\C^2$ is linear, it induces a projective action of $\Gamma$ on $\C P(1)$ coinciding with the
action induced by identifying ${\rm PSL}\, (2 ,\C)$ with the automorphism group of $\C P(1)$. Hence, by letting $W \subset \C P(1)$
be the image of $V \subset \C^2$ by the canonical projection $\C^2 \setminus \{ (0,0) \} \rightarrow \C P(1)$, it follows
that the sequence $\{ \gamma_i \}$ converges uniformly to the identity on $W$. As previously seen, this contradicts the fact that
$\Gamma$ is a discrete subgroup of ${\rm PSL}\, (2 ,\C)$.
\end{ex}

On the other hand, the results in Section~\ref{provingtheorems}, also show that every non-solvable subgroup of $\diffCtwo$
is non-discrete in the sense of Definition~\ref{definitiondiscretegroup} (i.e. for a chosen neighborhood of the origin).
At this level, there is no known obstruction to the existence of vector fields
in the closure of these non-solvable groups though no general affirmative result is so far available. Inasmuch as no ``counterexample''
is known, it seems a bit unlikely that non-trivial vector fields in the closure of the corresponding group will
exist without any (at least weak) additional assumption.

Going back to subgroups of $\diffn$, the notion of {\it global non-discrete}\, is less suited than the notion of non-discrete
set forth by Definition~\ref{definitiondiscretegroup} since the former depends on the representatives chosen. Actually,
even for a given finite set of local diffeomorphisms (fixing the origin),
it may happen that the pseudogroup they generate on an open set $U$ is non-discrete while it becomes discrete on a smaller
open set. Furthermore, from a technical point of view, the effects of non-linear terms away from the origin can easily become
out of control. Let us close this discussion with a remark showing that ``many'' discrete subgroups
of $\Diffgentwo$ can be produced by ``higher order perturbations'' of discrete subgroups of ${\rm GL}\, (2,\C)$.

\begin{ex}
(Non-linear perturbations of discrete subgroups of ${\rm GL}\, (2 ,\C)$). Given a subgroup $G \subset \Diffgentwo$
consider again the natural homomorphism $\rho : G \rightarrow {\rm GL}\, (2,\C)$ and the associated exact sequence
$$
0 \longrightarrow {\rm Ker}\, (\rho) \longrightarrow G \longrightarrow \rho (G) \subset {\rm GL}\, (2,\C)
\longrightarrow 0 \, ,
$$
where $\rho (g)$ is the derivative $D_0g$ at the origin. Then we have:

\noindent {\it Claim}. Suppose that $\rho (G) \subset {\rm GL}\, (2,\C)$ is a discrete subgroup and that
${\rm Ker}\, (\rho) \subset \diffCtwo$ is discrete as well (this happens, for example, when the homomorphism
$\rho$ is one-to-one). Then $G$ is discrete.

\noindent {\it Proof}. Suppose that $\{ g_j\}$, $g_j \neq {\rm id}$ for every~$j \in \N$,
is a sequence of elements in $G$ converging uniformly to the identity on some neighborhood $V$ of $(0,0) \in \C^2$.
Then the sequence of derivatives $\{ D_0 g_j \} \subset {\rm GL}\, (2,\C)$ must converge to the identity matrix~$I$
by virtue of the Cauchy formula. Since $\rho (G) \subset {\rm GL}\, (2,\C)$ is discrete, there follows that $D_0 g_j$
equals~$I$ for large $j \in \N$. Hence, modulo dropping finitely many terms in the mentioned sequence, we have
$g_j \in {\rm Ker}\, (\rho)$ for every~$j \in \N$. A contradiction then arises from the fact that
${\rm Ker}\, (\rho) \subset \diffCtwo$ is discrete. The claim is proved.
\end{ex}

\section{Abelian groups, normalizers, and general solvable subgroups of $\formdiffn$}

The rest of this paper will entirely be devoted to the proof of Theorem~\ref{commuting9} and to related
results. This section is divided into three subsections and, in the first one, some additional elementary
results concerning abelian subgroups of $\formdiffn$ are provided. The second subsection concerns more elaborate results
on normalizers of abelian subgroups of $\formdiffn$; see also
Lemma~\ref{newversionLemma22.1}. Finally the third section is essentially devoted to stating a detailed version
of Theorem~6 in \cite{ribon} in the specific case of subgroups of $\formdiffn$.
In what follows, we keep the notations of Sections~2.2 and~2.3.

\subsection{Elementary facts on abelian groups}

To begin with, consider an abelian subgroup $G \subset \formdiffn$ which, in principle, need not be finitely generated.
The group $G$ is necessarily torsion-free since all of its elements are tangent to the identity.
Hence a basis $\{ g_i \}_{i=1}^N \subset G$ for this group can be considered where $N \in
\N \cup \{ \infty \}$ (at this point we only assume the group is countably generated).

To the group $G$, it is associated an abelian Lie algebra $\mathfrak{g} \subset \ghatX$ which is generated (both as Lie
algebra and as vector space) by the infinitesimal generators $X_i$ of the elements $g_i$ in the above mentioned basis.
Let us first consider the case in which $\mathfrak{g}$ contains two formal vector fields $X$ and $Y$ which are not
everywhere parallel. In particular every vector field in $\ghatXone$ can be written as a linear combination of
$X$ and $Y$ with coefficients in the field $\fieldC$; see Section~2.2. Then we have:

\begin{lemma}
\label{newversionLemma11.1}
Under the above assumption, the abelian Lie algebra $\mathfrak{g}$ is generated over $\C$ by $X$ and $Y$.
In particular $G$ is contained in the exponential of $\mathfrak{g}$ though $G$ is not contained in the
exponential of a single vector field in $\mathfrak{g}$.
\end{lemma}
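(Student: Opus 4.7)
The plan is to establish the three assertions of the lemma in turn: that $\mathfrak{g} = \C X + \C Y$, that $G \subset {\rm Exp}(\mathfrak{g})$, and that no single vector field has $G$ in its exponential. Since $\C X + \C Y \subseteq \mathfrak{g}$ is automatic, the crux is the reverse inclusion.

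For that, I would take an arbitrary $Z \in \mathfrak{g}$ and, invoking the hypothesis that $X,Y$ are not everywhere parallel, apply Equation~(\ref{veryelementary}) to write $Z = fX + gY$ with $f,g \in \fieldC$. The abelianness of $\mathfrak{g}$ yields $[X,Z] = 0$, and expanding this via the Leibniz rule (using $[X,X]=0$ and $[X,Y]=0$) gives $(X_{\ast} f) X + (X_{\ast} g) Y = 0$. Since $X$ and $Y$ remain linearly independent over $\fieldC$, we extract $X_{\ast} f = X_{\ast} g = 0$, and $[Y,Z]=0$ analogously gives $Y_{\ast} f = Y_{\ast} g = 0$. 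Writing $X = A \, \partial /\partial x + B \, \partial /\partial y$ and $Y = C \, \partial /\partial x + D \, \partial /\partial y$, the pair of equations $X_{\ast} f = Y_{\ast} f = 0$ is a $2 \times 2$ linear system in $(f_x, f_y)$ with coefficient determinant $AD - BC$. Because $X,Y$ are not everywhere parallel, $AD - BC$ is a nonzero element of $\formalC$, so $(AD - BC) f_x = 0$ and $(AD - BC) f_y = 0$ force $f_x = f_y = 0$ in the field $\fieldC$; thus $f \in \C$, and likewise $g \in \C$. This yields $\mathfrak{g} = \C X + \C Y$.

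For the second assertion, since $G$ is abelian Lemma~\ref{commuting1} makes all pairwise brackets $[X_i, X_j]$ vanish, and the Campbell-Hausdorff formula collapses to ${\rm Exp}(U) \, {\rm Exp}(V) = {\rm Exp}(U+V)$ for any $\C$-combination of the $X_i$. Writing an arbitrary $g \in G$ as a finite product of basis elements $g_i^{\pm 1} = {\rm Exp}(\pm X_i)$, its infinitesimal generator becomes the corresponding integer linear combination of the $X_i$, which lies in $\mathfrak{g}$. The third assertion is then immediate: if $G$ were contained in ${\rm Exp}(\C Z)$ for some single $Z \in \mathfrak{g}$, each basis element would satisfy $g_i = {\rm Exp}(t_i Z)$, so by the uniqueness of infinitesimal generators (Lemma~\ref{correspondence}) we would have $X_i = t_i Z$; in particular $X$ and $Y$ would be complex multiples of $Z$, hence linearly dependent over $\C$, contradicting the non-parallel hypothesis.

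The only step that is not routine bookkeeping is the passage from ``$f$ is a common first integral of $X$ and $Y$ in $\fieldC$'' to ``$f \in \C$'', and this is essentially a direct translation of the non-parallel hypothesis into the determinant $AD - BC$ being a nonzero element of $\formalC$; the remainder of the argument combines the formal calculus developed in Section~2.2 with the elementary Campbell-Hausdorff identity for commuting vector fields recalled in Lemma~\ref{commuting1}.
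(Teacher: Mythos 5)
Your proof is correct and follows essentially the same route as the paper: decompose $Z = fX + gY$ over $\fieldC$, use abelianness to show $f,g$ are common first integrals of the non-parallel pair $X,Y$, and conclude they are constants. You merely fill in details the paper leaves implicit (the determinant argument for why a common first integral must be constant, and the Campbell--Hausdorff collapse for the ``in particular'' clauses), so there is nothing to correct.
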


\begin{proof}
Consider $Z \in \mathfrak{g}$ and let $Z = aX +bY$ with $a,b \in \fieldC$. Since $\mathfrak{g}$ is abelian,
it follows that $[Z,X]=[Z,Y]=0$ which in turn leads to
$$
\frac{\partial a}{\partial X} = \frac{\partial a}{\partial Y} = \frac{\partial b}{\partial X} =
\frac{\partial b}{\partial Y} = 0 \, .
$$
Since $X$ and $Y$ are not everywhere parallel, there follows that $a, b$ are both constants i.e.,
$a,b \in \C$ proving the first part of the statement.

To conclude that $G$ cannot be contained in the exponential of a single vector field just note that,
if this were the case, the Lie algebra $\mathfrak{g}$ would coincide with the one-dimensional vector space
spanned by the vector field in question. This clearly contradicts the existence of two non everywhere parallel
vector fields $X$ and $Y$ in $\mathfrak{g}$.
\end{proof}

The argument above also yields the following corollary:

\begin{corol}
\label{newversionLemma11.2}
Assume that $\mathfrak{g} \subset \ghatX$ is a (non-trivial) abelian Lie algebra. Then one of the
following holds:
\begin{enumerate}

\item Suppose that $\mathfrak{g}$ contains two vector fields $X$ and $Y$ that are not everywhere parallel.
Then $\mathfrak{g}$ can be identified with the two-dimensional vector space spanned by $X$ and $Y$
over $\C$.

\item There is a basis $\{ X_i \}_{i=1}^N$, $N \in \N \cup \{ \infty \}$, for $\mathfrak{g}$ having such that,
setting $X=X_1$, we have $X_i = h_iX$ for every $2 \leq i \leq N$ where $h_i \in \fieldC$ is a first integral of $X$
(i.e., $\partial h_i /\partial X =0$).\qed
\end{enumerate}
\end{corol}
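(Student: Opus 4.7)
The plan is to split into the evident dichotomy suggested by the two alternatives in the statement. Either $\mathfrak{g}$ contains two formal vector fields $X, Y$ that are not everywhere parallel (in which case we aim for Case~(1)), or else any two non-zero elements of $\mathfrak{g}$ are everywhere parallel as elements of $\ghatXone$ over $\fieldC$ (in which case we aim for Case~(2)). This dichotomy is exhaustive, so proving both alternatives under their respective hypotheses will complete the argument.

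For Case~(1), I would essentially quote Lemma~\ref{newversionLemma11.1}. More precisely, given $X, Y \in \mathfrak{g}$ that are not everywhere parallel, every $Z \in \ghatXone$ admits a unique expression $Z = aX + bY$ with $a,b \in \fieldC$ by Formula~(\ref{veryelementary}). If in addition $Z \in \mathfrak{g}$, then $[Z,X] = [Z,Y] = 0$ forces $\partial a/\partial X = \partial a/\partial Y = \partial b/\partial X = \partial b/\partial Y = 0$; since $X$ and $Y$ are not everywhere parallel, this system implies $a,b \in \C$. Thus $\mathfrak{g}$ is identified with the two-dimensional $\C$-vector space spanned by $X$ and $Y$, exactly as in Case~(1).

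For Case~(2), the hypothesis is that every pair of non-zero elements of $\mathfrak{g}$ is everywhere parallel. Fix any non-zero $X_1 \in \mathfrak{g}$. For any $Z \in \mathfrak{g}$ we may then write $Z = hX_1$ for a unique $h \in \fieldC$. A direct computation using the Leibniz rule gives
\[
[X_1, hX_1] = X_1(h) \cdot X_1 + h \cdot [X_1,X_1] = X_1(h) \cdot X_1,
\]
and because $\mathfrak{g}$ is abelian the left-hand side vanishes, forcing $\partial h/\partial X_1 = 0$; i.e., $h$ is a first integral of $X_1$. To produce the basis described in Case~(2), I would simply extend $\{X_1\}$ to a Hamel basis $\{X_i\}_{i=1}^N$ of $\mathfrak{g}$ viewed as a $\C$-vector space, where $N \in \N \cup \{\infty\}$; writing $X_i = h_i X_1$ for $i \geq 2$, the previous computation guarantees that each $h_i$ is a first integral of $X = X_1$.

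There is no serious obstacle: the dichotomy is clean, Case~(1) reduces mechanically to Lemma~\ref{newversionLemma11.1}, and Case~(2) hinges only on the elementary identity $[X,hX] = X(h)X$ together with commutativity. The only conceptual point worth noting, and the reason the statement has the form it does, is that ``parallel'' must be interpreted over the field of fractions $\fieldC$ rather than over $\formalC$, so that the coefficients $h_i$ are genuinely allowed to be meromorphic rather than holomorphic — a distinction that will become relevant in subsequent sections but requires no additional work here.
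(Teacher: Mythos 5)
Your proposal is correct and follows essentially the same route as the paper: the paper simply records this corollary as a consequence of the argument in Lemma~\ref{newversionLemma11.1}, with Case~(1) being that lemma verbatim and Case~(2) resting on the identity $[X, hX] = (\partial h/\partial X)\,X$ exactly as you compute. Your write-up merely makes explicit the second alternative that the paper leaves to the reader.
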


The Lie algebra $\mathfrak{g}$ spanned by vector fields $X$ and $Y$ as in item~(1) above will be referred to as
the {\it linear span of $X$ and $Y$}\, so that the phrase ``the linear span of $X$ and $Y$'' implies
that $X$ and $Y$ commute and that they are not everywhere parallel.
Concerning item~(2), we note that $N \in \N$ if and only if
$\mathfrak{g}$ is finitely generated as vector space. More importantly, although $X$ and $X_i$, $i\geq 2$, belong to
$\ghatX$, the equation $X_i = h_i X$ does not imply that
$h_i$ lies in $\formalC$, as opposed to $\fieldC$, since $X$ is not supposed
to have isolated singularities.

Recall that the {\it centralizer}\, of an element $F \in \formdiffn$ is the group formed by those elements in $\formdiffn$
commuting with $F$ (and hence commuting with every element in the cyclic group generated by $F$). To
characterize the centralizer of an element $F \in \formdiffn$, we denote by $X$ its infinitesimal generator.
Note that there may or may not exist another vector field $Y$ commuting with $X$ while
not everywhere parallel to $X$. When this vector field $Y$ exists, it is never unique since every linear combination
of $X$ and $Y$ will have similar properties. Furthermore, if $X$ happens
to admit some non-constant first integral $h$, then $hY$ will also commute with $X$. When both
$h$ and $Y$ exist, then
every element of $\formdiffn$ whose infinitesimal generator $Z$ has
the form $Z=aX + bY$, where $X, Y$ are as above and $a,\, b$ are first integrals of $X$, automatically belongs
to the centralizer of $F$, cf. Lemma~\ref{commuting1}. With this notation, the centralizer of $F$ admits
the following characterization.

\begin{lemma}
\label{commuting2}
Let $F \in \formdiffn$ be given and denote by $X$ its infinitesimal generator. Then the centralizer of $F$ in $\formdiffn$
coincides with one of the following groups.
\begin{description}
\item[{\sc Case 1}] Suppose that every vector field $Y \in \ghatX$ commuting with $X$ is everywhere parallel to $X$.
Then the centralizer of $F$ consists of the subgroup of $\formdiffn$ whose elements have infinitesimal generators
of the form $hX$, where $h \in \fieldC$ is a formal first integral of $X$. In particular, if $X$ admits only constants
as first integrals, then the centralizer of $F$ is reduced to the exponential of $X$.

\item[{\sc Case 2}] Suppose there is $Y \in \ghatX$ which is not everywhere parallel to $X$ and still commutes with $X$.
Then the centralizer of $F$ coincides with the subgroup of $\formdiffn$ consisting of those elements $F \in \formdiffn$ whose
infinitesimal generators have the form $aX +bY$, where $a,b \in \fieldC$ are (formal) first integral of $X$.
\end{description}
\end{lemma}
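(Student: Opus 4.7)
The plan is to deduce the structure of the centralizer of $F$ from the structure of those vector fields in $\ghatX$ commuting with the infinitesimal generator $X$ of $F$, via the bijection ${\rm Exp}$ of Lemma~\ref{correspondence}. Indeed, item~(1) of Lemma~\ref{commuting1} asserts that $\widetilde{F} \in \formdiffn$ commutes with $F$ if and only if its infinitesimal generator $Z \in \ghatX$ commutes with $X$. Hence the problem reduces to characterizing the $Z \in \ghatX$ satisfying $[Z,X]=0$ in each of the two cases.

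In Case~1, the hypothesis provides that every such $Z$ is everywhere parallel to $X$, so I write $Z = hX$ with $h \in \fieldC$. The Leibniz rule gives the elementary identity
\[
[hX,\, X] \;=\; -\,X(h)\,X,
\]
which vanishes precisely when $X(h)=0$, that is, when $h$ is a formal first integral of $X$. Conversely, for any first integral $h \in \fieldC$ of $X$, the vector field $hX$ commutes with $X$, so (provided it belongs to $\ghatX$) its exponential lies in the centralizer of $F$. When $X$ admits no non-constant first integral, the only admissible values of $h$ are scalars, and the centralizer collapses to the one-parameter group $\{{\rm Exp}(tX)\}_{t \in \C}$.

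In Case~2, I fix a vector field $Y \in \ghatX$ which commutes with $X$ and is not everywhere parallel to $X$. Formula~(\ref{veryelementary}) then allows me to write every $Z \in \ghatXone$ uniquely as $Z = aX + bY$ with $a,b \in \fieldC$. Using $[X,Y]=0$, a direct computation parallel to the one above yields
\[
[Z,\, X] \;=\; -\,X(a)\,X \;-\; X(b)\,Y.
\]
Since $X$ and $Y$ are not everywhere parallel (so that they form a basis of $\ghatXone$ over $\fieldC$), this vanishes if and only if both $X(a)=0$ and $X(b)=0$, i.e.\ $a$ and $b$ are first integrals of $X$. The converse implication is again immediate from the formula above.

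The main point of caution, rather than a genuine obstacle, is bookkeeping: the coefficients $h$, $a$, $b$ live in the fraction field $\fieldC$, so one must check that the resulting combinations $hX$ or $aX+bY$ actually lie in $\ghatX$ (that is, are honest formal vector fields whose first jet at the origin vanishes) in order for their exponentials to be genuine elements of $\formdiffn$. This is precisely the content of the qualifier ``whose infinitesimal generators have the form\ldots'' in the statement, so no further argument is needed beyond observing that the bijection ${\rm Exp}$ is applied only to elements of $\ghatX$.
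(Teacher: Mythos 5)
Your proposal is correct and follows essentially the same route as the paper: reduce to the commutation of infinitesimal generators via Lemma~\ref{commuting1}, then in each case expand $Z$ over $\fieldC$ (as $hX$, respectively $aX+bY$) and read off from $[Z,X]=0$ that the coefficients are first integrals of $X$. The only cosmetic difference is that you write out the bracket identities explicitly where the paper states them as $dh.X=0$ and $(\partial a/\partial X)X+(\partial b/\partial X)Y=0$.
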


\begin{proof}
Suppose that $H$ is an element of $\formdiffn$ commuting with $F$. Denoting by $Z$ the infinitesimal generator of $H$, it follows
from Lemma~\ref{commuting1} that $[X,Z]=0$. Conversely the $1$-parameter group obtained as the exponential of $Z$
is automatically contained in the centralizer of $F$.

Next, suppose that the assumption in Case~1 is verified. Then the quotient $h$ between $Z$ and $X$ can be defined as an element
of $\fieldC$ satisfying $Z = hX$. Therefore the condition $[X,Z]=0$ becomes $dh.X=0$, i.e. $h$ is a first integral for $X$.

Consider now the existence of $Y$, not everywhere parallel to $X$, verifying $[X,Y]=0$.
It is clear that the elements of $\formdiffn$ described
in Case~2 belong to the centralizer of $F$. Thus only the converse needs to be proved. Since $H$ commutes with $F$, Lemma~\ref{commuting1}
yields again $[X,Z]=0$.
Since $Y$ is not a multiple of $X$, there are functions $a(x,y) , \, b(x,y) \in \fieldC$ such that $Z = a X + b Y$. Now the equation $[X, Z] = 0$
yields
$$
( \partial a /\partial X) .X +  ( \partial b /\partial X) .Y =0 \, .
$$
Thus the fact that $Y$ is not a multiple of $X$ ensures that $( \partial a /\partial X)  = ( \partial b /\partial X) =0$. In other words, both
$a, \, b$ are first integrals of $X$. The lemma follows.
\end{proof}

Concerning the situation described in Case~2 of Lemma~\ref{commuting2}, it is already known that $Y$ is not uniquely defined.
Nonetheless, the reader will note that every other choice of a vector field commuting with $X$ and not everywhere parallel to~$X$
leads to the same group of elements commuting with $F$.

Here is an easy consequence of Lemma~\ref{commuting2}.

\begin{lemma}
\label{lastversionLemma2}
Suppose that $h$ is a non-constant first integral of $X$ and let $F_1 = {\rm Exp} \, (X)$ and $F_2 = {\rm Exp} \, (hX)$
be elements in $\formdiffn$.
The intersection of the centralizers of $F_1$ and $F_2$, i.e. the set of elements in $\formdiffn$ commuting
with both $F_1, F_2$ is the subgroup of $\formdiffn$ constituted by those elements whose infinitesimal generators
have the form $aX$, where $a$ is a first integral of $X$. In particular, this group is abelian.
\end{lemma}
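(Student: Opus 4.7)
The plan is to combine Lemma~\ref{commuting1} with the case-by-case description of centralizers given in Lemma~\ref{commuting2}, and to spend the real effort on ruling out the ``parallel-component'' term $bY$ that appears in Case~2 of that lemma.

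First I would fix an element $H \in \formdiffn$ with infinitesimal generator $Z$ and observe, by item~(1) of Lemma~\ref{commuting1}, that $H$ commutes with both $F_1$ and $F_2$ if and only if $[X,Z]=0$ and $[hX,Z]=0$ simultaneously. The first equation already places $Z$ in the centralizer of $F_1$, so Lemma~\ref{commuting2} supplies two possibilities for $Z$. In Case~1 of that lemma, $Z$ has the form $Z = aX$ with $a \in \fieldC$ a first integral of $X$. Using the standard commutator identity $[fV,gW] = fg[V,W] + f(V_{\ast}g)W - g(W_{\ast}f)V$, one computes
\[
[hX,\,aX] \;=\; h(X_{\ast}a)X - a(X_{\ast}h)X \;=\; 0,
\]
since both $a$ and $h$ are first integrals of $X$. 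So in Case~1 every element of the centralizer of $F_1$ automatically commutes with $F_2$, and the intersection of centralizers is exactly as claimed.

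Next I would treat Case~2 of Lemma~\ref{commuting2}, where there is some $Y \in \ghatX$ not everywhere parallel to $X$ with $[X,Y]=0$. Then $Z = aX + bY$ with $a,b \in \fieldC$ both first integrals of $X$, and the same commutator identity gives
\[
[hX,\,aX+bY] \;=\; h(X_{\ast}a)X - a(X_{\ast}h)X + hb[X,Y] + h(X_{\ast}b)Y - b(Y_{\ast}h)X \;=\; -\,b(Y_{\ast}h)\,X.
\]
Hence the second commuting condition becomes $b(Y_{\ast}h)=0$. The crux is to show $Y_{\ast}h \neq 0$: this is where the hypothesis that $h$ is a \emph{non-constant} first integral of $X$ enters. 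Since $X$ and $Y$ are not everywhere parallel, they form a basis for formal vector fields over $\fieldC$; if both $X_{\ast}h=0$ and $Y_{\ast}h=0$ held, the differential $dh$ would vanish identically on this basis, forcing $h$ to be constant, contrary to assumption. Therefore $Y_{\ast}h$ is a non-zero element of $\fieldC$, and the condition $b(Y_{\ast}h)=0$ forces $b=0$. Thus $Z=aX$ with $a$ a first integral of $X$, again yielding the description in the statement. The hard part, as already indicated, is precisely this linear-algebra argument over $\fieldC$ ruling out the $Y$-component; everything else is a direct application of the previously established lemmas.

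Finally, for the abelianness claim, I would compute, for $Z_1 = a_1 X$ and $Z_2 = a_2 X$ with $X_{\ast}a_1 = X_{\ast}a_2 = 0$, that
\[
[a_1 X, a_2 X] \;=\; a_1(X_{\ast}a_2)X - a_2(X_{\ast}a_1)X \;=\; 0,
\]
and then invoke item~(1) of Lemma~\ref{commuting1} once more to conclude that the corresponding elements of $\formdiffn$ commute. This completes the proof sketch.
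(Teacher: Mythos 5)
Your proposal is correct and follows essentially the same route as the paper: both reduce to $[X,Z]=[hX,Z]=0$ via Lemma~\ref{commuting1}, split according to whether $Z$ is everywhere parallel to $X$ using Lemma~\ref{commuting2}, and in the non-parallel case derive that $h$ would be annihilated by two non-everywhere-parallel vector fields and hence constant. Your explicit decomposition $Z=aX+bY$ with the conclusion $b(Y_{\ast}h)=0$ is just a rephrasing of the paper's direct observation that $\partial h/\partial Z=0$.
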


\begin{proof}
Let $\overline{F} \in \formdiffn$ be an element commuting with both $F_1$ and $F_2$ and denote by $Z$ the
infinitesimal generator of $F$. If $Z$ is everywhere parallel to~$X$, then the statement follows
from Lemma~\ref{commuting2}, Case~1. Assume now that $Z$ is not everywhere parallel to~$X$ and note that
we must have $[Z,X]=[Z,hX]=0$ (Lemma~\ref{commuting1}). From this, there follows that $\partial h /\partial Z=0$.
Since $\partial h /\partial X=0$ and $X, \, Z$ are not everywhere parallel, we conclude that $h$ must be
constant which gives the desired contradiction.
\end{proof}

\subsection{On normalizers of certain abelian groups}

Recall that the {\it normalizer}\, of a group
$G \subset \formdiffn$ is the maximal subgroup $N_G$ of $\formdiffn$ containing $G$ and such that $G$ is a normal subgroup of $N_G$.
Similarly, the {\it centralizer} of an abelian group $G$ is the maximal subgroup of $\formdiffn$ containing
$G$ in its center. Given $F \in \formdiffn$, we shall refer to the {\it normalizer}\, (resp. {\it centralizer}) of
$F$ meaning the normalizer (resp. centralizer) of the cyclic group generated by $F$. This section is intended to establishing
certain results concerning normalizers of abelian groups. In the sequel we shall freely use the following consequence
of the constructions detailed in Section~2.3: given a subgroup $G \subset \formdiffn$ whose Lie algebra is denoted
by $\mathfrak{g}$, the normalizer of $G$ in $\formdiffn$ naturally acts by pull-backs on $\mathfrak{g}$.
Note that this assertion becomes apparent if $\mathfrak{g}$ is thought of as being the Lie
algebra generated by the infinitesimal generators of all elements of $G$.

We begin with an easy observation:

\begin{lemma}
\label{newversionLemma11.3}
Given $F \in \formdiffn$, the normalizer and the centralizer of $F$ coincide and hence are described
by Lemma~\ref{commuting2}.
\end{lemma}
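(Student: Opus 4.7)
The plan is to establish the only non-trivial inclusion, that the normalizer of $F$ is contained in its centralizer; the reverse inclusion is tautological, and we may assume $F \neq {\rm id}$ since otherwise the statement is vacuous. Take $H$ in the normalizer of $\langle F \rangle$. By definition there exists an integer $n \in \Z$ with
\[
H \circ F \circ H^{-1} = F^n,
\]
and the whole task reduces to showing $n = 1$, which is equivalent to $H$ commuting with $F$.

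To establish this, I would compare the infinitesimal generators on both sides of the identity above. Let $X = \log F$ and $Z = \log H$; both belong to $\ghatX$ by hypothesis. Write $k \geq 2$ for the order of $X$ at the origin and let $X_k \not\equiv 0$ be its leading homogeneous component. On the one hand, the infinitesimal generator of $F^n$ is $nX$, whose leading homogeneous component is $nX_k$. On the other hand, since $\log(H^{-1}) = -Z$ (because $H^{-1} = {\rm Exp}(-Z)$), Hadamard's formula~(\ref{hadamardlemmastatement1}) applied to $H^{-1}$ gives the infinitesimal generator of $H \circ F \circ H^{-1}$ as
\[
(H^{-1})^{\ast} X \; = \; X \; - \; [Z,X] \; + \; \tfrac{1}{2} [Z,[Z,X]] \; - \; \cdots.
\]

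The key point is that, since $Z \in \ghatX$ also has order at least $2$ at the origin, each iterated bracket $[Z,[Z,\ldots,[Z,X]\ldots]]$ has order strictly greater than $k$; this is precisely the estimate already invoked in part~(2) of Lemma~\ref{commuting1}, namely that the bracket of two formal vector fields of orders $r, s \geq 2$ has order at least $r + s - 1$. Consequently the leading homogeneous component of $(H^{-1})^{\ast} X$ coincides with $X_k$. Equating leading components of the two expressions for the infinitesimal generator of $H \circ F \circ H^{-1}$ yields $X_k = n X_k$, and since $X_k$ is a non-zero homogeneous polynomial vector field, this forces $n = 1$. Thus $H$ commutes with $F$, so the normalizer and centralizer coincide, and the description provided by Lemma~\ref{commuting2} applies.

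The step I expect to require the most care is the leading-order analysis of $(H^{-1})^{\ast} X$: the argument hinges crucially on the assumption $H \in \formdiffn$, because only then does $Z$ have vanishing linear part, which is what makes every Hadamard bracket strictly raise the order at the origin. If $H$ were only a formal diffeomorphism fixing $(0,0)$, then the linear part of $Z$ could rescale $X_k$ non-trivially and values of $n$ other than $1$ could a priori occur.
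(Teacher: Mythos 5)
Your proof is correct and follows essentially the same route as the paper: both reduce to the observation that, by Hadamard's formula, conjugating by an element of $\formdiffn$ changes the infinitesimal generator of $F$ only by terms of strictly higher order (since the conjugator's generator has vanishing linear part), which pins down the scalar to $1$. The only cosmetic difference is that you compare $H\circ F\circ H^{-1}$ with $F^n$ directly, whereas the paper phrases it as the unipotent action $g^{\ast}Z = cZ$ on the one-dimensional Lie algebra of $\langle F\rangle$ — and the paper's stated ``alternative'' argument is precisely your leading-order Hadamard computation.
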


\begin{proof}
It suffices to show that the normalizer of $F$ is contained in the centralizer of $F$. For this,
denote by $Z$ the infinitesimal generator of $F$ and consider an element $g$ in the normalizer of $F$.
The Lie algebra associated to the cyclic group generated by $F$ has dimension~$1$ and consists of constant
multiples of $Z$. Since $g$ acts on this Lie algebra by pull-backs, there follows that $g^{\ast} Z = cZ$
for some $c \in \C$. However, the automorphism induced by $g$ is unipotent since $g \in \formdiffn$ and this implies that $c=1$,
i.e. $g^{\ast} Z =Z$. There follows that $g$ commutes with $F$ and the lemma is proved.

Alternatively, Hadamard lemma (Formula~\ref{hadamardlemmastatement1}) shows that $g^{\ast} Z -Z \in \mathfrak{g}$
would have order at the origin strictly greater than the order of $Z$ unless $g^{\ast} Z =Z$. Therefore the latter possibility
must hold.
\end{proof}

Consider now an abelian group $G \subset \formdiffn$ whose Lie algebra coincides with
the linear span of two vector fields $X, \, Y$ as in item~(1) of Corollary~\ref{newversionLemma11.2}.
Concerning the Lie algebra $\mathfrak{g}$ of $G$, two different situations may occur, namely:
all linear combinations of $X, \, Y$ may or may not have the
same order at the origin. Clearly, when not all these vector fields have the same order at the origin, there is
a unique (up to a multiplicative constant) vector field $Z$ in $\mathfrak{g}$ whose order at the origin
strictly greater than the orders of all remaining vector fields in $\mathfrak{g}$.

\begin{lemma}
\label{lastversionLemma3}
Let $G \subset \formdiffn$ be an abelian group whose Lie algebra $\mathfrak{g}$ is isomorphic to the linear span
of vector fields $X, \, Y \in \ghatX$. Then one of the following holds:
\begin{enumerate}
  \item Assume that all vector fields in the linear span of $X, Y$ have the same order at the origin.
Then the normalizer $N_G$ of $G$ is contained in the exponential of $\mathfrak{g}$.

  \item Assume that there is a vector field $Z$ in the linear span of $X, Y$ whose order is greater than the orders
  of the remaining vector fields. Then the normalizer $N_G$ of $G$ is either abelian or metabelian. Also $N_G$ is necessarily
  a nilpotent group.
\end{enumerate}
\end{lemma}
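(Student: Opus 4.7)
The plan is to study how elements of $N_G$ act on the two-dimensional Lie algebra $\mathfrak{g} = \C X + \C Y$ (given by Corollary~\ref{newversionLemma11.2}(1)) via pull-back, and to combine this with Hadamard's formula~(\ref{hadamardlemmastatement1}) to pin down the structure of $N_G$. The central observation I would use is that, for any $g \in N_G \subset \formdiffn$ with infinitesimal generator $W \in \ghatX$ and any $V \in \mathfrak{g}$, the expression
\[
g^{\ast} V - V = [W,V] + \tfrac{1}{2} [W,[W,V]] + \cdots
\]
lies in $\mathfrak{g}$ but, if non-zero, has order at the origin strictly larger than ${\rm ord}\,(V)$, because ${\rm ord}\,(W) \geq 2$ forces each iterated bracket to strictly raise the order. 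This order-tracking replaces the usual linear-algebra argument for unipotency of the induced action of $N_G$ on $\mathfrak{g}$.

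For Case~(1), since every non-zero element of $\mathfrak{g}$ has the same order $k$, no non-zero element of $\mathfrak{g}$ can have order $>k$; hence $g^{\ast}V = V$ for every $V \in \mathfrak{g}$, i.e.\ $W$ commutes with both $X$ and $Y$. I would then invoke Lemma~\ref{commuting2} (Case~2) applied first with respect to $X$ and then with respect to $Y$, writing $W = \alpha X + \beta Y$ with $\alpha, \beta \in \fieldC$ annihilated by the derivations along both $X$ and $Y$. Since $X, Y$ are not everywhere parallel and hence $\fieldC$-linearly independent, this forces $\alpha, \beta \in \C$, so $W \in \mathfrak{g}$ and $g \in {\rm Exp}\,(\mathfrak{g})$. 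For Case~(2), the line $\C \cdot Z$ is the unique maximal-order line in $\mathfrak{g}$; picking $X' \in \mathfrak{g}$ with ${\rm ord}\,(X') < {\rm ord}\,(Z)$, the same order argument forces $g^{\ast} Z = Z$ and $g^{\ast} X' = X' + d(g) Z$ for some $d(g) \in \C$. The induced map $d : N_G \to (\C,+)$ is a homomorphism, whose kernel $C$ is the centralizer of $\mathfrak{g}$ in $\formdiffn$; by the Case~(1) argument $C$ coincides with ${\rm Exp}\,(\mathfrak{g})$ and is therefore abelian. Hence $N_G$ is an extension of an abelian subgroup of $\C$ by the abelian group $C$, i.e.\ metabelian (abelian precisely when $d \equiv 0$).

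To establish nilpotency, I would compute, for $g \in N_G$ and $c = {\rm Exp}\,(V) \in C$ with $V = \alpha X' + \beta Z$, the conjugate $g c g^{-1} = {\rm Exp}\,(g_{\ast} V)$ where $g_{\ast} V = V - \alpha \, d(g) Z$ is read off the known action of $g$ on $\mathfrak{g}$. Since $V$ and $V - \alpha \, d(g) Z$ both lie in the abelian Lie algebra $\mathfrak{g}$, the corresponding exponentials add, yielding
\[
[g,c] = g c g^{-1} c^{-1} = {\rm Exp}\,(-\alpha \, d(g) Z) \in {\rm Exp}\,(\C Z) \, .
\]
Because $g^{\ast}Z = Z$ for every $g \in N_G$, the subgroup ${\rm Exp}\,(\C Z)$ is central in $N_G$. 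Together with $[N_G, N_G] \subseteq C$, this gives $C^2 N_G \subseteq [N_G, C] \subseteq {\rm Exp}\,(\C Z) \subseteq Z(N_G)$ and hence $C^3 N_G = \{{\rm id}\}$, proving $N_G$ is at most step-$3$ nilpotent. The main obstacle I anticipate is the verification in Case~(1) that elements of $\fieldC$ annihilated by both $X$ and $Y$ must be constant: this uses in an essential way that ``not everywhere parallel'' translates into genuine $\fieldC$-linear independence, and one must be alert to the fact that $\fieldC$ includes denominators. Once this point is in hand, the rest of the argument reduces to order-tracking via Hadamard's formula and bookkeeping on abelian extensions.
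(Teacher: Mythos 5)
Your proposal is correct and follows essentially the same route as the paper: both study the pull-back action of the normalizer on the two-dimensional algebra $\mathfrak{g}$, use Hadamard's formula together with order considerations to show this action is trivial in case~(1) and unipotent upper-triangular (fixing the maximal-order line $\C Z$) in case~(2), and then read off the metabelian structure from the resulting homomorphism $N_G \to (\C,+)$ with abelian kernel ${\rm Exp}\,(\mathfrak{g})$. The only cosmetic difference is in the nilpotency step, which you obtain by a direct central-series computation at the group level using $[g,c] = {\rm Exp}\,(-\alpha\, d(g) Z)$, whereas the paper identifies the ambient three-dimensional Lie algebra with the strictly upper-triangular $3\times 3$ matrices; both arguments rest on the same structural facts.
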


\begin{proof}
Consider the subgroup $\Gamma_{{\rm abelian}-1}$ of $\formdiffn$ consisting of all elements in $\formdiffn$
that act on $\mathfrak{g}$. In other words, a formal diffeomorphism $F \in \formdiffn$ lies in $\Gamma_{{\rm abelian}-1}$
if and only if $F^{\ast} \mathfrak{g} \subset \mathfrak{g}$. Recalling that the normalizer $N_G \subset \formdiffn$
of $G$ naturally acts on $\mathfrak{g}$, there follows that $N_G \subseteq \Gamma_{{\rm abelian}-1}$. To prove
the lemma it is therefore sufficient to study the group $\Gamma_{{\rm abelian}-1}$.

Let then $F \in \Gamma_{{\rm abelian}-1} \subset \formdiffn$
and consider the action of $F$ on $\mathfrak{g}$.
Since $F$ is unipotent, the eigenvalues of the corresponding automorphism are equal to~$1$ so that either this automorphism
coincides with the identity or it is non-diagonalizable. In the former case $F$ belongs to the exponential of
$\mathfrak{g}$ since it commutes with both $X$ and $Y$.

Suppose now that the above mentioned action of $F$ is not diagonalizable. Up to a change of basis, we can assume that
$F^{\ast} X =X$ and $F^{\ast} Y = Y + X$. In particular, Hadamard lemma applied to $F^{\ast} Y - Y$ shows that
the order of $X$ is strictly larger than the order of $Y$ so that we are in the situation described in item~(2).
In other words, if $X$ and $Y$ are as in item~(1), then the group $\Gamma_{{\rm abelian}-1}$ coincides with the
exponential ${\rm Exp}\, (\mathfrak{g})$ of $\mathfrak{g}$ and the lemma follows at once.

It remains to study the case were $X$ and $Y$ are as in item~(2). Without loss of generality, we can assume that
the order of $X$ is strictly larger than the order of~$Y$ so that the $X$ is distinguished in $\mathfrak{g}$ as the
unique vector field (up to a multiplicative constant) having maximal order at the origin. In particular, for
every $F \in \Gamma_{{\rm abelian}-1}$, we have $F^{\ast} X=X$.

Next fix a vector field $Z \in \mathfrak{g}$ which is not a constant multiple of $X$. The vector field
$F^{\ast} Z - Z$ lies in $\mathfrak{g}$ and has order strictly larger than the order of~$Z$. Thus
$F^{\ast} Z - Z$ must be a constant multiple of $X$, i.e. we have
\begin{equation}
F^{\ast} Z = Z + cX \label{homomorphism1}
\end{equation}
for some constant $c \in \C$ depending only on $F$ (note that $c$ may equal zero and this is certainly the case
when $F$ lies in the exponential of $\mathfrak{g}$). In particular,
we have obtained a map $\sigma$ from $\Gamma_{{\rm abelian}-1}$ to $\C$ that
assigns to $F \in \Gamma_{{\rm abelian}-1}$ the constant $c \in \C$ appearing in Equation~(\ref{homomorphism1}). However,
since every element $F \in \Gamma_{{\rm abelian}-1}$ verifies $F^{\ast} X =X$, there also follows that
$\sigma : \Gamma_{{\rm abelian}-1} \rightarrow \C$ is a group homomorphism.
Furthermore the kernel of $\sigma$ consists of those elements fixing both $Z$ and $X$ so that
this kernel can be identified with the exponential ${\rm Exp}\, (\mathfrak{g})$ of $\mathfrak{g}$.
Summarizing, the group $\Gamma_{{\rm abelian}-1}$ can alternately be defined by
the short exact sequence
\begin{equation}
0 \longrightarrow {\rm Exp}\, (\mathfrak{g}) \simeq \C^2 \longrightarrow \Gamma_{{\rm abelian}-1}
\stackrel{\sigma}{\longrightarrow} \C \longrightarrow 0 \, . \label{ashortexactsequence1}
\end{equation}
This sequence realizes $\Gamma_{{\rm abelian}-1}$ as
an abelian extension of an abelian group so that $\Gamma_{{\rm abelian}-1}$ must be
step~$2$ solvable. Since $N_G$ naturally sits
inside $\Gamma_{{\rm abelian}-1}$, we conclude that $N_G$ is either abelian or metabelian.

It only remains to check that the group $\Gamma_{{\rm abelian}-1}$ is, in fact, nilpotent. For this note that
$\Gamma_{{\rm abelian}-1}$ is a complex Lie group of dimension~$3$ as follows from sequence~(\ref{ashortexactsequence1}).
Denoting by $\mathfrak{g}_{{\rm abelian}-1}$ its Lie algebra, we see that $\mathfrak{g}_{{\rm abelian}-1}$
is neither abelian (otherwise there is nothing to be proved) nor isomorphic to the Lie algebra of ${\rm PSL}\, (2,\C)$
since $X$ commutes with $Y$. Furthermore the image of $\mathfrak{g}_{{\rm abelian}-1}$ by the adjoint representation must
be contained in $\mathfrak{g}$. Since $X$ is distinguished in $\mathfrak{g}_{{\rm abelian}-1}$ for its maximal order at the
origin, there follows that $X$ lies in the center of $\mathfrak{g}_{{\rm abelian}-1}$. Finally, by considering a third
element $\widetilde{Z}$ in $\mathfrak{g}_{{\rm abelian}-1}$ so that $X, Y, \widetilde{Z}$ form a basis for
$\mathfrak{g}_{{\rm abelian}-1}$, we also conclude that $[\widetilde{Z}, Y] = cX$ since $[\widetilde{Z}, Y]$ lies
in $\mathfrak{g}$ and has order strictly larger than the order of~$Y$. From this it follows that
$\mathfrak{g}_{{\rm abelian}-1}$ is isomorphic to the Lie algebra of strictly upper-triangular $3\times3$ matrices or,
equivalently, that $N_G$ is isomorphic to a subgroup of the group of unipotent upper-triangular $3\times 3$ matrices.
The lemma is proved.
\end{proof}

The next lemma completes the description of the normalizers of (non-trivial) finitely generated
abelian groups, cf. Corollary~\ref{newversionLemma11.2}.

\begin{lemma}
\label{commuting7nowlemma}
Let $G \subset \formdiffn$ be a finitely generated abelian group all of whose elements have infinitesimal
generators parallel to a certain formal vector field $X$. Assume that the rank of $G$ is at least~$2$.
Then the normalizer $N_G$ of $G$ in $\formdiffn$ is either abelian or metabelian. Furthermore, it is also a
nilpotent group.
\end{lemma}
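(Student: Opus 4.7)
The plan is to analyze the natural pullback action of $N_G$ on the Lie algebra $\mathfrak{g}$ of $G$ and to extract the structure of $N_G$ from the resulting short exact sequence. By Corollary~\ref{newversionLemma11.2}(2), since the rank of $G$ is at least $2$ and all of its generators are parallel to $X$, there is a basis of $\mathfrak{g}$ of the form $X, h_2X, \ldots, h_nX$ (with $n \geq 2$), where $h_2, \ldots, h_n \in \fieldC$ are nonconstant first integrals of $X$. Set $V = \C \oplus \C h_2 \oplus \cdots \oplus \C h_n$, so that $\mathfrak{g} = V\cdot X$ as a $\C$-vector space. I would then consider the homomorphism $\tau : N_G \to {\rm GL}(\mathfrak{g})$ induced by pullback and study its kernel and image separately.

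For the kernel $K = \ker \tau$, an element $F \in K$ acts as the identity on both $X$ and $h_2X$, so by Lemma~\ref{commuting1} it commutes with $\exp(X)$ and with $\exp(h_2X)$. Lemma~\ref{lastversionLemma2} then identifies $K$ with $\exp(\mathcal{I}\cdot X)$, where $\mathcal{I}$ denotes the set of first integrals of $X$ in $\fieldC$; the reverse inclusion is immediate because any such element commutes with every element of $\mathfrak{g}$ and therefore normalizes $G$ while acting trivially on $\mathfrak{g}$. In particular $K$ is abelian and is normal in $N_G$ as the kernel of a homomorphism.

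For the image $\tau(N_G)$, Hadamard's formula~(\ref{hadamardlemmastatement1}) applied to any $Z \in \mathfrak{g}$ shows that $F^*Z - Z$ has order strictly greater than $Z$; hence $\tau(F)$ preserves the order filtration on $\mathfrak{g}$ and induces the identity on each graded piece. In a basis adapted to this filtration, $\tau(N_G)$ thus sits inside the group of unipotent lower-triangular $n\times n$ matrices and is in particular nilpotent. A more refined analysis, writing $F^*X = (1+P_0)X$ with $P_0 \in V \cap \mathfrak{m}$ and imposing $F^*(h_iX) \in V\cdot X$ together with the Hadamard-controlled order of $F^*h_i - h_i$, pins down $F^*h_i = h_i/(1+P_0)$ and hence $F^*(h_iX) = h_iX$ in the main generic case, so $\tau(F)$ reduces to a shear along the $X$-direction and $\tau(N_G)$ is abelian. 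Combined with the short exact sequence $1 \to K \to N_G \to \tau(N_G) \to 1$ and the abelianness of $K$, this yields $D^1N_G \subseteq K$, whence $N_G$ is either abelian (when $\tau$ is trivial) or metabelian.

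The main obstacle will be the nilpotency claim, because $K$ is infinite-dimensional as a Lie group and the order-raising property on $\mathfrak{g}$ alone does not terminate the central series on $K$. To overcome this, I plan to pass to the Lie algebra $\mathfrak{n}$ of $N_G$ in the sense of Proposition~\ref{frommarteloribon} and to show that its lower central series terminates. Two inputs will be used: for every $Y \in \mathfrak{n}$, the endomorphism $\mathrm{ad}(Y)$ strictly raises the order filtration on $\mathfrak{g}$, so iterated brackets eventually land inside the abelian ideal $\mathfrak{k} = \mathcal{I}\cdot X$ corresponding to $K$; and the finite-dimensional transverse subspace of $\mathfrak{n}$ modulo $\mathfrak{k}$ must be a nilpotent Lie subalgebra whose adjoint action on $\mathfrak{k}$, once carefully decomposed using the $\mathcal{I}$-module structure of $\mathfrak{k}$, produces terminating iterated brackets. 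Transferring the resulting nilpotency of $\mathfrak{n}$ back to $N_G$ via Proposition~\ref{frommarteloribon} then completes the argument.
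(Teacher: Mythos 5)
Your overall architecture --- study the pullback action $\tau$ of $N_G$ on the finite-dimensional Lie algebra $\mathfrak{g}$, identify $\ker\tau$ with $\exp(\mathcal{I}\cdot X)$ via Lemma~\ref{lastversionLemma2}, and use Hadamard's formula to make the image unipotent --- is the same as the paper's (which works with the a priori larger group $\Gamma_{{\rm abelian}-2}$ of all $F$ with $F^{\ast}\mathfrak{g}\subseteq\mathfrak{g}$). But there is a genuine gap at the step where you ``pin down'' $F^{\ast}h_i=h_i/(1+P_0)$ and conclude that $\tau(F)$ is a shear: this is asserted only ``in the main generic case'' and does not follow from what precedes. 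The missing idea is to adapt the basis to the \emph{order filtration} rather than taking $X=X_1$ from Corollary~\ref{newversionLemma11.2}: choose $X$ of maximal order at the origin in $\mathfrak{g}$ (it exists by finite-dimensionality). Then $F^{\ast}X-X$ lies in $\mathfrak{g}$ and has order strictly larger than that of $X$, so $F^{\ast}X=X$ for every $F\in N_G$ --- no factor $(1+P_0)$ and no case distinction. (If $\mathfrak{g}$ contains two independent maximal-order elements $X$ and $hX$, Lemma~\ref{lastversionLemma2} already forces $N_G$ to be abelian.) Taking next $Y=hX\in\mathfrak{g}$ of second-largest order, the same Hadamard argument gives $F^{\ast}Y=Y+cX$, and $F\mapsto c$ is a homomorphism $N_G\to\C$ whose kernel fixes both $X$ and $hX$, hence equals $G_X=\exp(\mathcal{I}\cdot X)=\ker\tau$ by Lemma~\ref{lastversionLemma2}. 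This is what actually shows $\tau(N_G)$ is \emph{abelian} (it embeds in $\C$), which is what the metabelian conclusion requires; your argument as written only delivers nilpotence of $\tau(N_G)$ from triangularity, and that is not enough to get $D^1N_G\subseteq K$.

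The second gap is the nilpotency claim, which you correctly flag as the delicate point but leave as a plan: the assertion that the adjoint action of the transverse part on $\mathfrak{k}=\mathcal{I}\cdot X$ ``produces terminating iterated brackets'' is exactly what needs proof, and, as you note yourself, order-raising alone does not terminate anything on the infinite-dimensional $\mathfrak{k}$. The paper's route is to decompose $\mathfrak{g}=E_1\oplus\cdots\oplus E_d$ by order, observe that $F^{\ast}Z-Z\in E_1\oplus\cdots\oplus E_{i-1}$ for $Z\in E_i$, and realize $\Gamma_{{\rm abelian}-2}$ as the semidirect product of $G_X$ by the resulting group of unipotent upper-triangular automorphisms of $\mathfrak{g}$, from which the nilpotency is read off. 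If you want to salvage your Lie-algebra version, you must exhibit the explicit form of ${\rm ad}(Y)$ on $\mathfrak{k}$ for $Y$ in the transverse part (using that $Y$ derives first integrals of $X$ into first integrals of $X$, cf.\ Formula~(\ref{generalizedSchwarz})) and show that the relevant iterated brackets land in the center; at present that computation is absent.
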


\begin{proof}
Recall that the Lie algebra $\mathfrak{g}$ of $G$ is generated both as Lie algebra and as vector space by the
infinitesimal generators of a set of elements forming a basis for $G$. Denote by $n \geq 2$ the dimension of this Lie algebra.
Let $X$ denote an element in $\mathfrak{g}$ whose order at $(0,0) \in \C^2$ is maximal among
all vector fields in $\mathfrak{g}$. The existence of $X$ is guaranteed by the fact that $\mathfrak{g}$ has finite
dimension.

Next let $\Gamma_{{\rm abelian}-2} \subset \formdiffn$ be the group formed by all those  diffeomorphisms $F$
in $\formdiffn$ for which $F^{\ast} \mathfrak{g} \subset \mathfrak{g}$. In particular the normalizer
$N_G$ of $G$ is naturally contained in $\Gamma_{{\rm abelian}-2}$. Furthermore, for every $F \in
\Gamma_{{\rm abelian}-2}$, we have $F^{\ast} X =X$ since $X$ has maximal order in $\mathfrak{g}$ and
$F$ is unipotent. In particular, if there are more than one vector field (up to multiplicative constants) in
$\mathfrak{g}$ having maximal order at $(0,0) \in \C^2$, the group $\Gamma_{{\rm abelian}-2}$ must be abelian
since it will lie in the intersection of the centralizers of $X$ and another vector field
$hX$, where $h$ is a non-constant first integral of $X$; see Lemma~\ref{lastversionLemma2}. In fact,
$\Gamma_{{\rm abelian}-2}$ will coincide
with the exponential of the infinite dimensional Lie algebra formed by vector fields
of the form $aX$ where $a$ is a first integral of~$X$.

Suppose now that up to multiplicative constants
$X$ is the unique vector field in $\mathfrak{g}$ whose order at the origin is maximal. Consider a vector field
$Y \in \mathfrak{g}$ whose order at the origin is the ``second largest possible'' in the sense that the condition
of having a vector field $Z$ in $\mathfrak{g}$ whose order at $(0,0) \in \C^2$ is strictly greater than the order of $Y$
implies that $Z$ must be a constant multiple
of~$X$. Note that the vector field $Y$ clearly exists since $n \geq 2$, though it is not necessarily unique (always up to
multiplicative constants).
For $F \in \Gamma_{{\rm abelian}-2}$, we consider $F^{\ast} Y -Y \in \mathfrak{g}$. As previously seen,
Hadamard's lemma implies that $F^{\ast} Y = Y + cX$ for some constant $c \in \C$ and for every $F \in \Gamma_{{\rm abelian}-2}$.
Thus, arguing as in the proof of Lemma~\ref{lastversionLemma3}, we conclude that the assignment $F \in \Gamma_{{\rm abelian}-2}
\mapsto c \in \C$ such that $F^{\ast} Y = Y + cZ$ defines a homomorphism from $\Gamma_{{\rm abelian}-2}$ to $\C$ whose kernel
is an abelian group. There follows that $\Gamma_{{\rm abelian}-2}$ is either abelian or metabelian.
In fact, the group $\Gamma_{{\rm abelian}-2}$ can alternately be defined by the exact sequence
\begin{equation}
0 \longrightarrow G_X \longrightarrow \Gamma_{{\rm abelian}-2}
\longrightarrow \C \longrightarrow 0 \, , \label{ashortexactsequence2}
\end{equation}
where $G_X \subset \formdiffn$ is the abelian group all of whose elements have infinitesimal generator of the form
$aX$ where $a$ is a first integral for $X$. To complete the proof of the lemma, it suffices to check that
$\Gamma_{{\rm abelian}-2}$ is nilpotent. For this let $E_1 \subset \mathfrak{g}$ denote the vector space spanned by $X$.
Next considering the subspace $E_2 \subset \mathfrak{g}$ spanned by all elements in $\mathfrak{g}$ having ``second greatest
order'' at the origin. Clearly $E_1 \cap E_2 = \{0 \}$. If $E_1 \oplus E_2$ is strictly contained in $\mathfrak{g}$,
we continue inductively by defining $E_3$ as the subspace whose vector fields have ``third greatest order''. With this
procedure, we obtain a decomposition of $\mathfrak{g}$ as direct sum of subspace $E_1 \oplus \cdots \oplus E_d$ for some
$d \geq 2$. Moreover, for every $F \in \Gamma_{{\rm abelian}-2}$ and $Z \in E_i$, $i=1, \ldots ,d$,
we have $F^{\ast} Z -Z \in E_1 \oplus \cdots \oplus E_{i-1}$ thanks again to Hadamard lemma (where $F^{\ast} Z -Z = 0$ if $i=1$).
Since, in addition, every element in $\formdiffn$ is unipotent, we conclude from the preceding that that these automorphism
have upper block-triangular form with all the eigenvalues equal to~$1$. Denote by ${\rm Aut}\, (\mathfrak{g})$
the group formed by these automorphism and note that this group is nilpotent. Now, it is immediate to check that
the assignment of the corresponding induced automorphism to every element in $\Gamma_{{\rm abelian}-2}$ is a homomorphism
from $\Gamma_{{\rm abelian}-2}$ onto ${\rm Aut}\, (\mathfrak{g})$ whose kernel is $G_X$. Hence, we obtain
$\Gamma_{{\rm abelian}-2} = G_X \rtimes {\rm Aut}\, (\mathfrak{g})$ i.e., $\Gamma_{{\rm abelian}-2}$ is the semidirect product
of $G_X$ (abelian) and ${\rm Aut}\, (\mathfrak{g})$ (nilpotent). The nilpotent character of $\Gamma_{{\rm abelian}-2}$
follows at once. The proof of the lemma is completed.
\end{proof}

\subsection{Classification of solvable groups in $\formdiffn$}

In this section, we shall detail the classification of finitely generated solvable subgroups of $\formdiffn$
obtained in the work of Martelo-Ribon \cite{ribon}, cf. Theorem~6 in the mentioned paper.
However we begin with a more general lemma.

\begin{lemma}
\label{newversionLemma22.1}
Suppose that $G_0 \subset \formdiffn$ is an abelian group whose Lie algebra coincides with the linear
span of two vector fields $X$ and $Y$. Suppose also that $G_1$ is a non-abelian finitely generated group containing $G_0$ as
a normal subgroup. Then the normalizer of $G_1$ is metabelian.
\end{lemma}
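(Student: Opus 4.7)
The plan is to locate $N_{G_1}$ inside the normalizer of a specific Heisenberg-type subgroup of $\formdiffn$ already produced by the proof of Lemma~\ref{lastversionLemma3}, and then to show that this larger normalizer is nilpotent of class at most~$3$, hence metabelian. First, since $G_0$ is normal in $G_1$, every element of $G_1$ lies in $N_{G_0}$. Because $G_1$ is non-abelian, Case~(1) of Lemma~\ref{lastversionLemma3} is excluded (it would force $N_{G_0}\subseteq{\rm Exp}(\mathfrak{g}_0)$ to be abelian), so we are in Case~(2) and $G_1\subseteq N_{G_0}\subseteq \Gamma_{{\rm abelian}-1}$, where $\Gamma_{{\rm abelian}-1}$ is the $3$-dimensional step-$2$ nilpotent (Heisenberg) Lie subgroup of $\formdiffn$ constructed in that proof. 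Denote its Lie algebra by $\mathfrak{h}=\C X+\C Y+\C\widetilde{Z}$, with $X$ central and $[\widetilde{Z},Y]=cX$.

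Next, using Proposition~\ref{frommarteloribon}, the Lie algebra $\mathfrak{g}_1$ associated with $G_1$ in the sense of Section~2.3 is non-abelian (since $G_1$ is), and lies between $\mathfrak{g}_0$ and $\mathfrak{h}$; a non-abelian Lie subalgebra of the $3$-dimensional Heisenberg $\mathfrak{h}$ that strictly contains the $2$-dimensional abelian $\mathfrak{g}_0$ can only be $\mathfrak{h}$ itself, so $\mathfrak{g}_1=\mathfrak{h}$. Therefore the Zariski closure $\overline{G_1}$ equals $\Gamma_{{\rm abelian}-1}$. Since the Zariski closure is preserved by conjugation, every $h\in N_{G_1}$ normalizes $\overline{G_1}=\Gamma_{{\rm abelian}-1}$, so $N_{G_1}\subseteq N_{\Gamma_{{\rm abelian}-1}}$, and it suffices to prove that the latter group is metabelian.

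To analyze $N_{\Gamma_{{\rm abelian}-1}}$, I would examine the homomorphism $h\mapsto h^{\ast}|_{\mathfrak{h}}$ from $N_{\Gamma_{{\rm abelian}-1}}$ into ${\rm Aut}(\mathfrak{h})$. Hadamard's formula~(\ref{hadamardlemmastatement1}) shows that $h^{\ast}$ preserves the filtration of $\mathfrak{h}$ by order at the origin and acts trivially on the associated graded pieces, so the image of this homomorphism lies in the group of \emph{unipotent} automorphisms of $\mathfrak{h}$. This unipotent group has dimension at most~$3$ and is itself nilpotent of class~$2$, since its elements are upper-triangular $3\times 3$ matrices with $1$'s on the diagonal (a direct Heisenberg computation shows that their commutator subgroup is the one-parameter central subgroup acting by shifts along the unique highest-order vector field $X$). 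The kernel of $h\mapsto h^{\ast}|_{\mathfrak{h}}$ is the centralizer of $\mathfrak{h}$ in $\formdiffn$, which by iterated application of Lemma~\ref{commuting2} to the generators $X,Y,\widetilde{Z}$ reduces to ${\rm Exp}(\C X)$; this kernel is central in $N_{\Gamma_{{\rm abelian}-1}}$ because $h^{\ast}X=X$ for every such $h$.

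Combining these observations, $N_{\Gamma_{{\rm abelian}-1}}$ is a central extension of a class-$2$ nilpotent group by an abelian group, and is therefore itself nilpotent of class at most~$3$. By the standard commutator inclusion $[C^i G,C^j G]\subseteq C^{i+j}G$, any class-$3$ nilpotent group satisfies $[[G,G],[G,G]]\subseteq C^4G=\{{\rm id}\}$ and is metabelian; passing to the subgroup $N_{G_1}\subseteq N_{\Gamma_{{\rm abelian}-1}}$ concludes the proof. The main technical obstacle I anticipate lies in the Hadamard-filtration bookkeeping that justifies unipotency of $h^{\ast}|_{\mathfrak{h}}$, especially when the auxiliary vector field $\widetilde{Z}$ has order at the origin strictly larger than that of $Y$: in that regime $h^{\ast}Y$ may acquire a non-trivial $\widetilde{Z}$-component and thus move outside $\mathfrak{g}_0$, which is precisely the phenomenon that prevents a more direct argument via $N_{G_1}\subseteq N_{G_0}$ based on a would-be characteristicity of $G_0$ in $G_1$, and which forces us to work with the larger normalizer $N_{\Gamma_{{\rm abelian}-1}}$.
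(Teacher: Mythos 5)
Your proof is correct, and its first half coincides with the paper's: both arguments use normality of $G_0$ plus non-abelianity of $G_1$ to rule out Case~(1) of Lemma~\ref{lastversionLemma3}, place $G_1$ inside $\Gamma_{{\rm abelian}-1}$, force $\mathfrak{g}_1$ to equal the full $3$-dimensional algebra $\mathfrak{g}_{{\rm abelian}-1}$ by the dimension count, and thereby reduce the problem to the normalizer of $\Gamma_{{\rm abelian}-1}$. Where you diverge is in handling that normalizer. The paper proves the sharper statement (its Lemma~\ref{nextlemmabelow}) that $N_{\Gamma_{{\rm abelian}-1}}$ \emph{equals} $\Gamma_{{\rm abelian}-1}$: it singles out the maximal-order vector field $W$ in the family $c_1Y+c_2\widetilde{Z}$, uses Hadamard's lemma to show $F^{\ast}W-W$ must be a constant multiple of $X$, and then recycles the arguments of Lemmas~\ref{lastversionLemma3} and~\ref{commuting7nowlemma}. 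You instead prove only the weaker fact that $N_{\Gamma_{{\rm abelian}-1}}$ is nilpotent of class at most~$3$, via the representation $h\mapsto h^{\ast}|_{\mathfrak{g}_{{\rm abelian}-1}}$: the image is unipotent (hence class $\le 2$) by the same Hadamard filtration-by-order argument the paper uses elsewhere, the kernel is the centralizer ${\rm Exp}(\C X)$, and that kernel is central because every normalizing element fixes the unique maximal-order direction $X$; a central extension of a class-$2$ group is class~$\le 3$, hence metabelian. Your route is softer and cheaper — it avoids the explicit identification of the normalizer — but it yields strictly less: the paper's identity $N_{\Gamma_{{\rm abelian}-1}}=\Gamma_{{\rm abelian}-1}$ is invoked again later (e.g.\ in the proof of Lemma~\ref{newversionLemma33.1}), so within the architecture of the paper Lemma~\ref{nextlemmabelow} cannot simply be replaced by your estimate, even though for the statement of Lemma~\ref{newversionLemma22.1} itself your argument is entirely sufficient. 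One small point worth making explicit in your write-up: the computation of the kernel as ${\rm Exp}(\C X)$ uses $[\widetilde{Z},Y]=cX$ with $c\neq 0$, which holds precisely because $\mathfrak{g}_1=\mathfrak{g}_{{\rm abelian}-1}$ is non-abelian.
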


\begin{proof}
Since $G_1$ is non-abelian so is the normalizer of $G_0$. Therefore, it follows from Lemma~\ref{lastversionLemma3}
that $G_1$ is isomorphic to a (non-abelian) subgroup of $\Gamma_{{\rm abelian}-1}$ which, in turn, is a step~$2$
solvable (and nilpotent) group. Also, still owing to Lemma~\ref{lastversionLemma3},
we can assume without loss of generality that the order of $X$ at $(0,0) \in \C^2$ is strictly larger than the
corresponding order of~$Y$ and that the exponential of $X$ contains the center of $G_1$.

Consider the Lie algebra associated with $G_1$ and note that this Lie algebra cannot be abelian.
Also this Lie algebra contains (strictly) the linear span of $X$ and $Y$ so that its dimension is
at least~$3$. On the other hand, the Lie algebra of $G_1$ is isomorphic to a sub-algebra of the Lie algebra of
$\Gamma_{{\rm abelian}-1}$. Since the latter algebra has dimension~$3$ (Lemma~\ref{lastversionLemma3}), there follows that
the two Lie algebras coincide. In particular $G_1$ is Zariski-dense in $\Gamma_{{\rm abelian}-1}$. In turn, the Zariski-denseness
of $G_1$ in $\Gamma_{{\rm abelian}-1}$ implies that these two groups share the same normalizer in
$\formdiffn$. Therefore, to prove Lemma~\ref{newversionLemma22.1}, it suffices to establish the lemma below concerning
the normalizer of $\Gamma_{{\rm abelian}-1}$.
\end{proof}

\begin{lemma}
\label{nextlemmabelow}
The normalizer of $\Gamma_{{\rm abelian}-1}$ coincides with $\Gamma_{{\rm abelian}-1}$ itself.
\end{lemma}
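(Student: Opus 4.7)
The plan is to analyze how an element $H$ in the normalizer of $\Gamma_{{\rm abelian}-1}$ acts by pull-back on the Lie algebra $\mathfrak{g}_{{\rm abelian}-1}$ and then deduce that $H$ must preserve the $2$-dimensional subalgebra $\mathfrak{g}\subset \mathfrak{g}_{{\rm abelian}-1}$; by definition of $\Gamma_{{\rm abelian}-1}$ this is equivalent to $H\in \Gamma_{{\rm abelian}-1}$. Recall from the proof of Lemma~\ref{lastversionLemma3} that $\mathfrak{g}_{{\rm abelian}-1}$ is the $3$-dimensional Heisenberg Lie algebra with basis $\{X,Y,\widetilde{Z}\}$ satisfying $[X,Y]=[X,\widetilde{Z}]=0$ and $[\widetilde{Z},Y]=cX$ for some $c\neq 0$, in which $X$ is distinguished both as the unique (up to scalar) element of maximal order at $(0,0)$ and as a generator of the center. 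Since $H\in\formdiffn$ is unipotent, $H^{\ast}$ fixes the center of $\mathfrak{g}_{{\rm abelian}-1}$; combining this with the maximal-order property and Hadamard's lemma yields $H^{\ast}X=X$.

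Let $Z_H$ denote the infinitesimal generator of $H$. The identity $H^{\ast}X=X$ translates into $[Z_H,X]=0$. Because $Y\in\ghatX$ commutes with $X$ without being everywhere parallel to it, Lemma~\ref{commuting2} (Case~2) places us exactly in the situation where $Z_H = aX+bY$ for some $a,b\in\fieldC$ that are first integrals of $X$. Expanding $H^{\ast}Y$ by Hadamard's formula, and using the identity $[fV,gW]=fg[V,W]+fV(g)W-gW(f)V$ together with $[X,Y]=0$, one verifies inductively that the $\fieldC$-submodule $\fieldC\cdot X+\fieldC\cdot Y$ of $\ghatXone$ is stable under $\mathrm{ad}_{Z_H}$. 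Therefore $H^{\ast}Y$ admits a representation
\[
H^{\ast}Y=(1+\widetilde{g})Y+\widetilde{f}\, X,\qquad \widetilde{f},\widetilde{g}\in\fieldC,
\]
with $\widetilde{f},\widetilde{g}$ determined explicitly by $a$ and $b$ through the iterated bracket formulas.

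At the same time, $H^{\ast}Y$ lies in $\mathfrak{g}_{{\rm abelian}-1}$ and has the same order at the origin as $Y$, while $X$ has strictly larger order. So we can also write
\[
H^{\ast}Y=Y+\alpha X+\beta\widetilde{Z},\qquad \alpha,\beta\in\C,
\]
with the coefficient $\beta$ being forced to vanish whenever $\mathrm{ord}(\widetilde{Z})\leq \mathrm{ord}(Y)$. In this easy subcase, $H^{\ast}Y\in\mathfrak{g}$ is immediate and the conclusion $H\in\Gamma_{{\rm abelian}-1}$ follows at once.

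The main obstacle is the complementary subcase $\mathrm{ord}(\widetilde{Z})>\mathrm{ord}(Y)$, where the $\widetilde{Z}$-component of $H^{\ast}Y$ is a priori permitted by order considerations. The plan for this subcase is to match the two representations of $H^{\ast}Y$: writing $\widetilde{Z}=f_{0}X+g_{0}Y$ in the $\fieldC$-basis $\{X,Y\}$, equating coefficients produces the relations $\widetilde{g}=\beta g_{0}$ and $\widetilde{f}-\alpha=\beta f_{0}$. Combining these with the first-integral conditions $X(a)=X(b)=0$ and with the structural identity $[\widetilde{Z},Y]=cX$ (which becomes a differential equation relating $f_{0}$ and $g_{0}$) should force $\beta=0$. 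The verification will hinge on a careful bookkeeping of first integrals under the derivations $X(\cdot)$ and $Y(\cdot)$, on the explicit combinatorics of the Hadamard expansion, and on the constraint that $\widetilde{f}$ and $\widetilde{g}$ come from a single element $Z_H$ in the centralizer of $X$; this is the genuinely delicate technical step of the proof. Once $\beta=0$ is secured, $H^{\ast}Y=Y+\alpha X\in \mathfrak{g}$, and the lemma follows.
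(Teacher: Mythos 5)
Your set-up agrees with the paper's up to the point where you establish $H^{\ast}X=X$ (same reasoning: $X$ is central and of maximal order, $H^{\ast}$ is unipotent). After that the routes diverge, and yours does not close. The decisive assertion of your argument --- that $\beta=0$ in the subcase ${\rm ord}\,(\widetilde{Z})>{\rm ord}\,(Y)$ --- is only announced as a plan; you yourself flag it as ``the genuinely delicate technical step'' and never carry it out. Worse, the relations you extract do not visibly force $\beta=0$: writing $\widetilde{Z}=f_0X+g_0Y$, the identity $[\widetilde{Z},Y]=cX$ makes $g_0$ a \emph{constant}, and both $\widetilde{f},\widetilde{g}$ (coming from the Hadamard expansion of $Z_H=aX+bY$) and $\alpha+\beta f_0$, $\beta g_0$ are first integrals of $X$, so matching coefficients produces no contradiction without a genuine computation inside the iterated-bracket series. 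There is also a smaller slip in the ``easy'' subcase: when ${\rm ord}\,(\widetilde{Z})={\rm ord}\,(Y)$ a cancellation between the $Y$- and $\widetilde{Z}$-components of $H^{\ast}Y-Y$ can produce an element of the family $\{c_1Y+c_2\widetilde{Z}\}$ of order larger than ${\rm ord}\,(Y)$, so $\beta$ is not forced to vanish by the order count you invoke.

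The paper sidesteps exactly this difficulty by not working with $Y$ at all: it takes $W$ to be an element of \emph{maximal order} in the two-dimensional family $\mathfrak{F}=\{c_1Y+c_2\widetilde{Z}\}$. Then for $F$ in the normalizer, $F^{\ast}W-W=a_1X+a_2Y+a_3\widetilde{Z}$ has order strictly larger than ${\rm ord}\,(W)$, while any non-zero element $a_2Y+a_3\widetilde{Z}$ of $\mathfrak{F}$ has order at most ${\rm ord}\,(W)$ and ${\rm ord}\,(X)>{\rm ord}\,(W)$; hence $a_2Y+a_3\widetilde{Z}=0$ and $F^{\ast}W=W+a_1X$ in two lines, with no Campbell--Hausdorff bookkeeping. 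The conclusion is then reached by applying the arguments of Lemma~\ref{lastversionLemma3} or Lemma~\ref{commuting7nowlemma} to the pair $(X,W)$. If you want to salvage your approach, the minimal fix is to replace $Y$ (and $\widetilde{Z}$) by such a maximal-order element of $\mathfrak{F}$ before running the order comparison; as written, the lemma remains unproved in precisely the case that matters.
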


\begin{proof}
We need to show that the normalizer of $\Gamma_{{\rm abelian}-1}$ is contained in $\Gamma_{{\rm abelian}-1}$.
We know that every element $F \in \formdiffn$ lying in the normalizer of $\Gamma_{{\rm abelian}-1}$ acts by pull-backs on
the Lie algebra $\mathfrak{g}_{{\rm abelian}-1}$ which, in turn, is spanned as vector space by three vector fields
$X, Y$, and $Z$ where $X$ lies in the center and where $[Y,Z]=X$. In particular the order of $X$ at the origin is strictly
larger than the corresponding orders of $Y$ and $Z$. We claim that $X$ is distinguished in $\mathfrak{g}_{{\rm abelian}-1}$
as the vector field of maximal order at the origin. To check this assertion, note that every vector field in
$\mathfrak{g}_{{\rm abelian}-1}$ having maximal order at the origin must lie in the center of $\mathfrak{g}_{{\rm abelian}-1}$.
If this center were not spanned by constant multiples of $X$, then the Lie algebra $\mathfrak{g}_{{\rm abelian}-1}$ would
be contained in the centralizer of two vector fields which would force $\mathfrak{g}_{{\rm abelian}-1}$ to be abelian; cf. Section~5.1.
The resulting contradiction proves the claim.

To finish the proof of Lemma~\ref{newversionLemma22.1}, we proceed as follows. Consider the family $\mathfrak{F}$
of vector fields having the form $c_1 Y + c_2 Z$ contained in the Lie algebra of $\Gamma_{{\rm abelian}-1}$ with
$c_1, c_2 \in \C$. Let $W$ denote a vector field in $\mathfrak{F}$ having maximal order at the origin among vector fields
in this family. The existence of $W$ is clear although it need not be unique. Note however that $W$ does not coincide
with $X$ since $X$, $Y$ and $Z$ are linearly independent over~$\C$. Now, for $F$ in the normalizer of
$\Gamma_{{\rm abelian}-1}$, consider $F^{\ast} W -W \in \mathfrak{g}_{{\rm abelian}-1}$. By construction,
we also have $F^{\ast} W -W = a_1 X + a_2 Y +a_3Z$, for certain $a_i,a_2,a_3 \in \C$. However the order of both
$X$ and $F^{\ast} W -W$ are strictly greater than the order of $a_2 Y +a_3Z$. Thus we must have
$F^{\ast} W -W = a_1 X$. The lemma now follows by repeating the arguments of Lemma~\ref{lastversionLemma3}
(if $X$ and $W$ are not everywhere parallel) or of Lemma~\ref{commuting7nowlemma}
(if $X$ and $W$ are everywhere parallel). Lemma~\ref{newversionLemma22.1} is proved.
\end{proof}

We can now provide the formal classification of non-abelian solvable subgroups of $\formdiffn$. As mentioned,
the list below is a consequence of Theorem~6 in \cite{ribon} specified for the case of subgroups of $\formdiffn$.
Consider then a finitely generated solvable non-abelian group $G \subset \formdiffn$ and denote by $D^k G$ the non-trivial
derived subgroup of $G$ having highest order~$k$. In other words, $k \geq 1$ is such that
$D^k G$ is abelian and not reduced to the identity.
The reader is reminded that, albeit abelian, the group $D^k G$ need not be finitely generated even if $G$ is so.
In view of Lemma~\ref{newversionLemma22.1}, we can assume that every element in $D^k G$ has an infinitesimal generator
of the form $hX$ where $X$ is some fixed vector field and where $h$ is some first integral of~$X$.
In particular, these infinitesimal generators form the abelian Lie algebra $D^k \mathfrak{g}$ associated to $D^k G$.
Note also that $X$ can be supposed to belong to $D^k \mathfrak{g}$ since the quotient between two first integrals still
is a first integral. These assumptions will be made without further comments in what follows.

A last remark is needed before the classification of solvable subgroups of $\formdiffn$ can be stated.
Consider two vector fields $X$ and $\ooY$ such that the commutator $[X, \ooY]$ has the form $aX$ i.e.,
it is everywhere parallel to~$X$. Then the very definition of commutator for two vector fields yield the
following ``generalized Schwarz theorem''
\begin{equation}
\frac{\partial}{\partial \ooY} \left( \frac{\partial f}{\partial X} \right)
- \frac{\partial}{\partial X} \left( \frac{\partial f}{\partial \ooY} \right) = \frac{\partial f}{\partial [X,\ooY]}
= a \, \frac{\partial f}{\partial X} \label{generalizedSchwarz}
\end{equation}
for every $f \in \fieldC$. In particular $\ooY$ {\it derives first integrals of $X$ into first integrals
of $X$}, i.e. if $\varphi$ is a first integral of $X$ then so is $\partial \varphi/\partial \ooY$.

Let $\mathcal{I}_X$ denote the field formed by all first integrals of $X$ (we may assume this field contains non-constant
elements). The first case in the classification is the following:
\begin{itemize}

\item[Case 1 -] Suppose that $G$ is metabelian and that all of its elements have infinitesimal generator parallel
to a same vector field (necessarily $X$). Then the Lie algebra $\mathfrak{g}$ of $G$ is constituted by
vector fields of the form $uX$. Moreover $X$ can be chosen so as to ensure the existence
of $f \in \fieldC$ such that $\partial f /\partial X =\overline{h}$ is a non-zero element
in $\mathcal{I}_X$. Furthermore the assignment $uX \in \mathfrak{g} \mapsto u$ identifies
$\mathfrak{g}$ with a differential algebra $\mathcal{A}$ which, in turn, is
constituted by functions having the form $\varphi_1 f +\varphi_2$ where $f$ is as above and
$\varphi_1, \varphi_2$ belong to $\mathcal{I}_X$.

\item[Case 2 -] Suppose that $G$ is metabelian but contains an element whose infinitesimal generator is
not everywhere parallel to~$X$. Then there is a vector field $\ooY$ not everywhere parallel to~$X$ and
possessing the following property:

\noindent \hspace{0.3cm} ($\bullet$) \hspace{0.3cm} $[\ooY,X]= \tilde{h} X$ where $\tilde{h}$ is a
first integral for~$X$ (in particular if $\tilde{h} \equiv 0$ then $X,\ooY$ commute).

\noindent Furthermore there
is a function $f \in \fieldC$ such that $\partial f /\partial X$ is a non-identically zero first integral of $X$
and there is a certain first integral $h$ of $X$ such that every vector field in $\mathfrak{g}$ has
the form $(\varphi_1 f + \varphi_2) X + \alpha h \ooY$ where $\alpha \in \C$ and where $\varphi_1, \varphi_2$ are first integrals of~$X$.
Moreover, for every pair $(\varphi_1 f + \varphi_2) X + \alpha_1 h \ooY$ and $(\varphi_3 f + \varphi_4) X + \alpha_2 h \ooY$ of
elements in $\mathfrak{g}$, the function
$$
\alpha_2 \frac{\partial (\varphi_1 f + \varphi_2)}{\partial \ooY} - \alpha_1 \frac{\partial (\varphi_3 f + \varphi_4)}{\partial \ooY}
$$
lies in $\mathcal{I}_X$.

\item[Case 3 -] Suppose that $G$ is not metabelian so that $k=2$. Then the Lie algebra
of $G$ contains a vector field $\ooY$ not everywhere parallel to~$X$ and satisfying
the same condition as in Case~2 (namely $[\ooY,X]= \tilde{h} X$ where $\tilde{h}$ is a
first integral for~$X$).
Moreover the solvable Lie algebra of $G$ cab be identified with (a sub-algebra of)
the algebra $\mathfrak{g}_{\rm step-3}$ consisting of all formal vector fields having the form
$$
(\varphi_1 f + \varphi_2)X + \alpha h\ooY \,
$$
where $\alpha \in \C$, $\varphi_1, \varphi_2$ are first integrals of $X$, and where $h$ is a fixed first integral of $X$.

\end{itemize}

\begin{obs}
\label{justaremarkmoreorlessuseless-1}
{\rm Consider Case~2 and Case~3 above along with the corresponding vector field $\overline{Y}$. For every first
integral $h$ of $X$, note that the vector field
$h \overline{Y}$ satisfies the same conditions as $\overline{Y}$
(as indicated in Case~2). Hence, up to changing the vector field $\overline{Y}$, we can say that the
Lie algebra $\mathfrak{g}_{\rm step-3}$ consists of the vector fields having the form
$$
(\varphi_1 f + \varphi_2)X + \alpha \ooY \,
$$
where $\varphi, \, \varphi_2$, and $\alpha$ are as in Case~3. A similar simplification is possible in Case~2.}
\end{obs}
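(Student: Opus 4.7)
The plan is to verify the claim by a short Leibniz-rule computation. Recall the standard identity $[fU,V] = f[U,V] - (Vf)\,U$ for vector fields $U,V$ and a scalar function $f$. Applied with $f = h$, $U = \ooY$, and $V = X$ this yields $[h\ooY, X] = h[\ooY, X] - (\partial h / \partial X)\,\ooY$. By hypothesis $h$ is a first integral of $X$, so $\partial h/\partial X = 0$ and the right-hand side collapses to $h\tilde{h}\,X$. The product $h\tilde{h}$ is again a first integral of $X$, being the product of two elements of $\mathcal{I}_X$. Thus $h\ooY$ satisfies condition~($\bullet$) of Case~2 with the auxiliary first integral $\tilde{h}$ replaced by $h\tilde{h}$, which is the precise sense in which $h\ooY$ plays the same structural role as $\ooY$.

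Having established this invariance, I would next spell out the resulting simplification of $\mathfrak{g}_{\rm step-3}$. Set $\ooY' = h\ooY$, where $h$ denotes the fixed first integral appearing in the Case~3 description of $\mathfrak{g}_{\rm step-3}$. Then every element written as $(\varphi_1 f + \varphi_2)X + \alpha\, h\ooY$ rewrites as $(\varphi_1 f + \varphi_2)X + \alpha\, \ooY'$. Since the function $f$ and the field $\mathcal{I}_X$ are defined intrinsically in terms of $X$, the coefficients $\varphi_1, \varphi_2 \in \mathcal{I}_X$ are unaffected by this relabeling, and the Case~3 normal form assumes the announced cleaner shape.

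For the parallel simplification in Case~2, it remains to check that the pairwise compatibility condition, namely that $\alpha_2\, \partial(\varphi_1 f + \varphi_2)/\partial \ooY - \alpha_1\, \partial(\varphi_3 f + \varphi_4)/\partial \ooY$ lies in $\mathcal{I}_X$, is preserved under the substitution $\ooY \mapsto \ooY' = h\ooY$. This is immediate from $\partial /\partial \ooY' = h\,\partial/\partial \ooY$: the transformed expression equals $h$ times the original, and $\mathcal{I}_X$ is closed under multiplication by its own elements. No genuine obstacle is anticipated, the argument being an algebraic bookkeeping exercise around the Leibniz identity. The only point requiring mild care is to confirm that the ambient data, namely $f$, $\mathcal{I}_X$, and the differential algebra $\mathcal{A}$ of Case~1, are all constructed from $X$ alone and are therefore invariant under rescalings of $\ooY$ by first integrals of $X$.
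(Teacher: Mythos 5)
Your verification is correct and is exactly the computation the paper leaves implicit: the remark is asserted without proof, and the Leibniz identity $[h\ooY,X]=h[\ooY,X]-(\partial h/\partial X)\ooY=h\tilde h X$ together with $\partial/\partial(h\ooY)=h\,\partial/\partial\ooY$ is the intended justification. No gaps; the only (trivial) point left unsaid is that the fixed $h$ is not identically zero, so $h\ooY$ remains non-parallel to $X$ and the relabeling does not change the underlying set of vector fields.
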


\section{Towards Theorem~\ref{commuting9} - Induced Lie algebra maps}

In the remainder two sections of this paper, the proof of Theorem~\ref{commuting9} will finally be completed.
In the present section, we shall obtain a number of general auxiliary results allowing us to derive properties
about infinitesimal generators from properties involving formal diffeomorphisms in $\formdiffn$. Some of these
results hold interest in their own and, in any event, they
will come in hand for the proof of Theorem~\ref{commuting9} provided in the next section.

We begin the discussion with a rather general lemma.

\begin{lemma}
\label{NormalizeractingonAlgebra}
Suppose we are given a Lie subalgebra $\mathfrak{g}_1$ of $\ghatX$ along with a formal diffeomorphism
$F \in \formdiffn$ whose infinitesimal generator is denoted by $Z$. Assume that $F^{\ast} \mathfrak{g}_1
\subseteq \mathfrak{g}_1$. Then there is a well-defined homomorphism $[Z, \, . \,] : \mathfrak{g}_1
\rightarrow \mathfrak{g}_1$ assigning to $X \in \mathfrak{g}_1$ the commutator $[Z,X] \in \mathfrak{g}_1$.
\end{lemma}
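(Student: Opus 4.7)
The strategy is to combine Hadamard's formula~(\ref{hadamardlemmastatement1}) with iteration $F \mapsto F^n$ and a Vandermonde-type extraction of the linear-in-$n$ coefficient. The $\C$-linearity of the assignment $X \mapsto [Z,X]$ is immediate from bilinearity of the Lie bracket, so the only substantive content is that $[Z,X]$ actually belongs to $\mathfrak{g}_1$ whenever $X$ does. First I would use $(F^n)^{\ast} = (F^{\ast})^n$ to upgrade the hypothesis to $(F^n)^{\ast}\mathfrak{g}_1 \subseteq \mathfrak{g}_1$ for every integer $n \geq 0$. Since the infinitesimal generator of $F^n$ is $nZ$, formula~(\ref{hadamardlemmastatement1}) applied to $F^n$ reads
\[
(F^n)^{\ast} X \;=\; \sum_{k=0}^{\infty} \frac{n^k}{k!}\, \mathrm{ad}(Z)^{k}(X) \;\in\; \mathfrak{g}_1 \, ,
\]
where $\mathrm{ad}(Z)^{k}(X) = [Z,[Z,\ldots,[Z,X]]]$ involves $k$ copies of $Z$.

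Next I would exploit the order estimate underlying Lemma~\ref{commuting1}: because $Z \in \ghatX$ has order at least $2$ at the origin, each iterated bracket $\mathrm{ad}(Z)^{k}(X)$ has order at least $r+k$, with $r$ denoting the order of $X$. This ensures Krull-convergence of the series above, and reduction modulo vector fields of order $>r+N$ kills every term with $k>N$. The identity thus becomes a polynomial relation in $n$ of degree $\leq N$ valued in a finite dimensional truncation space. Evaluating at $n = 0,1,\ldots,N$ and inverting the essentially Vandermonde matrix $(n^k/k!)_{0 \leq n, k \leq N}$ over~$\Q$, I would solve for the coefficient indexed by $k=1$ to obtain rationals $c_{N,0},\ldots,c_{N,N}$ such that
\[
Y_N \;:=\; \sum_{n=0}^{N} c_{N,n}\, (F^n)^{\ast} X \;\in\; \mathfrak{g}_1 \, ,
\]
with $Y_N$ agreeing with $[Z,X]$ modulo order $>r+N$. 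Consequently the sequence $\{Y_N\} \subset \mathfrak{g}_1$ converges to $[Z,X]$ in the Krull topology of $\ghatX$.

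The hard part will be the final closure step: to promote the Krull-approximate conclusion $Y_N \to [Z,X]$ into the exact conclusion $[Z,X] \in \mathfrak{g}_1$ one needs $\mathfrak{g}_1$ to be Krull-closed. This is precisely the setting in which the lemma will be applied in the remainder of the paper, where $\mathfrak{g}_1$ is the Lie algebra attached to a subgroup of $\formdiffn$ through the construction reviewed in Section~2.3: the defining relation~(\ref{definitionLiealgebraassociated}) realises such a Lie algebra as an intersection, over every finite jet order, of conditions cut out by the finite dimensional Lie algebras $\mathfrak{g}_k$, hence as a Krull-closed subset of $\ghatX$. With that closedness in hand $[Z,X]$ lies in $\mathfrak{g}_1$, while the Jacobi identity simultaneously yields that $[Z,\,\cdot\,]$ is a derivation of $\mathfrak{g}_1$, a property which goes slightly beyond the bare linearity recorded in the statement.
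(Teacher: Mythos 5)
Your proposal is correct, and it reaches the conclusion by a genuinely different route from the paper. The paper first proves a Claim that the whole complex one-parameter group $F_t = {\rm Exp}\,(tZ)$ preserves $\mathfrak{g}_1$ — this requires the algebraic-group argument of Lemma~\ref{connectnessfromribon} (unipotence of $F_{t,k}$ in each jet space $D_k$ makes the invariance condition polynomial in $t$, and a polynomial vanishing on $\Z_+$ vanishes identically) — and then writes Hadamard's formula for $F_t$, divides by $t$, and lets $t \to 0$. You instead stay with the integer iterates $F^n$, for which invariance of $\mathfrak{g}_1$ is immediate from $(F^{\ast})^n = (F^n)^{\ast}$, truncate Hadamard's formula~(\ref{hadamardlemmastatement1}) modulo order $> r+N$ using the order estimate from Lemma~\ref{commuting1}, and recover $[Z,X]$ by Vandermonde interpolation at $n = 0, \ldots, N$. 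This buys you a more elementary argument: the Zariski-closure step disappears entirely, and as a bonus the same inversion extracts every coefficient ${\rm ad}(Z)^k(X)$, so your scheme proves Corollary~\ref{addedinFebruary-1} in the same stroke, whereas the paper needs a separate (if short) induction for it. The cost is the final closure step, which you correctly identify: you need $\mathfrak{g}_1$ to be Krull-closed to pass from $Y_N \to [Z,X]$ to $[Z,X] \in \mathfrak{g}_1$. This is not a defect relative to the paper — the published proof invokes exactly the same closedness of $\mathfrak{g}_1$ when it takes the limit $t \to 0$, even though the hypothesis is not stated in the lemma — and you are right that in all applications $\mathfrak{g}_1$ is of the form~(\ref{definitionLiealgebraassociated}) or a finite-dimensional subspace, hence closed.
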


\begin{proof}
The proof amounts to checking that the commutator $[Z,X]$ lies in $\mathfrak{g}_1$
provided that so does~$X$. To do this,
the complex one-parameter group given by the exponential of $Z$ will be denoted by
$F_t$, $t \in \C$, so that $F_1 =F$. The proof of the lemma depends on the following
claim:

\noindent {\it Claim}. We have $F_t^{\ast} \mathfrak{g}_1 \subseteq \mathfrak{g}_1$ for every $t \in \C$.

\noindent {\it Proof of the Claim}. The argument is similar to the one employed in the proof
of Lemma~\ref{connectnessfromribon}. We consider the linear group $D_k$ for some $k$ fixed
and denote by $F_{t,k}$ the element of $D_k$ induced by $F_t$. Similarly
$\mathfrak{g}_{k, (1)}$
will denote the sub-algebra of the Lie algebra of $D_k$ induced by $\mathfrak{g}_1$.
It suffices to show that $F_{t,k}$ satisfies $F_{t,k}^{\ast} \mathfrak{g}_{k, (1)} \subseteq \mathfrak{g}_{k, (1)}$
for every $t \in \C$ and every $k \in \N$. Up to passing
to some conveniently chosen Grassmann space where $\mathfrak{g}_{k, (1)}$
becomes identified to a point, the condition
$F_{t,k}^{\ast} \mathfrak{g}_{k, (1)} \subseteq \mathfrak{g}_{k, (1)}$ becomes an algebraic equation on
the variable~$t$.
Here, as in the proof of Lemma~\ref{connectnessfromribon}, the fundamental observation
leading to the algebraic nature of this equation is the fact that $F_{t,k}$ is unipotent:
its infinitesimal generator in the
Lie algebra of $D_k$ is a nilpotent vector field. In turn, the exponential of a nilpotent vector field
has polynomial entries on $t$ since a sufficiently large powers of the corresponding matrix will vanish
identically. From this it follows that the subset of $\C$ consisting of those
$t \in \C$ for which $F_{t,k}^{\ast} \mathfrak{g}_{k, (2)} \subseteq \mathfrak{g}_{k, (1)}$ is a Zariski-closed set. However,
this set contains the positive integers $\Z_+$ and hence must coincide with all of $\C$. The claim is proved.\qed

The rest of the proof of Lemma~\ref{NormalizeractingonAlgebra} relies on Hadamard lemma. Note that
for every $t \in \C$ and every vector field $X \in \mathfrak{g}_1$, the vector field $F_{t}^{\ast} X$
lies in $\mathfrak{g}_1$ as a consequence of the Claim. Therefore Hadamard lemma yields
$$
\frac{1}{t} ( F_{t}^{\ast} X - X) = [Z,X] + \frac{t}{2} [Z,[Z,X]] + \cdots \, .
$$
For every value of $t \in \C^{\ast}$,
the left hand side of the preceding equation lies in $\mathfrak{g}_1$ since both
$F_{t}^{\ast} X$ and $X$ belong to the Lie algebra $\mathfrak{g}_1$.
However, since $\mathfrak{g}_1$ is closed, the limit of the left hand side when
$t \rightarrow 0$ also belongs to $\mathfrak{g}_1$. This limit, however, is clearly equal to
$[Z,X]$. The proof of the lemma is completed.
\end{proof}

The following consequence of Lemma~\ref{NormalizeractingonAlgebra} is worth stating:

\begin{corol}
\label{LemmainvolvingHadamard}
Suppose we are given $F \in \formdiffn$ and $X \in \ghatX$ such that $F^{\ast} X$ is everywhere parallel
to $X$. Then the infinitesimal generator $Z$ of $F$ is such that the commutator $[Z,X]$ is everywhere
parallel to~$X$.
\end{corol}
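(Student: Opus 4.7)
The plan is to deduce the corollary directly from Lemma~\ref{NormalizeractingonAlgebra} by choosing a Lie subalgebra that encodes the property of being everywhere parallel to~$X$. Concretely, I would set
\[
\mathfrak{g}_1 = \{ Y \in \ghatX \; ; \; \, Y = h \, X \; \text{for some} \; h \in \fieldC \} \, .
\]
The whole proof reduces to three short verifications: that $\mathfrak{g}_1$ is a (Krull-closed) Lie subalgebra of $\ghatX$ containing $X$, and that $F^{\ast} \mathfrak{g}_1 \subseteq \mathfrak{g}_1$.

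Closedness under the bracket is immediate from the identity
\[
[h_1 X, h_2 X] = \Bigl( h_1 \, \frac{\partial h_2}{\partial X} - h_2 \, \frac{\partial h_1}{\partial X} \Bigr) X \, ,
\]
which is again of the form $h X$ with $h \in \fieldC$. Closedness in the Krull topology follows from the fact that, writing $X = A \, \partial /\partial x + B \, \partial /\partial y$, the condition for $P \, \partial /\partial x + Q \, \partial /\partial y$ to be a $\fieldC$-multiple of $X$ can be recast as the single equation $PB - QA = 0$ in $\formalC$, a closed condition order by order. Obviously $X \in \mathfrak{g}_1$.

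The invariance $F^{\ast}\mathfrak{g}_1 \subseteq \mathfrak{g}_1$ is precisely where the hypothesis is used. The standard formula for the pull-back of a vector field by a diffeomorphism gives $F^{\ast}(h X) = (h \circ F) \cdot F^{\ast} X$ for every $h \in \fieldC$. By assumption $F^{\ast} X = a X$ with $a \in \fieldC$, so $F^{\ast}(h X) = ((h \circ F) \, a) X$, which again lies in $\mathfrak{g}_1$. Lemma~\ref{NormalizeractingonAlgebra} then provides the homomorphism $[Z, \cdot \,] : \mathfrak{g}_1 \to \mathfrak{g}_1$, and applying it to $X \in \mathfrak{g}_1$ yields $[Z, X] \in \mathfrak{g}_1$, which is exactly the assertion.

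There is no genuine obstacle here: the entire technical content has been absorbed into Lemma~\ref{NormalizeractingonAlgebra}, whose combination of the Zariski-type connectedness argument with Hadamard's formula is what transfers the invariance of $\mathfrak{g}_1$ under the single pull-back $F^{\ast}$ to the corresponding infinitesimal relation for the generator $Z$.
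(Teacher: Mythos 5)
Your proof is correct and follows essentially the same route as the paper: both reduce the statement to Lemma~\ref{NormalizeractingonAlgebra} applied to a Lie algebra of vector fields everywhere parallel to~$X$, the only difference being that the paper uses the smallest $F$-invariant Lie algebra containing~$X$ while you use the full algebra of $\fieldC$-multiples of~$X$ inside~$\ghatX$. Your explicit verification of Krull-closedness is a welcome touch, since the proof of Lemma~\ref{NormalizeractingonAlgebra} does use that property even though its statement does not mention it.
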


\begin{proof}
Consider the smallest Lie algebra $\mathfrak{g}$ stable under pull-backs by $F$ and containing the vector field $X$.
Since  $F^{\ast} X$ is everywhere parallel to $X$, this Lie algebra is fully constituted by vector fields
everywhere parallel to $X$. Now apply the lemma to $\mathfrak{g}_1 = \mathfrak{g}$ to conclude that
$[Z,X]$ must belong to $\mathfrak{g}$. The lemma follows.
\end{proof}

Another very useful by-product of Lemma~\ref{NormalizeractingonAlgebra} is as follows:

\begin{corol}
\label{addedinFebruary-1}
Let $\mathfrak{g}_1$ and $F \in \formdiffn$ be as in Lemma~\ref{NormalizeractingonAlgebra}.
Assume that $Z$ is the infinitesimal generator of $F$ and consider a vector field $X$ in $\mathfrak{g}_1$.
Then all the iterated commutators $[Z,\ldots [Z,[Z,X]] \ldots ]$ lie in $\mathfrak{g}_1$.
\end{corol}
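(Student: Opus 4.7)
The plan is to derive this statement directly from Lemma~\ref{NormalizeractingonAlgebra} by a straightforward induction on the number of nested brackets. The lemma itself establishes that whenever $F^{\ast} \mathfrak{g}_1 \subseteq \mathfrak{g}_1$, the operator $\mathrm{ad}_Z := [Z, \,\cdot\,]$ takes $\mathfrak{g}_1$ into $\mathfrak{g}_1$; the corollary is just the observation that one may iterate this operator.

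More precisely, I would proceed as follows. The base case $\mathrm{ad}_Z(X) = [Z,X] \in \mathfrak{g}_1$ is precisely Lemma~\ref{NormalizeractingonAlgebra} applied to $X \in \mathfrak{g}_1$. For the inductive step, assume that $\mathrm{ad}_Z^n(X) = [Z, [Z, \ldots , [Z,X]] \ldots ] \in \mathfrak{g}_1$ for some $n \geq 1$. The hypothesis $F^{\ast} \mathfrak{g}_1 \subseteq \mathfrak{g}_1$ is intrinsic to the pair $(F, \mathfrak{g}_1)$ and does not depend on the particular element of $\mathfrak{g}_1$ one starts with; hence Lemma~\ref{NormalizeractingonAlgebra} applies once again to the element $\mathrm{ad}_Z^n(X) \in \mathfrak{g}_1$, yielding
\[
\mathrm{ad}_Z^{n+1}(X) = [Z, \mathrm{ad}_Z^n(X)] \in \mathfrak{g}_1 \, ,
\]
which closes the induction and proves the corollary.

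There is no genuine obstacle here, since all the work has been absorbed into Lemma~\ref{NormalizeractingonAlgebra} itself. The only minor point worth emphasizing is that the conclusion of that lemma already produces an \emph{endomorphism} $\mathrm{ad}_Z : \mathfrak{g}_1 \rightarrow \mathfrak{g}_1$ rather than just a single-element statement, so iterating this endomorphism is legitimate and yields exactly the iterated commutators of the statement.
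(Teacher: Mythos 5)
Your proof is correct, but it takes a genuinely different (and shorter) route than the paper's. You simply observe that Lemma~\ref{NormalizeractingonAlgebra} furnishes an endomorphism $[Z,\,\cdot\,]:\mathfrak{g}_1\rightarrow\mathfrak{g}_1$ whose hypothesis ($F^{\ast}\mathfrak{g}_1\subseteq\mathfrak{g}_1$) is independent of the element one feeds in, and then iterate it; this is a legitimate and clean induction, since the lemma's conclusion is stated for every element of $\mathfrak{g}_1$ and not just for the original $X$. The paper instead goes back to the mechanism of the lemma's proof: it expands Hadamard's formula one order further, writing
$$
\frac{2}{t}\left( \frac{1}{t}\left( F_{t}^{\ast} X - X\right) - [Z,X] \right) = [Z,[Z,X]] + O(t),
$$
notes that the left-hand side lies in $\mathfrak{g}_1$ for every $t$ because $F_t^{\ast}\mathfrak{g}_1\subseteq\mathfrak{g}_1$, and recovers $[Z,[Z,X]]$ as a limit using the closedness of $\mathfrak{g}_1$, before invoking induction for the higher commutators. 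What the paper's argument buys is robustness: it would still go through if one only knew that the orbit $\{F_t^{\ast}X\}$ stays in $\mathfrak{g}_1$, without knowing that $\mathrm{ad}_Z$ preserves all of $\mathfrak{g}_1$. What your argument buys is economy: given the lemma exactly as stated, no further analysis (and no further appeal to closedness) is needed. Both are valid proofs of the corollary.
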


\begin{proof}
We already know that $[Z,X]$ belongs to $\mathfrak{g}_1$. Let us check that $[Z,[Z,X]]$ belong to
$\mathfrak{g}_1$ as well. Keeping the notation used in the proof
of Lemma~\ref{NormalizeractingonAlgebra}, we have that $F_t^{\ast} \mathfrak{g}_1 \subseteq \mathfrak{g}_1$ for every $t \in \C$.
Now note that
$$
\frac{2}{t}
\left( \frac{1}{t} ( F_{t}^{\ast} X - X) - [Z,X] \right) = [Z,[Z,X]] + O \, (t) \, .
$$
Again for every $t \in \C$ the left side of the above equation lies in $\mathfrak{g}_1$ since both
$F_{t}^{\ast} X - X$ and $[Z,X]$ do so. By taking the limit as $t \rightarrow 0$ we then conclude that
$[Z,[Z,X]] \in \mathfrak{g}_1$ as desired. The rest of the proof is a simple induction argument.
\end{proof}

Our next lemma is also rather general and, albeit slightly technical, it will be very useful in
our discussion.

\begin{lemma}
\label{addedinJanuary2015.Number2}
Assume we are given a set $S \subset \formdiffn$ consisting of $s \geq 2$ formal diffeomorphisms
$F_1, \ldots ,F_s$. Denote by $Z_i$ the infinitesimal generator of $F_i$, $i=1,\ldots ,s$. Assume that
every element in the set $S(1) = \{ [F_i^{\pm 1}, F_j^{\pm 1} ] \, ; \; F_i, \, F_j \in S \}$ has infinitesimal
generator coinciding with a constant $c_{i^{\pm 1}, j^{\pm 1}}$ multiple of a certain vector field~$Y \in \ghatX$.
Then for every pair $i, j \in \{ 1, \ldots ,s\}$, the commutator $[Z_i,Z_j]$ coincides with $Y$ times a certain
constant in $\C$ (depending on $i$ and $j$). Moreover, in this case, we must have $[Z_i,Y]=[Z_j,Y]=0$ unless $[Z_i,Z_j]=0$.
\end{lemma}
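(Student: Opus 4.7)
First I would dispose of the trivial case: if $F_i$ and $F_j$ commute then by Lemma~\ref{commuting1} so do $Z_i$ and $Z_j$, every $[F_i^{\pm 1}, F_j^{\pm 1}]$ equals the identity, all four constants vanish, and the lemma holds with $[Z_i,Z_j] = 0\cdot Y$. Assuming henceforth that $F_i$ and $F_j$ do not commute, none of the four commutators is the identity, so all four constants $c_{i^{\pm 1},j^{\pm 1}}$ are nonzero; in particular $Y \not\equiv 0$.

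The crucial step is to show that $[Z_i, Y] = 0$ and $[Z_j, Y] = 0$. For this I would exploit the elementary group identity
\[
F_i\,[F_i^{-1}, F_j]\,F_i^{-1} \;=\; [F_i, F_j]^{-1},
\]
which at the level of infinitesimal generators reads $(F_i)_{\ast}(c_{i^{-1},j}\, Y) = -c_{ij}\, Y$, or equivalently $(F_i)_{\ast} Y = -(c_{ij}/c_{i^{-1},j})\, Y$. Thus $(F_i)_{\ast} Y$ lies in $\C Y$; comparing leading homogeneous components and using that $F_i$ is tangent to the identity then forces the scalar to be~$1$, i.e.\ $(F_i)_{\ast} Y = Y$. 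Hadamard's formula~(\ref{hadamardlemmastatement1}) applied to $F_i^{-1}$ now yields
\[
0 \;=\; -[Z_i, Y] + \tfrac{1}{2}[Z_i, [Z_i, Y]] - \cdots,
\]
and since $Z_i \in \ghatX$ has order at least~$2$, the iterated brackets $[Z_i,[Z_i,Y]], \ldots$ have strictly greater order than $[Z_i, Y]$; the usual leading-order argument thus gives $[Z_i, Y] = 0$. A completely symmetric identity with the roles of $F_i$ and $F_j$ interchanged delivers $[Z_j, Y] = 0$.

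With $[Z_i, Y] = [Z_j, Y] = 0$ in hand, the Campbell--Hausdorff expansion established in the proof of Lemma~\ref{commuting1},
\[
c_{ij}\, Y \;=\; [Z_i, Z_j] + \tfrac{1}{2}\bigl[Z_i, [Z_i, Z_j]\bigr] + \tfrac{1}{2}\bigl[Z_j, [Z_i, Z_j]\bigr] + \cdots,
\]
does most of the remaining work. The key structural observation is that every term after the initial $[Z_i, Z_j]$ belongs to the Lie ideal generated by $[Z_i, Z_j]$ in the free Lie algebra on $Z_i, Z_j$ (the series vanishes identically whenever $[Z_i, Z_j]$ does), hence can be written as an iterated $\mathrm{ad}\, Z_i, \mathrm{ad}\, Z_j$ applied to $[Z_i, Z_j]$. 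Now Lemma~\ref{commuting1} already tells us that $[Z_i, Z_j]$ and $c_{ij}\, Y$ share the same leading homogeneous component, so we may write $[Z_i, Z_j] = c_{ij}\, Y + R$ with the order of $R$ strictly greater than that of $Y$. Since $\mathrm{ad}\, Z_i$ and $\mathrm{ad}\, Z_j$ annihilate $Y$, every iterated adjoint action of $Z_i, Z_j$ applied to $[Z_i, Z_j]$ equals the same action applied to $R$, and substitution into the Hausdorff formula yields
\[
0 \;=\; R + \tfrac{1}{2}\bigl[Z_i + Z_j,\, R\bigr] + \cdots,
\]
in which every term after $R$ has order strictly greater than the order of~$R$. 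Comparing leading homogeneous components forces $R = 0$, i.e.\ $[Z_i, Z_j] = c_{ij}\, Y$.

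The main obstacle is the middle step — establishing $(F_i)_{\ast} Y \in \C Y$ from the pointwise information on $S(1)$ alone. The trick is to play off the different sign patterns in the definition of $S(1)$: knowing simultaneously that $[F_i, F_j]$ and $[F_i^{-1}, F_j]$ both lie in the one-parameter subgroup generated by $Y$, together with the conjugation identity above, forces $F_i$ to preserve the complex line $\C Y \subset \ghatX$. Once this rigidity is available, the rest is driven entirely by the order-increasing nature of the bracket on $\ghatX$.
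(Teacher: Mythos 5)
Your proof is correct, but it is organized differently from the paper's. The paper works directly with the two Campbell--Hausdorff expansions of $\log\,[F_i,F_j]$ and $\log\,[F_i,F_j^{-1}]$: comparing leading homogeneous components gives $c_{i,j^{-1}}=-c_{i,j}$, and adding the two series cancels the first-order term and leaves $[Z_j,[Z_i,Z_j]]$ as the lowest-order survivor, whence $[Z_j,[Z_i,Z_j]]=0$ and, symmetrically, $[Z_i,[Z_i,Z_j]]=0$; the Hausdorff series then truncates to $[Z_i,Z_j]=c_{i,j}Y$, and the relations $[Z_i,Y]=[Z_j,Y]=0$ drop out only at the very end. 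You reverse the order of deduction: the conjugation identity $F_i\circ[F_i^{-1},F_j]\circ F_i^{-1}=[F_i,F_j]^{-1}$ gives $(F_i)_{\ast}Y\in\C Y$, unipotency pins the scalar to~$1$, Hadamard's formula then yields $[Z_i,Y]=0$ (and symmetrically $[Z_j,Y]=0$) first, and only then do you return to Campbell--Hausdorff to kill the remainder $R$. Both arguments rest on the same two ingredients --- the sign information carried by the inverses appearing in $S(1)$ and the strict order-increase of ${\rm ad}\,Z_i$ on $\ghatX$ --- so the difference is one of packaging rather than substance. What your route buys is a cleaner intermediate geometric statement ($F_i$ and $F_j$ preserve the line $\C Y$, so the ``moreover'' clause comes first), at the cost of invoking explicitly the fact that the Hausdorff series of a group commutator lies in the ideal generated by $[Z_i,Z_j]$ and is therefore spanned by iterated adjoints of the generators applied to $[Z_i,Z_j]$; since the paper needs the same structural fact for its own final truncation step, nothing extra is really being assumed.
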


\begin{proof}
According to Campbell-Hausdorff formula in~(\ref{finalcommutators}), the infinitesimal generator
$c_{i, j} Y$ of $F_i \circ F_j \circ F_i^{-1} \circ F_j^{-1}$ is given by
\begin{equation}
c_{i, j} Y = [Z_i,Z_j] + \frac{1}{2} \left( [Z_i,[Z_i,Z_j]] + [Z_j,[Z_i,Z_j]] \right) +
\cdots \, .\label{campbellH-1}
\end{equation}
Naturally we can assume that $c_{i, j} \neq 0$, otherwise the statement follows from
Lemma~\ref{commuting1}. On the other hand, as observed in the proof of Lemma~\ref{commuting1}, the first
non-zero homogeneous component of $[Z_i,Z_j]$ coincides with the first non-zero homogeneous component of the
entire right hand side of~(\ref{campbellH-1}). In particular, the value of $c_{i, j}$ is determined
by comparing the first non-zero homogeneous component of $[Z_i,Z_j]$ with the first non-zero homogeneous component of
$Y$.

Consider now the commutator $F_i \circ F_j^{-1} \circ F_i^{-1} \circ F_j$ whose infinitesimal generator is
$c_{i,j^{-1}} Y$ where
\begin{equation}
c_{i, j^{-1}} Y = -[Z_i,Z_j] + \frac{1}{2} \left( [Z_i,[Z_i,-Z_j]] + [-Z_j,[Z_i,-Z_j]] \right) +
\cdots \, .\label{campbellH-2}
\end{equation}
Again $c_{i, j^{-1}}$ is determined by comparing the first non-zero homogeneous components of $Y$
and of $[Z_i,Z_j]$ so that we must have $c_{i, j^{-1}} =- c_{i, j}$. Adding up equations~(\ref{campbellH-1})
and~(\ref{campbellH-2}), we obtain
$$
0 = [Z_j,[Z_i,Z_j]] + \cdots \,
$$
where the ellipsis stand for terms whose orders are
greater than the order of $[Z_j,[Z_i,Z_j]]$. From this, we conclude
that $[Z_j,[Z_i,Z_j]]$ must vanish identically. Analogously $[Z_i,[Z_i,Z_j]]$ vanishes identically
as well. In turn, the right hand side of~(\ref{campbellH-1}) (resp.~(\ref{campbellH-2})) becomes reduced to
$[Z_i,Z_j]$. The lemma follows at once.
\end{proof}

We can now begin a direct approach to the proof of Theorem~\ref{commuting9}
by recalling the general strategy to prove this type of statement.
Consider a pseudo-solvable group
$G$ along with a finite generating set $S =S(0)$ leading to a sequence of sets $S(j)$ that degenerates into
$\{ {\rm id} \}$ for large enough $j \in \N$. Denote by
$G(j)$ (resp. $G (j,j-1)$) the subgroup generated by
$S(j)$ (resp. $S(j) \cup S(j-1)$). Let $k$ be the largest integer for which $S(k)$ is not reduced to the identity.
It then follows that $G(k)$ is abelian. Similarly the group $G (k,k-1)$
is solvable. Next denote by $m$ the {\it smallest}\, integer for which
$G (m,m-1)$ is solvable. Unless otherwise mentioned, we shall always assume aiming at
a contradiction that $m \geq 2$. Recall also that
every element $F$ in $S (m-2)$ satisfies the condition
\begin{equation}
F^{\pm 1} \circ G (m-1) \circ F^{\mp 1} \subset G (m,m-1) \, . \label{Conjugatingsolvablegroups-10}
\end{equation}
Actually a slightly more precise formulation of this property is provided by condition~(\ref{Conjugatingsolvablegroups-2}).
Our aim will be to prove that the group generated by $G (m,m-1) \cup S(m-2) = G(m-1,m-2)$ is still solvable which, in turn, will
contradict the fact that $m \geq 2$.

At this juncture, it is convenient to single out a couple of simple consequences stemming from
condition~(\ref{Conjugatingsolvablegroups-10}). These are as follows.
\begin{itemize}
  \item Assume that the group $G(m-1)$ is Zariski-dense in $G(m,m-1)$. Then the two groups share the same Lie algebra
  and, in fact, they are both Zariski-dense in the exponential of this common Lie algebra. In this
  case condition~(\ref{Conjugatingsolvablegroups-10}) implies that $F \in S(m-2)$ must belong to the
  normalizer of $G(m,m-1)$. This remark will simplify the discussion in Section~7 at a couple of points.

  \item Let $\mathfrak{g} (m-1)$ (resp. $\mathfrak{g} (m)$) denote the Lie algebra associated with the group $G(m-1)$
  (resp. $G(m)$) while $\mathfrak{g} (m,m-1)$ will denote the Lie algebra associated with $G(m,m-1)$.
  Clearly $\mathfrak{g} (m-1) \subset \mathfrak{g} (m,m-1)$. Condition~(\ref{Conjugatingsolvablegroups-10}) then implies
  that $F^{\ast} (\mathfrak{g} (m-1)) \subseteq \mathfrak{g} (m,m-1)$.

\end{itemize}

Keeping the above notation, let us consider in closer detail the fact that
$F^{\ast} (\mathfrak{g} (m-1)) \subseteq \mathfrak{g} (m,m-1)$. Note that this situation is close to the content
of Lemma~\ref{NormalizeractingonAlgebra} except that we are not certain
to also have $F^{\ast} (\mathfrak{g} (m,m-1)) \subseteq \mathfrak{g} (m,m-1)$. To overcome this difficulty and
be able to exploit Lemma~\ref{NormalizeractingonAlgebra}, a further elaboration on these conditions is needed.
To begin the discussion, recall that
neither $G(m)$ nor $G(m-1)$ is reduced to the identity so that the corresponding Lie algebras
$\mathfrak{g} (m)$ and $\mathfrak{g} (m-1)$ are non-trivial. First, we have:

\begin{lemma}
\label{nodimension-1-June2015}
Without loss of generality, we can always assume that the dimension of the Lie algebra $\mathfrak{g} (m-1)$
is at least~$2$.
\end{lemma}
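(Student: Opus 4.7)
My plan is to dispose of the sub-cases $\dim \mathfrak{g}(m-1) = 0$ and $\dim \mathfrak{g}(m-1) = 1$ separately, deriving in each a contradiction with the minimality of $m$ (which forces $G(m-1,m-2)$ to be non-solvable). Since the remaining possibility $\dim \mathfrak{g}(m-1) \geq 2$ is precisely what the lemma asserts, this will complete the reduction.

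First, if $\dim \mathfrak{g}(m-1) = 0$, then the Zariski closure $\overline{G(m-1)}$ is trivial, so $G(m-1) = \{{\rm id}\}$ and in particular $S(m-1) = \{{\rm id}\}$. By the very definition of $S(m-1)$ as the collection of commutators $[F_1^{\pm 1}, F_2^{\pm 1}]$ with $F_1 \in S(m-2)$ and $F_2 \in S(m-3) \cup S(m-2)$, any two elements of $S(m-2)$ must then commute. Hence $G(m-1,m-2) = \langle S(m-1) \cup S(m-2) \rangle = \langle S(m-2) \rangle$ is abelian, contradicting the non-solvability forced by the minimality of $m$.

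The substantive case is $\dim \mathfrak{g}(m-1) = 1$. Writing $\mathfrak{g}(m-1) = \C X$ for some $X \in \ghatX$, Proposition~\ref{frommarteloribon} gives $\overline{G(m-1)} = {\rm Exp}\,(\C X)$ and in particular $G(m-1)$ is abelian. Condition~(\ref{Conjugatingsolvablegroups-10}) together with the Zariski-closure construction of Section~2.3 then yields $F^{\ast} X \in \mathfrak{g}(m,m-1)$ for every $F \in S(m-2)$. I would combine the classification of solvable subalgebras of $\ghatX$ from Section~5.3 with the order estimates provided by Hadamard's formula (in particular Corollaries~\ref{LemmainvolvingHadamard} and~\ref{addedinFebruary-1}) to argue that $X$ must be proportional, up to a first-integral factor, to the distinguished top-order vector field of $\mathfrak{g}(m,m-1)$. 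The ensuing rigidity of the pull-back action of $S(m-2)$ forces $G(m-1,m-2)$ itself to lie inside the exponential of a solvable Lie algebra, again contradicting the minimality of $m$.

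The main obstacle is precisely this second case: the inclusion $F^{\ast} \mathfrak{g}(m-1) \subset \mathfrak{g}(m,m-1)$ is too weak to apply Lemma~\ref{NormalizeractingonAlgebra} directly, and one has to extract solvability of $G(m-1,m-2)$ from the sole interplay between the one-parameter subgroup ${\rm Exp}\,(\C X)$ and the solvable envelope of $G(m,m-1)$. The payoff of the lemma is that, once the case $\dim \mathfrak{g}(m-1) \geq 2$ has been isolated, one has enough independent vector fields in $\mathfrak{g}(m-1)$ to invoke the Hadamard-type rigidity of Corollary~\ref{addedinFebruary-1} and to pin down the pull-back action of $S(m-2)$ via the structural results of Section~5.
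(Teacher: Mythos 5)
Your reduction of the case $\dim \mathfrak{g}(m-1)=0$ is fine (and in fact that case is already excluded by the standing convention of Section~2.1 that neither $G(m-1)$ nor $G(m)$ is reduced to the identity). The problem is the case $\dim \mathfrak{g}(m-1)=1$, which you correctly identify as the substantive one but do not actually resolve. Your plan is to study the pull-back action of $S(m-2)$ on the line $\C X = \mathfrak{g}(m-1)$ inside $\mathfrak{g}(m,m-1)$ and to deduce from the classification of Section~5.3 that ``$X$ is proportional to the distinguished top-order vector field,'' whence some rigidity of the $F_i$. This has two gaps. First, there is no distinguished top-order vector field in general: $\mathfrak{g}(m,m-1)$ may for instance be abelian of the form $\{hX'\}$ with infinitely many orders present, and nothing forces $X$ into any privileged position in it. Second, and more seriously, even full knowledge of how each $F_i \in S(m-2)$ acts on $X$ only controls the brackets $[Z_i, X]$; it says nothing about the mutual brackets $[Z_i,Z_j]$ of the infinitesimal generators of the elements of $S(m-2)$, and it is precisely these brackets that must be controlled to conclude that $G(m-1,m-2)$ is solvable. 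Your argument never uses the one piece of information that makes this possible.

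That piece of information is that every element of $S(m-1)$ is by definition a group commutator $[F_i^{\pm 1},F_j^{\pm 1}]$ with $F_i \in S(m-2)$ and $F_j \in S(m-2)\cup S(m-3)$, and the hypothesis $\dim\mathfrak{g}(m-1)=1$ says all these commutators have infinitesimal generator equal to a constant multiple of a single vector field $Y$. The paper feeds this directly into Lemma~\ref{addedinJanuary2015.Number2} (a Campbell--Hausdorff computation comparing $[F_i,F_j]$ with $[F_i,F_j^{-1}]$), which yields that $[Z_i,Z_j]$ is itself a constant multiple of $Y$ and that $[Z_i,Y]=[Z_j,Y]=0$ whenever $[Z_i,Z_j]\neq 0$; the pairs with $[Z_i,Z_j]=0$ are then handled by the Jacobi identity (using either a non-commuting pair in $S(m-2)$ or an element of $S(m-3)$ not commuting with some $F_i$). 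The upshot is that the Lie algebra generated by $Y,Z_1,\ldots,Z_s$ is nilpotent, hence $G(m-1,m-2)$ is solvable, contradicting the minimality of $m$. Without an argument of this type --- i.e.\ without exploiting that the commutators of $S(m-2)$ land in the one-dimensional algebra --- your sketch cannot be completed.
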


\begin{proof}
The proof amounts to checking that Theorem~\ref{commuting9} holds whenever the dimension of
$\mathfrak{g} (m-1)$ is exactly~$1$. For this we assume once and for all
that the dimension of $\mathfrak{g} (m-1)$ equals~$1$ so that every element in $S(m-1)$ has the
same infinitesimal generator $Y$ up to a multiplicative constant. Consider the formal diffeomorphisms $F_1, \ldots ,F_s$
in the set $S(m-2)$. The infinitesimal generator of $F_i$ is denoted by $Z_i$, $i=1,\ldots ,s$.

Assume first that to every $i=1, \ldots, s$ there corresponds $j(i) \in \{ 1, \ldots ,s\}$ such that the commutator
$[Z_i ,Z_{j(i)}]$ does not vanish identically. Under this assumption, Lemma~\ref{addedinJanuary2015.Number2} immediately implies
that the Lie algebra generated by $Y, Z_1, \ldots ,Z_s$ is solvable (actually nilpotent). The proof
of Theorem~\ref{commuting9} follows at once.

Now consider the more general case where there is $r \leq s-2$ such that
$Z_1, \ldots , Z_r$ commute with every $Z_i$, $i=1,\ldots ,s$. Moreover,
to every $i \in \{ r+1 ,\ldots, s\}$ there corresponds $j(i) \in \{ r+1 ,\ldots, s\}$ so that $[Z_i ,Z_{j(i)}]$
does not vanish identically.
The difficulty to apply Lemma~\ref{addedinJanuary2015.Number2} in this situation
lies in the fact that this lemma provides no information on the commutators $[Z_i, Y]$ for $i=1, \ldots, r$. The
desired information, however, can be derived from Jacobi identity as follows. Given $Z_i$ with $i=1, \ldots, r$,
choose two non-commuting vector fields $Z_{j_1}$ and $Z_{j_2}$ (in particular $j_1, \, j_2 \in \{ r+1 ,\ldots, s\}$).
Jacobi identity then yields
$$
0 = [Z_i ,[Z_{j_1}, Z_{j_2}]] + [Z_{j_1}, [Z_{j_2}, Z_i]] + [Z_{j_2}, [Z_i,Z_{j_1}]] \, .
$$
Since $[Z_{j_1}, Z_{j_2}]$ is a constant multiple of $Y$ and $[Z_{j_2}, Z_i] = [Z_i,Z_{j_1}] =0$, there follows
that $[Z_i,Y]=0$. Therefore the Lie algebra generated by $Y, Z_1, \ldots ,Z_s$ must still be solvable and this
yields Theorem~\ref{commuting9} in the situation in question.

Finally suppose that $[Z_i,Z_j]=0$ for every pair $i,j \in \{1, \ldots ,s\}$. Since $G(m-1)$ is not reduced to the identity,
there must exist an element $\overline{F} \in S(m-3)$ which does not commute with, say, $F_1$. Denoting by $\overline{Z}$
the infinitesimal generator of $\overline{F}$, Lemma~\ref{addedinJanuary2015.Number2} can still be applied
to ensure that $[Z_1, \overline{Z}]$ coincides with a constant multiple of $Y$ whereas $[Z_1,Y]=[\overline{Z},Y]=0$.
In particular $[Z_i,Y]=0$ for every $i=1, \ldots, s$ such that $[Z_i ,\overline{Z}]$ does not vanish identically.
On the other hand, if $[Z_{i_0},\overline{Z}]=0$ for some $i_0 \in \{1, \ldots, s\}$, then Jacobi identity gives us again
$$
0= [Z_{i_0},[Z_1 ,\overline{Z}]] + [Z_1, [\overline{Z} , Z_{i_0}]] + [\overline{Z},[Z_{i_0}, Z_1]] \, .
$$
Since $[Z_{i_0},\overline{Z}]= [Z_{i_0}, Z_1] =0$ (by assumption) and $[Z_1, \overline{Z}]$ coincides with a constant
multiple of $Y$, we conclude that $[Z_{i_0} ,Y]=0$ so that the Lie algebra generated by $Y, Z_1, \ldots ,Z_s$ is again
solvable. The proof of the lemma is completed.
\end{proof}

Now, we state:

\begin{lemma}
\label{universalsolvablealgebra-June2015}
There is a maximal solvable Lie algebra $\mathfrak{g}^{\infty} (m,m-1)$ containing $\mathfrak{g} (m,m-1)$
along with another subalgebra $\mathfrak{g}^{\infty,\ast} (m,m-1)$ which satisfies the following conditions:
\begin{itemize}
  \item $\mathfrak{g}^{\infty,\ast} (m,m-1)$ contains $\mathfrak{g} (m-1)$

  \item $\mathfrak{g}^{\infty,\ast} (m,m-1)$ is invariant under the action of $F$ by pull-backs. Moreover
  $\mathfrak{g}^{\infty,\ast} (m,m-1)$ is also uniform in the sense that it can be chosen so as
to be simultaneously invariant by every formal diffeomorphism $F \in \formdiffn$ fulfilling
condition~(\ref{Conjugatingsolvablegroups-10}).
\end{itemize}
\end{lemma}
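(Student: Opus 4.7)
My plan is to construct $\mathfrak{g}^\infty(m,m-1)$ first by an abstract Zorn argument, then to construct $\mathfrak{g}^{\infty,\ast}(m,m-1)$ by saturating $\mathfrak{g}(m-1)$ under pullbacks by all diffeomorphisms in the collection $\mathcal{F} \subset \formdiffn$ of elements satisfying condition~(\ref{Conjugatingsolvablegroups-10}), and finally embed the saturation inside a solvable extension.

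For the first part, I would apply Zorn's lemma to the poset $\mathcal{C}$ of solvable Lie subalgebras of $\ghatX$ containing the solvable Lie algebra $\mathfrak{g}(m,m-1)$, ordered by inclusion. The crucial input here is the uniform bound on the derived length of solvable subalgebras of $\ghatX$ provided by the remark following Proposition~\ref{frommarteloribon} (namely, at most~$3$). Given any chain $\{\mathfrak{a}_\lambda\}$ in $\mathcal{C}$, the union $\bigcup_\lambda \mathfrak{a}_\lambda$ is again a Lie subalgebra whose third derived algebra vanishes, hence remains solvable. A maximal element $\mathfrak{g}^\infty(m,m-1)$ therefore exists.

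For the second part, note that $\mathcal{F}$ is stable under $F \mapsto F^{-1}$ and contains the identity, since (\ref{Conjugatingsolvablegroups-10}) is symmetric and is trivially satisfied by ${\rm id}$. Define recursively $\mathfrak{h}_0 = \mathfrak{g}(m-1)$ and, for $k\geq 0$, let $\mathfrak{h}_{k+1}$ be the Lie subalgebra of $\ghatX$ generated by $\mathfrak{h}_k \cup \{ F^{\ast}X \, : \, F \in \mathcal{F}, \, X \in \mathfrak{h}_k \}$. Set $\mathfrak{g}^{\infty,\ast}(m,m-1) = \bigcup_{k\geq 0} \mathfrak{h}_k$. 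By construction this Lie algebra contains $\mathfrak{g}(m-1)$ and is invariant under $F^{\ast}$ for every single $F \in \mathcal{F}$, which is precisely the uniformity required by the second bullet. The base case $\mathfrak{h}_1 \subseteq \mathfrak{g}(m,m-1)$ follows directly from~(\ref{Conjugatingsolvablegroups-10}), so $\mathfrak{h}_1$ is solvable.

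The hard part is showing that the full saturation $\mathfrak{g}^{\infty,\ast}(m,m-1)$ is itself solvable, so that it can be fit inside a maximal solvable algebra; the difficulty lies in the fact that $F_2 \circ F_1$ need not belong to $\mathcal{F}$ when $F_1, F_2$ do, so iterated pullbacks cannot be reduced to single ones. Here I would combine Corollary~\ref{addedinFebruary-1} with Hadamard's formula~(\ref{hadamardlemmastatement1}) to express each $F^{\ast}X$ as $X$ plus iterated brackets with the infinitesimal generator of $F$, and use the uniform bound of~$3$ on derived length of solvable Lie subalgebras of $\ghatX$ (together with an induction on $k$) to conclude that $D^3 \mathfrak{h}_k = \{0\}$ for every $k$. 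Once this solvability is in hand, one last invocation of Zorn's lemma enlarges the algebra $\mathfrak{g}^\infty(m,m-1)$ to a maximal solvable Lie algebra containing both $\mathfrak{g}(m,m-1)$ and $\mathfrak{g}^{\infty,\ast}(m,m-1)$, which is the desired object and concludes the proof.
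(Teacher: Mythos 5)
Your proposal has a genuine gap at exactly the point you flag as ``the hard part'', and the tools you invoke there cannot close it. Condition~(\ref{Conjugatingsolvablegroups-10}) only controls $F^{\ast}X$ for $X\in\mathfrak{g}(m-1)$; it says nothing about $F^{\ast}Y$ when $Y$ lies in $\mathfrak{h}_1\setminus\mathfrak{h}_0$, so already the passage from $\mathfrak{h}_1$ to $\mathfrak{h}_2$ in your saturation is uncontrolled. Corollary~\ref{addedinFebruary-1} cannot help, because its hypothesis is precisely the invariance $F^{\ast}\mathfrak{g}_1\subseteq\mathfrak{g}_1$ that you are trying to establish; and Hadamard's formula~(\ref{hadamardlemmastatement1}) expresses $F^{\ast}Y$ through iterated brackets $[Z,Y]$ with $Z=\log F$, but there is no a priori reason for $[Z,Y]$ to land in any solvable algebra once $Y$ leaves $\mathfrak{g}(m-1)$. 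Likewise, the derived-length bound~$3$ is a constraint satisfied by algebras already known to be solvable; it is not a mechanism for proving that $\bigcup_k\mathfrak{h}_k$ is solvable. The paper itself points out, just before stating the lemma, that the whole difficulty is that one is \emph{not} certain to have $F^{\ast}(\mathfrak{g}(m,m-1))\subseteq\mathfrak{g}(m,m-1)$; a formal saturation plus induction does not resolve this.

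What the paper actually does is structural rather than formal: using Lemma~\ref{nodimension-1-June2015} (so that $\dim\mathfrak{g}(m-1)\geq 2$) and the classification of solvable subalgebras of $\ghatX$ from Section~5.3, it identifies where the two solvable algebras $\mathfrak{g}(m-1)$ and $F^{\ast}(\mathfrak{g}(m-1))$ can sit inside $\mathfrak{g}(m,m-1)$. From this one deduces, case by case, that $F$ must preserve a distinguished vector field $X$ (e.g.\ the one of maximal order at the origin, fixed because $F$ is unipotent) or at least send $X$ to an everywhere parallel vector field, and then the \emph{canonical} maximal solvable algebra attached to $X$ --- for instance $\{(\varphi_1 f+\varphi_2)X\}$, or the relevant algebra of type $\mathfrak{g}_{{\rm abelian}-1}$ or $\mathfrak{g}_{\rm step-3}$ --- is automatically invariant. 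Because this algebra depends only on $X$ and the structure of $\mathfrak{g}(m,m-1)$, and not on the particular $F$, the uniformity in the second bullet comes for free; in your construction, by contrast, uniformity is asserted but the object being saturated is not shown to stay inside anything solvable. Your Zorn argument for the existence of a maximal solvable algebra containing a \emph{given} solvable algebra is fine, but it is the easy half; to repair the proposal you would need to replace the inductive saturation step by the classification-based analysis, at which point you would essentially be reproducing the paper's proof.
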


\begin{proof}
Consider the non-trivial solvable (isomorphic) Lie algebras $\mathfrak{g} (m-1)$ and $F^{\ast} (\mathfrak{g} (m-1))$
which are both contained in the solvable Lie algebra $\mathfrak{g} (m,m-1)$. The proof of the lemma relies on the classification
of solvable Lie algebras as described in Section~5.3. To begin with consider a non-zero vector field
$X \in \mathfrak{g} (m-1)$.

\noindent {\it Case 1}. Assume that all vector fields in $\mathfrak{g} (m-1)$ are everywhere parallel to $X$.

In this case $F^{\ast} (\mathfrak{g} (m-1))$ is a Lie algebra formed by mutually everywhere parallel vector fields.
Owing to Lemma~\ref{nodimension-1-June2015} there also follows that the dimension of both Lie algebras
$\mathfrak{g} (m-1)$ and $F^{\ast} (\mathfrak{g} (m-1))$ is at least~$2$. Finally both
$\mathfrak{g} (m-1)$ and $F^{\ast} (\mathfrak{g} (m-1))$ are subalgebras of the solvable Lie algebra $\mathfrak{g} (m,m-1)$.
Direct inspection in the classification of solvable Lie
algebras provided in Section~5.3 then shows that both $\mathfrak{g} (m-1)$ and $F^{\ast} (\mathfrak{g} (m-1))$
are contained in a Lie algebra of the form $(\varphi_1 f + \varphi_2)X$ (with the notation of Section~5.3). In
particular $F^{\ast} X$ is everywhere parallel to~$X$. It also follows that the
(maximal) Lie algebra of the form $(\varphi_1 f + \varphi_2)X$ is invariant by $F$ and contains
$\mathfrak{g} (m-1)$. The Lie algebra in question can then be taken as $\mathfrak{g}^{\infty,\ast} (m,m-1)$.
Note however that our construction does not ensure that $\mathfrak{g}^{\infty,\ast} (m,m-1)$ also contains
$\mathfrak{g} (m,m-1)$. However, it is again
clear from the classification in Section~5.3 that the smallest Lie algebra containing both $\mathfrak{g}^{\infty,\ast} (m,m-1)$
and $\mathfrak{g}^{\infty} (m,m-1)$ is still a solvable Lie algebra. Thus we can choose $\mathfrak{g}^{\infty} (m,m-1)$ to
coincide with the Lie algebra generated by both $\mathfrak{g}^{\infty,\ast} (m,m-1)$
and $\mathfrak{g}^{\infty} (m,m-1)$. This proves the lemma in Case~1.

\noindent {\it Case 2}. Assume that $\mathfrak{g} (m-1)$ contains a vector field that is not everywhere parallel to $X$.

According to the discussion in Section~5.3, the solvable Lie algebra $\mathfrak{g} (m,m-1)$ either is isomorphic
to $\mathfrak{g}_{{\rm abelian}-1}$ (the Lie algebra associated with the group $\Gamma_{{\rm abelian}-1}$) or
is as in Case~2 or Case~3 of the same section.

First we assume that $\mathfrak{g} (m,m-1)$ is isomorphic to $\mathfrak{g}_{{\rm abelian}-1}$ and hence of
dimension~$3$. Recall from the proof of Lemma~\ref{lastversionLemma3} that $\mathfrak{g} (m,m-1) \simeq \mathfrak{g}_{{\rm abelian}-1}$
is generated by three vector fields $X, Y, \widetilde{Z}$ such that $[X,Y]=[X,\widetilde{Z}]=0$ and $[\widetilde{Z}, Y] = cX$.
Moreover $X$ spans the center of $\mathfrak{g} (m,m-1)$ and $X$ is also
distinguished in $\mathfrak{g} (m,m-1)$ as the vector field of maximal order at the origin (see proof of Lemma~\ref{nextlemmabelow}
for details).

On the other hand, we can assume that the dimension of $\mathfrak{g} (m-1)$ is strictly less than~$3$
otherwise all the algebras $\mathfrak{g} (m-1)$, $F^{\ast} (\mathfrak{g} (m-1))$, and $\mathfrak{g} (m,m-1)$
must coincide so that $\mathfrak{g} (m,m-1)$ is $F$ invariant and the lemma follows. By resorting
to Lemma~\ref{nodimension-1-June2015}, we can hence assume that the dimension of $\mathfrak{g} (m-1)$ is exactly~$2$.
Therefore either $\mathfrak{g} (m-1)$ is a linear span or it is isomorphic to an affine algebra. However,
the affine algebra case can be ruled out since all vector fields have zero linear part so that the
commutator have order strictly larger than the order of the initial vector fields; see Lemma~\ref{commuting1}.
In other words, $\mathfrak{g} (m-1)$ must be a linear span of commuting vector fields. Clearly $X$ lies
in $\mathfrak{g} (m-1)$, otherwise $\mathfrak{g} (m,m-1)$ would be abelian. Similarly $X$ also belongs
to $F^{\ast} (\mathfrak{g} (m-1))$. In fact, $F$ must preserve $X$ since $X$ has maximal order in $\mathfrak{g} (m,m-1)$.

Without loss of generality, we can assume that $Y$ is not everywhere parallel to $X$ and lies in $\mathfrak{g} (m-1)$.
To complete the proof of the lemma for the case where $\mathfrak{g} (m,m-1)$ is isomorphic to
$\mathfrak{g}_{{\rm abelian}-1}$. It suffices
to check that $\widetilde{Z}$ can be chosen so as to be everywhere parallel to~$X$. In fact, since $F$ preserves $X$
there follows that $F$ must preserve $\widetilde{Z}$ as well provided that $X$ and $\widetilde{Z}$ are everywhere parallel.
To check the claim, first note that $\widetilde{Z}$ has the form $aX +bY$ since $[X,\widetilde{Z}]=0$
(here $a$ and $b$ are first integrals of $X$). In turn, equation $[\widetilde{Z}, Y] = cX$ ensures that $b$ is also
a first integral of $Y$ so that $b$ must be a constant $\tilde{c} \in \C$. Now the claim follows by replacing $\widetilde{Z}$
by $\widetilde{Z} - \tilde{c} Y$.

To finish the proof of the lemma, it only remains to consider the possibility of having $\mathfrak{g} (m,m-1)$
as in Case~2 or Case~3 of Section~5.3. If the dimension of $\mathfrak{g} (m-1)$ equals~$3$ or greater, then this Lie algebra
must contain two linearly independent vector fields everywhere parallel to~$X$. Hence $F$ should preserve
the Lie algebra formed by vector fields of the form $(\varphi_1 f + \varphi_2)X$ as above. Since $\mathfrak{g} (m-1)$
also contains a vector field of the form $(\varphi_1 f + \varphi_2)X + cY$, we also conclude that $F^{\ast} (cY)$ must
coincide with a constant multiple of $Y$ up to adding another vector field of the form $(\varphi_1 f + \varphi_2)X$.
The lemma results as once in this case.

Suppose now that the dimension of $\mathfrak{g} (m-1)$ equals~$2$ and consider non-everywhere parallel vector fields
$X$ and $\overline{Y}$ in $\mathfrak{g} (m-1)$. As already seen, the Lie algebra $\mathfrak{g} (m-1)$ must be abelian.
In particular, $\overline{Y}$ yields a representation of the Lie algebra $\mathfrak{g} (m,m-1)$
by vector fields of the form
$$
(\varphi_1 f + \varphi_2)X + ch\overline{Y}
$$
where $c \in \C$ and $h$ is some fixed first integral of~$X$. However $h$ must be constant since $\overline{Y}$ itself
lies in this algebra. Thus $F$ must taken the linear span of $X$ and $\overline{Y}$ to a linear span contained in
the algebra formed by the vector fields $(\varphi_1 f + \varphi_2)X + c\overline{Y}$. The invariance of $\mathfrak{g} (m,m-1)$
by $F$ results at once. The proof of the lemma is completed.
\end{proof}

The combination of Lemma~\ref{NormalizeractingonAlgebra}, Corollary~\ref{addedinFebruary-1}, and
Lemma~\ref{universalsolvablealgebra-June2015} immediately yields the following lemma:

\begin{lemma}
\label{addedinJanuary2015}
With the preceding notations, consider an element $F \in S(m-2)$ and denote by $Z$ its infinitesimal generator.
Then for every vector field $X \in \mathfrak{g} (m-1)$, the commutator $[Z,X]$ lies in
$\mathfrak{g}^{\infty,\ast} (m,m-1) \subseteq \mathfrak{g}^{\infty} (m,m-1)$. In fact,
all the iterated commutators $[Z,\ldots [Z,[Z,X]] \ldots ]$ lie in
$\mathfrak{g}^{\infty,\ast} (m,m-1) \subseteq \mathfrak{g}^{\infty} (m,m-1)$.\qed
\end{lemma}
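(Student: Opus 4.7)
The plan is essentially to chain together the three previously established results, with the only non-trivial observation being that the auxiliary subalgebra provided by Lemma~\ref{universalsolvablealgebra-June2015} is precisely what is needed in order for Lemma~\ref{NormalizeractingonAlgebra} and Corollary~\ref{addedinFebruary-1} to apply.

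First I would invoke Lemma~\ref{universalsolvablealgebra-June2015} to produce the Lie algebra $\mathfrak{g}^{\infty,\ast}(m,m-1)$. What matters in the present context are the two properties granted by that lemma: $\mathfrak{g}(m-1)$ is contained in $\mathfrak{g}^{\infty,\ast}(m,m-1)$, and the algebra $\mathfrak{g}^{\infty,\ast}(m,m-1)$ is invariant under pull-backs by $F$ (and in fact by any element satisfying condition~(\ref{Conjugatingsolvablegroups-10}), though uniformity is not needed at this precise step). In particular, $F^{\ast}\mathfrak{g}^{\infty,\ast}(m,m-1)\subseteq\mathfrak{g}^{\infty,\ast}(m,m-1)$, so the standing hypothesis of Lemma~\ref{NormalizeractingonAlgebra} is fulfilled with $\mathfrak{g}_1=\mathfrak{g}^{\infty,\ast}(m,m-1)$.

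Next, for any $X\in\mathfrak{g}(m-1)\subseteq\mathfrak{g}^{\infty,\ast}(m,m-1)$, Lemma~\ref{NormalizeractingonAlgebra} applied to the pair $(F,\mathfrak{g}^{\infty,\ast}(m,m-1))$ asserts that $[Z,X]$ belongs to $\mathfrak{g}^{\infty,\ast}(m,m-1)$. This already yields the first conclusion, namely that $[Z,X]\in\mathfrak{g}^{\infty,\ast}(m,m-1)\subseteq\mathfrak{g}^{\infty}(m,m-1)$.

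To obtain the statement for iterated commutators $[Z,\ldots,[Z,[Z,X]]\ldots]$, I would invoke Corollary~\ref{addedinFebruary-1}, again with $\mathfrak{g}_1=\mathfrak{g}^{\infty,\ast}(m,m-1)$. This is permissible for the same reason: the invariance of $\mathfrak{g}^{\infty,\ast}(m,m-1)$ under $F^{\ast}$ is the sole hypothesis of the corollary, and $X$ lies in $\mathfrak{g}^{\infty,\ast}(m,m-1)$ because it lies in $\mathfrak{g}(m-1)$. The corollary then gives the inclusion of every iterated bracket in $\mathfrak{g}^{\infty,\ast}(m,m-1)$, which in turn sits inside $\mathfrak{g}^{\infty}(m,m-1)$, completing the proof. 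There is no genuine obstacle here: all the hard work has been packaged into Lemma~\ref{universalsolvablealgebra-June2015}, whose construction of an $F$-invariant enlargement of $\mathfrak{g}(m-1)$ within an ambient solvable algebra is precisely the mechanism that renders Lemma~\ref{NormalizeractingonAlgebra} and its corollary applicable despite the a priori absence of $F$-invariance for $\mathfrak{g}(m,m-1)$ itself.
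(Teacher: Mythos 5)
Your proposal is correct and follows exactly the route the paper intends: the paper gives no written proof beyond asserting that the lemma is an immediate combination of Lemma~\ref{NormalizeractingonAlgebra}, Corollary~\ref{addedinFebruary-1}, and Lemma~\ref{universalsolvablealgebra-June2015}, and your argument is precisely the spelled-out version of that combination, applying the first two results with $\mathfrak{g}_1=\mathfrak{g}^{\infty,\ast}(m,m-1)$.
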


We can now close this section with a technical lemma which, albeit slightly unrelated
to the preceding material, will be rather useful in the next section.

\begin{lemma}
\label{addedinJanuary2015.Number3}
Assume that $Z_1, Z_2$, and $X$ are vector fields in $\ghatX$ satisfying the following conditions:
\begin{itemize}
  \item $Z_2 =aX +bZ_1$ and $[Z_1,X]$ is everywhere parallel to~$X$.

  \item $b$ is a first integral of $X$.

  \item The time-one maps $F_1$ and $F_2$ induced respectively by $Z_1$ and $Z_2$ are such that
  the infinitesimal generator of $F_1 \circ F_2 \circ F_1^{-1} \circ F_2^{-1}$ has the form
  $hX$ for some first integral $h$ of $X$.
\end{itemize}
Then the commutator $[Z_1,Z_2]$ is everywhere parallel to~$X$ or, equivalently, $b$ is a first integral
of $Z_1$.
\end{lemma}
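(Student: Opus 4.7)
The plan is to first reduce the equivalence of the two stated conclusions to a direct bracket calculation, and then to prove that $[Z_1, Z_2]$ is everywhere parallel to $X$ by working modulo the set of vector fields parallel to $X$ and invoking Campbell-Hausdorff. The equivalence is elementary: expanding the bracket yields
\[
[Z_1, Z_2] = [Z_1, aX + bZ_1] = \left(\frac{\partial a}{\partial Z_1}\right) X + a\,[Z_1, X] + \left(\frac{\partial b}{\partial Z_1}\right) Z_1,
\]
whose first two summands are everywhere parallel to $X$ by hypothesis. In the degenerate case where $Z_1$ is itself everywhere parallel to $X$, the vector field $Z_2 = aX + bZ_1$ is also parallel to $X$ and both conclusions of the lemma hold automatically. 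Otherwise, $[Z_1, Z_2]$ is everywhere parallel to $X$ if and only if $(\partial b/\partial Z_1)\, Z_1$ is, which amounts to $\partial b/\partial Z_1 = 0$.

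Assume henceforth that $Z_1$ is not everywhere parallel to $X$, and let $\mathcal{V}$ denote the set of formal vector fields with coefficients in $\fieldC$ that are everywhere parallel to $X$. The crucial structural observation will be that $\mathcal{V}$ is stable under the adjoint actions of both $Z_1$ and $Z_2$. For $fX \in \mathcal{V}$,
\[
[Z_1, fX] = \left(\partial f/\partial Z_1\right) X + f\, [Z_1, X] \in \mathcal{V}
\]
because $[Z_1, X] \in \mathcal{V}$ by hypothesis. The analogous statement for $Z_2$ is where the assumption $\partial b/\partial X = 0$ will enter decisively: one computes
\[
[Z_2, X] = [aX, X] + [bZ_1, X] = -\left(\partial a/\partial X\right) X + b\,[Z_1, X] - \left(\partial b/\partial X\right) Z_1,
\]
and the last term vanishes by hypothesis, so $[Z_2, X] \in \mathcal{V}$ and thus $[Z_2, fX] \in \mathcal{V}$ for every $fX \in \mathcal{V}$. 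Iterating, every nested commutator of $Z_1$ and $Z_2$ is well-defined modulo $\mathcal{V}$, and since $Z_2 \equiv b Z_1 \pmod{\mathcal{V}}$ each such commutator is congruent to a scalar multiple of $Z_1$ in the quotient.

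The concluding step will be a direct application of Campbell-Hausdorff, exactly in the form recalled in Equation~(\ref{finalcommutators}). Let $W$ denote the infinitesimal generator of $[F_1, F_2]$. By hypothesis $W = hX \in \mathcal{V}$, while Campbell-Hausdorff yields
\[
W \equiv [Z_1, Z_2] + \frac{1}{2}\, [Z_1, [Z_1, Z_2]] + \frac{1}{2}\, [Z_2, [Z_1, Z_2]] + \cdots \pmod{\mathcal{V}}.
\]
Since $Z_1, Z_2 \in \ghatX$, each nested commutator on the right has strictly greater order at the origin than $[Z_1, Z_2]$, exactly as in the proof of Lemma~\ref{commuting1}, and this strict ordering is preserved after projection modulo $\mathcal{V}$. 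Therefore the first non-zero homogeneous component of $W$ modulo $\mathcal{V}$ coincides with that of $[Z_1, Z_2] \equiv (\partial b/\partial Z_1) Z_1 \pmod{\mathcal{V}}$. Combined with $W \equiv 0 \pmod{\mathcal{V}}$ and with the fact that $Z_1 \notin \mathcal{V}$, this forces $\partial b/\partial Z_1 = 0$, which is what we wanted to show.

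The only delicate point in this plan is the verification that $\mathcal{V}$ is stable under $\mathrm{ad}(Z_2)$, where the hypothesis that $b$ is a first integral of $X$ plays a decisive role; once this stability is in place, the conclusion is a routine leading-order comparison of the Campbell-Hausdorff expansion in the quotient modulo $\mathcal{V}$, along the lines of Lemma~\ref{commuting1}.
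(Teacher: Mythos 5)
Your strategy---reducing to the coefficient $\partial b/\partial Z_1$ of $Z_1$, working modulo the module $\mathcal{V}$ of vector fields everywhere parallel to $X$, and comparing leading terms in the Campbell--Hausdorff expansion---is essentially the paper's strategy, and your verification that $\mathcal{V}$ is stable under ${\rm ad}(Z_1)$ and ${\rm ad}(Z_2)$ is correct. However, there is a genuine gap at the decisive step, namely the assertion that the strict increase of orders along the nested commutators ``is preserved after projection modulo $\mathcal{V}$''. The coefficients produced by this projection live in $\fieldC$ rather than $\formalC$ (cf.\ Formula~(\ref{veryelementary}): they involve division by the wedge of $X$ and $Z_1$), so they may have negative order, and the order of a vector field does not control the order of its $Z_1$-component. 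Concretely, computing the $Z_1$-components of the first higher terms of~(\ref{finalcommutators}) one finds $[Z_1,[Z_1,Z_2]]\equiv (\partial^2 b/\partial Z_1^2)\,Z_1$ and $[Z_2,[Z_1,Z_2]]\equiv \bigl(b\,\partial^2 b/\partial Z_1^2-(\partial b/\partial Z_1)^2\bigr)\,Z_1$ modulo $\mathcal{V}$. If ${\rm ord}\,(b)<0$, the factor $b$ in $b\,\partial^2 b/\partial Z_1^2$ can more than cancel the one order gained by the extra derivative along $Z_1$, so this term need not have order strictly larger than that of $(\partial b/\partial Z_1)Z_1$, and the leading-order comparison you rely on collapses.

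The paper closes exactly this gap before invoking Campbell--Hausdorff: it observes that the roles of $Z_1$ and $Z_2$ are interchangeable and that the coefficient of $Z_2$ in the reversed decomposition $Z_1=\tilde a X+\tilde b Z_2$ is $\tilde b=1/b$, so that one may assume ${\rm ord}\,(b)\geq 0$ without loss of generality; with this normalization (and the remark that ${\rm ord}\,(\partial b/\partial Z_1)\geq 1$) the explicit order estimates for the $Z_1$-components go through, and an induction handles the remaining terms of the series. To repair your argument you need this normalization, or some equivalent control on ${\rm ord}\,(b)$, together with the explicit computation of the $Z_1$-components of the higher commutators; the abstract claim that order comparisons pass to the quotient modulo $\mathcal{V}$ is not justified and is false in general.
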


\begin{proof}
We can assume that $b$ is not identically zero, otherwise the statement is clear.
Note that $Z_1$ and $Z_2$ have similar properties. More precisely both $[Z_1, X]$ and $[Z_2,X]$ are everywhere
parallel to~$X$ and, in particular, they both derive first integrals of $X$ into first integrals of $X$.
Also none of these vector fields is everywhere parallel to~$X$. Denoting by ${\rm ord}\, (b)$ the order of the formal
function $b$ at $(0,0) \in \C^2$, first note the following:

\noindent {\it Claim}. Without loss of generality we can assume that ${\rm ord}\, (b) \geq 0$.

\noindent {\it Proof of the Claim}. As observed above, the roles of $Z_1$ and $Z_2$ are interchangeable. Thus
we can work either with $Z_2 = aX +bZ_1$ or with $Z_1 = \tilde{a} X + \tilde{b} Z_2$. A direct inspection in
the formulas for the coefficients $a, \, b$ and $\tilde{a}, \tilde{b}$ then shows that $\tilde{b} =1/b$.
In fact, modulo considering the obvious extensions of these vector fields to $\C^3$ the
vector product (denoted by $\wedge$) of the various vector fields in question becomes well defined. All these vector products are
pairwise parallel since their only non-zero component necessarily corresponds to the ``third'' (added) component.
Now just note that $b$ equals the ratio of $X \wedge Z_2$ and $X \wedge Z_1$ whereas $\tilde{b}$ is the ratio of
$X \wedge Z_1$ and $X \wedge Z_2$. The claim results at once.\qed

Assuming then ${\rm ord}\, (b) \geq 0$, we shall use the Campbell-Hausdorff
formula in~(\ref{finalcommutators}). More precisely, note that
$$
[Z_1, Z_2] = a_1 X + \frac{\partial b}{\partial Z_1} \, Z_1 \, .
$$
Assume aiming at a contradiction that $\partial b/\partial Z_1$ does not vanish identically and
denote by ${\rm ord}\, (\partial b/\partial Z_1)$ the order of $\partial b/\partial Z_1$
at the origin. Note that ${\rm ord}\, (\partial b/\partial Z_1)$ is strictly greater than
${\rm ord}\, (b)$ since the linear part of $Z_1$ at the origin vanishes. Hence we have
${\rm ord}\, (\partial b/\partial Z_1)\geq 1$.

The proof is reduced to check that the components in the direction of~$Z_1$ of all the remaining terms in
Campbell-Hausdorff formula~(\ref{finalcommutators}) have order strictly larger than the order
of $(\partial b/\partial Z_1) Z_1$. In the sequel the reader is reminded that $b$ and all its derivatives
with respect to $Z_1$ are first integrals for $X$. We begin with the term
$$
\frac{1}{2} \left( [Z_{1},[Z_{1}, Z_{2}]] + [Z_{2},[Z_{1},Z_{2}]] \right) \, .
$$
Recalling that $[Z_1,Z_2] = a_1 X + (\partial b/\partial Z_1) Z_1$, we first obtain
$$
[Z_1, [Z_1,Z_2]] = [Z_1, a_1 X + (\partial b/\partial Z_1) Z_1] = a_2 X + \frac{\partial^2 b}{\partial Z_1^2} \, Z_1 \, .
$$
Since the linear part of $Z_1$ at the origin equals zero, there follows that the order
of $(\partial^2 b/\partial Z_1^2) Z_1$ is strictly greater than the order of
$(\partial b/\partial Z_1) Z_1$ as desired. Concerning the term $[Z_{2},[Z_{1},Z_{2}]]$, we have
$$
[Z_{2},[Z_{1},Z_{2}]] = [aX + bZ_1 , a_1 X + (\partial b/\partial Z_1) Z_1] = a_2X +
\left( b \left( \frac{\partial^2 b}{\partial Z_1^2} \right) - \left( \frac{\partial b}{\partial Z_1} \right)^2
\right) \, Z_1 \, .
$$
Since ${\rm ord}\, (b) \geq 0$, there follows again that the order of $b (\partial^2 b/\partial Z_1^2) Z_1$ is
is strictly greater than the order of $(\partial b/\partial Z_1) Z_1$. Similarly, since
${\rm ord}\, (\partial b/\partial Z_1)\geq 1$, the order of $(\partial b/\partial Z_1)^2 Z_1$ is strictly
greater than the order of $(\partial b/\partial Z_1) Z_1$. The proof of the lemma now results from a straightforward
induction argument.
\end{proof}

\section{Proof of Theorem~\ref{commuting9}}

To better organize the discussion, Theorem~\ref{commuting9} will be proved by gradually
increasing the complexity of the solvable group $G (m,m-1)$. The simplest possible structure for $G (m,m-1)$ corresponds
to an abelian group and this case is handled by Proposition~\ref{firtcasePropositioncommuting9}
below. In the sequel we always keep the notation used in Section~6.

\begin{prop}
\label{firtcasePropositioncommuting9}
Assume that the group $G (m,m-1) \subset \formdiffn$
is abelian. Then the initial group $G$ is solvable.
\end{prop}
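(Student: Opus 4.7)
I plan to argue by contradiction: assume $m \geq 2$ and deduce that $G(m-1,m-2)$ is itself solvable, contradicting the minimality of $m$. By Lemma~\ref{nodimension-1-June2015} we may assume $\dim_{\C} \mathfrak{g}(m-1) \geq 2$, and since $G(m,m-1)$ is finitely generated abelian with unipotent elements, its Lie algebra $\mathfrak{g}(m,m-1)$ is abelian and finite-dimensional over $\C$, with $\mathfrak{g}(m-1) \subseteq \mathfrak{g}(m,m-1)$. If $G(m-1)$ is Zariski-dense in $G(m,m-1)$ (equivalently $\mathfrak{g}(m-1) = \mathfrak{g}(m,m-1)$), then by the remark following~(\ref{Conjugatingsolvablegroups-10}) every $F \in S(m-2)$ normalizes $G(m,m-1)$, so that $G(m-1,m-2) \subseteq N_{G(m,m-1)}$. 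Splitting $\mathfrak{g}(m,m-1)$ via Corollary~\ref{newversionLemma11.2} and applying Lemma~\ref{lastversionLemma3} in the linear span case, or Lemma~\ref{commuting7nowlemma} in the parallel case (where $\dim \mathfrak{g}(m,m-1) \geq 2$ forces $G(m,m-1)$ to have rank at least~$2$), one finds that $N_{G(m,m-1)}$ is nilpotent, hence solvable. Since the linear span case of Corollary~\ref{newversionLemma11.2} forces $\dim_{\C} \mathfrak{g}(m,m-1) = 2$ and therefore $\mathfrak{g}(m-1) = \mathfrak{g}(m,m-1)$, the only remaining situation is that $\mathfrak{g}(m,m-1)$ consists of vector fields everywhere parallel to a distinguished $X$, while $\mathfrak{g}(m-1) \subsetneq \mathfrak{g}(m,m-1)$.

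In that situation $X$ can be chosen inside $\mathfrak{g}(m-1)$ by Corollary~\ref{newversionLemma11.2}(2), and for $F \in S(m-2)$ with infinitesimal generator $Z$, the inclusion $F^{\ast}X \in \mathfrak{g}(m,m-1)$ makes $F^{\ast}X$ parallel to $X$, so Corollary~\ref{LemmainvolvingHadamard} yields that $[Z,X]$ is parallel to $X$ as well. Let $\mathfrak{g}^{\infty,\ast}(m,m-1)$ be the solvable, $F^{\ast}$-invariant Lie algebra supplied by Lemma~\ref{universalsolvablealgebra-June2015} (in this parallel regime, a subalgebra of the metabelian algebra $\{(\varphi_1 f + \varphi_2)X\}$ of Case~1 in Section~5.3), and let $\widetilde{\mathfrak{g}}$ be the Lie algebra generated by $\mathfrak{g}^{\infty,\ast}(m,m-1)$ together with the infinitesimal generators $Z_i$ of the elements of $S(m-2)$. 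Lemma~\ref{NormalizeractingonAlgebra} gives $[Z_i,\mathfrak{g}^{\infty,\ast}(m,m-1)] \subseteq \mathfrak{g}^{\infty,\ast}(m,m-1)$, and via the Jacobi identity this property propagates to all of $\widetilde{\mathfrak{g}}$, making $\mathfrak{g}^{\infty,\ast}(m,m-1)$ a solvable ideal of $\widetilde{\mathfrak{g}}$. Since $\widetilde{\mathfrak{g}}$ contains the Lie algebra of $G(m-1,m-2)$, establishing its solvability would yield the desired contradiction via Proposition~\ref{frommarteloribon}.

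The main obstacle is to show that the quotient $\widetilde{\mathfrak{g}}/\mathfrak{g}^{\infty,\ast}(m,m-1)$ is solvable, which reduces to controlling the brackets $[Z_i,Z_j]$ modulo $\mathfrak{g}^{\infty,\ast}(m,m-1)$. What is available is that each $[F_i,F_j] \in S(m-1) \subseteq G(m-1)$ has infinitesimal generator $\log([F_i,F_j]) \in \mathfrak{g}(m-1) \subseteq \mathfrak{g}^{\infty,\ast}(m,m-1)$, and the Campbell-Hausdorff identity used in the proof of Lemma~\ref{commuting1}(2) forces the leading homogeneous component of $[Z_i,Z_j]$ to coincide with that of $\log([F_i,F_j])$, hence to lie in $\mathfrak{g}^{\infty,\ast}(m,m-1)$. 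An induction on contact order in the spirit of Lemmas~\ref{addedinJanuary2015.Number2} and~\ref{addedinJanuary2015.Number3} should then propagate this matching from the leading terms to the full bracket, yielding $[Z_i,Z_j] \in \mathfrak{g}^{\infty,\ast}(m,m-1)$. With $\widetilde{\mathfrak{g}}/\mathfrak{g}^{\infty,\ast}(m,m-1)$ thus abelian, the proof would be complete.
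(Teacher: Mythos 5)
Your proposal tracks the paper's reductions faithfully for most of its length: the dichotomy from Corollary~\ref{newversionLemma11.2}, the observation that the linear-span case forces $\mathfrak{g}(m-1)=\mathfrak{g}(m,m-1)$ and hence lands in $\Gamma_{{\rm abelian}-1}$, the reduction to the case where $\mathfrak{g}(m,m-1)\subseteq\{\varphi X\}$ with $\varphi$ a first integral of $X$, the invariance of $\{\varphi X\}$ under the $F_i$, and the conclusion $[Z_i,X]=h_iX$ are all exactly as in the paper. The gap is in your final step. You assert that the leading homogeneous component of $[Z_i,Z_j]$ coincides with that of $\log([F_i,F_j])\in\{\varphi X\}$ and hence ``lies in'' $\mathfrak{g}^{\infty,\ast}(m,m-1)$, and that an induction on contact order ``should propagate'' this to the full bracket. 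Neither half of this works as stated: membership in $\{\varphi X\}$ is not a condition detected by leading homogeneous components (the leading term of $\varphi X$ with $\varphi\in\fieldC$ a first integral is in general not of the form (first integral)$\cdot X$), and the Campbell--Hausdorff identity $\log([F_i,F_j])=[Z_i,Z_j]+\tfrac12([Z_i,[Z_i,Z_j]]+[Z_j,[Z_i,Z_j]])+\cdots$ gives no control of $[Z_i,Z_j]$ beyond its leading term unless the higher iterated brackets are controlled. Lemma~\ref{addedinJanuary2015.Number2} achieves such control only because all commutators there are \emph{constant} multiples of a single $Y$, which enables the cancellation between $[F_i,F_j]$ and $[F_i,F_j^{-1}]$; that trick is not available when the commutators range over the infinite-dimensional space $\{\varphi X\}$.

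The paper closes this gap by splitting into two subcases rather than by a general CH induction. If every $Z_i$ is parallel to $X$, write $Z_i=a_iX$; the relation $[Z_i,X]=h_iX$ says $\partial a_i/\partial X$ is a first integral of $X$, and a direct computation then shows $[a_iX,a_jX]=(a_i\partial_Xa_j-a_j\partial_Xa_i)X$ with first-integral coefficient, so the algebra is as in Case~1 of Section~5.3 and hence metabelian. If some $Z_1$ is not parallel to $X$, one writes $Z_i=a_iX+b_iZ_1$, uses $[Z_i,X]=h_iX$ to see that $b_i$ is a first integral of $X$, and only then invokes Lemma~\ref{addedinJanuary2015.Number3} --- whose hypotheses are precisely this decomposition together with the fact that $\log([F_1,F_i])$ has the form $hX$ --- to conclude that $[Z_1,Z_i]$ is parallel to $X$, i.e.\ that $b_i$ is also a first integral of $Z_1$ and hence constant. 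Your sketch never sets up the decomposition $Z_i=a_iX+b_iZ_1$ that makes Lemma~\ref{addedinJanuary2015.Number3} applicable, and never treats the all-parallel subcase, so the crucial solvability of $\widetilde{\mathfrak{g}}$ remains unproved. You should replace the ``propagation'' paragraph with this case analysis.
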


\begin{proof}
Note that, by definition, none of the sets $S(m)$ and $S(m-1)$ is reduced to the identity.
The dimension of the Lie algebra $\mathfrak{g} (m,m-1)$ is finite since $G (m,m-1)$ is abelian
and finitely generated by construction. On the other hand, Lemma~\ref{nodimension-1-June2015}
ensures that the dimension of $\mathfrak{g} (m-1)$, and hence the dimension of $\mathfrak{g} (m,m-1)$,
is at least~$2$.

According to Corollary~\ref{newversionLemma11.2},
either $\mathfrak{g} (m,m-1)$ coincides with the linear span of two (non-everywhere parallel, commuting) vector fields
$X$ and $Y$ or it is generated by (finitely many) vector fields having the form $hX$ where $h$ is a first integral
of~$X$. Since the dimension of $\mathfrak{g} (m,m-1)$ is at least~$2$, in the latter case
there also follows that $\mathfrak{g} (m,m-1)$ contains $X$ and some other vector field
$Y=hX$, where $h$ is a non-constant first integral of $X$.

Assume first that $\mathfrak{g} (m,m-1)$ coincides with the linear span of vector fields $X$ and $Y$.
Since the dimension of $\mathfrak{g} (m-1)$ is at least~$2$, there follows that these two Lie algebras
should coincide. In other words, relation~(\ref{Conjugatingsolvablegroups-10}) implies that every diffeomorphism
$F \in S(m-2)$ should leave $\mathfrak{g} (m,m-1)$ invariant. By virtue of the material in Section~5.2, we conclude
that $G(m-1,m-2)$ is a subgroup of $\Gamma_{{\rm abelian}-1}$ and hence it is solvable.

Summarizing the preceding, to prove our proposition we can assume that $\mathfrak{g} (m,m-1)$ consists of
vector fields which are given as the product of $X$ by a first integral of itself.
Also, we can assume that both
$X$ and some vector field $Y=hX$ lie in $\mathfrak{g} (m-1)$, where $h$ is a non-constant first integral of~$X$.

Assume first that $\mathfrak{g} (m-1)$ coincides with $\mathfrak{g} (m,m-1)$. In this case
relation~(\ref{Conjugatingsolvablegroups-10}) implies again that every diffeomorphism
$F \in S(m-2)$ should leave $\mathfrak{g} (m,m-1)$ invariant. There follows that $G(m-1,m-2)$ is a subgroup of
$\Gamma_{{\rm abelian}-2}$ and hence solvable (cf. Section~5.2). In other words, we can assume that
$\mathfrak{g} (m-1)$ is strictly contained in $\mathfrak{g} (m,m-1)$.

More generally let $S(m-2) = \{ F_1, \ldots ,F_s \}$. For every $i=1, \ldots ,s$, denote
by $Z_i$ the infinitesimal generator of $F_i$. Next, fixed $i$, we know that $F_i^{\ast} X$ and
$F_i^{\ast} Y = F_i^{\ast} (hX)$ are both equal to first integrals of $X$ times $X$ itself. From this, we conclude
that the Lie algebra $\{ \varphi X\}$ consisting of all vector fields of the form $\varphi X$, where $\varphi$ is a first integral
of~$X$, if left invariant by all the formal diffeomorphisms  $F_i$, $i=1, \ldots ,s$. In other words,
in the statement of Lemma~\ref{addedinJanuary2015}, we can choose
$\mathfrak{g}^{\infty,\ast} (m,m-1) = \mathfrak{g}^{\infty} (m,m-1) = \{ \varphi X\}$.
In particular, Lemma~\ref{addedinJanuary2015} yields
$$
[Z_i,X] = h_i X
$$
for some first integral $h_i$ of $X$, $i=1, \ldots, s$.

Assume now that all
the vector fields $Z_i$ are everywhere parallel to $X$. Let $Z_i =a_i X$. Since
$[Z_i,X] = h_i X$, there follows that $\partial a_i /\partial X$ is a first integral of
$X$. Therefore the Lie algebra $\mathfrak{g} (m-1,m-2)$ is as in Case~1 of Section~5.3. In particular
$\mathfrak{g} (m-1,m-2)$ is solvable and this establishes the proposition in the case in question.

To complete the proof of the proposition there only remains to consider the case where not all the vector fields
$Z_1 , \ldots ,Z_s$ are everywhere parallel to~$X$. We can then assume that $Z_1$ is not everywhere parallel
to $X$. Note, however, that all the vector fields $Z_i$ still derive first integrals of
$X$ into first integrals of $X$ since $[Z_i,X] = h_i X$ (see Formula~(\ref{generalizedSchwarz})).
Now for $i \in \{ 2, \ldots ,s\}$, we set
$Z_i = a_iX + b_iZ_1$. Since $[Z_i,X] = h_{i} X$, we conclude that both
$\partial a_i /\partial X$ and $b_i$ are first integral of $X$. Owing to Lemma~\ref{addedinJanuary2015.Number3},
we therefore conclude that $[Z_1, Z_i]$ is everywhere parallel to $X$, for every $i=1, \ldots ,s$.
However, the condition of having $[Z_1, Z_i]$ everywhere parallel to $X$ implies that
$b_i$ must be a first integral for $Z_1$. Therefore $b_i$
is actually constant since it is also a first integral for $X$ (and $X$ and $Z_1$ are
not everywhere parallel). The solvable nature of the Lie algebra in question is now clear and this completes
the proof of the proposition.
\end{proof}

From now on we always assume that the {\it finitely generated solvable group}\, $G (m,m-1)$ is not abelian.
Denote by $D^s G (m,m-1)$ the non-trivial derived subgroup of $G (m,m-1)$ having highest
order~$s$. $D^s G (m,m-1)$ is also the only non-trivial abelian derived subgroup
of $G (m,m-1)$. Furthermore, we have $s \in \{ 1, 2\}$; cf. \cite{ribon}
or Section~5.3. Note however that the non-trivial abelian group $D^s (m,m-1)$
may fail to be finitely generated.
The abelian Lie algebra associated with $D^s G (m,m-1)$ will be denoted by
$D^s \mathfrak{g} (m,m-1)$. Then, we have:

\begin{lemma}
\label{newversionLemma33.1}
Suppose that $D^s \mathfrak{g} (m,m-1)$ coincides with the linear span of two vector fields $X$ and $Y$.
Then the initial group $G$ is solvable.
\end{lemma}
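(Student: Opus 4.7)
The plan is to derive a contradiction with the standing assumption $m \geq 2$ by showing that $G(m-1,m-2)$ is itself a solvable subgroup of $\formdiffn$, thus contradicting the minimality of $m$. The argument proceeds by analyzing the normalizer of the abelian group $D^s G(m,m-1)$ in $\formdiffn$ via Lemma~\ref{lastversionLemma3}.

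First I would rule out Case~1 of Lemma~\ref{lastversionLemma3}: if all vector fields in the linear span of $X$ and $Y$ had the same order at the origin, the normalizer of $D^s G(m,m-1)$ would be contained in the exponential of $D^s\mathfrak{g}(m,m-1)$ and would therefore be abelian. Since $G(m,m-1)$ sits inside the normalizer of its own derived subgroup, this would force $G(m,m-1)$ to be abelian, contrary to the standing assumption. Hence we are in Case~2: after replacing $Y$ by a suitable linear combination with $X$, we may assume $X$ has strictly greater order at the origin than $Y$, and the normalizer of $D^s G(m,m-1)$ is contained in the nilpotent, step-$2$ solvable three-dimensional group $\Gamma_{{\rm abelian}-1}$.

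Next I would show that every $F \in S(m-2)$ stabilizes the linear span of $X$ and $Y$ under pull-back, and hence lies in the normalizer of $D^s G(m,m-1)$. Denoting by $Z = \log F$ the infinitesimal generator, Lemmas~\ref{universalsolvablealgebra-June2015} and~\ref{addedinJanuary2015} produce a solvable Lie algebra $\mathfrak{g}^{\infty,\ast}(m,m-1)$ containing $\mathfrak{g}(m-1)$ and invariant under pull-back by $F$, and guarantee that all iterated commutators $[Z,\dots[Z,X']\dots]$ with $X' \in \mathfrak{g}(m-1)$ remain inside $\mathfrak{g}^{\infty,\ast}(m,m-1)$. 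Since $X$ is distinguished within this ambient solvable Lie algebra by strictly maximal order at the origin, and since $F^{\ast}$ is unipotent, Hadamard's expansion $F^{\ast}X = X + [Z,X] + \frac{1}{2}[Z,[Z,X]] + \cdots$ together with the order inequalities forces $F^{\ast}X = X$. An analogous analysis applied to $Y$ (now of ``second largest'' order) yields $F^{\ast}Y = Y + \alpha X$ for some $\alpha \in \C$. In particular $F^{\ast}$ preserves the linear span of $X$ and $Y$, as claimed.

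Combining the two steps shows that $G(m-1,m-2)$ is contained in the normalizer of $D^s G(m,m-1)$, and hence in $\Gamma_{{\rm abelian}-1}$. Consequently $G(m-1,m-2)$ is solvable, contradicting the minimality of $m$ and completing the proof. The main obstacle is the second step, namely upgrading the a priori weak information $F G(m-1) F^{-1} \subset G(m,m-1)$ (which only says that $F^{\ast}$ carries $\mathfrak{g}(m-1)$ into the larger $\mathfrak{g}(m,m-1)$) into the stronger statement that $F^{\ast}$ actually preserves the two-dimensional subspace $D^s\mathfrak{g}(m,m-1)$. This upgrade rests on the order-maximality rigidity of $X$ inside $\mathfrak{g}^{\infty,\ast}(m,m-1)$, exploited through a careful inspection of Hadamard's formula in the spirit of the proof of Lemma~\ref{lastversionLemma3}.
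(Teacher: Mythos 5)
Your overall architecture is the right one --- reduce everything to the three-dimensional group $\Gamma_{{\rm abelian}-1}$ and exploit order-maximality of $X$ together with unipotence of $F^{\ast}$ --- and your first step (ruling out item~(1) of Lemma~\ref{lastversionLemma3} because $G(m,m-1)$ normalizes its own derived subgroup and is not abelian) is sound. The gap is in your second step, where you claim $F^{\ast}X=X$ and $F^{\ast}Y=Y+\alpha X$. The hypothesis~(\ref{Conjugatingsolvablegroups-10}) only gives $F^{\ast}\mathfrak{g}(m-1)\subseteq\mathfrak{g}(m,m-1)$; to run the Hadamard argument on $X$ you need two facts that you never establish: that $X$ actually lies in $\mathfrak{g}(m-1)$ (otherwise nothing controls where $F^{\ast}X$ lands), and that $X$ has strictly maximal order in an ambient algebra known to contain $F^{\ast}X-X$. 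Your candidate ambient algebra $\mathfrak{g}^{\infty,\ast}(m,m-1)$ does neither job: it is only guaranteed to contain $\mathfrak{g}(m-1)$, not the span of $X$ and $Y$, and being a maximal solvable algebra it may contain vector fields of arbitrarily large order (e.g. multiples of $X$ by first integrals), so $X$ need not be order-maximal in it. The paper secures both facts by first pinning down the structure of $G(m,m-1)$: Lemma~\ref{newversionLemma22.1} forces $s=1$, hence $\mathfrak{g}(m,m-1)$ coincides with the full three-dimensional algebra $\mathfrak{g}_{{\rm abelian}-1}$, in which $X$ is distinguished as the unique vector field of maximal order (proof of Lemma~\ref{nextlemmabelow}); and every two-dimensional abelian subalgebra of $\mathfrak{g}_{{\rm abelian}-1}$ --- in particular $\mathfrak{g}(m-1)$, whose dimension is at least~$2$ by Lemma~\ref{nodimension-1-June2015} --- must contain the center $\C X$. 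Only then does unipotence give $F^{\ast}X=X$.

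The ``analogous analysis applied to $Y$'' has the same problem in a more acute form: $Y$ belongs to $D^{s}\mathfrak{g}(m,m-1)$ but there is no reason for it to belong to $\mathfrak{g}(m-1)$, so nothing constrains $F^{\ast}Y$ a priori. This is precisely why the paper splits into two sub-cases according to whether the second generator $\overline{Y}$ of $\mathfrak{g}(m-1)$ is everywhere parallel to~$X$ or not: in the parallel case one cannot control $F^{\ast}Y$ at all and instead shows $F_i^{\ast}\mathfrak{g}(m-1)=\mathfrak{g}(m-1)$ and invokes Lemma~\ref{commuting7nowlemma}; in the non-parallel case one writes $\overline{Y}=\overline{a}X+\overline{b}Y$, proves that $\overline{b}$ is constant, and only then extracts the behavior of $F_i^{\ast}Y$ from that of $F_i^{\ast}\overline{Y}$. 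Without these intermediate structural steps your second paragraph does not go through.
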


\begin{proof}
To begin with let $S(m-2) = \{ F_1, \ldots ,F_s \}$. The infinitesimal generator of $F_i$ will be
denoted by $Z_i$, $i=1 ,\ldots ,s$.
Recall that $D^s G (m,m-1)$ is a normal subgroup of $D^{s-1} G (m,m-1)$ which, in turn, is not an abelian group.
Thus Lemma~\ref{lastversionLemma3} ensures that $D^{s-1} G (m,m-1) \subset G(m,m-1)$ is isomorphic
to a non-abelian subgroup of $\Gamma_{{\rm abelian}-1}$. In turn, Lemma~\ref{newversionLemma22.1} shows that
the normalizer of $D^{s-1} G (m,m-1)$ is metabelian which implies that $D^{s-1} G (m,m-1) = G(m,m-1)$ i.e.,
we necessarily have $s=1$. In other words, $G(m,m-1)$ is isomorphic to a non-abelian subgroup of $\Gamma_{{\rm abelian}-1}$
and hence $\mathfrak{g} (m,m-1)$ coincides with the Lie algebra $\mathfrak{g}_{{\rm abelian}-1}$ of $\Gamma_{{\rm abelian}-1}$.
Thus $\mathfrak{g} (m,m-1)$ is generated by vector fields $X$, $Y$, and $\widetilde{Z}$ such that
$[X,Y]=[X,\widetilde{Z}]=0$ and $[Y ,\widetilde{Z}] = cX$ for some $c \in \C$.

Consider the Lie algebra $\mathfrak{g} (m-1)$. We can
assume that $\mathfrak{g} (m-1)$ is strictly contained in $\mathfrak{g} (m,m-1)$ otherwise
$S(m-2)$ is contained in the normalizer of $\Gamma_{{\rm abelian}-1}$ coinciding with $\Gamma_{{\rm abelian}-1}$
itself (see Lemma~\ref{nextlemmabelow}).
Similarly, owing to Lemma~\ref{nodimension-1-June2015}, we can assume that the dimension of
$\mathfrak{g} (m-1)$ is strictly larger than~$1$. Hence, the Lie algebra $\mathfrak{g} (m-1)$ must have dimension
equal to~$2$ and, since it is a sub-algebra of $\mathfrak{g} (m,m-1) \simeq \mathfrak{g}_{{\rm abelian}-1}$,
it is therefore abelian.

Since $\mathfrak{g} (m-1)$ is abelian, there follows that it
must contain $X$ since $X$ lies in the center of $\mathfrak{g}_{{\rm abelian}-1}$. Recalling from the proof of
Lemma~\ref{nextlemmabelow} that $X$ is distinguished in $\mathfrak{g}_{{\rm abelian}-1}$ as the vector field of maximal
order (up to constant multiples), there follows that $X$ also belongs to $F_i^{\ast} \mathfrak{g} (m-1) \subset
\mathfrak{g} (m,m-1) \simeq \mathfrak{g}_{{\rm abelian}-1}$, for every $i=1, \ldots, s$. Indeed, for every $F_i \in
S(m-2)$, we must have $F_i^{\ast} X =X$ since $F_i$ is unipotent.

Consider now another vector field $\overline{Y}$ in $\mathfrak{g} (m-1)$ which is linearly independent with $X$. In principle
$\overline{Y}$ may be everywhere parallel to~$X$. However, if $\overline{Y}$ is everywhere parallel to~$X$,
then the abelian sub-algebra generated by $X$ and by $\overline{Y}$ is the (unique) maximal abelian sub-algebra
of $\mathfrak{g}_{{\rm abelian}-1}$ consisting of vector fields everywhere parallel to~$X$. In fact, the vector field
$Y$ is not everywhere parallel to~$X$ (by assumption) and this implies the preceding assertion.
A similar conclusion holds for the Lie algebra $F_i^{\ast} \mathfrak{g} (m-1) \subset
\mathfrak{g} (m,m-1) \simeq \mathfrak{g}_{{\rm abelian}-1}$, since $F_i^{\ast} X =X$. From the maximal character of
the Lie algebras in question, we therefore conclude that
$F_i^{\ast} \mathfrak{g} (m-1) = \mathfrak{g} (m-1)$; i.e. $\mathfrak{g} (m-1)$ is invariant by $F_i$, for every
$i=1, \ldots, s$. In other words,
$S(m-2)$ is contained in the normalizer of a two-dimensional abelian
Lie algebra and hence the group generated by $S(m-1) \cup S(m-2)$ is solvable, see Lemma~\ref{commuting7nowlemma}.

We can now assume that $X$ and $\overline{Y}$ are not everywhere parallel. We still have $F_i^{\ast} X =X$ which
implies that $[Z_i ,X]=0$. On the other hand, $\overline{Y} = \overline{a} X + \overline{b} Y$ since
$[X, \overline{Y}]=0$ (where $\overline{a}$ and $\overline{b}$ are first integrals of~$X$). Since
$[Y, \overline{Y}]$ must be a multiple of~$X$, there follows that $\overline{b}$ is also a first integral of~$Y$
and, therefore, an actual constant.

To complete the proof we now proceed as follows. For $i$ fixed, $F_i^{\ast} (\overline{Y}) =
(\overline{a} \circ F_i) X + \overline{b} F_i^{\ast} Y$. Since this vector field still belongs to
$\mathfrak{g} (m,m-1) \simeq \mathfrak{g}_{{\rm abelian}-1}$, we conclude that $F_i^{\ast} Y$ is still a constant.
Because $F_i$ is unipotent (i.e. tangent to the identity), this constant must be~$1$ so that $F_i$ actually preserves
both $X$ and $Y$. As already seen, this implies that $F_i$ is actually contained in the abelian group generated
by the exponentials of $X$ and $Y$. It is now clear that $G(m-1,m-2)$ is still solvable and this completes the proof
of the lemma.
\end{proof}

In view of Lemma~\ref{newversionLemma33.1} we assume in what follows that $D^s \mathfrak{g} (m,m-1)$ is fully constituted by
vector fields of the form $hX$, where $h$ is a first integral for $X$ (by way of notation, we also suppose that
$X$ itself belongs to $D^s \mathfrak{g} (m,m-1)$). Note that the dimension of $D^s \mathfrak{g} (m,m-1)$ is finite
if and only if $D^s G (m,m-1)$ is finitely generated. In this case, the group
$D^{s-1} G (m,m-1)$ has non-trivial center: since inner automorphisms of
$D^{s-1} G (m,m-1)$ leave $D^s G (m,m-1)$ invariant, they must also leave invariant
those vector fields in $D^s \mathfrak{g} (m,m-1)$
having maximal order at the origin of $\C^2$ (as follows from Hadamard lemma, cf. Section~5).
In turn, the center of $D^{s-1} G (m,m-1)$ must be contained in the exponential of a
single vector field $X$ otherwise a contradiction
would arise from Lemma~\ref{lastversionLemma2}. Thus those elements in the intersection
of the exponential of $X$ with the group $D^s G (m,m-1)$ lie in the center of
$D^{s-1} G (m,m-1)$ proving our assertion. Next note that the center of $G (m,m-1)$
is non-trivial if and only if the center of $D^{s-1} G(m,m-1)$ is non-trivial.
In fact, if the center of $D^{s-1} G$ is non-trivial, then the chain of normal subgroups
$$
D^s G (m,m-1) \triangleleft D^{s-1} G (m,m-1) \triangleleft \cdots \triangleleft G (m,m-1)
$$
implies that $D^{i-1} G$ normalizes $D^i G$ so that $D^{i-1} G$ should also normalize the center
of $D^i G$. Hence the center of $D^{s-1} G$ lies also in the center of $G$. The converse is clear.

In the general case, however, the center of $G (m,m-1)$ may be
trivial. Furthermore (non-abelian) solvable subgroups of $\formdiffn$ having non-trivial center are
easy to characterize. In fact, let $G$ be a (non-abelian) solvable subgroup of $\formdiffn$
and denote by $D^s G$ the (non-trivial) abelian derived
subgroup of $G$ (it is the non-trivial derived subgroup of maximal order~$s$).

\begin{lemma}
\label{newversionLemma33.2}
Let $G$ and $D^s G$ be as above. Assume that the Lie algebra $D^s \mathfrak{g}$ associated with $D^s G$ is constituted
by vector fields everywhere parallel to a certain vector field $X$. Assume also that $G$ has non-trivial center.
Then $s=1$. Moreover the Lie algebra $\mathfrak{g}$ associated with $G$
is constituted by vector fields of the form $aX +\alpha Y$ where $a$ is a first integral of $X$ and where $\alpha \in \C$.
Moreover $X,Y$ are non-everywhere parallel commuting vector fields.
In particular the center of $G$ is (non-trivial and) contained in ${\rm Exp}\, (tX)$.
\end{lemma}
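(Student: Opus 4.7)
My plan is to apply the classification of solvable subalgebras in Section~5.3 and then exploit the non-trivial center hypothesis to narrow down the structure of $\mathfrak{g}$. Since $G$ is non-abelian solvable, $\mathfrak{g}$ belongs to Case~1, 2, or 3 of Section~5.3. In Case~1 ($\mathfrak{g}$ consists of $uX$ with $u \in \mathcal{A}$), a central element $Z_c = u_c X$ must satisfy $[u_c X, X] = 0$, forcing $u_c \in \mathcal{I}_X$, and then $[u_c X, fX] = u_c \bar h X = 0$ forces $u_c = 0$ since $\bar h \neq 0$; this contradicts the non-trivial center hypothesis, so Case~1 is excluded.

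In Cases~2 and~3, I invoke Remark~\ref{justaremarkmoreorlessuseless-1} to write every element of $\mathfrak{g}$ as $(\varphi_1 f + \varphi_2)X + \alpha \ooY$, with $\varphi_i \in \mathcal{I}_X$, $\alpha \in \C$, $[\ooY, X] = \tilde h X$, $\tilde h \in \mathcal{I}_X$, and both $X, \ooY \in \mathfrak{g}$. A non-trivial central element $Z_c = u_c X + \beta \ooY$ with $u_c = \psi_1 f + \psi_2$ must satisfy $[Z_c, W] = 0$ for all $W \in \mathfrak{g}$. Testing against $W = X$ gives $\psi_1 \bar h = \beta \tilde h$; against $W = \ooY$ gives $\partial u_c/\partial \ooY = -u_c \tilde h$; against $W = \varphi X$ for $\varphi \in \mathcal{I}_X$ with $\varphi X \in \mathfrak{g}$ yields $\beta\, \partial \varphi/\partial \ooY = 0$ after the $\tilde h$-terms cancel using the first identity; and testing against $W = fX$ (if $fX \in \mathfrak{g}$) gives $u_c \bar h + \beta\, \partial f/\partial \ooY = 0$. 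If $\dim D^s \mathfrak{g} \geq 2$, Lemma~\ref{lastversionLemma2} applied to $X$ and a further element $hX \in D^s \mathfrak{g}$ with $h \in \mathcal{I}_X$ non-constant forces $Z_c$ parallel to $X$, i.e.\ $\beta = 0$; this together with the $fX$-test would then give $u_c = 0$, so necessarily $fX \not\in \mathfrak{g}$; if $\dim D^s \mathfrak{g} = 1$, an analogous argument using the centralizer structure of Lemma~\ref{commuting2} plays the same role. In either case $\mathfrak{g}$ is forced into the shape $\{\varphi X + \alpha \ooY : \varphi \in \mathcal{I}_X, \alpha \in \C\}$.

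Once $\mathfrak{g}$ is thus pinned down, $u_c \in \mathcal{I}_X$ (since $\psi_1 = 0$) and hence $\partial u_c/\partial X = 0 = \beta \tilde h$; in the subcase $\beta \neq 0$ this directly forces $\tilde h = 0$, while in the subcase $\beta = 0$ the identity $\partial u_c/\partial \ooY = -u_c \tilde h$ combined with $u_c \in \mathcal{I}_X$ and the non-triviality of $Z_c$ likewise forces $\tilde h = 0$ via a generalized Schwarz type argument. Setting $Y = \ooY$, the derived algebra $D^1 \mathfrak{g}$ is spanned by vector fields $bX$ with $b \in \mathcal{I}_X$, which commute pairwise, so $D^2 \mathfrak{g} = 0$ and $s = 1$. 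Finally, the standard fact $\mathcal{I}_X \cap \mathcal{I}_Y = \C$ for non-parallel pairs (since $d\varphi(X) = d\varphi(Y) = 0$ forces $d\varphi = 0$) combined with the remaining centrality conditions gives $u_c \in \C$ and $\beta = 0$ (otherwise $\mathfrak{g}$ would reduce to the abelian linear span of $X$ and $Y$, contradicting that $G$ is non-abelian), so $Z_c = tX$ and the center of $G$ sits inside ${\rm Exp}(tX)$. The main technical obstacle will be the careful case analysis, especially verifying that the assumption $fX \in \mathfrak{g}$ leads to a clean contradiction, together with repeated careful invocation of the identity $\mathcal{I}_X \cap \mathcal{I}_Y = \C$ at the right places.
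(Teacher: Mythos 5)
Your route --- starting from the Section~5.3 classification and testing a central element against specific vector fields --- is genuinely different from the paper's, which instead observes that every element of $G$ commutes with the central element and invokes Lemma~\ref{commuting2} (Case~2) to write every infinitesimal generator at once as $aX+bY$ with $a,b\in\mathcal{I}_X$ and $[X,Y]=0$, and then rules out $s=2$ by a cocycle computation on the coefficients $b$. Unfortunately your version breaks exactly where the paper does its real work. From the implication ``if $fX\in\mathfrak{g}$ then $u_c=0$'' you infer $fX\notin\mathfrak{g}$ and then assert that $\mathfrak{g}$ is forced into the shape $\{\varphi X+\alpha\ooY\,:\,\varphi\in\mathcal{I}_X,\ \alpha\in\C\}$. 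That is a non sequitur: the classification only says each element has the form $(\varphi_1 f+\varphi_2)X+\alpha\ooY$, and an element such as $fX+\ooY$ may lie in $\mathfrak{g}$ without either $fX$ or $\ooY$ doing so. Testing $Z_c=u_cX$ against a general $W=(\varphi_1 f+\varphi_2)X+\alpha\ooY$ yields only the single relation $u_c\varphi_1\overline{h}=\alpha\,(u_c\tilde{h}+\partial u_c/\partial\ooY)$, which does not force $\varphi_1=0$. So the $f$-dependence --- equivalently the possibility $s=2$, which is precisely Case~3 --- is not excluded, and your subsequent computation of $D^1\mathfrak{g}$ and the conclusion $s=1$ collapse.

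Two further points need repair. The claim $\tilde{h}=0$ in the subcase $\beta=0$ is circular: you would get it from $u_c$ being a non-zero constant, but $u_c\in\C$ is to come from $u_c\in\mathcal{I}_X\cap\mathcal{I}_{\ooY}$, and $u_c\in\mathcal{I}_{\ooY}$ is exactly the identity $\partial u_c/\partial\ooY=-u_c\tilde{h}$ with $\tilde{h}=0$ already in hand. The branch $\dim D^s\mathfrak{g}=1$, which you need in order to conclude $\beta=0$ at all, is not argued (``an analogous argument plays the same role'' is not a proof), and several of your tests presuppose that $X$, $\ooY$, $fX$ or $\varphi X$ themselves belong to $\mathfrak{g}$, which the classification does not guarantee. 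The missing mechanism is the paper's: since the center lies in ${\rm Exp}\,(tX)$, Lemma~\ref{commuting2} hands you a commuting, non-parallel $Y$ and the normal form $aX+bY$ with $a,b\in\mathcal{I}_X$ for all of $\mathfrak{g}$; the quotients of the $Y$-coefficients then satisfy a cocycle relation whose derivative along $Y$ produces the contradiction that kills $s\geq 2$. I would graft that argument onto your setup rather than trying to extract everything from the centrality identities alone.
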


\begin{proof}
Ultimately the result is just a special case of the classification of groups presented in Section~5.3.
For the convenience of the reader, we shall provide a self-contained argument. First, according to
the previous discussion, we know that the center of $G$ is trivial if and only if the center of $D^{s-1} G$
is so. By assumption, in the present case none of these centers turns out to be trivial.
Still owing to the above discussion, we assume that
$X \in D^s \mathfrak{g}$ is such that its exponential contains the center of $D^{s-1} G$ and of $G$. Therefore
every vector field in $\mathfrak{g}$ has the form $aX +bY$ where $a, b$ are first
integrals of $X$ and where $Y$ is a vector field commuting with $X$ and not everywhere parallel to~$X$. The reader
will also note that a vector field $Y$ as indicated must exist since $G$ would be abelian otherwise.

Consider now the Lie algebra $D^{s-1} \mathfrak{g}$ associated to $D^{s-1} G$. The commutator of two vector fields
$Z_1, Z_2 \in D^{s-1} \mathfrak{g}$ must be contained in $D^s \mathfrak{g}$ and hence it must have the form $hX$
where $h$ is some first integral of $X$. Setting $Z_1 =a_1X +b_1Y$ and $Z_2 =a_2 X +b_2Y$, the preceding
implies that $b_1/b_2$ must be a constant unless one between $b_1, b_2$ vanishes identically.
In other words, there must exist a function $f \in \fieldC$ such that the following holds:

\noindent {\it Claim}.
Every vector field $Z \in D^{s-1} \mathfrak{g}$ has the from $Z =aX +\alpha fY$ where $a, \, f$ are first integrals of $X$
and $\alpha$ is a constant in $\C$ depending on $Z$.\qed

We also note that the general form of the quotient $b_1/b_2$ satisfies
the co-cycle relation $(b_1/b_2) (b_2/b_3) = b_1/b_3$ which is necessary to have a well-defined Lie algebra.
Furthermore, the vector fields in $D^s \mathfrak{g}$ sits inside the above mentioned form (just take $\alpha =0$).

Suppose now that $s\geq 2$ so that the Lie algebra $D^{s-2} \mathfrak{g}$ can be considered. The preceding
argument can thus be repeated: let $Z_1, Z_2$ be vector fields in $D^{s-2} \mathfrak{g}$ leading to a commutator
$[Z_1,Z_2]$ in $D^{s-1} \mathfrak{g} \setminus D^k \mathfrak{g}$. Letting $Z_1 =a_1X +b_1Y$ and $Z_2 =a_2 X +b_2Y$,
we obtain
$$
\frac{\partial (b_1/b_2)}{\partial Y} = \alpha f
$$
for some $\alpha \in \C$ and for $f$ as in the above claim. Naturally it can be supposed that $f$ is not a constant.
If $H$ is a specific function satisfying
$\partial H /\partial Y =f$, then the quotient $b_1/b_2$ has the general form $\alpha H +\varphi$ where $\varphi$ is
a first integral of $Y$. Nonetheless, to have a well-defined Lie algebra, we still need to check the co-cycle
relation $(b_1/b_2) (b_2/b_3) = b_1/b_3$. In particular $b_2/b_1$ must admit the same pattern i.e., we must
have $b_2/b_1 = \overline{\alpha} H +\overline{\varphi}$, for a suitable constant
$\overline{\alpha} \in \C$ and first integral $\overline{\varphi}$ of $Y$. Furthermore,
the fact that $(b_1/b_2)(b_2/b_1) =1$
immediately leads to $\alpha \overline{\alpha} H^2 +
H (\alpha \overline{\varphi} + \overline{\alpha} \varphi) + \varphi \overline{\varphi}=1$. Therefore,
by taking the derivative with respect to~$Y$, we obtain
$$
(2 \alpha\overline{\alpha} H + \alpha \overline{\varphi} + \overline{\alpha} \varphi) \, . \, \frac{\partial H}{\partial Y}
=(2 \alpha \overline{\alpha } H + \alpha\overline{\varphi} + \overline{\alpha} \varphi) \, . \, f = 0 \, .
$$
Since $f$ is not identically zero, it follows that $H$ must be a first integral for $Y$ since $\varphi, \,
\overline{\varphi}$ are so. In any event, a contradiction arises at once.
From this contradiction, we conclude that $s$ equals~$1$. The lemma then follows by replacing $Y$ by $fY$, cf.
Remark~\ref{justaremarkmoreorlessuseless-1}.
\end{proof}

In what follows we always set $S(m-2) = \{ F_1, \ldots ,F_s \}$ while the infinitesimal
generator of $F_i$ will be denoted by $Z_i$.
Before discussing the case in which the group $G (m,m-1) \subset \formdiffn$ has non-trivial center,
it is however convenient to settle the following special case:

\begin{lemma}
\label{newversionLemma33.5}
Assume that $G(m,m-1)$ is as in Case~1 of Section~5.3; i.e. the infinitesimal generator of every element
in $G(m,m-1)$ is parallel to a certain vector field~$X$ (and $G(m,m-1)$ is not abelian). Then the
initial group $G$ is solvable.
\end{lemma}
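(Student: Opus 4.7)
The plan is to split the proof according to whether every infinitesimal generator $Z_i$ of $F_i \in S(m-2)$ is everywhere parallel to $X$ or not, and in each case to show that the Lie algebra $\mathfrak{g}(m-1,m-2)$ generated by $\mathfrak{g}(m,m-1)$ together with the $Z_i$'s still fits into one of the solvable families classified in Section~5.3, which will contradict the minimality of $m$. As preparation, I would take $\mathfrak{g}^{\infty,*}(m,m-1)$ to be the full algebra $\{(\varphi_1 f + \varphi_2)X : \varphi_j \in \mathcal{I}_X\}$ furnished by Case~1 of Lemma~\ref{universalsolvablealgebra-June2015}, so that Lemma~\ref{addedinJanuary2015} makes every iterated commutator $[Z_i,\ldots,[Z_i,[Z_i,W]]\ldots]$ with $W \in \mathfrak{g}(m-1)$ automatically available in $\mathfrak{g}^{\infty,*}$; by Lemma~\ref{nodimension-1-June2015} we have $\dim \mathfrak{g}(m-1) \geq 2$, so a non-zero $W = u_0 X \in \mathfrak{g}(m-1)$ is at my disposal.

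In the first case, where $Z_i = u_i X$ for every $i$, the proof is mostly computational: from $[Z_i,u_0X] \in \mathfrak{g}^{\infty,*}$ I read off a first constraint on $\partial u_i/\partial X$, and then $[Z_i,[Z_i,u_0X]]$ produces a term involving $(\partial u_i/\partial X)^2$ whose $f^2$-contribution must cancel because $\mathfrak{g}^{\infty,*}$ is linear in~$f$; this cancellation forces $\partial u_i/\partial X \in \mathcal{I}_X$, after which all of $\mathfrak{g}(m-1,m-2)$ sits inside a (possibly larger) Case~1 algebra from Section~5.3 and is solvable. This is parallel in spirit to the argument used in Case~1 of Proposition~\ref{firtcasePropositioncommuting9}.

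In the second case, I assume without loss of generality that $Z_1$ is not everywhere parallel to $X$ and write $Z_i = a_i X + b_i Z_1$ for $i \geq 2$. Expanding $[Z_i,X] \in \mathfrak{g}^{\infty,*}$ and using that $[Z_1,X]$ is itself parallel to $X$ (by Corollary~\ref{LemmainvolvingHadamard} applied to $F_1^*X \in \mathfrak{g}^{\infty,*}$) kills the $Z_1$-component of $[Z_i,X]$ and gives $b_i \in \mathcal{I}_X$. To upgrade this to $b_i \in \mathbb{C}$, I apply Lemma~\ref{addedinJanuary2015.Number3} to the pair $(F_1,F_i)$: the commutator $[F_1,F_i]$ lies in $S(m-1) \subset G(m-1)$, so its infinitesimal generator lies in $\mathfrak{g}(m-1) \subseteq \mathfrak{g}^{\infty,*}$ and is in particular parallel to $X$; a careful reading of the proof of Lemma~\ref{addedinJanuary2015.Number3} shows that only parallelism to $X$ is used, so the conclusion $\partial b_i/\partial Z_1 = 0$ still applies, and combined with $b_i \in \mathcal{I}_X$ and the non-parallelism of $X, Z_1$ it yields $b_i \in \mathbb{C}$. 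Then $a_iX = Z_i - b_i Z_1 \in \mathfrak{g}(m-1,m-2)$, and the $X$-parallel part $\mathfrak{h}$ of $\mathfrak{g}(m-1,m-2)$ is an ideal with at most one-dimensional quotient spanned by (the image of) $Z_1$. Re-running the $f^2$-cancellation trick on the iterated commutators $[Z_1,\ldots,[Z_1,u_0X]\ldots]$ and $[a_iX,\ldots,[a_iX,u_0X]\ldots]$ forces $\mathfrak{h}$ to remain inside a Case~1 algebra of Section~5.3, and the extension by the single non-parallel generator $Z_1$ then matches either Case~2 or Case~3 of Section~5.3, both of which are solvable.

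The main technical obstacle is Case~B. The subtlety is that $[Z_1,X] = \lambda X$ with $\lambda = \varphi_1 f + \varphi_2$ rather than a genuine first integral of $X$, so the action of $Z_1$ on the $X$-parallel ideal could in principle generate vector fields outside the Case~1 algebra; the $f^2$-cancellation mechanism, channelled through Lemma~\ref{addedinJanuary2015}, is what prevents this. Producing the exact match with Case~2 versus Case~3 of Section~5.3, and verifying that the adapted use of Lemma~\ref{addedinJanuary2015.Number3} (weakening the first-integral qualifier on $h$ to mere parallelism to $X$) is legitimate, are the places where the most care is required.
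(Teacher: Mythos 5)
Your proposal is correct and follows essentially the same route as the paper: the same split according to whether all the $Z_i$ are everywhere parallel to $X$, the same use of Lemma~\ref{addedinJanuary2015} with $\mathfrak{g}^{\infty,\ast}(m,m-1)=\{(\varphi_1 f+\varphi_2)X\}$ to control iterated commutators, the same $f^2$-cancellation forcing $\varphi_{1,i}=0$ (equivalently $\partial u_i/\partial X\in\mathcal{I}_X$), and the same appeal to Lemma~\ref{addedinJanuary2015.Number3} applied to $[F_1,F_i]\in S(m-1)$ to upgrade $b_i$ from a first integral of $X$ to a constant. Your observation that Lemma~\ref{addedinJanuary2015.Number3} is being invoked with only parallelism to $X$ rather than the literal ``$hX$ with $h$ a first integral'' hypothesis is a fair reading of how the paper itself uses that lemma here.
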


\begin{proof}
The Lie algebra $\mathfrak{g} (m,m-1)$ associated with $G(m,m-1)$ is thus formed by vector fields
having the form $(\varphi_1 f + \varphi_2) X$ where $\varphi_1, \varphi_2$ are first integrals of $X$
and where $f$ satisfies $\partial f/\partial X =\overline{h}$ for some non-identically zero first integral
$\overline{h}$ of~$X$. Furthermore, by virtue of Lemma~\ref{nodimension-1-June2015}, we can assume that
the dimension of $\mathfrak{g} (m-1)$ is at least~$2$.

For every $Y \in \mathfrak{g} (m-1)$ and $i \in \{1, \ldots ,s \}$, the vector field $F_i^{\ast} Y$ lies
in $\mathfrak{g} (m,m-1)$ and hence is everywhere parallel to~$X$. There follows that the solvable Lie algebra
$\mathfrak{g}^{\infty,\ast} (m,m-1)$ (as well as $\mathfrak{g}^{\infty} (m,m-1)$) constructed in
Lemma~\ref{universalsolvablealgebra-June2015} is fully constituted by vector fields everywhere parallel to~$X$.
Hence they are still contained in the algebra $\{ (\varphi_1 f + \varphi_2) X \}$ consisting
of all vector fields having the form $(\varphi_1 f + \varphi_2) X$ indicated
above. Now, for $Y$ in $\mathfrak{g} (m-1)$, Lemma~\ref{addedinJanuary2015} ensures that the
commutator $[Z_i,Y]$ lies in $\{ (\varphi_1 f + \varphi_2) X \}$.
Since all the vector fields are everywhere parallel to $X$, we conclude that $[Z_i, X]$ is everywhere parallel
to~$X$ as well. In particular $Z_i$ derives first integrals of $X$ into first integrals of $X$.

Suppose now that all the vector fields $Z_i$ are everywhere parallel to $X$. Set $Z_i = a_i X$ and let
$[Z_i, X] = (-2 \varphi_{1,i}  \overline{h} f - \varphi_{2,i} \overline{h}) X$. We then have $\partial a_i /\partial X =
2\varphi_{1,i} \overline{h} f + \varphi_{2,i} \overline{h}$ so that $a_i = \varphi_{1,i} f^2 + \varphi_{2,i} f + \varphi_{3,i}$
where $\varphi_{3,i}$ is another first integral for $X$. Another application of Lemma~\ref{addedinJanuary2015}
ensures that $[Z_i,[Z_i,X]]$ belongs to $\{ (\varphi_1 f + \varphi_2) X \}$ as well.
A direct computation of $[Z_i,[Z_i,X]]$, however, yields
$$
[Z_i,[Z_i,X]] = 2 \varphi_{1,i}^2 \overline{h}^2 f^2 + \widetilde{\varphi}_{2,i} f + \widetilde{\varphi}_{3,i}
$$
for suitable first integrals $\widetilde{\varphi}_{2,i}, \, \widetilde{\varphi}_{3,i}$ of~$X$.
Since $[Z_i,[Z_i,X]]$ lies in $\mathfrak{g} (m,m-1)$, we conclude that $\varphi_{1,i}$ vanishes identically.
Therefore the Lie algebra generated by $\mathfrak{g} (m-1)$ and the vector fields $Z_1, \ldots ,Z_s$ is still solvable.

It remains to consider the case in which not all the vector fields $Z_i$ are everywhere parallel to~$X$.
We can then assume that $Z_1$ is not everywhere parallel to~$X$. Yet, the reader is reminded that $Z_1$
derives first integrals of $X$ into first integrals of $X$.

For $i =2, \ldots ,s$, we set $Z_i = a_i X + b_i Z_1$. Since $[Z_i,X]$ is everywhere parallel to~$X$, we still
conclude that $b_i$ is a first integral of~$X$. On the other hand, the Lie algebra $\mathfrak{g} (m-1)$ contains
the infinitesimal generators of the commutators $F_1 \circ F_i \circ F_1^{-1} \circ F_i^{-1}$ so that
Lemma~\ref{addedinJanuary2015.Number3} ensures that $[Z_1,Z_i]$ is everywhere parallel to~$X$. In other words,
all the coefficients $b_i$ are constants in $\C$. Hence to complete the proof of the lemma it suffices
to check that $a_i$ has the form $\varphi_{1,i} f + \varphi_{2,i}$ for suitable first integrals
$\varphi_{1,i}$ and $\varphi_{2,i}$ of~$X$. This straightforward verification is left to the reader since it
essentially amounts to keeping track of the components parallel to~$X$ of the indicated vector fields by
repeating the argument used in the case where all the vector fields $Z_1, \ldots ,Z_s$
are everywhere parallel to~$X$. The proof of the lemma is completed.
\end{proof}

Now we state:

\begin{prop}
\label{firtcasePropositioncommuting9-furtherstep}
Keeping the preceding notation, assume that
the non-abelian solvable group $G (m,m-1) \subset \formdiffn$ has non-trivial center. Then the initial group $G$ is solvable.
\end{prop}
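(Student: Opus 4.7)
The plan is to first invoke the structural results of Section~5 to pin down $G(m,m-1)$ precisely. By Lemma~\ref{newversionLemma33.1}, I may assume that $D^s \mathfrak{g}(m,m-1)$ consists entirely of vector fields everywhere parallel to a fixed formal vector field $X$. Combined with the non-trivial center hypothesis, Lemma~\ref{newversionLemma33.2} then yields $s=1$ together with a vector field $Y$, commuting with $X$ and not everywhere parallel to $X$, such that every element of $\mathfrak{g}(m,m-1)$ has the form $aX + \alpha Y$ with $a$ a first integral of $X$ and $\alpha \in \C$, and such that the center of $G(m,m-1)$ lies in ${\rm Exp}\,(tX)$. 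Lemma~\ref{newversionLemma33.5} furthermore lets me assume that $\mathfrak{g}(m,m-1)$ genuinely contains elements with nonzero $Y$-component, so both $X$ and $Y$ are essential for the Lie algebra structure.

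Next, for each $F_i \in S(m-2)$ with infinitesimal generator $Z_i$, I would show that $F_i^{\ast} X = X$ and $F_i^{\ast} Y = Y + \widetilde{a}_i X$ for some first integral $\widetilde{a}_i$ of $X$. Since $X$ spans the center of $\mathfrak{g}(m,m-1)$ and is distinguished there by maximal order at the origin (by the Hadamard-type arguments of Lemma~\ref{nextlemmabelow}), and since Lemma~\ref{nodimension-1-June2015} ensures $\mathfrak{g}(m-1)$ has dimension at least $2$, the vector field $X$ must belong to $\mathfrak{g}(m-1)$. The arguments from Case~2 in the proof of Lemma~\ref{universalsolvablealgebra-June2015} then produce the stated description of $F_i^{\ast} X$ and $F_i^{\ast} Y$, unipotence forcing the coefficient of $Y$ to equal $1$. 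Translating this via Corollary~\ref{LemmainvolvingHadamard}, or directly through Lemma~\ref{NormalizeractingonAlgebra} applied to the $F_i$-stable solvable algebra $\mathfrak{g}^{\infty,\ast}(m,m-1)$, yields the infinitesimal relations $[Z_i, X] = 0$ and $[Z_i, Y]$ everywhere parallel to $X$; in particular each $Z_i$ maps first integrals of $X$ to first integrals of $X$.

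Writing $Z_i = a_i X + b_i Y$ in the $\fieldC$-basis $\{X, Y\}$, the relation $[Z_i, X] = 0$ combined with $[X, Y] = 0$ forces $\partial a_i / \partial X = 0$ and shows that $b_i$ is a first integral of $X$. If every $Z_i$ happens to be everywhere parallel to $X$, the argument of Lemma~\ref{newversionLemma33.5} transfers verbatim to the enlarged algebra and yields solvability. Otherwise, assume $Z_1$ is not everywhere parallel to $X$ and rewrite $Z_i = \widetilde{a}_i X + \widetilde{b}_i Z_1$ for $i \geq 2$. Since the commutator $[F_1, F_i]$ lies in $G(m-1)$ and, by the form of $\mathfrak{g}(m-1) \subset \mathfrak{g}(m,m-1)$, has generator of the form $hX$ with $h$ a first integral of $X$, the hypotheses of Lemma~\ref{addedinJanuary2015.Number3} are met for the pair $(Z_1, Z_i)$; the conclusion forces $\widetilde{b}_i$ to be a first integral of $Z_1$ as well, hence a constant in $\C$.

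Finally, to control the $X$-component $\widetilde{a}_i$, I would iterate the commutators $[Z_1, X], [Z_1, [Z_1, X]], \ldots$, using Lemma~\ref{addedinJanuary2015} and Corollary~\ref{addedinFebruary-1} to ensure every iterate stays inside $\mathfrak{g}^{\infty,\ast}(m,m-1)$, and then read off the admissible form of $\widetilde{a}_i$ by comparing orders just as in the proof of Lemma~\ref{newversionLemma33.5}. This should show that the Lie algebra generated by $\mathfrak{g}(m,m-1) \cup \{Z_1, \ldots, Z_s\}$ fits Case~3 of the classification of Section~5.3 and is therefore solvable, contradicting the minimality of $m$ and proving the proposition. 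The main obstacle is precisely this last bookkeeping step: the solvable normal forms of Section~5.3 are extremely rigid, and ruling out forbidden contributions (such as a quadratic-in-$f$ term in $\widetilde{a}_i$) in the expression for $Z_i$ requires carefully combining the iterated-commutator analysis with the $F_i$-invariance of $\mathfrak{g}^{\infty,\ast}(m,m-1)$ to propagate constraints all the way back to the coefficients of the initial decomposition.
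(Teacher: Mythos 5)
Your overall strategy tracks the paper's: reduce via Lemmas~\ref{newversionLemma33.1} and~\ref{newversionLemma33.2} to the normal form $\{aX+\alpha Y\}$, control the commutators $[Z_i,X]$ and $[Z_i,Y]$, and then pin down the coefficients of $Z_i=a_iX+b_iY$ using Lemma~\ref{addedinJanuary2015.Number3} and the order bookkeeping of Lemma~\ref{newversionLemma33.5}. However, there is a genuine error at the pivotal step: you claim $F_i^{\ast}X=X$, hence $[Z_i,X]=0$, on the grounds that $X$ lies in $\mathfrak{g}(m-1)$ and is distinguished by maximal order. Neither assertion is available here. Since $D^s\mathfrak{g}(m,m-1)$ consists of \emph{all} fields $hX$ with $h$ a first integral of $X$, there is no element of maximal order (the orders of the $hX$ are unbounded), so unipotence does not force $F_i^{\ast}X=X$; and $\mathfrak{g}(m-1)$, being merely a subalgebra of dimension at least~$2$, may consist entirely of fields $h_1X, h_2X$ with non-constant $h_j$, without containing $X$ itself. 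Worse, $[Z_i,X]=0$ would give $\partial a_i/\partial X=0$, which is incompatible with the normal form $a_i=\varphi_{1,i}f+\varphi_{2,i}$ you aim for at the end (recall $\partial f/\partial X=\overline{h}\neq 0$). The correct, and sufficient, statement is only that $[Z_i,X]$ is \emph{everywhere parallel} to~$X$; the paper obtains it not from invariance of $X$ but from the growth of orders along the iterated commutators $[Z_i,\ldots[Z_i,\overline{Y}]\ldots]$ inside $\mathfrak{g}^{\infty,\ast}(m,m-1)$ (the $\ooY$-components have fixed order, so high iterates must become parallel to~$X$), together with a separate argument for $[Z_i,\ooY]$.

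A second, related omission is the case split. When $\mathfrak{g}(m-1)$ is abelian and consists entirely of fields parallel to~$X$, there is no element $\overline{Y}=aX+\alpha Y$, $\alpha\neq 0$, in $\mathfrak{g}(m-1)$ on which to run the iterated-commutator argument, so $[Z_i,Y]$ cannot be controlled directly and one cannot conclude that $b_i$ is constant this way; this is exactly the ``crucial point'' the paper isolates, resolved by decomposing $Z_i=\overline{a}_iX+\overline{b}_iZ_1$ relative to a non-parallel $Z_1$ and applying Lemma~\ref{addedinJanuary2015.Number3} (with the case $s=1$ treated separately). You do invoke the right lemma, but your write-up presupposes throughout that $[Z_i,Y]$ is parallel to $X$, which is only established in the other sub-cases. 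With the claim $[Z_i,X]=0$ weakened to parallelism and the abelian-all-parallel sub-case treated on its own, your argument becomes essentially the paper's proof.
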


\begin{proof}
We keep the preceding notation so that $S(m-2) = \{ F_1, \ldots ,F_s \}$ and the infinitesimal generator of
$F_i$ is denoted by $Z_i$.
By assumption, the solvable Lie algebra $\mathfrak{g} (m,m-1)$ is an in Lemma~\ref{newversionLemma33.2}. Recall also
Lemma~\ref{nodimension-1-June2015} allows us to assume that the dimension of $\mathfrak{g} (m-1) \subset
\mathfrak{g} (m,m-1)$ is at least~$2$.

The proof of the proposition will be split into two cases according to whether or not $\mathfrak{g} (m-1)$ is abelian.

\noindent {\it Case A}. Assume that $\mathfrak{g} (m-1)$ is abelian.

We begin by considering the abelian sub-algebras of $\mathfrak{g} (m,m-1)$ having dimension at least~$2$. Owing to the
description of $\mathfrak{g} (m,m-1)$ provided by Lemma~\ref{newversionLemma33.2}, these
algebras fall into two classes, namely:
\begin{enumerate}
  \item Lie algebras of dimension~$2$ containing non everywhere parallel vector fields. This type of Lie
  algebra has one of the following forms:
  \begin{itemize}
    \item It may be generated by $X$ and by another vector field $Y$ having the form
  $aX + \alpha Y$, with $\alpha \in \C^{\ast}$.

    \item It may be generated by vector fields of the form $aX + \alpha Y$ and $ca X + \beta Y$ where
    $c$, $\alpha$ and $\beta$ are all constants. Moreover $c \neq \beta/\alpha$.
  \end{itemize}

  \item Lie algebras constituted by vector fields that are everywhere parallel to $X$ (and hence of
  the form $hX$ for some first integral $h$ of $X$).
\end{enumerate}

Consider first the case where $\mathfrak{g} (m-1)$ is as in item~(1) above.
Consider also the Lie algebras $\mathfrak{g}^{\infty,\ast} (m,m-1)$ and $\mathfrak{g}^{\infty} (m,m-1)$
provided by Lemma~\ref{universalsolvablealgebra-June2015}. The solvable Lie algebra
$\mathfrak{g}^{\infty,\ast} (m,m-1)$ contains $\mathfrak{g} (m-1)$ and hence it
is not fully constituted by vector fields everywhere parallel to~$X$. Thurefore it must be as in Cases~2 or~3 of
Section~5.3; i.e. vector fields in $\mathfrak{g}^{\infty,\ast} (m,m-1)$ have the form
$(\varphi_1 f + \varphi_2) X + \alpha Y$ with $\alpha \in \C$ (the possibility of always having $\varphi_1=0$
is not excluded either).

Fix $\overline{Y} \in \mathfrak{g} (m-1)$
which is not everywhere parallel to~$X$. Hence we have $\overline{Y} = aX + \alpha Y$ for some $\alpha \in \C^{\ast}$.
According to Lemma~\ref{addedinJanuary2015},
all the iterated commutators $[Z_i, \ldots [Z_i, \overline{Y}]\ldots]$ lie in $\mathfrak{g}^{\infty,\ast} (m,m-1)$.
However, as we iterate these commutators, the orders of
the resulting vector fields keep increasing strictly since the linear parts of all the involved vector fields are
zero. Since the components in the direction $Y$ have all fixed order (they only differ by a multiplicative constant),
there follows that some sufficiently high commutator will be everywhere parallel to~$X$. A further iteration of this
commutator will still be everywhere parallel to~$X$ for the same reason. From this there follows that
$[Z_i ,X]$ must be everywhere parallel to~$X$. Moreover, we also have:

\noindent {\it Claim}. $[Z_i, Y]$ is everywhere parallel to~$X$.

\noindent {\it Proof of the Claim}. Consider the first iterated
commutator $[Z_i, [Z_i, \ldots [Z_i, \overline{Y}]\ldots]]$ which is everywhere parallel to~$X$. The preceding
iterated commutator $[Z_i, \ldots [Z_i, \overline{Y}]\ldots]$ then still has the form $aX + \beta Y$ for some
$\beta \in \C^{\ast}$. Therefore the commutator $[Z_i, aX +\beta Y]$ is everywhere parallel to~$X$. However the commutator
$[Z_i, aX]$ is everywhere parallel to~$X$ as well since so is $[Z_i, X]$. Therefore the commutator $[Z_i, \beta Y]$ must
be everywhere parallel to~$X$ as well and this completes the proof of the claim.\qed

Next set $Z_i = a_i X + b_iY$. Since $[Z_i, X]$ is everywhere parallel to~$X$, there follows that $b_i$ is
a first integral of $X$. Similarly $b_i$ is also a first integral of $Y$ since $[Z_i ,Y]$ is everywhere parallel
to~$X$. In other words, $b_i$ is constant. Finally $a_i$ must have the form $\varphi_1 f + \varphi_2$ as now
follows from considering commutators $[Z_i, \overline{Y}_1]$ and $[Z_i,\overline{Y}_2]$ in
$\mathfrak{g}^{\infty,\ast} (m,m-1)$ for two linearly independent vector fields $\overline{Y}_1$ and $\overline{Y}_2$
in $\mathfrak{g} (m-1)$. Therefore the group $G(m-1,m-2)$ is again solvable and this prove the
proposition in the present case.

To finish the discussion of Case~A, suppose now that $\mathfrak{g} (m-1)$ is fully constituted by vector fields having
the form $hX$ where $h$ is a first integral of $X$. In this case $F_i^{\ast} (\mathfrak{g} (m-1)) \subset
\mathfrak{g} (m,m-1)$ is an abelian
sub-algebra of $\mathfrak{g} (m,m-1)$ fully constituted by pairwise everywhere parallel vector fields. Since the dimension
of $\mathfrak{g} (m-1)$ is at least~$2$, the description above of the abelian sub-algebras of
$\mathfrak{g} (m,m-1)$ ensures that $F_i^{\ast} (\mathfrak{g} (m-1))$ is again formed by vector fields everywhere parallel
to~$X$. In other words, the commutator $[Z_i, X]$ is everywhere parallel to~$X$. In particular
$\mathfrak{g}^{\infty,\ast} (m,m-1)$ has the form $(\varphi_1 f + \varphi_2) X$ as in Case~1 of Section~5.3.

Again let $Z_i = a_i X + b_iY$ so as to conclude that $b_i$ is a first integral of~$X$ from the fact that
$[Z_i, X]$ is everywhere parallel to~$X$. The crucial point here compared
to the previous case lies in the fact that only commutators of $Z_i$ with vector fields everywhere parallel to~$X$
are controlled which, in turn, prevents us from repeating the above argument
to conclude that $b_i$ is a constant. To overcome this difficulty we proceed as follows.

Assume first that $s=1$ so that $Z_1 = a_1 X + b_1Y$ with $b_1$ being a first integral of $X$. To conclude
that $G(m-1,m-2)$ is solvable, it is therefore sufficient to check that $a_1$ has the above indicated form
$\varphi_1 f + \varphi_2$. This however follows from the same computations carried out in the proof
of Lemma~\ref{newversionLemma33.5}. More precisely, consider two linearly independent vector fields
$(\varphi_3 f + \varphi_4) X$ and $(\varphi_5 f + \varphi_6) X$ in $\mathfrak{g} (m-1)$. Owing to
Lemma~\ref{addedinJanuary2015}, the commutators $[Z_1, (\varphi_3 f + \varphi_4) X]$ and
$[Z_1, (\varphi_5 f + \varphi_6) X]$ still possesses the general form $(\varphi_1 f + \varphi_2) X$. From this
there follows that $a_1$ has the general form $\varphi_1 f + \varphi_2$ and completes the proof of the proposition
in the present case.

Assume now that $s \geq 2$. Without loss of generality, we can assume that $b_1$ is not identically zero.
In other words, $Z_1$ is not everywhere parallel to~$X$. Now, following the argument given at the end of
the proof of Lemma~\ref{newversionLemma33.5}, we set
$Z_i = \overline{a}_i X + \overline{b}_i Z_1$, for $i =2, \ldots ,s$.
Since $[Z_i,X]$ is everywhere parallel to~$X$, we still
conclude that $\overline{b}_i$ is a first integral of~$X$. On the other hand, the Lie algebra $\mathfrak{g} (m-1)$ contains
the infinitesimal generators of the commutators $F_1 \circ F_i \circ F_1^{-1} \circ F_i^{-1}$ so that
Lemma~\ref{addedinJanuary2015.Number3} ensures that $[Z_1,Z_i]$ is everywhere parallel to~$X$. In other words,
all the coefficients $\overline{b}_i$ are constants in $\C$. The proof of Proposition~\ref{firtcasePropositioncommuting9-furtherstep}
in Case~A is completed.

\noindent {\it Case B}. Assume that $\mathfrak{g} (m-1)$ is not abelian (and thus it is metabelian).

Since $\mathfrak{g} (m-1)$ is not abelian, it necessarily contains a vector field $\overline{Y}$ of the form
$aX + \alpha Y$ with $\alpha \neq 0$. Moreover
$D^1 \mathfrak{g} (m-1)$ is non-trivial and automatically constituted by vector fields
of the form $hX$, $h$ first integral of $X$. Clearly $F_i^{\ast} (D^1 \mathfrak{g} (m-1))$ is contained in
$D^1 (F_i^{\ast} \mathfrak{g} (m-1)) \subset D^1 \mathfrak{g} (m,m-1)$. There follows again that $F_i^{\ast} (X)$
is everywhere parallel to~$X$ (or equivalently that $[Z_i, X]$ is everywhere parallel to~$X$). Note however that this
conclusion can also be obtained by repeating the argument in the beginning of the proof of Case~A. Similarly, the
argument employed in the proof of the preceding Claim also applies to the present situation and implies
that $[Z_i ,Y]$ is everywhere parallel to~$X$. Hence we can again set
$Z_i =a_i X + b_iY$ where $b_i$ is a constant (for all $i=1, \ldots, s$).
Finally $a_i$ must have the form $\varphi_1 f + \varphi_2$ as now
follows from considering commutators $[Z_i, aX + \alpha Y]$ and $[Z_i,hX]$ in
$\mathfrak{g}^{\infty,\ast} (m,m-1)$ for $\overline{Y} = aX + \alpha Y$ as above and some other vector field
$hX \in \mathfrak{g} (m-1)$, where $h$ is some first integral of~$X$. The proof of
Proposition~\ref{firtcasePropositioncommuting9-furtherstep} is now completed.
\end{proof}

To prove Theorem~\ref{commuting9} it only remains to discuss the
general case of a solvable group with trivial center. In fact, given that Case~1 of Section~5.3
was already settled by Lemma~\ref{newversionLemma33.5},
we can assume that $G(m,m-1)$ is as in Case~2 or in Case~3 of Section~5.3. In particular
there exists a vector field $\ooY$ which is not everywhere parallel to~$X$ and satisfies
the condition indicated in the above mentioned Case~2. The reader is also reminded that the highest order
non-trivial derived Lie algebra $D^s \mathfrak{g} (m,m-1)$ of $\mathfrak{g} (m,m-1)$ consists of
vector fields of the form $hX$ where $h$ is a first integral for $X$. Without loss of generality we also
assume that $X \in D^s \mathfrak{g} (m,m-1)$.
We are finally able to prove Theorem~\ref{commuting9}.

\begin{proof}[Proof of Theorem~\ref{commuting9}]
As mentioned, it suffices to consider the situations where $G(m,m-1)$ is as
in Case~2 and in Case~3. However, to abridge notation, we shall only deal with
Case~3 since Case~2 can be regarded as a particular one.

Therefore the Lie algebra $\mathfrak{g} (m,m-1) \subset \mathfrak{g}_{\rm step-3}$
consists of vector fields having the form
$$
(\varphi_1 f + \varphi_2)X + \alpha \ooY \,
$$
where $\alpha \in \C$ and $\varphi_1, \varphi_2, f$ are first integrals of $X$.
Moreover $[\ooY, X] = hX$ where $h$ is a first integral of~$X$, possibly vanishing identically.
The above relation also ensures that $\ooY$ derives first integrals of $X$ into first integrals
of~$X$.

Next note that $\mathfrak{g}_{\rm step-3}$ is the largest solvable Lie algebra of $\ghatX$
so that both solvable Lie algebras
$\mathfrak{g}^{\infty,\ast} (m,m-1)$ and $\mathfrak{g}^{\infty} (m,m-1)$ are naturally contained
in $\mathfrak{g}_{\rm step-3}$. In particular, there follows from Lemma~\ref{addedinJanuary2015}
that all iterated commutators $[Z_i,\ldots [Z_i,[Z_i,\widetilde{Z}]] \ldots ]$ belong to
$\mathfrak{g}_{\rm step-3}$ provided that $\widetilde{Z} \in \mathfrak{g} (m-1)$.

Consider the Lie algebra $\mathfrak{g} (m-1)$ and recall that its dimension can be assumed greater
than or equal to~$2$. Again the
discussion will be split into two cases according to whether or not
$\mathfrak{g} (m-1)$ is abelian.

\noindent {\it The abelian case}: assume that $\mathfrak{g} (m-1)$ is abelian.

By assumption $\mathfrak{g} (m-1)$ is an abelian sub-algebra of $\mathfrak{g}_{\rm step-3}$ whose dimension
is at least~$2$. Owing to the description of abelian Lie algebras in Section~5.1, an abelian Lie algebra
having dimension at least~$2$  either is a linear span of two vector fields or consists of vector fields that are
everywhere parallel. When this Lie algebra is contained in $\mathfrak{g}_{\rm step-3}$, we obtain:

\noindent {\it Claim~1}. Without loss of generality we can assume that $\mathfrak{g} (m-1)$ either is spanned
by $X$ and $\ooY$ or it consists of vector fields having the form $hX$ where $h$ is a first integral of $X$.

\noindent {\it Proof of Claim~1}. Suppose first that $\mathfrak{g} (m-1)$ is a linear span of two vector fields
$(\varphi_1 f + \varphi_2) X + \alpha \ooY$
and $(\varphi_3 f + \varphi_4) X + \beta \ooY$. By taking a suitable linear combination of them, $\mathfrak{g} (m-1)$
is also spanned by $(\varphi_1 f + \varphi_2) X + \alpha \ooY$ and by $(\varphi_5 f + \varphi_6) X$. Now set
$(\varphi_1 f + \varphi_2) X + \alpha \ooY$ as your ``new vector field $\ooY$'' and $(\varphi_5 f + \varphi_6) X$
as the ``new vector field $X$''.

Suppose now that all vector fields in $\mathfrak{g} (m-1)$ are pairwise everywhere parallel. Since the
dimension of $\mathfrak{g} (m-1)$ is at least~$2$, there follows that these vector fields have to be everywhere parallel
to~$X$. Now it is clear that the quotient between two of these vector fields must be a first integral
of~$X$ so that the claim follows from choosing one of these vector fields as the ``new vector field $X$''.\qed

We begin with the case in which $\mathfrak{g} (m-1)$ can be identified with the linear span of the vector fields
$X$ and $\ooY$. The extra difficulty arising in the present case when compared to the proof of
Proposition~\ref{firtcasePropositioncommuting9-furtherstep} (Case~A) lies in the fact that the vector field
$X$ is no longer ``canonical''. Indeed, in the context of Proposition~\ref{firtcasePropositioncommuting9-furtherstep},
the corresponding vector field $X$ was naturally associated with the center of the Lie algebra $\mathfrak{g} (m,m-1)$
and hence was uniquely determined (up to multiplicative constants).
This no longer holds here since the center of $\mathfrak{g} (m-1)$ is supposed to be trivial.  Yet the argument
employed in the proof of Proposition~\ref{firtcasePropositioncommuting9-furtherstep} still holds here. Alternatively,
a slightly different argument is possible. This argument is summarized by Claim~2 below.

\noindent {\it Claim~2}. Without loss of generality we can assume that $[Z_i, X]$ is everywhere parallel to~$X$.

\noindent {\it Proof of Claim~2}. It is known that $[Z_i ,X]= a_{X,1,i} X + \alpha_{1,i} \ooY$ with $\alpha_{1,i} \in
\C$ (Lemma~\ref{addedinJanuary2015}). We assume that $\alpha_{1,i} \neq 0$ otherwise there is nothing to be proved.
Lemma~\ref{addedinJanuary2015} also ensures that $[Z_i,\ooY] = a_{\ooY,1,i} X + \beta_{1,i} \ooY$ with $\beta_{1,i} \in \C$.

Now consider the commutator $[Z_i,[Z_i,X]]$. This vector field belongs to $\mathfrak{g}_{\rm step-3}$
(owing again to Lemma~\ref{addedinJanuary2015}) and thus has the form $a_{X,2,i} X + \alpha_{2,i} \ooY$ with $\alpha_{2,i} \in \C$.
On the other hand, a direct computation yields
\begin{eqnarray*}
[Z_i,[Z_i,X]] & = & \left( \frac{\partial a_{X,1,i}}{\partial Z_i} \right) X + a_{X,1,i} [Z_i, X] +
\alpha_{1,i} [Z_i, \ooY] = \\
& =& \widetilde{a}_X X + (a_{X,1,i} \alpha_{1,i} + \beta_{1,i} \alpha_{1,i}) \ooY \, .
\end{eqnarray*}
Thus $\alpha_{2,i} = a_{X,1,i} \alpha_{1,i} + \beta_{1,i} \alpha_{1,i}$ so that $a_{X,1,i}$ is a constant since
$\alpha_{1,i} \neq 0$.

Similarly $[Z_i,[Z_i,\ooY]] = a_{\ooY,2,i} X + \beta_{2,i} \ooY$ has the form
$$
[Z_i,[Z_i,\ooY]] = \widetilde{a}_{\ooY} X + (a_{\ooY,1,i} \alpha_{1,i} + \beta_{1,i}^2) \ooY \, .
$$
Thus $a_{\ooY,1,i}$ is constant as well. Summarizing the preceding, every $Z_i$ induces an endomorphism of
the linear span of $X$ and $\ooY$. Therefore the Lie algebra generated by $X, \ooY, Z_1, \ldots ,Z_s$ is solvable
by virtue of Lemma~\ref{lastversionLemma3}.\qed

We assume in the sequel that $[Z_i,X]$ is everywhere parallel to~$X$ so that $Z_i = a_i X + b_i\ooY$
where $b_i$ is a first integral of~$X$. However, by
resorting to the argument given in the proof of Proposition~\ref{firtcasePropositioncommuting9-furtherstep},
we see that $[Z_i,\ooY]$ is everywhere parallel to~$X$ as well. In fact, all the iterated commutators
$[Z_i,\ldots [Z_i,[Z_i,\ooY]] \ldots ]$ belong to $\mathfrak{g}_{\rm step-3}$ while their orders at the origin
becomes arbitrarily large: therefore at some point they must become everywhere parallel to~$X$. This fact combined
with Claim~2 ensures that the commutator $[Z_i,\ooY]$ must be everywhere parallel to~$X$. In turn, this implies
that $b_i$ is a first integral of $Y$ as well so that we actually have
$Z_i =a_i X + \alpha_i \ooY$ with $\alpha_i \in \C$. To prove that the Lie algebra generated by
$X, \ooY, Z_1 ,\ldots ,Z_s$ still is solvable, there only remains to check that $a_i$ has the form
$a_i = \varphi_{1,i} f + \varphi_{2,i}$ ($\varphi_{1,i}, \, \varphi_{2,i}$ being first integrals of~$X$). This
however follows from the same argument employed in the proof of Lemma~\ref{newversionLemma33.5}. In other words,
the theorem is proved provided that $\mathfrak{g} (m-1)$ coincides with the linear span of two vector fields.

To complete the discussion of the case in which $\mathfrak{g} (m-1)$ is abelian, it remains to
consider the situation in which $\mathfrak{g} (m-1)$
consists of (two or more) vector fields having the form $hX$ where $h$ is a first integral of $X$.
The argument is essentially the same used in the analogous situation occurring in the proof
of Proposition~\ref{firtcasePropositioncommuting9-furtherstep}. We summarize the discussion in the sequel.
For every $i=1, \ldots ,s$, $F_i^{\ast} \mathfrak{g} (m-1)$ is again an abelian Lie all of whose vector fields
are pairwise everywhere parallel. Since $F_i^{\ast} \mathfrak{g} (m-1) \subseteq \mathfrak{g}_{\rm step-3}$,
we conclude that $F_i^{\ast} \mathfrak{g} (m-1)$ has again the form $h (\varphi_1 f + \varphi_2)X$ with
$\varphi_1, \varphi_2$ first integrals of $X$ (in particular $[Z_i,X]$ is everywhere parallel to~$X$).
As already seen, this also implies that $[Z_i ,\ooY]$ is everywhere parallel to~$X$. Assembling this information,
there follows that $Z_i$ has the form $Z_i = a_i X + \alpha_i \ooY$ with $\alpha_i \in \C$ and where
$a_i$ is a first integral of~$X$. Therefore the
Lie algebra generated by $Z_1, \ldots ,Z_s$ along with vector fields of the form
$hX$ ($h$ first integral of $X$) is solvable and this ends the proof
of Theorem~\ref{commuting9} in the case where $\mathfrak{g} (m-1)$ is an abelian algebra.

\noindent {\it The non-abelian case}. Suppose that $\mathfrak{g} (m-1)$ is not abelian.

The fundamental observation explaining why the non-abelian case discussed below is somewhat simpler
than the abelian one lies in the fact that the derived Lie algebra $D^1 \mathfrak{g}_{\rm step-3}$
consists of vector fields everywhere parallel to~$X$. To exploit this remark, we proceed as follows.

Since $\mathfrak{g} (m-1)$ is not abelian, its derived Lie algebra $D^1 \mathfrak{g} (m-1)$ is not trivial. Furthermore
$D^1 \mathfrak{g} (m-1)$ clearly consists of vector fields everywhere parallel to~$X$.
Fixed $i \in \{ 1, \ldots ,s \}$, consider then
the map from $\mathfrak{g} (m-1)$ to $\mathfrak{g} (m,m-1)$ consisting of taking the commutator with $Z_i$.
This map clearly sends $D^1 \mathfrak{g} (m-1)$ in $D^1 \mathfrak{g} (m,m-1)$ so that there is $aX \in D^1 \mathfrak{g} (m-1)$
such that $[Z_i,aX]$ is again everywhere parallel to~$X$. Thus we obtain once and for all that
$[Z_i,X]$ is everywhere parallel to~$X$. In turn, this implies that $[Z_i,\ooY]$ is everywhere parallel to~$X$ as well
(cf. Claim~2 in the proof of Proposition~\ref{firtcasePropositioncommuting9-furtherstep}). Thus
$Z_i a_i X + \alpha_i \ooY$ with $\alpha_i \in \C$. Once again to conclude that $a_i$ has the form
$a_i = \varphi_{1,i} f + \varphi_{2,i}$ ($\varphi_{1,i}, \, \varphi_{2,i}$ first integrals of~$X$) it suffices
to repeat the argument employed in the proof of Lemma~\ref{newversionLemma33.5}. The proof
of Theorem~\ref{commuting9} is completed.
\end{proof}

\vspace{0.1cm}

\begin{flushleft}
{\sc Julio Rebelo} \\
Institut de Math\'ematiques de Toulouse\\
118 Route de Narbonne\\
F-31062 Toulouse, FRANCE.\\
rebelo@math.univ-toulouse.fr

\end{flushleft}

\vspace{0.1cm}

\begin{flushleft}
{\sc Helena Reis} \\
Centro de Matem\'atica da Universidade do Porto, \\
Faculdade de Economia da Universidade do Porto, \\
Portugal\\
hreis@fep.up.pt \\

\end{flushleft}

\end{document}